\theoremstyle{definition}
\newtheorem{definition}{Definition}[section]
\newtheorem{theorem}{Theorem}[section]
\newtheorem{corollary}{Corollary}[theorem]
\newtheorem{lemma}[theorem]{Lemma}
\newtheorem{prop}{{\bf Proposition}}[section]
\def\appendix#1{\addtocounter{section}{1}\setcounter{equation}{0}
\renewcommand{\thesection}{\Alph{section}}
\section*{Appendix \thesection\protect\indent \parbox[t]{11.15cm}{#1}}
\addcontentsline{toc}{section}{Appendix \thesection\ \ \ #1}}
\def\hn{{\hat{\nabla}}}
\begin{document}

\begin{center}
\vspace*{-1.0cm}
\begin{flushright}
\end{flushright}

\hfill DMUS-MP-19-10
\\

\vspace{2.0cm} {\Large \bf Eigenvalue  estimates for multi-form modified Dirac operators } \\[.2cm]

\vskip 2cm
J. Gutowski$^1$ and   G.  Papadopoulos$^2$
\\
\vskip .6cm

\begin{small}
${}^1$ \textit{Department of Mathematics, University of Surrey
\\
Guildford, Surrey, GU2 7XH, UK}\\
\texttt{j.gutowski@surrey.ac.uk}
\end{small}

\vskip5mm

\begin{small}
${}^2$ \textit{Department of Mathematics, King's College London
\\
Strand, London WC2R 2LS, UK}\\
\texttt{george.papadopoulos@kcl.ac.uk}
\end{small}

\end{center}

\vskip 2.5 cm

\begin{abstract}
\noindent

We give  estimates for the eigenvalues of  multi-form modified Dirac operators which are constructed from a standard Dirac operator with the addition of
 a Clifford algebra element associated to a multi-degree form. In particular  such estimates are presented for  modified Dirac operators with a $k$-degree form   $0\leq k\leq 4$, those modified with  multi-degree $(0,k)$-form $0\leq k\leq 3$ and the horizon Dirac operators which are modified with a multi-degree $(1,2,4)$-form. In particular, we give the necessary geometric conditions  for such operators to admit zero modes as well as those for  the zero modes to be parallel with a respect to a suitable connection.  We also  demonstrate that   manifolds which admit such parallel spinors  are associated with twisted covariant form hierarchies which generalize the conformal Killing-Yano forms.

\end{abstract}

\newpage

\renewcommand{\thefootnote}{\arabic{footnote}}

\section{Introduction}

It is a consequence of the Lichnerowicz formula and theorem that the Dirac operator on  closed\footnote{Closed manifolds are those that are compact and without boundary.} spin manifolds $M^n$ with scalar curvature $R\gneq 0$, i.e. $R\geq 0$ with $R\not=0$ somewhere, does not admit zero modes.
As a result, the Atiyah-Singer index of the Dirac operator vanishes. Moreover if $R=0$, then
all zero modes of the Dirac operator are parallel.
It is apparent from the statements above that the vanishing of the index of the Dirac operator is  a necessary condition for the existence  on $M^n$ of metrics with positive scalar curvature. In addition, Gromov and Lawson have shown that the vanishing of
the real index of the Dirac operator now acting   on the sections of a Clifford bundle is the necessary and sufficient condition
for the existence of a  metric with positive scalar curvature on simply connected closed spin  manifolds  $M^n$, $n\geq 5$, see e.g. \cite{lawson}.
Other key results that are centred around the Lichnerowicz formula and theorem are the  estimates for the eigenvalues of the Dirac operator.  These are of two kinds, one  is a lower bound on the  eigenvalues of the Dirac operator, see \cite{fried, hijazi, tfbook}, and other is an upper bound for the growth  of the eigenvalues of the Dirac operator \cite{witten, atiyah}.
All these results summarize some of the applications of spinors to geometry.

More recently a key ingredient in the proof of the  horizon conjecture \cite{iibhor} in several supergravity theories is   a generalisation
of the Lichnerowicz formula and theorem for  connections of a  spin bundle which are not induced from the Levi-Civita connection
of the underlying manifold, see e.g. \cite{iibhor, 5hor, mhor, revhor}. Such connections have the general form ${{\hat{\nabla}}}_X=\nabla_X+\Sigma_X$, where $\nabla$ is the  Levi-Civita spin connection
and $\Sigma_X$ is a Clifford algebra element constructed from the form field strengths of these theories.
Typically
the holonomy of these connections is a subgroup of a general linear group and therefore they do not a priori have special geometric properties like for example  preserving the invariant inner
product on the spin bundle. Nevertheless, some of the results of the Lichnerowicz formula and theorem can be generalized to these connections including the
relation between the zero modes of an  operator ${\cal{D}}$ and the parallel spinors of ${{\hat{\nabla}}}$, where ${\cal{D}}$ is constructed from the Dirac operator upon addition of a Clifford algebra element associated to a multi-form. So far, the existence of such connections and multi-form Dirac operators lies within the realm of supergravity theories and it is not
a priori apparent the extent to which such connections exist on every spin (or spin$_c$) Riemannian manifold.

Motivated by this, let us consider the definition; see appendix A for the notation.
\begin{definition}
Let $(M^n,g)$ be a Riemannian spin manifold.  A  multi-form modified Dirac operator, or multi-form Dirac operator for short,  ${\cal{D}}$ is an operator of the form ${\cal{D}}=\slashed{\nabla}+\slashed \omega$, where $\slashed{\nabla}$ is the Dirac operator constructed
from the spin Levi-Civita connection of $M^n$, $\omega$ is a multi-degree form on $M^n$ and $/~:~ \Lambda^*(M^n)\rightarrow {\rm Cl}(M^n)$ is the bundle
isomorphism between the bundle of forms $\Lambda^*(M^n)$  and the Clifford algebra bundle ${\rm Cl}(M^n)$ over $M^n$, respectively.
\end{definition}

The purpose of this paper is to initiate a more systematic investigation of  spin bundle connections ${{\hat{\nabla}}}$ and multi-form Dirac operators ${\cal{D}}$ on a spin Riemannian manifold for which a Lichnerowicz type of formula and theorem can be formulated,  and to explore some applications
to geometry.  One of the results is the existence of estimates for the eigenvalues of  such multi-form Dirac operators. These results also provide
 an alternative proof for the estimates for the eigenvalues of the Dirac operator given  in \cite{fried, hijazi, tfbook}.

One class of ${\cal{D}}$ operators that we shall be considering are those constructed from a single $k$-degree form $\omega$. Provided that $k\leq 3$ and under certain conditions on $\omega$, one can show
 that the norm of all eigenvalues of ${\cal{D}}$ is bounded from below. Furthermore, one can show that under certain conditions  on the geometry of $M^n$, the zero modes of ${\cal{D}}$ are parallel with respect to
\begin{eqnarray}
{{\hat{\nabla}}}_X=\nabla_X+k_1\, \slashed{\omega}\cdot \slashed{X}+k_2\, i_X\slashed{\omega}~,
\label{omegacon}
 \end{eqnarray}
 where $\cdot$ denotes the Clifford algebra multiplication, $i_X\slashed{\omega}\mathrel{\vcenter{\baselineskip0.5ex \lineskiplimit0pt
                     \hbox{\scriptsize.}\hbox{\scriptsize.}}}
                    =/(i_X\omega)$ and $k_1, k_2\in \mathbb{C}$.    The main results are described in the
 theorems and propositions \ref{th:1form}, \ref{prop1}, \ref{th:2fc}, \ref{prop2},  \ref{th:3f} and  \ref{prop3}.

The geometry of manifolds that admit ${{\hat{\nabla}}}$ parallel spinors has also been investigated. It is demonstrated that all such manifolds admit
a twisted covariant form hierarchy, for the definition see \ref{twis}.  This generalizes the notion of conformal Killing-Yano forms.
The conditions satisfied by the forms of the twisted covariant hierarchy  can be found in the corollary \ref{c:1ff} and propositions \ref{p:2fb} and \ref{p:3fb}.

Another class of ${\cal{D}}$ operators that will be investigated are those constructed from degree $(0,k)$ multi-forms.  It is shown that a fundamental formula that leads to a Lichnerowicz type of theorem and estimates for the
eigenvalues of ${\cal{D}}$ can be constructed provided that $1\leq k\leq 3$.  Our main results are described in the theorem and propositions \ref{theorem01}, \ref{prop01},
\ref{theorem02}, \ref{prop02}, \ref{03th}, \ref{prop03} and \ref{c:03f}.  The twisted covariant form hierarchy associated with ${{\hat{\nabla}}}$-parallel spinors in this case can be constructed from those associated with 0-form and $k$-form connections ${{\hat{\nabla}}}$.
Furthermore, we present the fundamental formula and explore some of its consequences for the horizon Dirac operators which are examples of  (1,2,4)-form modified Dirac operator.

This paper is organized as follows. In section two, we review the Lichnerowicz formula and theorem and present our results for 0-form modified
Dirac operators.  In sections 3, 4 and 5, the fundamental formulae are described  and their consequences are explored for
1-form, 2-form and 3-form modified Dirac operators, respectively. We also demonstrate that the construction cannot be extended to $k>3$-form Dirac operators.
In sections 6, 7 and 8, we give fundamental formulae for (0,$k$)-form Dirac operators, $k=1,2,3$, respectively. In section 9, we extend our work to the horizon Dirac operators and in section 10, we give our conclusions. In appendix A, we have collected our conventions.

\section{ 0-form modified Dirac operators}

 \subsection{Revisiting Lichnerowicz formula and theorem }

 The  Lichnerowicz formula relates the square of the Dirac operator $\slashed \nabla$ to the Laplace operator of the Levi-Civita connection and to the scalar curvature $R$ of the underlying Riemannian manifold $(M, g)$ as
 \begin{eqnarray}
 \slashed \nabla^2=\nabla^2- R/4~.
  \end{eqnarray}
  Using this after a straightforward calculation one finds the ``fundamental identity''
  \begin{eqnarray}
  \nabla^2\parallel\epsilon \parallel^2=2 \parallel\nabla\epsilon\parallel^2+2\,{\rm Re}\langle\epsilon, \slashed \nabla^2\epsilon\rangle+{1\over2} R \parallel\epsilon\parallel^2~.
  \label{diracmax}
  \end{eqnarray}
  Assuming that $M^n$ is closed,  $R\geq0$ and $\slashed\nabla\epsilon=0$, it is a consequence of the  Hopf maximum principle that the differential
  equation on $\parallel\epsilon \parallel^2$ will have no solutions.  This is a contradiction unless $\slashed \nabla$ does not have zero modes.  On the other hand
  if $\slashed \nabla\epsilon=0$ and $R=0$, the Hopf maximum principle will imply that $\nabla\epsilon=0$.  Therefore all zero modes of the Dirac operator are parallel, i.e.
  \begin{eqnarray}
\nabla_X\epsilon=0\Longleftrightarrow \slashed \nabla \epsilon=0~.
\end{eqnarray}

Assuming that $M^n$ is closed, integrating (\ref{diracmax}) on $M^n$ one derives the more familiar integral identity
\begin{eqnarray}
\int_{M^n} \parallel\slashed \nabla\epsilon\parallel^2=\int_{M^n} \parallel\nabla\epsilon\parallel^2+ {1\over4}\int_{M^n} R \parallel \epsilon\parallel^2~.
\label{intform}
\end{eqnarray}
The two consequences of the Lichnerowicz formula explained above via the use of the fundamental identity (\ref{diracmax}) can also be derived from this integral identity.
Moreover, if $\epsilon$ is an eigenspinor with eigenvalue $\lambda$, one can easily conclude that
\begin{eqnarray}
|\lambda|^2\geq{1\over4} \inf_{M^n}\, R~,
\end{eqnarray}
i.e. all eigenvalues of the Dirac operator are bounded below by the minimum of the scalar curvature on $M^n$.
These results can be summarized as follows.

\begin{theorem}
Let $(M^n, g)$ be a closed spin Riemannian manifold.  Then
\begin{enumerate}
\item If the Ricci scalar $R\gneq 0$ on $M^n$, then ${\rm Ker}\,\slashed\nabla=\{0\}$.

\item If $R=0$, then ${\rm Ker}\,\slashed\nabla={\rm Ker}\,\nabla$ and all the zero modes of $\slashed\nabla$ are parallel spinors.

\item Let $\lambda$ be any eigenvalue of $\slashed\nabla$, then $|\lambda|^2\geq{1\over4} \inf_{M^n}\, R$.
\end{enumerate}
\end{theorem}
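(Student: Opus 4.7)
The plan is to derive all three assertions from the two identities already established in the excerpt: the pointwise fundamental identity~(\ref{diracmax}) and its integrated form~(\ref{intform}). For parts (1) and (2) I would use the integral identity, which gives the cleanest argument on a closed manifold; for part (3) an $L^2$ argument on an eigenspinor is essentially forced.

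Starting with (1) and (2), substituting $\slashed\nabla\epsilon=0$ in~(\ref{intform}) yields
\begin{equation*}
0 \;=\; \int_{M^n}\parallel\nabla\epsilon\parallel^2 + \tfrac{1}{4}\int_{M^n} R\,\parallel\epsilon\parallel^2 .
\end{equation*}
Since $R\geq 0$, both integrands are pointwise nonnegative, so both integrals must vanish. The vanishing of the first forces $\nabla\epsilon\equiv 0$, establishing the nontrivial inclusion ${\rm Ker}\,\slashed\nabla\subset{\rm Ker}\,\nabla$ in~(2); the reverse inclusion is immediate since $\slashed\nabla$ is a Clifford contraction of $\nabla$. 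For~(1), $\nabla\epsilon\equiv 0$ implies $\parallel\epsilon\parallel^2$ is a constant $c$, and vanishing of the second integral gives $c\int_{M^n} R = 0$; the hypothesis $R\gneq 0$ makes $\int_{M^n} R>0$, forcing $c=0$ and hence $\epsilon\equiv 0$.

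For (3), substituting $\slashed\nabla\epsilon=\lambda\epsilon$ in~(\ref{intform}) gives
\begin{equation*}
|\lambda|^2\int_{M^n}\parallel\epsilon\parallel^2 \;=\; \int_{M^n}\parallel\nabla\epsilon\parallel^2 + \tfrac{1}{4}\int_{M^n} R\,\parallel\epsilon\parallel^2 \;\geq\; \tfrac{1}{4}\,\inf_{M^n} R \cdot \int_{M^n}\parallel\epsilon\parallel^2 ,
\end{equation*}
and dividing by the strictly positive $L^2$-norm of $\epsilon$ yields $|\lambda|^2\geq \tfrac{1}{4}\inf_{M^n} R$.

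No step is genuinely delicate: each claim reduces to a nonnegativity or sign argument applied to identities already proved in the paper, so there is no real obstacle to overcome. An equivalent pointwise route, sketched in the paragraphs preceding the theorem, would apply the Hopf maximum principle to~(\ref{diracmax}) directly to conclude that $\parallel\epsilon\parallel^2$ is constant, then use $R\gneq 0$ to force a contradiction for~(1) and $\nabla\epsilon\equiv 0$ for~(2); I would mention this as an alternative but present the integral version for brevity and because it transparently handles the possibility that $M^n$ is disconnected.
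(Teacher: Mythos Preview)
Your proof is correct and follows essentially the same approach as the paper: both rely on the fundamental identity~(\ref{diracmax}) and its integrated form~(\ref{intform}), with the paper emphasizing the Hopf maximum principle route for (1)--(2) and you the integral route, while each explicitly mentions the other as an equivalent alternative. Part~(3) is handled identically.
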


Some applications of the above theorem have already been mentioned in the introduction.

\subsection{Fundamental  identities   for 0-form Dirac operators }

As ${\cal{D}}$ is a 0-form Dirac operator choose
 $\omega=e f$ where $f$ is a real smooth function on $M^n$ and $e\in \mathbb{C}$. So ${\cal{D}}=\slashed \nabla+ ef$.

\begin{prop}
\label{0formpropextra}
A fundamental identity for ${\cal{D}}$ is
\begin{eqnarray}
&&\nabla^2\parallel\epsilon\parallel^2=\left({1\over2} R+ c_1 f^2 \right)\parallel\epsilon\parallel^2+2 \parallel{{\hat{\nabla}}}\epsilon\parallel^2
\cr
&&
 +2 {\rm Re} \langle\epsilon, (\slashed {\nabla}-2\bar k f+\bar e f){\cal{D}}\epsilon\rangle
-2 \nabla_i{\rm Re}\langle \epsilon, e f \Gamma^i\epsilon\rangle~,
\label{0max}
\end{eqnarray}
where ${{\hat{\nabla}}}_X=\nabla_X+ k f \slashed{X}$, $c_1=-2 |k|^2 n+2{\rm Re} ((2\bar k+e) e)-4({\rm Re}\,e)^2$ and  $k\in\mathbb{C}$.
\end{prop}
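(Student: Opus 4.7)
The plan is to begin from the classical Lichnerowicz identity (\ref{diracmax}), namely $\nabla^2\parallel\epsilon\parallel^2 = 2\parallel\nabla\epsilon\parallel^2 + 2\,{\rm Re}\langle\epsilon,\slashed \nabla^2\epsilon\rangle + \tfrac{1}{2}R\parallel\epsilon\parallel^2$, and to rewrite its right-hand side entirely in terms of ${\cal{D}}$ and ${{\hat{\nabla}}}$ via the relation $\slashed \nabla\epsilon = {\cal{D}}\epsilon - ef\epsilon$. No new analytic input is needed; the proposition amounts to an algebraic rearrangement of (\ref{diracmax}) once three elementary auxiliary identities are in hand.

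The three pieces go as follows. First, expanding $\parallel{{\hat{\nabla}}}\epsilon\parallel^2 = \sum_i\parallel\nabla_i\epsilon + kf\Gamma_i\epsilon\parallel^2$ and using $\Gamma^i\Gamma_i = n$ on the spin bundle together with the Hermiticity of $\Gamma^i$ (which gives $\langle\Gamma^i\nabla_i\epsilon,\epsilon\rangle = \overline{\langle\epsilon,\slashed \nabla\epsilon\rangle}$) yields
\begin{equation*}
\parallel{{\hat{\nabla}}}\epsilon\parallel^2 = \parallel\nabla\epsilon\parallel^2 + 2\,{\rm Re}\langle\epsilon,\bar k f\slashed \nabla\epsilon\rangle + n|k|^2 f^2\parallel\epsilon\parallel^2 ,
\end{equation*}
from which $\parallel\nabla\epsilon\parallel^2$ is solved in terms of $\parallel{{\hat{\nabla}}}\epsilon\parallel^2$, the ${\cal{D}}\epsilon$-cross term and an $f^2$ mass term. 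Second, a direct Leibniz expansion using $\slashed \nabla(ef\epsilon) = e\slashed{df}\,\epsilon + ef\,\slashed \nabla\epsilon$ produces
\begin{equation*}
\slashed \nabla^2\epsilon = \slashed \nabla{\cal{D}}\epsilon - e\slashed{df}\,\epsilon - ef\,{\cal{D}}\epsilon + e^2 f^2\epsilon .
\end{equation*}
Third, using $\nabla_i\Gamma^j = 0$ (the spin Levi-Civita connection annihilates the Clifford generators) and the Hermiticity of $\Gamma^i$, the divergence expands as
\begin{equation*}
\nabla_i{\rm Re}\langle\epsilon,ef\Gamma^i\epsilon\rangle = 2\,{\rm Re}(e)\,f\,{\rm Re}\langle\epsilon,\slashed \nabla\epsilon\rangle + {\rm Re}\langle\epsilon,e\slashed{df}\,\epsilon\rangle .
\end{equation*}

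The final step is to substitute all three into (\ref{diracmax}) and eliminate every remaining $\slashed \nabla\epsilon$ in favour of ${\cal{D}}\epsilon - ef\epsilon$. The $\slashed \nabla{\cal{D}}\epsilon$, $e\slashed{df}\,\epsilon$ and $\bar k f\,{\cal{D}}\epsilon$ contributions then match (\ref{0max}) on the nose. The $\bar e f\,{\cal{D}}\epsilon$ term is generated from the leftover $-2\,{\rm Re}\langle\epsilon,ef\,{\cal{D}}\epsilon\rangle$ by means of the elementary real-part identity $-2\,{\rm Re}(ez) = 2\,{\rm Re}(\bar e z) - 4\,{\rm Re}(e)\,{\rm Re}(z)$ applied with $z = f\langle\epsilon,{\cal{D}}\epsilon\rangle$; the surplus $-4\,{\rm Re}(e)f\,{\rm Re}\langle\epsilon,{\cal{D}}\epsilon\rangle$ is precisely what the divergence contributes once $\slashed \nabla\epsilon$ has been replaced by ${\cal{D}}\epsilon - ef\epsilon$ inside it. Finally, collecting the $f^2\parallel\epsilon\parallel^2$ contributions from the three sources gives $-2n|k|^2 + 4\,{\rm Re}(\bar k e) + 2\,{\rm Re}(e^2) - 4({\rm Re}\,e)^2$, which is exactly $c_1$ after rewriting $2\,{\rm Re}((2\bar k+e)e) = 4\,{\rm Re}(\bar k e) + 2\,{\rm Re}(e^2)$.

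The only genuine obstacle is bookkeeping the complex conjugates: since $e\in\mathbb{C}$ the Clifford element $ef$ is not self-adjoint on the Dirac bundle, so ${\rm Re}\langle\epsilon,ef\,\eta\rangle$ and ${\rm Re}\langle\epsilon,\bar e f\,\eta\rangle$ must be tracked as distinct objects. The divergence term $-2\nabla_i{\rm Re}\langle\epsilon,ef\Gamma^i\epsilon\rangle$ in (\ref{0max}) is forced on us by exactly this asymmetry, and its presence simultaneously accounts for both the $\bar e$-versus-$e$ mismatch in the ${\cal{D}}\epsilon$ terms and the $-4({\rm Re}\,e)^2$ tail in $c_1$. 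When $e$ is real, both complications collapse and the identity reduces to a straightforward rewriting of (\ref{diracmax}).
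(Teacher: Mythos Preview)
Your proof is correct and follows essentially the same route as the paper. Both start from the Lichnerowicz identity (\ref{diracmax}), replace $\nabla\epsilon$ by ${{\hat{\nabla}}}\epsilon-\Sigma\epsilon$ and $\slashed\nabla\epsilon$ by ${\cal D}\epsilon-ef\epsilon$, and absorb the $e\slashed{df}$ term into the total divergence $\nabla_i{\rm Re}\langle\epsilon,ef\Gamma^i\epsilon\rangle$; the only difference is that you package the computation into three auxiliary identities up front whereas the paper expands inline, and you make the real-part identity $-2{\rm Re}(ez)=2{\rm Re}(\bar e z)-4{\rm Re}(e){\rm Re}(z)$ explicit where the paper uses the equivalent $(e+\bar e)$ regrouping.
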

\begin{proof}
  Some of the steps for the derivation of the fundamental identity are as follows. Observe that (\ref{diracmax}) can be rewritten as
\begin{eqnarray}
\nabla^2\parallel\epsilon \parallel^2&=&2 \parallel({{\hat{\nabla}}}-\Sigma)\epsilon\parallel^2+2\,{\rm Re}\langle\epsilon, \slashed \nabla({\cal{D}}-\slashed\omega)\epsilon\rangle+{1\over2} R \parallel\epsilon\parallel^2
\cr
&=&2 \parallel  {{\hat{\nabla}}}\epsilon\parallel^2-4{\rm Re}\langle  \Sigma\epsilon, \nabla\epsilon\rangle  -2 \parallel\Sigma\epsilon\parallel^2    +2\,{\rm Re}\langle\epsilon, \slashed \nabla{\cal{D}}\epsilon\rangle
\cr
&&
-2\,{\rm Re}\langle\epsilon, \slashed \nabla(\slashed\omega\epsilon)\rangle  +{1\over2} R \parallel\epsilon\parallel^2    ~.
\end{eqnarray}
So far this calculation applies to all multi-form Dirac operators.  In this particular case, one finds that
\begin{eqnarray}
{\rm Re}\langle  \Sigma\epsilon, \nabla\epsilon\rangle={\rm Re}\langle  \epsilon, \bar kf \slashed\nabla\epsilon\rangle=
{\rm Re}\langle  \epsilon, \bar kf ({\cal{D}}-e f)\epsilon\rangle~,
\end{eqnarray}
and
\begin{eqnarray}
{\rm Re}\langle\epsilon, \slashed \nabla(\slashed\omega\epsilon)\rangle&=&{\rm Re}\langle\epsilon, (e \slashed df\epsilon+ ef \slashed\nabla\epsilon)\rangle
=\nabla_i{\rm Re}\langle\epsilon, e f\Gamma^i \epsilon\rangle
\cr &&-{\rm Re}\langle\epsilon, (e+\bar e) f({\cal{D}}-ef) \epsilon\rangle
+{\rm Re}\langle\epsilon, (ef {\cal{D}}-e^2 f^2) \epsilon)\rangle~,
\end{eqnarray}
which together with $\parallel\Sigma\parallel^2=|k|^2 n \parallel\epsilon\parallel^2$ give the fundamental identity of the proposition.
\end{proof}

{\bf Remark:}  The constants $e$ and $k$ and others that will be introduced later are referred to as the parameters of the problem. Many of the statements that follow require  certain
relations between these parameters which will increasingly  become more involved. One of the tasks is to verify that there is a range in the space of parameters
for which the statements stated hold.

\begin{theorem}\label{0formth}
Let $M^n$ be a closed spin manifold, and let
$c_1$ be the constant given in Proposition ({\ref{0formpropextra}}).
Consequences  of the fundamental identity are the following.
\begin{enumerate}

\item If ${1\over2} R+ c_1 f^2\gneq 0$ on $M^n$, then ${\rm Ker}\,{\cal{D}}=\{0\}$~,

\item If ${1\over2} R+ c_1 f^2=0$,  $f\not=0$ at some point on $M^n$ and ${\rm Ker}\,{\cal{D}}\not=\{0\}$, then ${\rm Ker}\,{\cal{D}}= {\rm Ker}\,{{\hat{\nabla}}}$ and $e=nk$~.

\end{enumerate}

\end{theorem}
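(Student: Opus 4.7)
The strategy is to integrate the fundamental identity (\ref{0max}) over the closed manifold $M^n$ and exploit the sign of each term, following the same pattern as the classical Lichnerowicz argument but with the extra divergence term handled via Stokes' theorem rather than via the maximum principle.

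For part (1), I start by assuming $\epsilon\in\mathrm{Ker}\,\mathcal{D}$ and substituting $\mathcal{D}\epsilon=0$ into (\ref{0max}). The term $2\,{\rm Re}\langle\epsilon,(\slashed\nabla-2\bar k f+\bar e f)\mathcal{D}\epsilon\rangle$ vanishes immediately. Next, because $M^n$ is closed, integration over $M^n$ kills the left-hand side (it is $\Delta\|\epsilon\|^2$) as well as the divergence $-2\nabla_i{\rm Re}\langle\epsilon,ef\Gamma^i\epsilon\rangle$. What remains is
\begin{equation*}
0=\int_{M^n}\Bigl(\tfrac{1}{2}R+c_1 f^2\Bigr)\|\epsilon\|^2+2\int_{M^n}\|{\hat\nabla}\epsilon\|^2.
\end{equation*}
Under the hypothesis $\tfrac12 R+c_1 f^2\gneq 0$ both summands are non-negative, so each must vanish. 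In particular ${\hat\nabla}\epsilon=0$. The linear ODE defining parallel transport for the connection ${\hat\nabla}$ is invertible along every curve, so a ${\hat\nabla}$-parallel section is either identically zero or nowhere zero; since we assumed $\epsilon\not\equiv 0$, we get $\|\epsilon\|^2>0$ pointwise. Consequently $\tfrac12 R+c_1 f^2\equiv 0$, contradicting the hypothesis. Hence $\mathrm{Ker}\,\mathcal{D}=\{0\}$.

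For part (2), I run the same integration with the hypothesis $\tfrac12 R+c_1 f^2=0$. Now the identity collapses to $\int_{M^n}\|{\hat\nabla}\epsilon\|^2=0$ for every $\epsilon\in\mathrm{Ker}\,\mathcal{D}$, giving the inclusion $\mathrm{Ker}\,\mathcal{D}\subseteq\mathrm{Ker}\,{\hat\nabla}$. To get the reverse inclusion and pin down $e=nk$, I take a nontrivial $\epsilon\in\mathrm{Ker}\,\mathcal{D}$ (which exists by assumption) and compute, using ${\hat\nabla}\epsilon=0$, i.e.\ $\nabla_X\epsilon=-kf\,\slashed X\epsilon$, that
\begin{equation*}
\slashed\nabla\epsilon=\Gamma^i\nabla_i\epsilon=-kf\,\Gamma^i\Gamma_i\epsilon=-nkf\,\epsilon,
\qquad \mathcal{D}\epsilon=(e-nk)f\,\epsilon.
\end{equation*}
Since $\mathcal{D}\epsilon=0$ and $\epsilon$ is nowhere vanishing (again by invertibility of ${\hat\nabla}$-parallel transport), evaluating at a point where $f\neq 0$ forces $e=nk$. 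With this relation in hand, the same computation shows that every $\eta\in\mathrm{Ker}\,{\hat\nabla}$ satisfies $\mathcal{D}\eta=0$, giving $\mathrm{Ker}\,{\hat\nabla}\subseteq\mathrm{Ker}\,\mathcal{D}$ and completing the equality.

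The only genuinely non-routine ingredient is the ``nowhere vanishing'' step for ${\hat\nabla}$-parallel spinors: note that ${\hat\nabla}$ is \emph{not} a metric connection here (the Clifford endomorphism $kf\slashed X$ need not be skew with respect to the spin inner product), so $\|\epsilon\|^2$ is not a priori constant. One cannot conclude nowhere vanishing from constancy of the norm, but the standard ODE uniqueness argument along curves still applies to a linear connection on a vector bundle. Once this is granted, both parts are immediate consequences of the fundamental identity and Stokes' theorem; the remaining work is just bookkeeping of the constant $c_1$, which has already been done in Proposition \ref{0formpropextra}.
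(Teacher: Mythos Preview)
Your proof is correct and follows essentially the same route as the paper's: integrate the fundamental identity, kill the divergence terms on the closed manifold, and use the nowhere-vanishing of ${\hat\nabla}$-parallel spinors together with the computation $\mathcal{D}-\slashed{\hat\nabla}=(e-nk)f$. Your treatment is in fact slightly more careful than the paper's in two places: in part~(1) you spell out why the contradiction really arises (first deducing ${\hat\nabla}\epsilon=0$, then nowhere-vanishing, then $\tfrac12 R+c_1 f^2\equiv 0$), and you correctly note that nowhere-vanishing must come from ODE uniqueness for parallel transport rather than constancy of $\|\epsilon\|^2$, since ${\hat\nabla}$ need not be compatible with the spin inner product.
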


\begin{proof}
Indeed let $\epsilon$ be a zero mode of ${\cal{D}}$, ${\cal{D}}\epsilon=0$. After integrating the  fundamental identity on $M^n$, one finds that
\begin{eqnarray}
\int_M \left(\left({1\over2} R+ c_1f^2\right)\parallel\epsilon\parallel^2+2 \parallel{{\hat{\nabla}}}\epsilon\parallel^2\right)=0~,
\end{eqnarray}
which leads to a contradiction provided that ${1\over2} R +c_1f^2\gneq 0$.  This establishes the first part of the statement.

On the other hand if ${\cal{D}}\epsilon=0$ and ${1\over2} R +c_1f^2= 0$, then one concludes that $\parallel{{\hat{\nabla}}}\epsilon\parallel^2=0$ and so ${{\hat{\nabla}}}\epsilon=0$.  Thus
all the zero modes of ${\cal{D}}$ are parallel.  Therefore ${\rm Ker}\,{\cal{D}}\subseteq {\rm Ker}\,{{\hat{\nabla}}}$.

On the other hand ${{\hat{\nabla}}}\epsilon=0$ implies that $\slashed\hn \epsilon=0$ and so $({\cal{D}}-\slashed\hn)\epsilon=f(e-nk) \epsilon=0$.
The spinor $\epsilon$ is parallel with respect to ${{\hat{\nabla}}}$ and so it is nowhere zero on $M^n$. In addition as $f$ does not vanish everywhere on $M^n$, one
finds that $e=nk$ and so  ${\cal{D}}=\slashed\hn$. As a result ${\rm Ker}{{\hat{\nabla}}}\subseteq {\rm Ker}{\cal{D}}$.  Combining this with ${\rm Ker}{\cal{D}}\subseteq {\rm Ker}{{\hat{\nabla}}}$ above,
one concludes that ${\rm Ker}{\cal{D}}= {\rm Ker}{{\hat{\nabla}}}$ which  establishes the second part of the statement.
\end{proof}

One of the applications of the fundamental identity is the derivation of an  estimate  for the eigenvalues of the Dirac operator.
\begin{corollary}
Let $M^n$ be a closed spin manifold.
Let $\lambda$ be an  eigenvalue of the Dirac operator $\slashed\nabla$ and $n>1$, then
\begin{eqnarray}
|\lambda|^2\geq   {1\over 4 (s^2 n-2s+1)} \inf_{M^n} R~,
\label{s0bound}
\end{eqnarray}
where $s\in \mathbb{R}$. An upper lower bound for $|\lambda|^2$ is attained for $s=n^{-1}$.
\end{corollary}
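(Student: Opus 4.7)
The natural plan is to specialise the fundamental identity \eqref{0max} so that a given eigenspinor $\slashed\nabla\epsilon=\lambda\epsilon$ becomes a zero mode of $\mathcal{D}$, killing the $2\,{\rm Re}\langle\epsilon,(\slashed\nabla-2\bar kf+\bar ef)\mathcal{D}\epsilon\rangle$ term, and then to integrate over $M^n$ and exploit the non-negativity of $\|\hat\nabla\epsilon\|^2$. Since $\slashed\nabla$ is self-adjoint on a closed spin manifold, $\lambda$ is real, and the obvious choice of parameters is $f\equiv 1$, $e=-\lambda$, and $k=-s\lambda$ with $s\in\mathbb{R}$ a free parameter; then $\mathcal{D}\epsilon=0$ pointwise and $\hat\nabla_X=\nabla_X-s\lambda\,\slashed X$.

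A short substitution should show that $c_1$ collapses to $-2\lambda^2(s^2 n-2s+1)$, which already carries the denominator appearing in \eqref{s0bound}. After integrating \eqref{0max} over $M^n$, the divergence term drops out by Stokes, the $\mathcal{D}\epsilon$-bracket vanishes by construction, and one is left with
\[
  0=\int_{M^n}\Bigl(\tfrac12 R-2\lambda^2(s^2 n-2s+1)\Bigr)\|\epsilon\|^2+2\int_{M^n}\|\hat\nabla\epsilon\|^2.
\]
Discarding the non-negative $\hat\nabla$-integral and using $R\geq\inf_{M^n}R$ then yields $2\lambda^2(s^2 n-2s+1)\geq\tfrac12\inf_{M^n}R$, provided the polynomial in $s$ is positive.

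The final point is that the polynomial $s^2 n-2s+1$ has discriminant $4(1-n)$, which is strictly negative precisely when $n>1$ --- this is exactly where the hypothesis on $n$ enters --- so the polynomial is positive for every real $s$, and dividing through establishes \eqref{s0bound}. Its minimum is attained at $s=n^{-1}$, where the denominator equals $(n-1)/n$ and one recovers the sharpest (classical Friedrich) bound $|\lambda|^2\geq\tfrac{n}{4(n-1)}\inf_{M^n}R$. I see no genuine obstacle in this argument: the only non-mechanical step is recognising that $e$ should be tied to $\lambda$ so as to annihilate the eigenspinor under $\mathcal{D}$, and that $k$ should scale linearly with $\lambda$ so that $c_1$ becomes a quadratic polynomial in the tunable parameter $s$; everything else is a routine substitution and an integration by parts.
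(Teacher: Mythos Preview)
Your argument is correct and matches the paper's proof: specialise $f=1$, $e=-\lambda$, $k=-s\lambda$, compute $c_1=-2|\lambda|^2(s^2n-2s+1)$, integrate the fundamental identity over $M^n$, drop the non-negative $\|\hat\nabla\epsilon\|^2$ term, and optimise the positive quadratic $s^2n-2s+1$ at $s=n^{-1}$. One small correction: in this paper's conventions (Appendix~A) $\slashed\nabla$ is formally \emph{anti}-self-adjoint, so $\lambda$ is purely imaginary rather than real; this is harmless for the argument since only $|\lambda|^2$ enters, but you should write $|\lambda|^2$ rather than $\lambda^2$ in the expression for $c_1$.
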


\begin{proof}   To derive this from the theorem \ref{0formth} take $f=1$ and choose $e=-\lambda$ and $k=-s \lambda$.  In such a  case, one has   ${\cal{D}}=\slashed \nabla-\lambda$. As $\slashed\nabla$ is formally anti-self-adjoint, $\lambda$ is imaginary.  If $\epsilon$ is an eigen-spinor of $\slashed \nabla$ with eigen-value $\lambda$, then
${\cal{D}}\epsilon=0$.  As ${\cal{D}}$ has zero modes, it is required that
\begin{eqnarray}
{1\over2} R +c_1f^2&=&{1\over2} R +2|e|^2 (-s^2 n+2s-1)
\nonumber \\
&=&{1\over2} R +2|\lambda|^2 (-s^2 n+2s-1)\leq 0~.
\end{eqnarray}
As for $n>1$, $s^2 n-2s+1>0$,
this inequality can be re-arranged  to bound   the eigenvalues of $\slashed \nabla$ in (\ref{s0bound}).  The minimum of polynomial $s^2 n-2s+1$
is at $s=n^{-1}$ and therefore an  upper lower bound for the eigenvalues of $\slashed\nabla$ is
\begin{eqnarray}
|\lambda|^2\geq   {n\over 4( n-1)} \inf_{M^n} R~.
\end{eqnarray}
This bound has originally been established  by Friedrich    \cite{fried}.
\end{proof}

{\bf Remark:}  As the focus is on  ${\cal{D}}$, ${{\hat{\nabla}}}$ is chosen to optimise the properties of ${\cal{D}}$. In this respect
it is allowed to vary $k$ in such a way that the fundamental identity can effectively be used. We have already seen examples
of this above.  A similar approach leads to estimates for the eigenvalues  $\lambda$ of ${\cal{D}}$.  As ${\cal{D}}$ is not formally anti-self-adjoint for $e$ complex, the eigenvalues
in general are complex numbers.

\begin{prop}
\label{prop0}
Let $M^n$ be a closed spin manifold.
Suppose that $k=e$ and take $\lambda$ to be an eigenvalue of ${\cal{D}}$, then
\begin{eqnarray}
|\lambda|^2\geq \inf_{M^n} \left({1\over4} R +(1-n)  |e|^2 f^2 \right)~.
\end{eqnarray}
\end{prop}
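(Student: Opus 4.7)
The strategy is to apply Proposition \ref{0formpropextra} to an eigenspinor $\epsilon$ with $\mathcal{D}\epsilon=\lambda\epsilon$, set $k=e$, integrate the fundamental identity over $M^{n}$, and extract $|\lambda|^{2}$ from the resulting cross-term via an auxiliary identity coming from the anti-self-adjointness of $\slashed{\nabla}$.

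First I would substitute $k=e$ into the expression for $c_{1}$. A short computation,
\begin{equation*}
c_{1}=-2|e|^{2}n+2\,\mathrm{Re}(2|e|^{2}+e^{2})-4(\mathrm{Re}\,e)^{2}=2(1-n)|e|^{2},
\end{equation*}
turns the pointwise curvature coefficient in (\ref{0max}) into $\tfrac{1}{2}R+2(1-n)|e|^{2}f^{2}$. Next I would integrate (\ref{0max}) over closed $M^{n}$: the Laplacian $\nabla^{2}\|\epsilon\|^{2}$ and the divergence $-2\nabla_{i}\,\mathrm{Re}\langle\epsilon,ef\Gamma^{i}\epsilon\rangle$ both disappear. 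Using $\slashed{\nabla}\epsilon=(\lambda-ef)\epsilon$ the cross-term reduces pointwise to
\begin{equation*}
2\,\mathrm{Re}\langle\epsilon,(\slashed{\nabla}-\bar{e}f)\mathcal{D}\epsilon\rangle=\bigl[\,2\,\mathrm{Re}(\lambda^{2})-4\,\mathrm{Re}(\lambda)\,\mathrm{Re}(e)\,f\,\bigr]\|\epsilon\|^{2}.
\end{equation*}

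The crucial auxiliary identity comes from integrating the eigenvalue equation itself. Since $\slashed{\nabla}$ is formally anti-self-adjoint, $\int_{M^{n}}\langle\epsilon,\slashed{\nabla}\epsilon\rangle$ is purely imaginary, so taking the real part of $\int\langle\epsilon,\mathcal{D}\epsilon\rangle=\lambda\!\int\|\epsilon\|^{2}$ gives
\begin{equation*}
\mathrm{Re}(\lambda)\int_{M^{n}}\|\epsilon\|^{2}=\mathrm{Re}(e)\int_{M^{n}}f\,\|\epsilon\|^{2}.
\end{equation*}
Plugging this into the integrated cross-term converts $4\,\mathrm{Re}(\lambda)\mathrm{Re}(e)\!\int f\|\epsilon\|^{2}$ into $4\,\mathrm{Re}(\lambda)^{2}\!\int\|\epsilon\|^{2}$, and combining with $2\,\mathrm{Re}(\lambda^{2})=4\,\mathrm{Re}(\lambda)^{2}-2|\lambda|^{2}$ makes the full eigenvalue-dependent contribution collapse to $-2|\lambda|^{2}\!\int\|\epsilon\|^{2}$. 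The integrated identity then reads
\begin{equation*}
2|\lambda|^{2}\!\int_{M^{n}}\!\|\epsilon\|^{2}=\int_{M^{n}}\!\bigl[\tfrac{1}{2}R+2(1-n)|e|^{2}f^{2}\bigr]\|\epsilon\|^{2}+2\!\int_{M^{n}}\!\|\hat{\nabla}\epsilon\|^{2}.
\end{equation*}
Dropping the nonnegative $\|\hat{\nabla}\epsilon\|^{2}$ term and using the pointwise bound $\tfrac{1}{4}R+(1-n)|e|^{2}f^{2}\geq\inf_{M^{n}}(\tfrac{1}{4}R+(1-n)|e|^{2}f^{2})$ delivers the claim.

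The main obstacle I anticipate is handling the cross-term: for general complex $e$ the operator $\mathcal{D}$ is neither self-adjoint nor anti-self-adjoint, so $\lambda$ is a priori a general complex number, and the direct integration-by-parts only produces $\mathrm{Re}(\lambda^{2})$, which is strictly weaker than $|\lambda|^{2}$ whenever $\mathrm{Re}(\lambda)\neq 0$. The identity $\mathrm{Re}(\lambda)\!\int\|\epsilon\|^{2}=\mathrm{Re}(e)\!\int f\|\epsilon\|^{2}$ is precisely the ingredient that turns the mismatch $4\,\mathrm{Re}(\lambda)\mathrm{Re}(e)f$ into an exact counter-term for $4\,\mathrm{Re}(\lambda)^{2}-2\,\mathrm{Re}(\lambda^{2})-2|\lambda|^{2}=0$, and spotting this cancellation is the only nontrivial step; everything else is routine integration and nonnegativity of $\|\hat{\nabla}\epsilon\|^{2}$.
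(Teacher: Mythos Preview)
Your argument is correct, but you take a somewhat more circuitous path than the paper does. The paper observes that with $k=e$ the operator appearing in the cross-term is precisely $-\mathcal{D}^{\dagger}$, i.e.\ $\slashed{\nabla}-\bar{e}f=-\mathcal{D}^{\dagger}$, so that \emph{pointwise}
\[
2\,\mathrm{Re}\langle\epsilon,(\slashed{\nabla}-\bar{e}f)\mathcal{D}\epsilon\rangle
= -2\,\|\mathcal{D}\epsilon\|^{2}+2\,\nabla_{i}\,\mathrm{Re}\langle\epsilon,\Gamma^{i}\mathcal{D}\epsilon\rangle~.
\]
This rearrangement is valid for \emph{any} spinor $\epsilon$ and yields, after integration, the universal identity
$\int\|\mathcal{D}\epsilon\|^{2}=\int\bigl(\tfrac{1}{4}R+(1-n)|e|^{2}f^{2}\bigr)\|\epsilon\|^{2}+\int\|\hat{\nabla}\epsilon\|^{2}$.
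For an eigenspinor the left-hand side is immediately $|\lambda|^{2}\!\int\|\epsilon\|^{2}$, with no further work required. You instead substitute $\mathcal{D}\epsilon=\lambda\epsilon$ first, which produces only $\mathrm{Re}(\lambda^{2})$ and a residual $f$-weighted term; you then repair this using the auxiliary relation $\mathrm{Re}(\lambda)\!\int\|\epsilon\|^{2}=\mathrm{Re}(e)\!\int f\|\epsilon\|^{2}$ coming from the anti-self-adjointness of $\slashed{\nabla}$. That trick is valid and the cancellation you identify does work, but it is specific to eigenspinors, whereas the paper's rewriting gives the stronger integral identity (\ref{0id}) and makes the appearance of $|\lambda|^{2}$ automatic.
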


\begin{proof} For $k=e$, the fundamental identity can be re-arranged as
\begin{eqnarray}
&&\nabla^2\parallel\epsilon\parallel^2=\left({1\over2} R +2(1-n)  |e|^2 f^2 \right)\parallel\epsilon\parallel^2+2 \parallel{{\hat{\nabla}}}\epsilon\parallel^2
\cr
&&- 2 \parallel {\cal{D}}\epsilon\parallel^2+2 \nabla_i {\rm Re} \langle\epsilon, \Gamma^i {\cal{D}} \epsilon\rangle
-2 \nabla_i{\rm Re}\langle \epsilon, e f \Gamma^i\epsilon\rangle~.
\label{0max}
\end{eqnarray}
Integrating this  over $M^n$, one finds that
\begin{eqnarray}
\int_M \parallel {\cal{D}}\epsilon\parallel^2=\int_M \left({1\over4} R +(1-n)  |e|^2 f^2 \right)\parallel\epsilon\parallel^2+\int_M \parallel{{\hat{\nabla}}}\epsilon\parallel^2~.
\label{0id}
\end{eqnarray}
Choosing $\epsilon$ to be an eigen-spinor of ${\cal{D}}$ with eigen-value $\lambda$, one establishes the proposition.
\end{proof}

{\bf Remark:}  If ${1\over4} R +(1-n)  |e|^2 f^2=0$, the identity (\ref{0id}) does not imply that the eigenspinors $\epsilon$ are parallel with respect
to some connection $\check\nabla_X={{\hat{\nabla}}}_X+p f \slashed X$ unless $f$ is constant.

\subsection{Manifolds with  ${{\hat{\nabla}}}$-parallel spinors}

The geometry of manifolds admitting ${{\hat{\nabla}}}$-parallel spinors, ${{\hat{\nabla}}}\epsilon=0$, is restricted.  Here the main focus is to investigate some aspects
of the geometry of manifolds that satisfy the conditions as stated in the second part of the theorem \ref{0formth}.

\begin{prop}
For a generic choice of metric on $M^n$, $f$ and $k$, the Lie algebra of the holonomy group  of ${{\hat{\nabla}}}$ is contained in $\mathfrak{pin}\otimes \mathbb{C}$,
$\mathfrak{Lie}\, {\rm Hol}({{\hat{\nabla}}})\subseteq \mathfrak{pin}\otimes \mathbb{C}$.
\end{prop}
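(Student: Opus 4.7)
The plan is a direct observation about the connection $1$-form of $\hn$. Writing $\hn_X=\nabla_X+kf\slashed{X}$, the connection $1$-form decomposes into two pieces: the Levi--Civita spin connection form, which at each point is a degree-$2$ element of the Clifford algebra (the image of the usual $\mathfrak{so}(n)$-valued form under the Clifford embedding $\Lambda^2\hookrightarrow{\rm Cl}(M^n)$), and the algebraic piece $kf\slashed{X}$, which is degree-$1$.

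Next I would verify that the subspace of ${\rm Cl}(M^n)\otimes\mathbb{C}$ spanned by its degree-$1$ and degree-$2$ parts is a Lie subalgebra under the commutator bracket, since
\begin{equation}
[\text{deg}_1,\text{deg}_1]\subseteq\text{deg}_2,\qquad [\text{deg}_1,\text{deg}_2]\subseteq\text{deg}_1,\qquad [\text{deg}_2,\text{deg}_2]\subseteq\text{deg}_2.
\end{equation}
Acting on spinors, this Lie subalgebra is precisely $\mathfrak{pin}\otimes\mathbb{C}$. As a consistency check one may compute the curvature
\begin{equation}
\hat R(X,Y)=R(X,Y)+k\bigl((\nabla_X f)\slashed{Y}-(\nabla_Y f)\slashed{X}\bigr)+k^2f^2[\slashed{X},\slashed{Y}],
\end{equation}
whose three terms are of Clifford degrees $2$, $1$, $2$ respectively, all lying within $\mathfrak{pin}\otimes\mathbb{C}$.

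The inclusion $\mathfrak{Lie}\,{\rm Hol}(\hn)\subseteq\mathfrak{pin}\otimes\mathbb{C}$ then follows from the standard fact that a connection whose $1$-form takes values in a matrix Lie subalgebra has restricted holonomy inside the corresponding Lie subgroup; equivalently, via Ambrose--Singer, the curvature values and their $\hn$-parallel translates all remain in $\mathfrak{pin}\otimes\mathbb{C}$ because $\hn$ acts on endomorphisms by bracket with an element of the same subalgebra. The main obstacle, and the sole role of the genericity clause, is to preclude non-generic degenerations (e.g.\ $k=0$ or $df\equiv 0$) in which the degree-$1$ term vanishes identically and the holonomy collapses into the strictly smaller $\mathfrak{spin}\otimes\mathbb{C}$; for generic $(g,f,k)$ no such collapse occurs, making $\mathfrak{pin}\otimes\mathbb{C}$ the natural upper bound rather than an artifact of the argument.
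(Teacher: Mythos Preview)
Your proposal is correct and follows essentially the same approach as the paper: compute the curvature of $\hn$, observe that it lands in the degree-$1$ plus degree-$2$ part of the Clifford algebra, identify that subspace with $\mathfrak{pin}$, and invoke Ambrose--Singer. Your curvature formula matches the paper's exactly, and your additional remarks---that the connection $1$-form already takes values in the subalgebra, and that parallel translates of the curvature stay in it because $\hn$ acts via bracket with elements of the same subalgebra---make the Ambrose--Singer step more explicit than the paper does, but the underlying argument is the same.
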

\begin{proof}
As a consequence of the Ambrose-Singer theorem the Lie algebra of the the holonomy group ${\rm Hol}({{\hat{\nabla}}})$ is spanned by  $\hat R(X,Y)$ evaluated at every point of manifold $M^n$, where $\hat R$ is the curvature of
${{\hat{\nabla}}}$ and $X,Y$ are any two vector fields. A short computation using the formula $\hat R(X,Y)={{\hat{\nabla}}}_X {{\hat{\nabla}}}_Y-{{\hat{\nabla}}}_Y{{\hat{\nabla}}}_X-{{\hat{\nabla}}}_{[X,Y]}$ reveals that
\begin{eqnarray}
 \hat R(X,Y)= R(X,Y)+ k X(f) \slashed Y-k Y(f) \slashed X+  k^2 f^2 (\slashed X \slashed Y-\slashed Y \slashed X)~,
\end{eqnarray}
where $R(X,Y)$ is the curvature of the frame connection induced on the spin bundle, i.e in an orthonormal co-frame basis $\{e_i; i=1,\dots, n\}$, one has $R(e_i,e_j)={1\over4}R_{ij,kl} \Gamma^k \Gamma^l$, see also appendix A.

Viewing ${\rm Cl}_n$ as a Lie algebra with commutator constructed from the Clifford multiplication in the standard way, i.e. $[a,b]= a\cdot b-b\cdot a$,
at every point $\hat R(X,Y)$ spans ${\rm Cl}^1_n\oplus {\rm Cl}^2_n$, where the superscript labels refer to the $\mathbb{Z}$ grading of the Clifford algebras.  This is the Lie algebra $\mathfrak{pin}$ of the Pin group. As the coefficients are complex,
one concludes that $\mathfrak{Lie}\, {\rm Hol}({{\hat{\nabla}}})\subseteq \mathfrak{pin}\otimes \mathbb{C}$.
\end{proof}

\begin{prop}\label{curv0}
If $M^n$ admits a ${{\hat{\nabla}}}$-parallel spinor $\epsilon$, then $\hat R(X,Y)\epsilon=0$ and
\begin{eqnarray}
\label{inteq1}
&&\left(-{1\over2} i_Y\slashed R+ n k i_Yd f-k \slashed\partial f \cdot \slashed Y- 2  (n-1) k^2 f^2\slashed Y \right)\epsilon=0~,
\end{eqnarray}
\begin{eqnarray}
\label{inteq2}
&&\left(-{1\over2} R+2 (n-1) k \slashed\partial f-2 n (n-1) k^2 f^2\right)\epsilon=0~,
\end{eqnarray}
where again $i_Y\slashed R\mathrel{\vcenter{\baselineskip0.5ex \lineskiplimit0pt
                     \hbox{\scriptsize.}\hbox{\scriptsize.}}}
                    =/(i_Y R)$ with $R$ the Ricci tensor.
\end{prop}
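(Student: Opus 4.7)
The plan is to start from the integrability condition $\hat R(X,Y)\epsilon=0$, which follows immediately from $\hn\epsilon=0$ together with the definition $\hat R(X,Y)=[\hn_X,\hn_Y]-\hn_{[X,Y]}$. Substituting the explicit formula for $\hat R$ derived in the preceding proposition, one obtains, for every pair of vector fields $X,Y$, the algebraic identity $\bigl(R(X,Y)+kX(f)\slashed Y-kY(f)\slashed X+k^2f^2(\slashed X\slashed Y-\slashed Y\slashed X)\bigr)\epsilon=0$, where $R(X,Y)$ is the spin Riemann curvature.

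To extract (\ref{inteq1}) I would contract this identity with a gamma matrix: pick an orthonormal frame $\{e_i\}$, set $X=e_i$ and act with $\Gamma^i$ on the left. For the Riemann piece this gives the standard Ricci contraction $\Gamma^i R(e_i,Y)=-\tfrac{1}{2}i_Y\slashed R$, which is a consequence of the algebraic Bianchi identity applied to $R(e_i,Y)=\tfrac{1}{4}R_{iY,kl}\Gamma^k\Gamma^l$. The remaining contributions are purely algebraic and reduce by the elementary Clifford identities $\Gamma^i\Gamma_i=n$, $\Gamma^i\slashed Y\Gamma_i=(2-n)\slashed Y$ and $\Gamma^i e_i(f)=\slashed{\partial f}$; collecting the resulting terms produces (\ref{inteq1}).

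The scalar identity (\ref{inteq2}) then follows by contracting (\ref{inteq1}) a second time: set $Y=e_j$ and act with $\Gamma^j$. The Ricci term collapses to $-\tfrac{1}{2}R$ by the symmetry $R_{ij}\Gamma^i\Gamma^j=R$, the two terms linear in $\partial f$ combine via $\Gamma^j\slashed{\partial f}\Gamma_j=(2-n)\slashed{\partial f}$ to give the coefficient $2(n-1)k$, and the $f^2$ term picks up an extra factor of $n$ from $\Gamma^j\Gamma_j$, yielding $-2n(n-1)k^2f^2$.

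No analytic or global input is needed: the entire argument is pointwise, relying only on $\hn\epsilon=0$ and the curvature formula of the previous proposition. The only place that genuinely requires care is the Ricci contraction of the spin curvature (where the first Bianchi identity must be used correctly) and the consistent tracking of signs following the Riemann/Ricci conventions in force; beyond that the derivation is routine Clifford bookkeeping and should present no real obstacle.
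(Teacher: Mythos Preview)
Your proposal is correct and follows precisely the approach the paper indicates: the paper's own proof merely states that $\hat R(X,Y)\epsilon=0$ is immediate from parallelism and that the Ricci and scalar identities ``follow after some further Clifford algebra and the Bianchi identities of Riemann curvature,'' which is exactly the two successive gamma-contractions you outline. Your version is in fact more explicit than the paper's, and your caveat about tracking the Ricci sign convention is well placed, since an overall sign in the contracted equation depends on it but does not affect the final statement.
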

\begin{proof}
Clearly $\hat R(X,Y)\epsilon=0$ is a consequence of ${{\hat{\nabla}}}_X\epsilon=0$.  The remaining two identities on the Ricci tensor  and curvature scalar follow
after  some further Clifford algebra and the Bianchi identities of Riemann curvature.
\end{proof}

\begin{corollary}Let $M^n$ be a closed  spin manifold.
Suppose that $n>1$, and ${\hat \nabla} \epsilon =0$.
\begin{enumerate}
\item   If $k$  is  imaginary, $k\not=0$, then $f$ is constant.

\item If  $k$ is real, $f$ is nowhere vanishing, and
the assumptions of the second part of the theorem \ref{0formth} hold, then $k=0$.

\end{enumerate}
\end{corollary}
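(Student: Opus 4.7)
The plan is to exploit the integrability conditions for $\hat\nabla$-parallel spinors collected in Proposition~\ref{curv0}, together with the rigidity of harmonic functions on a closed Riemannian manifold. The key reality ingredient throughout is that $\slashed\alpha$ is Hermitian for every real $1$-form $\alpha$ (equivalently, the $\Gamma^{i}$ are Hermitian), which is what makes $\slashed\nabla$ formally anti-self-adjoint in Riemannian signature.

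For part~(1), I would start by pairing (\ref{inteq2}) with $\epsilon$ in the spin inner product. Because $\slashed\partial f$ is Hermitian, the quantity $\langle\slashed\partial f\,\epsilon,\epsilon\rangle$ is real; for $k=i\beta$ purely imaginary, $k^{2}$ is real while the coefficient multiplying $\langle\slashed\partial f\,\epsilon,\epsilon\rangle$ is purely imaginary. Separating real and imaginary parts of the resulting scalar identity, and using $\beta\neq 0$ and $n>1$, isolates
\begin{equation*}
\langle\slashed\partial f\,\epsilon,\epsilon\rangle=0.
\end{equation*}
Next I would specialise (\ref{inteq1}) to $Y=\partial f$, using the elementary identity $\slashed\partial f\cdot\slashed\partial f=|\partial f|^{2}$; pairing with $\epsilon$ and extracting the imaginary part yields $(n-1)\beta\,|\partial f|^{2}\|\epsilon\|^{2}=0$. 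Since a nontrivial $\hat\nabla$-parallel spinor is nowhere zero (by uniqueness for the first-order linear system $\hat\nabla\epsilon=0$), this forces $df=0$, i.e.\ $f$ is constant.

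For part~(2), I would argue by contradiction, assuming $k\neq 0$. The central auxiliary object is the real Dirac current defined by $A_{i}=\langle\Gamma_{i}\epsilon,\epsilon\rangle$. Using $\nabla_{i}\epsilon=-kf\Gamma_{i}\epsilon$ (the content of $\hat\nabla\epsilon=0$) together with the Hermiticity of $\Gamma_{i}$ and the Clifford relation $\Gamma_{i}\Gamma_{j}+\Gamma_{j}\Gamma_{i}=2g_{ij}$, and crucially the assumption $k\in\mathbb{R}$, a direct calculation gives
\begin{equation*}
\nabla_{i}\|\epsilon\|^{2}=-2kfA_{i},\qquad \nabla_{i}A_{j}=-2kfg_{ij}\|\epsilon\|^{2}.
\end{equation*}
For $k$ real and $e=nk$ (as forced by Theorem~\ref{0formth}(2)) a short calculation yields $c_{1}=-2n(n-1)k^{2}$, so the hypothesis $\tfrac{1}{2}R+c_{1}f^{2}=0$ combined with (\ref{inteq2}) paired with $\epsilon$ delivers the further algebraic identity
\begin{equation*}
A(\partial f)=2nkf^{2}\|\epsilon\|^{2}.
\end{equation*}
Substituting the two derivative formulas and this last identity into $\nabla^{2}\|\epsilon\|^{2}=g^{ij}\nabla_{i}\nabla_{j}\|\epsilon\|^{2}$ produces an exact cancellation, so $\|\epsilon\|^{2}$ is harmonic and hence constant on the closed manifold. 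Consequently $\nabla_{i}\|\epsilon\|^{2}=-2kfA_{i}$ vanishes identically, and since $k\neq 0$ and $f$ is nowhere zero, $A\equiv 0$. But then $\nabla A=0$, and comparing with $\nabla_{i}A_{j}=-2kfg_{ij}\|\epsilon\|^{2}$ forces $\|\epsilon\|^{2}\equiv 0$, contradicting the existence of a nontrivial $\hat\nabla$-parallel spinor. Hence $k=0$.

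I expect the main obstacle to be the complex-reality bookkeeping that underlies the imaginary-part argument in~(1); once the Hermiticity conventions are carefully pinned down, part~(2) is more mechanical, with the punch line coming from the closed-manifold rigidity of harmonic functions together with the tight linear relations among $A$, $\nabla\|\epsilon\|^{2}$ and $\nabla A$ that are available precisely because $k$ is real.
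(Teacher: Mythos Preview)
Your argument is correct in both parts, but the routes differ from the paper's in interesting ways.

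For part~(1), the paper simply pairs (\ref{inteq1}) with $\epsilon$ for an \emph{arbitrary} vector $Y$ and takes the imaginary part; the only surviving contribution is $(n-1)k\,(i_Y df)\,\|\epsilon\|^2$, which forces $df=0$. Your approach is essentially the same imaginary-part trick, except that you first pass through (\ref{inteq2}) and then specialise to $Y=\partial f$. The preliminary step from (\ref{inteq2}) is in fact unnecessary: once you set $Y=\partial f$, the term $-2(n-1)k^2 f^2\langle\epsilon,\slashed\partial f\,\epsilon\rangle$ is already real (since $k^2\in\mathbb{R}$ and $\slashed\partial f$ is Hermitian) and drops out of the imaginary part regardless. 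So your proof works, but can be trimmed.

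For part~(2), your route is genuinely different from the paper's and arguably more structural. The paper combines the hypothesis $\tfrac12 R+c_1 f^2=0$ with (\ref{inteq2}) to obtain the pointwise spinor identity $(k\,\slashed\partial f-2nk^2 f^2)\epsilon=0$, and then simply evaluates this at a critical point of $f$ (which exists since $M^n$ is closed): there $\slashed\partial f=0$, so $2nk^2 f^2\epsilon=0$, and since $f$ and $\epsilon$ are nowhere zero one gets $k=0$. Your argument instead builds the Dirac current $A$, establishes $\nabla^2\|\epsilon\|^2=0$ via an exact cancellation, and uses closed-manifold rigidity to force $A\equiv 0$ and thence a contradiction. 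The paper's method is shorter and exploits a single pointwise evaluation; yours avoids picking a special point and instead leverages a global maximum-principle statement, at the cost of introducing the auxiliary bilinear $A$. Both are clean; the paper's is more economical, while yours makes more visible the connection to the bilinear-form machinery used elsewhere in the paper.
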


\begin{proof}

For the first part take the Dirac inner product of ({\ref{inteq1}}) with $\epsilon$,
and then take the imaginary part, to obtain the condition
$(n-1) k (i_Y df) \parallel \epsilon \parallel^2=0$. As $\epsilon$ is nowhere
vanishing, it follows that $f$ is constant.

For the second part, as $e=n k$, one has that $c_1=-2n (n-1) |k|^2$.  For $k$
real, the condition ({\ref{inteq2}}) implies that $(k \slashed\partial f-2n  k^2 f^2)\epsilon=0$.  As $f$ is nowhere zero, this can be rewritten as
 $k (\slashed \partial\log |f|-2n k) \epsilon=0$.  As $\epsilon$ is nowhere vanishing and $f$ has critical points, one finds that $k=0$.
\end{proof}

Given a spinor $\epsilon$ on a manifold $M^n$, one can define a set of  forms on $M^n$  as
\begin{eqnarray}
\chi_p(X_1, X_2, \dots, X_p)= {i^{[{p\over2}]} \over p!}  \sum_{\sigma}(-1)^{|\sigma|}
\langle \epsilon, \slashed X_{\sigma(1)}\cdot \slashed X_{\sigma(2)}\dots \cdot \slashed X_{\sigma(p)}\epsilon\rangle~,
\label{pbiforms}
\end{eqnarray}
where $\sigma$ is a permutation and $|\sigma|$ is the signature of $\sigma$. Using this definition, it is straightforward to demonstrate the following.

\begin{prop}
Let ${{\hat{\nabla}}}\epsilon=0$, then
\begin{eqnarray}
&&(\nabla_Y \chi_p)(X_1, X_2, \dots, X_p)=-a_{p,p+1}  (\bar k+(-1)^p k) f\, \chi_{p+1} (Y, X_1, \dots, X_p)
\cr
&&
\qquad\qquad\qquad-a_{p,p-1}
 (\bar k+(-1)^{p-1} k) f\,(\alpha_Y\wedge \chi_{p-1})(Y, X_1, \dots, X_p)~,
\end{eqnarray}
where $\alpha_Y(X)=g(Y,X)$ and $a_{p,q}= i^{[{p\over2}]-[{q\over2}]}$.
\end{prop}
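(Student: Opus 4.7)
The plan is to differentiate the defining formula (\ref{pbiforms}) for $\chi_p$ using the parallelism condition ${{\hat{\nabla}}}\epsilon=0$, commute $\slashed Y$ through the Clifford product, and identify the result with contributions proportional to $\chi_{p+1}(Y,X_1,\dots,X_p)$ and to $\alpha_Y\wedge\chi_{p-1}$.

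Fix $x\in M^n$ and extend each $X_i$ locally so that $\nabla_Y X_i|_x = 0$; then $(\nabla_Y\chi_p)(X_1,\dots,X_p)|_x$ reduces to the $Y$-derivative of the scalar $\chi_p(X_1,\dots,X_p)$. Since the Levi-Civita spin connection is compatible with both Clifford multiplication and the Dirac pairing, and since ${{\hat{\nabla}}}_Y\epsilon = 0$ gives $\nabla_Y\epsilon = -kf\,\slashed Y\epsilon$, applying the Leibniz rule to (\ref{pbiforms}) produces, for each permutation $\sigma$, the contribution
\begin{equation*}
-\bar k f\,\langle \slashed Y\epsilon,\,\slashed X_{\sigma(1)}\cdots\slashed X_{\sigma(p)}\epsilon\rangle \;-\; kf\,\langle \epsilon,\,\slashed X_{\sigma(1)}\cdots\slashed X_{\sigma(p)}\slashed Y\epsilon\rangle.
\end{equation*}
Using the (anti-)Hermiticity of $\slashed Y$ under the Dirac pairing (as fixed by the conventions of Appendix A) migrates the $\slashed Y$ in the first term into the right-hand slot, yielding an expression proportional to $\bar k f\langle\epsilon,\slashed Y\slashed X_{\sigma(1)}\cdots\slashed X_{\sigma(p)}\epsilon\rangle$.

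Next, in the second term anticommute $\slashed Y$ leftward through the $p$ Clifford factors, using $\{\slashed X_i,\slashed Y\}\propto g(X_i,Y)$. This produces $(-1)^p\slashed Y\slashed X_{\sigma(1)}\cdots\slashed X_{\sigma(p)}$ plus a sum of ``contraction'' terms in which one $\slashed X_{\sigma(j)}$ is removed and replaced by the scalar $g(X_{\sigma(j)},Y)$. After the sum over $\sigma$ and antisymmetrization in $(X_1,\dots,X_p)$, the pure parts from both contributions combine into $-a_{p,p+1}(\bar k+(-1)^p k)f\,\chi_{p+1}(Y,X_1,\dots,X_p)$, where the phase $a_{p,p+1}=i^{[p/2]-[(p+1)/2]}$ arises in reconciling the $i^{[p/2]}$ normalization of $\chi_p$ with the $i^{[(p+1)/2]}$ normalization of $\chi_{p+1}$. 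The contraction parts collapse, via the standard wedge-product identity
\[
(\alpha_Y\wedge\chi_{p-1})(X_1,\dots,X_p) = \sum_{j=1}^p(-1)^{j-1}g(Y,X_j)\,\chi_{p-1}(X_1,\dots,\widehat{X_j},\dots,X_p),
\]
into $-a_{p,p-1}(\bar k+(-1)^{p-1}k)f\,\alpha_Y\wedge\chi_{p-1}$, with the analogous phase $a_{p,p-1}$.

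The principal obstacle is sign and phase bookkeeping. Four separate sources must be reconciled simultaneously: (i) the $(-1)^p$ signs from the Clifford anticommutations, (ii) the (anti-)Hermiticity sign of $\slashed Y$ under the Dirac pairing, (iii) the signs arising from antisymmetrization in the $X_i$, and (iv) the $i^{[\cdot/2]}$ normalization factors between $\chi_{p\pm 1}$ and $\chi_p$. Once the Appendix A conventions are in hand, these combine mechanically into the coefficients displayed in the statement, and the identification of the two surviving sums with $\chi_{p+1}(Y,X_1,\dots,X_p)$ and $\alpha_Y\wedge\chi_{p-1}$ is immediate.
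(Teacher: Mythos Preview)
Your approach is correct and is essentially the same as the paper's: the paper states this proposition without proof (``it is straightforward to demonstrate''), and the method you outline---differentiate (\ref{pbiforms}), use ${{\hat{\nabla}}}\epsilon=0$ to replace $\nabla_Y\epsilon$ by $-kf\slashed Y\epsilon$, then reduce the resulting Clifford expression---is exactly what the paper does explicitly in the analogous Lemma~\ref{le:1f} via $\nabla_Y\chi_p=-i^{[p/2]}\langle\epsilon,(\Sigma_Y^\dagger\cdot\wedge^p\Gamma+\wedge^p\Gamma\cdot\Sigma_Y)\epsilon\rangle$.

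There is one expository imprecision worth flagging. After you anticommute $\slashed Y$ to the left in the second term, your ``pure part'' is $-(\bar k+(-1)^p k)f\,\langle\epsilon,\slashed Y\,\Gamma_{X_1\cdots X_p}\epsilon\rangle$ (once the $\sigma$-sum is performed). But $\slashed Y\,\Gamma_{X_1\cdots X_p}=\Gamma_{YX_1\cdots X_p}+\sum_a(-1)^{a+1}g(Y,X_a)\Gamma_{X_1\cdots\widehat{X_a}\cdots X_p}$, so this ``pure part'' already contains a $\chi_{p-1}$ contribution with coefficient $-(\bar k+(-1)^p k)$, not only the $\chi_{p+1}$ piece. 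The anticommutation contractions from the second term alone carry coefficient $-2(-1)^{p-1}k$; only when you add the hidden $\chi_{p-1}$ piece from the pure part do you recover the stated coefficient $-(\bar k+(-1)^{p-1}k)$. Your final answer is right, but the clean ``pure $\to\chi_{p+1}$, contractions $\to\chi_{p-1}$'' dichotomy as written is not quite how the terms actually split. A cleaner route is to apply the identities $\slashed Y\,\Gamma_{X_1\cdots X_p}=\Gamma_{Y X_1\cdots X_p}+C$ and $\Gamma_{X_1\cdots X_p}\slashed Y=(-1)^p\Gamma_{YX_1\cdots X_p}+(-1)^{p-1}C$ directly to the two terms, which makes the two coefficients fall out immediately.
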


 \begin{corollary}
 The p-forms $\chi_p$ are conformal Killing-Yano forms.
 \end{corollary}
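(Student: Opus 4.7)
The plan is to read the formula from the preceding proposition as already having the shape of the standard conformal Killing--Yano (CKY) decomposition $\nabla_Y\chi=\tfrac{1}{p+1}\,i_Y\,d\chi-\tfrac{1}{n-p+1}\,\alpha_Y\wedge\delta\chi$: the $\chi_{p+1}$-term is totally antisymmetric in all $p+1$ slots and so must reproduce the $i_Y d\chi_p$ part, while the $\alpha_Y\wedge\chi_{p-1}$-term is of pure trace type and so must reproduce the $\alpha_Y\wedge\delta\chi_p$ part. It therefore suffices to identify $\chi_{p+1}$ with a specific scalar multiple of $d\chi_p$ and $\chi_{p-1}$ with a specific scalar multiple of $\delta\chi_p$, and to check that the two coefficients come out exactly as $\tfrac{1}{p+1}$ and $-\tfrac{1}{n-p+1}$.

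First I would compute $d\chi_p$ via $d\chi_p=(p+1)\,\mathrm{Alt}(\nabla\chi_p)$, antisymmetrising the preceding proposition's formula over all $p+1$ arguments $(Y,X_1,\dots,X_p)$. The $\chi_{p+1}$-component is already totally antisymmetric in those slots and contributes $-(p+1)\,a_{p,p+1}(\bar k+(-1)^pk)\,f\,\chi_{p+1}$. The $\alpha_Y\wedge\chi_{p-1}$-component is built from the metric factor $g(Y,X_j)$, which is symmetric in the pair $(Y,X_j)$ while the companion $\chi_{p-1}$ does not depend on that pair, so its antisymmetrisation in just those two slots already vanishes and hence so does the total antisymmetrisation. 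This identifies $d\chi_p$ with $-(p+1)\,a_{p,p+1}(\bar k+(-1)^pk)\,f\,\chi_{p+1}$, so the first term in the preceding proposition equals $\tfrac{1}{p+1}\,i_Y d\chi_p$ exactly.

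Next I would compute $\delta\chi_p$ in an orthonormal frame $\{e_i\}$ via $(\delta\chi_p)(X_2,\dots,X_p)=-\sum_i(\nabla_{e_i}\chi_p)(e_i,X_2,\dots,X_p)$. The $\chi_{p+1}$-piece drops out because $\chi_{p+1}(e_i,e_i,\dots)=0$ by antisymmetry. For the $\alpha_{e_i}\wedge\chi_{p-1}$-piece the diagonal contribution $\sum_i g(e_i,e_i)\chi_{p-1}=n\,\chi_{p-1}$ combines with $p-1$ off-diagonal contractions $\sum_i g(e_i,X_j)\chi_{p-1}(e_i,\dots)=\chi_{p-1}(X_j,\dots)$ which, after the sign produced by reordering $X_j$ back into its natural slot, each collapse to $-\chi_{p-1}(X_2,\dots,X_p)$, producing the net trace $(n-p+1)\,\chi_{p-1}$. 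Hence $\delta\chi_p=(n-p+1)\,a_{p,p-1}(\bar k+(-1)^{p-1}k)\,f\,\chi_{p-1}$, and the second term in the preceding proposition equals $-\tfrac{1}{n-p+1}\,\alpha_Y\wedge\delta\chi_p$ exactly.

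Combining the two identifications and substituting back into the preceding proposition yields the CKY identity for $\chi_p$. The main bookkeeping obstacle is verifying that the $p-1$ off-diagonal contractions in the $\delta\chi_p$ calculation carry uniform signs so as to combine with the diagonal $n$-term into the clean coefficient $n-p+1$; the combinatorics here is straightforward but needs care. The degenerate cases in which $(\bar k+(-1)^pk)$ or $(\bar k+(-1)^{p-1}k)$ vanishes are automatic, since then the corresponding term in the preceding proposition is zero and the same formula forces $d\chi_p=0$ or $\delta\chi_p=0$, so the CKY identity reduces trivially to the surviving piece.
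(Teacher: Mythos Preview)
Your proof is correct and follows essentially the same approach as the paper's. The paper's own proof is a one-line ``it is straightforward to observe that $\nabla_Y\chi_p=\tfrac{1}{p+1}i_Yd\chi_p-\tfrac{1}{n-p+1}\alpha_Y\wedge\delta\chi_p$'', and what you have written is precisely the straightforward computation alluded to: antisymmetrise the proposition's formula to extract $d\chi_p$, trace it to extract $\delta\chi_p$, and read off the two coefficients.
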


 \begin{proof}
 Indeed it is straightforward to observe that
 \begin{eqnarray}
 \nabla_Y\chi_p={1\over p+1}i_Y d\chi_p-{1\over n-p+1} \alpha_Y\wedge \delta \chi_p~,
 \end{eqnarray}
 which is the definition of a conformal Killing-Yano p-form.
 \end{proof}

 {\bf Remark:}
 Clearly if $k$ is imaginary (real), then $d\chi_p=0$ for $p$ even (odd). Similarly  if $k$ is imaginary (real), then  $\delta \chi_p=0$ for $p$ odd (even).
 Furthermore for $k$ imaginary, the 1-forms are Killing, while for $k$ real the 1-forms are closed and conformal Killing.

 {\bf Remark:} Because of the Killing property of 1-forms for $k$ imaginary, the equation ${{\hat{\nabla}}}\epsilon=0$ is called the Killing spinor equation. The geometry of manifolds
 admitting Killing spinors for $f$ a constant function have been extensively investigated in the literature, for a summary see e.g. \cite{fried}.

\section{1-form modified Dirac operators}

\subsection{Eigenvalue estimates}

Suppose that $\omega=e A$, where  $A$ is  real 1-form on $M$ and $e\in \mathbb{C}$.  Thus $
{\cal{D}} = {\slashed{\nabla}}+e {\slashed{A}}$. On even dimensional manifolds, unlike the 0-form modified Dirac operators, ${\cal{D}}: \Gamma(S^\pm)\rightarrow \Gamma(S^\mp) $, where the signs denote the chirality of the spin bundle $S=S^+\oplus S^-$.  Furthermore ${\cal{D}}$ is formally  anti-self-adjoint for $e$ imaginary.  Next set
\begin{eqnarray}
{{\hat{\nabla}}}_X = \nabla_X + k_1 {\slashed{A}} \cdot \slashed X + k_2 A(X) \, ,
\end{eqnarray}
where $k_1, k_2\in \mathbb{C}$.
The proof of the fundamental identity is similar to that for the 0-form Dirac operators, and the steps will not be repeated here.

\begin{prop}\label{fun1}
The fundamental identity of 1-form Dirac operators is
\begin{eqnarray}
&&\nabla^2\parallel \epsilon\parallel^2=2\parallel{{\hat{\nabla}}} \epsilon\parallel^2+ 2{\rm Re}\langle\epsilon, (\slashed{\nabla} +(2\bar k_1+e)\slashed{A}) {\cal{D}}\epsilon\rangle
\cr
&&+2 {\rm Re}(2 k_1+k_2+e)\delta\left( A  \parallel\epsilon\parallel^2\right)+{1\over2} R \parallel\epsilon\parallel^2-2 {\rm Re}(2 k_1+k_2) \delta A \parallel\epsilon\parallel^2
\cr
&&
+c_1 A^2 \parallel\epsilon\parallel^2+4 {\rm Im} (e+2\bar k_1+\bar k_2)\, {\rm Im}\langle\epsilon, \nabla_A\epsilon\rangle
- {\rm Re}\langle\epsilon, e \slashed dA\epsilon\rangle~,
\label{1max}
\end{eqnarray}
where $c_1=-(2n |k_1|^2 +2 |k_2|^2+ 2{\rm Re} (2\bar k_2 k_1+2\bar k_1 e+ e^2))$.
\end{prop}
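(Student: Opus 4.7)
The plan is to follow the same strategy as for the 0-form operator (Proposition \ref{0formpropextra}), now enlarged to accommodate the richer $\Sigma_X = k_1 \slashed{A} \cdot \slashed{X} + k_2 A(X)$ and $\slashed{\omega} = e\slashed{A}$. The starting point is again the Lichnerowicz identity (\ref{diracmax}). Into it I substitute $\nabla = \hat\nabla - \Sigma$, so that
\begin{equation*}
2\|\nabla\epsilon\|^2 = 2\|\hat\nabla\epsilon\|^2 - 4\,{\rm Re}\langle\Sigma\epsilon, \nabla\epsilon\rangle - 2\|\Sigma\epsilon\|^2,
\end{equation*}
and $\slashed{\nabla} = {\cal{D}} - e\slashed{A}$, so that $\slashed{\nabla}^2\epsilon$ produces a ${\cal{D}}$-piece plus a remainder $-\slashed{\nabla}(e\slashed{A}\epsilon)$. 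The proof then reduces to the evaluation of the three auxiliary quantities $\|\Sigma\epsilon\|^2$, ${\rm Re}\langle\Sigma\epsilon,\nabla\epsilon\rangle$ and ${\rm Re}\langle\epsilon,\slashed{\nabla}(e\slashed{A}\epsilon)\rangle$.

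For $\|\Sigma\epsilon\|^2$ I expand $(k_1\slashed{A}\Gamma_i + k_2 A_i)^\dagger(k_1\slashed{A}\Gamma_i + k_2 A_i)$ and sum over $i$. Using $\slashed{A}\Gamma_i + \Gamma_i\slashed{A} = 2A_i$ together with $\sum_i\Gamma_i\Gamma_i = n$ in the conventions of Appendix A, the $|k_1|^2$ part contracts to $n A^2\|\epsilon\|^2$, while the $k_1\bar k_2$ cross-term simplifies via $\sum_i A_i\langle\epsilon,\slashed{A}\Gamma_i\epsilon\rangle = A^2\|\epsilon\|^2$. The total is $\|\Sigma\epsilon\|^2 = (n|k_1|^2 + |k_2|^2 + 2\,{\rm Re}(k_1\bar k_2))A^2\|\epsilon\|^2$, so that the factor $-2\|\Sigma\epsilon\|^2$ supplies the $-2n|k_1|^2-2|k_2|^2-4\,{\rm Re}(\bar k_2 k_1)$ portion of $c_1$.

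The cross-term ${\rm Re}\langle\Sigma\epsilon,\nabla\epsilon\rangle$ is processed by pulling the Clifford factors across and re-assembling $\Gamma^i\nabla_i = \slashed{\nabla} = {\cal{D}} - e\slashed{A}$. The part where $\slashed{A}\cdot\slashed{X}$ acts on $\slashed{\nabla}\epsilon$ becomes the $2\bar k_1\slashed{A}\,{\cal{D}}\epsilon$ contribution in the proposition, together with a further $e^2 A^2$ piece absorbed into $c_1$ and a genuine $\nabla A$ remainder. The third quantity expands by Leibniz as ${\rm Re}\langle\epsilon, e\Gamma^i\Gamma^j\nabla_i A_j\,\epsilon\rangle + {\rm Re}\langle\epsilon, e\slashed{A}\slashed{\nabla}\epsilon\rangle$; antisymmetrising $\nabla_i A_j$ splits off the $-{\rm Re}\langle\epsilon, e\slashed{dA}\epsilon\rangle$ term and a $\delta A$ contribution, while the symmetric half contributes once more to the $e\slashed{A}\,{\cal{D}}\epsilon$ term and to the $e^2 A^2$ part of $c_1$.

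Finally I assemble everything by grouping coefficients of $A^2\|\epsilon\|^2$, $\delta A\|\epsilon\|^2$, $\delta(A\|\epsilon\|^2)$, ${\rm Im}\langle\epsilon,\nabla_A\epsilon\rangle$, $\slashed{A}\,{\cal{D}}\epsilon$ and $\slashed{dA}\epsilon$ to recognise the claimed form. The main obstacle is the bookkeeping of complex coefficients: because $k_1$, $k_2$, $e$ are all complex, the conjugations inside $\langle\cdot,\cdot\rangle$ prevent these coefficients from being pulled through real parts, so one must carefully separate the real contributions $2\,{\rm Re}\langle\epsilon,\nabla_A\epsilon\rangle = \nabla_A\|\epsilon\|^2$ from the genuinely imaginary ones $2i\,{\rm Im}\langle\epsilon,\nabla_A\epsilon\rangle$, and then rewrite $\nabla_A\|\epsilon\|^2$ in terms of a total divergence on $A\|\epsilon\|^2$ and a $\delta A\|\epsilon\|^2$ remainder. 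This split is what ultimately produces the asymmetric combinations $(2k_1+k_2+e)\delta(A\|\epsilon\|^2)$, $-(2k_1+k_2)\delta A\|\epsilon\|^2$ and the separate imaginary piece $4\,{\rm Im}(e+2\bar k_1+\bar k_2)\,{\rm Im}\langle\epsilon,\nabla_A\epsilon\rangle$ in the final identity.
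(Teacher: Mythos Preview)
Your proposal is correct and follows exactly the template the paper uses: the paper's own proof consists of the single sentence ``The proof of the fundamental identity is similar to that for the 0-form Dirac operators, and the steps will not be repeated here,'' so you are simply filling in those steps---starting from (\ref{diracmax}), substituting $\nabla={{\hat{\nabla}}}-\Sigma$ and $\slashed\nabla={\cal{D}}-e\slashed A$, and then computing $\|\Sigma\epsilon\|^2$, ${\rm Re}\langle\Sigma\epsilon,\nabla\epsilon\rangle$ and ${\rm Re}\langle\epsilon,\slashed\nabla(e\slashed A\epsilon)\rangle$ exactly as in Proposition~\ref{0formpropextra}. Your explicit evaluation of $\|\Sigma\epsilon\|^2=(n|k_1|^2+|k_2|^2+2\,{\rm Re}(k_1\bar k_2))A^2\|\epsilon\|^2$ and your bookkeeping of the real/imaginary split of $\langle\epsilon,\nabla_A\epsilon\rangle$ are both correct and match the structure of the stated identity.
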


One of the consequences of the fundamental identity is the following theorem.

\begin{theorem}\label{th:1form}
Let $M^n$ be a closed spin manifold.
Suppose that $A$ is closed 1-form, $dA=0$, and ${\rm Im} (e+2\bar k_1+\bar k_2)=0$, then the following hold.

\begin{enumerate}
\item If ${1\over2} R-2 {\rm Re}(2 k_1+k_2)\delta A +c_1 A^2\gneq 0$ on $M^n$, then ${\rm Ker}\, {\cal{D}}=\{0\}$.

\item If ${1\over2} R-2 {\rm Re}(2 k_1+k_2)\delta A +c_1 A^2=0$,  $A$ is non-vanishing at some point on $M^n$ and ${\rm Ker}\,{\cal{D}}\not=\{0\}$, then ${\rm Ker}\, {\cal{D}}={\rm Ker}\,{{\hat{\nabla}}}$,   $e=(2-n)k_1+k_2$ and  $c_1=-(2(n^2-n)|k_1|^2+2 |e|^2+4n {\rm Re} (\bar e k_1)+2 {\rm Re}\,e^2)$.

\end{enumerate}

\end{theorem}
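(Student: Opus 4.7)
The plan is to mimic the argument of Theorem \ref{0formth} for the 0-form case, exploiting the fundamental identity of Proposition \ref{fun1}. First I would simplify (\ref{1max}) under the hypotheses: the assumption $dA=0$ kills the term $-{\rm Re}\langle\epsilon, e\slashed{dA}\epsilon\rangle$, and ${\rm Im}(e+2\bar k_1+\bar k_2)=0$ kills the term $4\,{\rm Im}(e+2\bar k_1+\bar k_2)\,{\rm Im}\langle\epsilon, \nabla_A\epsilon\rangle$. This reduces (\ref{1max}) to
\begin{eqnarray}
\nabla^2\|\epsilon\|^2 &=& 2\|\hn\epsilon\|^2 + 2{\rm Re}\langle\epsilon,(\slashed\nabla+(2\bar k_1+e)\slashed A){\cal D}\epsilon\rangle \nonumber \\
&& + 2{\rm Re}(2k_1+k_2+e)\,\delta(A\|\epsilon\|^2) + \bigl(\tfrac{1}{2}R - 2{\rm Re}(2k_1+k_2)\delta A + c_1 A^2\bigr)\|\epsilon\|^2. \nonumber
\end{eqnarray}
For part 1, set ${\cal D}\epsilon=0$ and integrate over the closed manifold $M^n$; Stokes' theorem disposes of both $\nabla^2\|\epsilon\|^2$ and $\delta(A\|\epsilon\|^2)$, leaving an integral of two non-negative quantities whose sum must vanish. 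The curvature-plus-norm condition being $\gneq 0$ then forces $\epsilon=0$, establishing ${\rm Ker}\,{\cal D}=\{0\}$.

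For part 2, the same integration under the equality hypothesis forces $\hn\epsilon=0$, so ${\rm Ker}\,{\cal D}\subseteq{\rm Ker}\,\hn$ and $\epsilon$ is nowhere vanishing. For the reverse inclusion, I would compute $\slashed{\hn}\epsilon = \Gamma^i\hn_i\epsilon$: using $\Gamma^i\slashed A\,\Gamma_i = (2-n)\slashed A$ (from the standard identity $\Gamma^i\Gamma^j\Gamma_i=(2-n)\Gamma^j$) together with the extra $k_2 A_i\Gamma^i=k_2\slashed A$ contribution, one finds
\begin{eqnarray}
\slashed{\hn}\epsilon = \slashed\nabla\epsilon + \bigl((2-n)k_1+k_2\bigr)\slashed A\,\epsilon. \nonumber
\end{eqnarray}
Subtracting from ${\cal D}\epsilon=\slashed\nabla\epsilon + e\slashed A\,\epsilon = 0$ gives $\bigl(e-(2-n)k_1-k_2\bigr)\slashed A\,\epsilon=0$. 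Since $(\slashed A)^2 = -|A|^2$, the Clifford element $\slashed A$ is invertible wherever $A\neq 0$, and as $\epsilon$ is nowhere zero this forces $e=(2-n)k_1+k_2$ at any point where $A$ is non-vanishing, hence as an identity between constants. With this relation ${\cal D}=\slashed{\hn}$, so ${\rm Ker}\,\hn\subseteq{\rm Ker}\,{\cal D}$, closing the inclusion.

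The remaining task is purely algebraic: substitute $k_2 = e-(2-n)k_1 = e+(n-2)k_1$ into the expression $c_1=-(2n|k_1|^2+2|k_2|^2+2{\rm Re}(2\bar k_2 k_1+2\bar k_1 e+e^2))$ from Proposition \ref{fun1} and collect. Using $|k_2|^2=|e|^2+(n-2)^2|k_1|^2+2(n-2){\rm Re}(\bar e k_1)$ and ${\rm Re}(\bar k_2 k_1)={\rm Re}(\bar e k_1)+(n-2)|k_1|^2$, the $|k_1|^2$ coefficient collapses to $2n+2(n-2)^2+4(n-2)=2(n^2-n)$ and the mixed terms combine to $4n\,{\rm Re}(\bar e k_1)$, producing the stated form of $c_1$. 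I expect no subtle obstacle in this part; the only point that requires care is the invertibility argument for $\slashed A$ used to extract the parameter constraint, since without the hypothesis that $A$ is non-vanishing somewhere one could not conclude $e=(2-n)k_1+k_2$.
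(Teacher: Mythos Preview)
Your argument is correct and follows the same route as the paper: simplify the fundamental identity (\ref{1max}) under the hypotheses, integrate over $M^n$ to kill the divergence terms, and in part 2 use $({\cal D}-\slashed\hn)\epsilon=(e-(2-n)k_1-k_2)\slashed A\,\epsilon=0$ together with the non-vanishing of $A$ somewhere to extract the parameter relation, then substitute back into $c_1$. One small slip: in the conventions of this paper (Appendix A) the Clifford relation is $\slashed X\cdot\slashed Y+\slashed Y\cdot\slashed X=+2g(X,Y)1$, so $(\slashed A)^2=+A^2$, not $-|A|^2$; this does not affect your invertibility argument, and the paper phrases it equivalently by multiplying through by $\slashed A$ to obtain $(e-(2-n)k_1-k_2)A^2\epsilon=0$.
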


\begin{proof}
Assuming that ${\cal{D}}\epsilon=0$, $dA=0$, and ${\rm Im} (e+2\bar k_1+\bar k_2)=0$, one finds that the first part of the statement follows after integrating the fundamental identity (\ref{1max}) over $M^n$.  As the only restriction on the parameters is ${\rm Im} (e+2\bar k_1+\bar k_2)=0$, the first part of the statement is valid for a 5-parameter family.

Next assuming in addition that
${1\over2} R-2 {\rm Re}(2 k_1+k_2)\delta A +c_1 A^2=0$, one   concludes   after integrating over $M^n$ that ${{\hat{\nabla}}}\epsilon=0$.  Therefore, ${\rm Ker}\, {\cal{D}}\subseteq {\rm Ker}{{\hat{\nabla}}}$.  In turn this implies that $\slashed\hn\epsilon=0$ and so ${\cal{D}}\epsilon-\slashed\hn\epsilon=(e-(2-n) k_1-k_2)\slashed A\epsilon=0$.
Therefore $(e-(2-n) k_1-k_2) A^2\epsilon=0$. As $\epsilon$ is parallel, it is no-where zero on $M^n$.  Furthermore as $A$ is non-vanishing at some point on $M^n$,
one concludes that $e=(2-n) k_1+k_2$. From this follows that ${\cal{D}}=\slashed\hn$ and ${\rm Ker}\, {\cal{D}}={\rm Ker}{{\hat{\nabla}}}$ as all ${{\hat{\nabla}}}$-parallel spinors are zero modes of
$\slashed\hn$. Solving $e=(2-n) k_1+k_2$ for $k_2$ and substituting in the expression for $c_1$ in proposition \ref{fun1}, one arrives at the expression stated in the second part of the
theorem.

Note that the second part of the statement is valid provided that the parameters are restricted as ${\rm Im} (e+2\bar k_1+\bar k_2)=0$ and $e=(2-n) k_1+k_2$.
These conditions can be re-expressed as $e=(2-n) k_1+k_2$ and ${\rm Im}\, k_1=0$.  Therefore there is a 3-parameter family of parameters that the statement is valid.
\end{proof}

{\bf Remark:}  The fundamental identity can be used in different ways.  In particular take  $A$ to be harmonic. If ${\rm Im} (e+2\bar k_1+\bar k_2)=0$ and ${1\over2} R +c_1 A^2=0$, it will be a consequence
of the Hopf maximum principle that every zero mode $\epsilon$ of ${\cal{D}}$ is parallel and  $\parallel \epsilon\parallel$ is constant. The latter condition on the length of $\epsilon$
is essential in the proof of the horizon conjecture. In particular, it is this arrangement of the fundamental identity that arises in physics-not only in this case
but also in all applications found for multi-form modified Dirac operators.

{\bf Remark: } The fundamental identity can be used to derive a bound for the eigenvalues of the Dirac operator $\slashed\nabla$ as in \cite{ hijazi}. However, this issue is left
to be examined later in the section describing  the multi-form modifications of the Dirac operator.

\begin{prop}
\label{prop1}
Let $M^n$ be a closed spin manifold.
If $dA=0$, ${\rm Im} (e+2\bar k_1+\bar k_2)=0$ and $k_1=-{\rm Re}\, e$, the eigenvalues $\lambda$ of ${\cal{D}}$ are bounded as
\begin{eqnarray}
|\lambda|^2\geq \inf_{M^n} \left({1\over4} R+{\rm Re}\,(2 e-k_2)\,\delta A+{c_1\over2} A^2\right)
\end{eqnarray}
where $c_1=-2 \left((n-2) ({\rm Re}\,e)^2+ |k_2|^2-2 {\rm Re}\,e\,{\rm Re}\,k_2+ {\rm Re}\,(e^2)\right)$.
\end{prop}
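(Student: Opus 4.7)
The plan is to follow the strategy of Proposition \ref{prop0}: derive from the fundamental identity of Proposition \ref{fun1} a Lichnerowicz-type integral identity of the shape $\int \|\mathcal{D}\epsilon\|^2 = \int \|\hat\nabla\epsilon\|^2 + \int(\text{curvature scalar})\|\epsilon\|^2$, and then evaluate on an eigenspinor.

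First I would impose the three hypotheses on the fundamental identity (\ref{1max}). With $k_1 = -\mathrm{Re}\,e$ (so $k_1$ is real), one computes $2\bar k_1 + e = -\bar e$, so that the operator acting on $\mathcal{D}\epsilon$ becomes $\slashed\nabla - \bar e \slashed A = \mathcal{D} - 2\mathrm{Re}(e)\slashed A$---the 1-form analogue of the $\slashed\nabla - \bar ef$ that arises with the choice $k = e$ in the proof of Proposition \ref{prop0}. Substituting $k_1 = -\mathrm{Re}\,e$ into the expression for $c_1$ from Proposition \ref{fun1} and simplifying yields the stated form of $c_1$, and the coefficient $-2\mathrm{Re}(2k_1 + k_2)$ of $\delta A$ reduces to $2\mathrm{Re}(2e - k_2)$. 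The assumption $dA=0$ kills the $\slashed{dA}$ term, and $\mathrm{Im}(e+2\bar k_1 + \bar k_2) = 0$ kills the $\mathrm{Im}\langle\epsilon,\nabla_A\epsilon\rangle$ term.

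Next I would integrate the simplified identity over the closed manifold $M^n$. The $\int \nabla^2 \|\epsilon\|^2$ contribution vanishes, as does $\int \delta(A\|\epsilon\|^2)$. The crucial manipulation is to use integration by parts to establish
\[
2\int_{M^n}\mathrm{Re}\langle\epsilon,(\mathcal{D}-2\mathrm{Re}(e)\slashed A)\mathcal{D}\epsilon\rangle = -2\int_{M^n}\|\mathcal{D}\epsilon\|^2,
\]
using that $\slashed\nabla$ is formally anti-self-adjoint, $\slashed A$ is anti-Hermitian (since $A$ is real), and $\slashed\nabla \epsilon = \mathcal{D}\epsilon - e \slashed A \epsilon$; this is the 1-form counterpart of the cancellation that produces $-2\|\mathcal{D}\epsilon\|^2$ in the proof of Proposition \ref{prop0}. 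Substituting into the integrated fundamental identity and dividing by two gives
\[
\int_{M^n}\|\mathcal{D}\epsilon\|^2 = \int_{M^n}\|\hat\nabla\epsilon\|^2 + \int_{M^n}\Bigl(\tfrac14 R + \mathrm{Re}(2e-k_2)\delta A + \tfrac{c_1}{2}A^2\Bigr)\|\epsilon\|^2.
\]
Choosing $\epsilon$ to be an eigenspinor of $\mathcal{D}$ with eigenvalue $\lambda$, the left hand side becomes $|\lambda|^2\int\|\epsilon\|^2$; discarding the non-negative term $\int\|\hat\nabla\epsilon\|^2$ and bounding the pointwise scalar in the remaining integrand by its infimum yields the stated estimate.

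The main obstacle is the integration-by-parts step: in contrast with the 0-form case, the anti-Hermiticity (rather than Hermiticity) of $\slashed A$ means that the adjoint correction produced when moving $\slashed\nabla$ off of $\mathcal{D}\epsilon$ does not automatically cancel the explicit $-\bar e\slashed A$ contribution, and a residual term of the form $-4\mathrm{Re}\int \bar e\langle\epsilon,\slashed A\mathcal{D}\epsilon\rangle$ remains. Verifying that this residual reduces to a pure divergence under the hypothesis $dA=0$---or equivalently, at the eigenspinor level, that it vanishes via the identity $\mathrm{Re}(\lambda)\int\|\epsilon\|^2 + \mathrm{Im}(e)\int\beta = 0$ with $\beta = -i\langle\epsilon,\slashed A\epsilon\rangle$, a direct consequence of the formal anti-self-adjointness of $\slashed\nabla$ applied to the eigenvalue equation---is where the specific choice $k_1 = -\mathrm{Re}\,e$, together with $\mathrm{Im}(e+2\bar k_1+\bar k_2)=0$, earns its keep.
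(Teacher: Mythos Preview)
Your approach matches the paper's: both convert the term $2\,{\rm Re}\langle\epsilon,(\slashed\nabla+(2\bar k_1+e)\slashed A)\mathcal{D}\epsilon\rangle$ into $-2\parallel\mathcal{D}\epsilon\parallel^2$ plus a divergence, then integrate over $M^n$ and evaluate on an eigenspinor. The paper carries out this rearrangement pointwise---obtaining $-2\parallel\mathcal{D}\epsilon\parallel^2 + 2\nabla_i{\rm Re}\langle\epsilon,\Gamma^i\mathcal{D}\epsilon\rangle$ together with the modified coefficient $-2\,{\rm Re}(e-k_2)$ in front of $\delta(A\parallel\epsilon\parallel^2)$---before integrating, which is precisely your integration-by-parts written locally.

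Your ``main obstacle'' paragraph, however, rests on a sign confusion. In this paper's conventions (Appendix~A) one has $\langle\slashed X\eta,\epsilon\rangle=\langle\eta,\slashed X\epsilon\rangle$, so $\slashed A$ is \emph{Hermitian}, not anti-Hermitian. With the correct sign the cancellation is exact and pointwise: writing $\slashed\nabla\epsilon=\mathcal{D}\epsilon-e\slashed A\epsilon$ and using $\langle e\slashed A\epsilon,\mathcal{D}\epsilon\rangle=\bar e\langle\epsilon,\slashed A\mathcal{D}\epsilon\rangle$, one obtains
\[
2\,{\rm Re}\langle\epsilon,(\slashed\nabla-\bar e\slashed A)\mathcal{D}\epsilon\rangle
=2\nabla_i{\rm Re}\langle\epsilon,\Gamma^i\mathcal{D}\epsilon\rangle-2\parallel\mathcal{D}\epsilon\parallel^2,
\]
with no residual. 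The hypothesis $dA=0$ is not used here; it only kills the $\slashed{dA}$ term in (\ref{1max}). Thus the choice $k_1=-{\rm Re}\,e$ does exactly what $k=e$ did in Proposition~\ref{prop0}, and the argument is cleaner than you feared: your final paragraph can simply be deleted.
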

\begin{proof}
If $k_1=-{\rm Re}\, e$, $dA=0$ and ${\rm Im} (e+2\bar k_1+\bar k_2)=0$ the fundamental identity can be re-arranged as
\begin{eqnarray}
\nabla^2\parallel \epsilon\parallel^2&=& 2 \parallel{{\hat{\nabla}}}\epsilon\parallel^2-2 \parallel{\cal{D}}\epsilon\parallel^2+ 2 \nabla_i{\rm Re}\langle\epsilon, \Gamma^i{\cal{D}}\epsilon\rangle-2 {\rm Re}(e-k_2)\delta\left( A  \parallel\epsilon\parallel^2\right)
\cr
&&+\left({1\over2} R+2{\rm Re}\,(2 e-k_2)\,\delta A+c_1A^2\right)\parallel \epsilon\parallel^2 \ .
\nonumber \\
\end{eqnarray}
Assuming that $\epsilon$ is an eigen-spinor  of ${\cal{D}}$ with eigen-value $\lambda$ and integrating over $M^n$, one finds  the  bound on on $\lambda$ stated
in the proposition. The constant $c_1$ can be easily computed upon setting $k_1=-{\rm Re}\, e$ in the expression for $c_1$ in proposition \ref{fun1}.

Note that as $k_1$ is real, the range of parameters is described by the conditions   $k_1=-{\rm Re}\, e$ and ${\rm Im}\, k_2={\rm Im}\, e$.
So the statement is valid for a 3-parameter family.
\end{proof}

\subsection{Manifolds with ${{\hat{\nabla}}}$-parallel spinors}

\subsubsection{Holonomy of ${{\hat{\nabla}}}$ and Curvature identities}

The Lie algebra of the holonomy group of ${{\hat{\nabla}}}$ can be identified using the Ambrose-Singer theorem. In particular one has the following.

\begin{prop}
Suppose that the metric on $M^n$, $A$, $k_1$ and $k_2$ are generic,  then $\mathfrak{Lie}\,{\rm Hol}({{\hat{\nabla}}})\subseteq \mathfrak{pin}\otimes \mathbb{C}\oplus \mathbb{C}$.
Furthermore if $dA=0$, then $\mathfrak{Lie}\,{\rm Hol}({{\hat{\nabla}}})\subseteq \mathfrak{pin}\otimes \mathbb{C}$
\end{prop}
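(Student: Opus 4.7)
The plan is to follow the template of the $0$-form argument: use the Ambrose--Singer theorem to identify $\mathfrak{Lie}\,{\rm Hol}({{\hat{\nabla}}})$ with the span (over all points of $M^n$ and all pairs of vector fields $X,Y$) of the curvature $\hat R(X,Y)$, and then read off which Clifford grades can possibly appear in $\hat R$.

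First I would decompose $\Sigma_X = k_1 \slashed A \cdot \slashed X + k_2 A(X)$ into Clifford grades. Using the standard Clifford identity writing $\slashed A\cdot\slashed X$ as the sum of a scalar piece proportional to $g(A,X)$ and a bivector piece $\slashed{A\wedge X}\in{\rm Cl}^2_n$, one sees that
\[
\Sigma_X \in {\rm Cl}^0_n \oplus {\rm Cl}^2_n
\]
after complexification. The two structural facts I will lean on are that the ${\rm Cl}^0_n$ component is central in the Clifford algebra, and that the ${\rm Cl}^2_n$ component sits inside $\mathfrak{spin}_n$, which is a Lie subalgebra under the Clifford commutator.

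Next I would expand
\[
\hat R(X,Y) = R(X,Y) + (\nabla_X\Sigma)_Y - (\nabla_Y\Sigma)_X + [\Sigma_X,\Sigma_Y]
\]
and examine each piece by grade. The spin-connection curvature $R(X,Y)$ lies in ${\rm Cl}^2_n\subset\mathfrak{pin}_n$. Covariantly differentiating $\Sigma$ does not change its grade pattern, so $(\nabla_X\Sigma)_Y - (\nabla_Y\Sigma)_X$ again lies in ${\rm Cl}^0_n\oplus {\rm Cl}^2_n$; moreover, because the scalar component of $\Sigma_Y$ equals $(k_2-k_1)A(Y)$ with a constant coefficient, its antisymmetrisation in $X,Y$ is proportional to $dA(X,Y)$. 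The commutator $[\Sigma_X,\Sigma_Y]$ is insensitive to the central scalar summands, so it reduces to $k_1^2[\slashed{A\wedge X},\slashed{A\wedge Y}]$, which remains in ${\rm Cl}^2_n$ since $\mathfrak{spin}_n$ is closed under the bracket. Summing, $\hat R(X,Y)\in{\rm Cl}^0_n\oplus{\rm Cl}^2_n$. Since ${\rm Cl}^2_n\subset\mathfrak{pin}_n$ and ${\rm Cl}^0_n\otimes\mathbb{C}\cong\mathbb{C}$, this proves $\mathfrak{Lie}\,{\rm Hol}({{\hat{\nabla}}})\subseteq\mathfrak{pin}\otimes\mathbb{C}\oplus\mathbb{C}$. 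For the second assertion, if $dA=0$ then the only source of a ${\rm Cl}^0_n$ contribution vanishes, so $\hat R(X,Y)\in{\rm Cl}^2_n$, giving the sharper inclusion $\mathfrak{Lie}\,{\rm Hol}({{\hat{\nabla}}})\subseteq\mathfrak{pin}\otimes\mathbb{C}$.

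The main obstacle is just careful bookkeeping: one has to verify that no odd-grade Clifford terms sneak in through either the covariant derivative of $\Sigma$ or the self-commutator $[\Sigma_X,\Sigma_Y]$. The centrality of the scalar component of $\Sigma$ and the closure of $\mathfrak{spin}_n={\rm Cl}^2_n$ under the commutator together eliminate this possibility. The word \emph{generic} is, as in the $0$-form proposition, the standard proviso that, absent nontrivial algebraic coincidences among $g$, $A$, $k_1$ and $k_2$, the span of the $\hat R(X,Y)$ saturates the ambient Lie algebra displayed on the right-hand side.
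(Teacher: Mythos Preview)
Your approach matches the paper's: apply Ambrose--Singer and read off the Clifford grades appearing in $\hat R(X,Y)$. The paper writes out $\hat R$ explicitly and asserts it spans ${\rm Cl}^0_n\oplus{\rm Cl}^1_n\oplus{\rm Cl}^2_n$; your more structural route---decompose $\Sigma_X\in{\rm Cl}^0_n\oplus{\rm Cl}^2_n$ first, then use centrality of ${\rm Cl}^0_n$ and closure of ${\rm Cl}^2_n=\mathfrak{spin}_n$ under the bracket---shows in fact that $\hat R(X,Y)\in{\rm Cl}^0_n\oplus{\rm Cl}^2_n$, with no ${\rm Cl}^1_n$ component at all. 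So your argument actually delivers the sharper inclusion $\mathfrak{Lie}\,{\rm Hol}(\hat\nabla)\subseteq\mathfrak{spin}_n\otimes\mathbb{C}\oplus\mathbb{C}$ (and $\mathfrak{spin}_n\otimes\mathbb{C}$ when $dA=0$), of which the proposition's statement is an immediate consequence. One cosmetic slip: in the paper's convention $\slashed X\cdot\slashed Y+\slashed Y\cdot\slashed X=2g(X,Y)$, the scalar component of $\Sigma_Y$ is $(k_1+k_2)A(Y)$ rather than $(k_2-k_1)A(Y)$, but this is immaterial since only the proportionality to $A(Y)$ with a constant coefficient is used.
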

\begin{proof}
It suffices to find the curvature of ${{\hat{\nabla}}}$.  Indeed
\begin{eqnarray}
\hat R(X,Y)&=&R(X,Y)+k_1(\nabla_X\slashed A\cdot \slashed Y-\nabla_Y\slashed A\cdot \slashed X)+k_2 dA(X,Y)
\cr
&+& k_1^2 \slashed A \cdot \left( \slashed X \cdot \slashed A \cdot \slashed Y
- \slashed Y \cdot \slashed A \cdot \slashed X \right)~.
\end{eqnarray}
Observe that at every point on $M^n$, the curvature spans the subspace ${\rm Cl}_n^0\oplus{\rm Cl}_n^1\oplus {\rm Cl}_n^2$ of the Clifford algebra. As ${\rm Cl}_n^0$ commutes with ${\rm Cl}_n^1\oplus {\rm Cl}_n^2$ and  ${\rm Cl}_n^1\oplus {\rm Cl}_n^2$ spans $\mathfrak{pin}$,
$\mathfrak{Lie}\,{\rm Hol}({{\hat{\nabla}}})\subseteq \mathfrak{pin}\otimes \mathbb{C}\oplus \mathbb{C}$, where the compexification arises because the coefficients are complex.

Next observe that if $dA=0$, then at every point on $M^n$ the curvature spans the subspace ${\rm Cl}_n^1\oplus {\rm Cl}_n^2$ of the Clifford algebra.
This proves the second part of the proposition.
\end{proof}

The existence of ${{\hat{\nabla}}}$-parallel spinors implies conditions on the curvature of $M^n$.  In particular after a short computation, one finds the following.

\begin{prop}
Let ${{\hat{\nabla}}}\epsilon=0$, then $\hat R(X,Y)\epsilon=0$.
Moreover consequences of  $\hat R(X,Y)\epsilon=0$ are
\begin{eqnarray}
&&\bigg(-{1\over2} i_Y\slashed R +(2-n) k_1 \nabla_Y\slashed A-{1\over2} k_1 \slashed {dA}\cdot\slashed Y+k_1 \delta A \slashed Y +k_2 i_Y\slashed{ dA}
\cr &&
~~~~+2(n-2)k_1^2 (A^2 \slashed Y - \slashed A A(Y)) \bigg)\epsilon=0~,
\cr &&
\bigg(-{1\over2} R+ ((3-n)k_1+k_2) \slashed{dA}-2(1-n) k_1 \delta A
\cr
&&~~~~+ 2(1-n)(2-n)k_1^2 A^2\bigg)\epsilon=0~.
\end{eqnarray}
\end{prop}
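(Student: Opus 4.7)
The first assertion $\hat R(X,Y)\epsilon = 0$ is immediate from $\hn\epsilon = 0$: applying $\hn_X$ to $\hn_Y\epsilon=0$, antisymmetrising in $X,Y$, and subtracting $\hn_{[X,Y]}\epsilon = 0$ yields $\hat R(X,Y)\epsilon = [\hn_X,\hn_Y]\epsilon - \hn_{[X,Y]}\epsilon = 0$. The substance of the proposition is to extract the two Clifford-traced identities acting on $\epsilon$ from this vanishing, using the explicit expression for $\hat R(X,Y)$ given in the preceding proposition.

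For the first identity, my plan is to set $X=e_i$ in an orthonormal frame and Clifford-multiply the expression for $\hat R(e_i,Y)\epsilon = 0$ by $\Gamma^i$, summed over $i$. For the Riemann piece I use the standard spinorial Bianchi identity $\Gamma^i R(e_i,Y)\epsilon = -\tfrac12\, i_Y\slashed R\,\epsilon$. For the $k_1$ contributions, $\Gamma^i\nabla_i\slashed A\cdot\slashed Y$ expands via $\Gamma^i\Gamma^j=g^{ij}+\Gamma^{ij}$ into a $\delta A\,\slashed Y$ piece and a $\slashed{dA}\cdot\slashed Y$ piece, while $\Gamma^i(\nabla_Y\slashed A)\slashed e_i$ collapses by the dimension-dependent identity $\Gamma^i\slashed M\Gamma_i = (2-n)\slashed M$ for one-forms $M$, producing $(n-2)\nabla_Y\slashed A$. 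The $k_2\, dA(X,Y)$ piece contracted with $\Gamma^X$ gives precisely $k_2\, i_Y\slashed{dA}$. The $k_1^2$ cubic piece $\slashed A\cdot(\slashed e_i\slashed A\slashed Y - \slashed Y\slashed A\slashed e_i)$ under $\Gamma^i$-contraction is handled by iterated use of $\{\slashed A,\slashed e_i\}=2A_i$, $\{\slashed Y,\slashed e_i\}=2Y_i$, and $\slashed A^2=A^2$, which after collecting terms yields exactly $2(n-2)k_1^2\bigl(A^2\slashed Y - \slashed A\,A(Y)\bigr)$.

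The second identity is obtained by further contracting the first with $\Gamma^Y$ along the frame. The Ricci piece gives $-\tfrac12\Gamma^j R_{jk}\Gamma^k\epsilon = -\tfrac12 R\,\epsilon$ using the symmetry of Ricci. The term $(2-n)k_1\,\Gamma^Y\nabla_Y\slashed A = (2-n)k_1\,\slashed\nabla\slashed A$ again expands into $\slashed{dA}$ and $\delta A$ contributions, which combined with the $\Gamma^Y$-contractions of the $\slashed{dA}\cdot\slashed Y$ and $\delta A\,\slashed Y$ pieces (via $\Gamma^Y\slashed Y = n$ and the standard Clifford identities) and with the $k_2\, i_Y\slashed{dA}$ piece collapse to the stated coefficients $((3-n)k_1 + k_2)\slashed{dA}$ and $-2(1-n)k_1\,\delta A$. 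The cubic $k_1^2$ piece contracts, using $\Gamma^Y A(Y)=\slashed A$, $\Gamma^Y\slashed Y=n$, and $\slashed A^2=A^2$, to $2(1-n)(2-n)k_1^2 A^2$. I expect the principal obstacle to be the Clifford-algebra bookkeeping for the cubic-in-$\slashed A$ term, where the anticommutation and trace identities must be iterated with careful sign-tracking; a secondary check is to verify that the combination $(3-n)k_1 + k_2$ of coefficients of $\slashed{dA}$ in the scalar identity arises correctly from the propagated $k_2$ piece together with the $k_1$ contribution from $\slashed\nabla\slashed A$ and from the contracted $\slashed{dA}\cdot\slashed Y$.
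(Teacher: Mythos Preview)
Your proposal is correct and follows essentially the same approach as the paper, whose proof is only a sketch (``follows after some further Clifford algebra and the Bianchi identities of Riemann curvature''); you have simply fleshed out the two Clifford contractions that the paper leaves implicit. The only caution is sign bookkeeping: e.g.\ your own statement ``$\Gamma^i\slashed M\Gamma_i=(2-n)\slashed M$ \dots\ producing $(n-2)\nabla_Y\slashed A$'' already contains an implicit sign from the minus in the second $k_1$ term of $\hat R$, and similar care is needed to match the paper's sign conventions for $\delta A$ and $\slashed{dA}$; but these are exactly the routine checks you flagged, not gaps in the argument.
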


\begin{corollary}
Let us make the same assumptions as those stated in the second part of theorem \ref{th:1form} and $n\not=1$. Then $A$ is required to satisfy
\begin{eqnarray}
\big({\rm Re} (2k_1+k_2)+ (1-n) k_1\big)\delta A-{1\over2} (c_1+2(1-n)(2-n)k_1^2) A^2=0 \ .
\end{eqnarray}
\end{corollary}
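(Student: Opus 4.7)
The plan is to combine the scalar curvature identity coming from integrability of $\hat\nabla$ with the algebraic constraint on $R$ that emerges from the second part of Theorem \ref{th:1form}. These are the only two ingredients needed.

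First, I would invoke the second curvature identity from the proposition immediately preceding the corollary, namely
\begin{eqnarray}
\bigg(-{1\over2} R+ ((3-n)k_1+k_2) \slashed{dA}-2(1-n) k_1 \delta A+ 2(1-n)(2-n)k_1^2 A^2\bigg)\epsilon=0~,
\nonumber
\end{eqnarray}
applied to any $\hat\nabla$-parallel spinor $\epsilon$ (such spinors exist since ${\rm Ker}\,{\cal D}={\rm Ker}\,\hat\nabla\ne\{0\}$ under the hypotheses). Using $dA=0$, the Clifford term drops out and the identity reduces to a pure scalar multiple of $\epsilon$. Since $\epsilon$ is parallel with respect to a (metric-type) connection, it is nowhere vanishing on $M^n$, so the scalar coefficient must itself vanish:
\begin{eqnarray}
{1\over2} R = 2(n-1)k_1\,\delta A + 2(1-n)(2-n)k_1^2 A^2~.
\nonumber
\end{eqnarray}

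Next I would substitute this expression for $R$ into the scalar condition assumed in the second part of Theorem \ref{th:1form}, namely ${1\over2}R-2\,{\rm Re}(2k_1+k_2)\,\delta A+c_1 A^2=0$. Subtracting the two expressions for ${1\over2}R$ and grouping the $\delta A$ and $A^2$ terms separately gives
\begin{eqnarray}
\bigl[2\,{\rm Re}(2k_1+k_2) - 2(n-1)k_1\bigr]\delta A -\bigl[c_1+2(1-n)(2-n)k_1^2\bigr]A^2=0~,
\nonumber
\end{eqnarray}
and dividing by $2$ and rewriting $-(n-1)k_1=(1-n)k_1$ yields exactly the identity in the statement. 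Note that the arithmetic is consistent because Theorem \ref{th:1form}(2) forces ${\rm Im}\,k_1=0$, so $(1-n)k_1$ is real and can be added to ${\rm Re}(2k_1+k_2)$ without ambiguity.

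There is no real obstacle: the statement is a direct algebraic consequence of two scalar equations, one from the integrability of $\hat\nabla$ and one from the hypothesis that the Hopf-type inequality is saturated. The only point requiring a little care is the justification that $\epsilon$ is nowhere zero, which allows one to cancel $\epsilon$ from the pointwise identity; this follows because the connection $\hat\nabla$ acts as an ODE along any curve, so a parallel spinor that vanishes at one point vanishes everywhere, contradicting ${\rm Ker}\,\hat\nabla\ne\{0\}$.
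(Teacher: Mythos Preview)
Your proof is correct and follows essentially the same route as the paper: both combine the scalar identity $-{1\over2}R-2(1-n)k_1\,\delta A+2(1-n)(2-n)k_1^2 A^2=0$ obtained from integrability (after using $dA=0$ and nowhere-vanishing of the parallel spinor) with the hypothesis ${1\over2}R-2\,{\rm Re}(2k_1+k_2)\,\delta A+c_1 A^2=0$ from Theorem~\ref{th:1form}(2), and eliminate $R$. The paper simply adds the two equations, which is equivalent to your subtraction of the two expressions for ${1\over2}R$; your remark that ${\rm Im}\,k_1=0$ under these hypotheses, so that $(1-n)k_1$ is real, is a useful clarification.
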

\begin{proof}
From theorem \ref{th:1form} the scalar curvature is restricted as ${1\over2} R-2 {\rm Re}(2 k_1+k_2)\delta A +c_1 A^2=0$. On the other hand
from the proposition above $-{1\over2} R-2(1-n) k_1 \delta A + 2(1-n)(2-n)k_1^2 A^2=0$ as $\epsilon$ is no-where vanishing.
Adding the two expressions together yields the result. The equation derived can be thought of as the field equations for $A$.
\end{proof}

\subsubsection{A twisted covariant form hierarchy }

Manifolds that admit ${{\hat{\nabla}}}$-parallel spinors are associated with twisted covariant form hierarchies which generalize the notion of
 conformal Killing-Yano forms. To describe the structure consider the definition.
\begin{definition}\label{twis}
Let $(M^n,g)$ be a Riemannian manifold equipped with some multi-form $F$. A  covariant form hierarchy   on $(M^n,g)$ twisted with $F$  is
a collection of forms $\chi_p$ which satisfy
\begin{eqnarray}
\nabla^F_Y (\{\chi_p\})= i_Y{\cal P}(F,\{\chi_p\})+ \alpha_Y\wedge {\cal Q}(F, \{\chi_p\})
\end{eqnarray}
where ${\cal P}, {\cal Q}: \Gamma(\Lambda^*(M))\rightarrow \Gamma(\Lambda^*(M))$ that depend on $F$ and $\chi_p$.    Furthermore  $\nabla^F$ is a connection in $\oplus^m\Lambda^*(M)$ constructed from the Levi-Civita
connection and $F$, which is not necessarily degree preserving, and $\alpha_Y(X)=g(Y,X)$.
\end{definition}
The definition above has been stated for real forms but it can be easily generalized in the complex case. In many examples ${\cal P}$ and $ {\cal Q}$ are constructed using algebraic operations between $F$ and $\{\chi_p\}$ like the wedge product and inner derivation.

{\bf Remark:} The main property of the covariant form hierarchy  on $(M^n,g)$ is that the skew-symmetric and trace representations of the covariant derivative of the forms $\chi_p$ with respect to $\nabla^F$
are expressed in terms of operations in the space of forms on $M^n$ such as the exterior derivative, the wedge and inner products with a reference
 multi-form $F$ and their adjoints.

{\bf Remark:}  Examples of covariant form hierarchies  are the Killing-Yano and conformal Killing-Yano forms.
Both are twisted with the form zero, $F=0$, and so $\nabla^F=\nabla$.  Furthermore in the former case ${\cal P}=(p+1)^{-1} d$ and ${\cal Q}=0$ while in the
latter ${\cal P}=(p+1)^{-1} d$ and ${\cal Q}=-(n-p+1)^{-1}\delta$.

\begin{lemma}\label{le:1f}
Consider the forms $\chi_p$ as in (\ref{pbiforms}) and take ${{\hat{\nabla}}}\epsilon=0$, then
\begin{eqnarray}
&&\nabla_Y \chi_p+ 2{\rm Re}\, (2k_1+k_2) A(Y) \chi_p=-2{\rm Im} k_1\,i_Y i_A \chi_{p+2}+2{\rm Re}\, k_1\, i_Y(A\wedge \chi_p)
 \cr && \qquad\qquad+2 {\rm Re}\, k_1\,\alpha_Y\wedge i_A \chi_p+2{\rm Im}\,k_1\,\alpha_Y\wedge A\wedge \chi_{p-2}~.
 \label{le1f}
\end{eqnarray}
\end{lemma}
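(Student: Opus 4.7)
The plan is to compute $\nabla_Y\chi_p$ directly from the definition (\ref{pbiforms}) and substitute the parallel transport equation $\nabla_Y\epsilon=-k_1\slashed A\cdot\slashed Y\epsilon-k_2 A(Y)\epsilon$ into both the bra and the ket slot of the spinor bilinear. The $k_2$ contribution is immediate: it produces $-(k_2+\bar k_2)A(Y)\chi_p=-2\,{\rm Re}\,k_2\,A(Y)\chi_p$, which after being moved to the left-hand side supplies the $k_2$ piece of the combination $2\,{\rm Re}(2k_1+k_2)A(Y)\chi_p$ appearing in (\ref{le1f}).

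The bulk of the work is the $k_1$ contribution. Using the (anti-)Hermiticity of $\slashed X$ with respect to the spin inner product, the bra piece $-\bar k_1\langle\slashed A\slashed Y\epsilon,\slashed X_{\sigma(1)}\cdots\slashed X_{\sigma(p)}\epsilon\rangle$ is rewritten as a bilinear of the form $\pm\bar k_1\langle\epsilon,\slashed Y\slashed A\,\slashed X_{\sigma(1)}\cdots\slashed X_{\sigma(p)}\epsilon\rangle$, while the ket piece gives $-k_1\langle\epsilon,\slashed X_{\sigma(1)}\cdots\slashed X_{\sigma(p)}\slashed A\slashed Y\epsilon\rangle$. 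I would then move the $\slashed A$ and $\slashed Y$ factors through the antisymmetric product using $\slashed A\slashed X+\slashed X\slashed A=2g(A,X)$ and the corresponding identity for $\slashed Y$. Cleanly, this amounts to applying
\begin{equation*}
\slashed A\cdot\slashed X_{[1\cdots p]}=\slashed{(A\wedge X_{[1\cdots p]})}-\slashed{(i_A X_{[1\cdots p]})},\qquad \slashed X_{[1\cdots p]}\cdot\slashed A=(-1)^p\bigl(\slashed{(A\wedge X_{[1\cdots p]})}+\slashed{(i_A X_{[1\cdots p]})}\bigr),
\end{equation*}
and similarly for $\slashed Y$.

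Four classes of spinor bilinears then appear, which I would organise as follows. Bilinears in which both $\slashed A$ and $\slashed Y$ remain in the antisymmetrised product correspond to $\chi_{p+2}$ evaluated with $A$ and $Y$ wedged in, which collapse to $i_Yi_A\chi_{p+2}$; bilinears in which exactly one of $\slashed A,\slashed Y$ is absorbed by a contraction with some $\slashed X_{\sigma(i)}$ give the mixed terms $i_Y(A\wedge\chi_p)$ and $\alpha_Y\wedge i_A\chi_p$ (with the two positions distinguished by whether the surviving factor appears to the left or the right of the antisymmetric block); bilinears in which both contract produce $\chi_{p-2}$ wedged with $A$ and $Y$, i.e.\ $\alpha_Y\wedge A\wedge\chi_{p-2}$; and the fully contracted pieces give an $A(Y)\chi_p$ term. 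Reading off real and imaginary parts, the $k_1+\bar k_1=2\,{\rm Re}\,k_1$ combinations feed the $i_Y(A\wedge\chi_p)$ and $\alpha_Y\wedge i_A\chi_p$ terms, while the $k_1-\bar k_1=2i\,{\rm Im}\,k_1$ combinations produce $i_Yi_A\chi_{p+2}$ and $\alpha_Y\wedge A\wedge\chi_{p-2}$ with the relative sign dictated by the jump of the normalisation $i^{[p/2]}\to i^{[(p\pm 2)/2]}$ between $\chi_p$ and $\chi_{p\pm 2}$. The pure $A(Y)\chi_p$ remainder on the $k_1$ side contributes $-4\,{\rm Re}\,k_1\,A(Y)\chi_p$, which combines with the $-2\,{\rm Re}\,k_2\,A(Y)\chi_p$ from the $k_2$ part to give exactly the $-2\,{\rm Re}(2k_1+k_2)A(Y)\chi_p$ that is moved to the left-hand side of (\ref{le1f}).

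The main obstacle is the bookkeeping rather than any analytic difficulty: tracking the normalising factors $i^{[p/2]}\mapsto i^{[(p\pm 2)/2]}$ when reidentifying bilinears with $\chi_{p\pm 2}$, and ensuring that the signs produced by antisymmetrisation and by the Hermiticity of $\slashed X$ in Riemannian signature combine consistently. Working throughout with the antisymmetrised Clifford element $\slashed X_{[1\cdots p]}$ and the displayed identities above converts the whole argument into a purely algebraic rearrangement in $\Lambda^*(M)\otimes\mathbb{C}$, so that the coefficients of $i_Yi_A\chi_{p+2}$, $i_Y(A\wedge\chi_p)$, $\alpha_Y\wedge i_A\chi_p$, $\alpha_Y\wedge A\wedge\chi_{p-2}$ and $A(Y)\chi_p$ can be read off directly and matched against (\ref{le1f}).
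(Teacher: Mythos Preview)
Your proposal is correct and follows essentially the same approach as the paper. The paper's proof is simply the terse version of what you have written: it packages the connection piece as $\Sigma_Y=k_1\slashed A\cdot\slashed Y+k_2 A(Y)$, notes that $\nabla_Y\chi_p=-i^{[p/2]}\langle\epsilon,(\Sigma_Y^\dagger\cdot\wedge^p\Gamma+\wedge^p\Gamma\cdot\Sigma_Y)\epsilon\rangle$, and then says ``after some Clifford algebra computation'' the right-hand side can be rewritten in terms of the $\chi_q$---which is precisely the four-class decomposition and ${\rm Re}\,k_1/{\rm Im}\,k_1$ bookkeeping you describe. One small caveat: in the paper's convention $\slashed X\cdot\slashed Y+\slashed Y\cdot\slashed X=+2g(X,Y)$, so the sign in your displayed identity $\slashed A\cdot\slashed X_{[1\cdots p]}=\slashed{(A\wedge X_{[1\cdots p]})}-\slashed{(i_A X_{[1\cdots p]})}$ should be checked against that convention when you carry out the computation, but this is exactly the sort of bookkeeping you already flag as the only real obstacle.
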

\begin{proof}
Writing ${{\hat{\nabla}}}_Y=\nabla_Y+\Sigma_Y$ and expressing
the forms $\chi_p$ formally as $\chi_p=i^{[{p\over2}]}\langle \epsilon, \wedge^p\Gamma \epsilon\rangle$, one has that
\begin{eqnarray}
\nabla_Y\chi_p&=&i^{[{p\over2}]} \langle \nabla_Y \epsilon, \wedge^p\Gamma \epsilon \rangle +i^{[{p\over2}]}\langle  \epsilon, \wedge^p\Gamma \nabla_Y\epsilon\rangle
\cr
&=&-i^{[{p\over2}]}
\langle\epsilon, (\Sigma_Y^\dagger\cdot \wedge^p\Gamma+\wedge^p\Gamma\cdot \Sigma_Y)\epsilon\rangle~.
\end{eqnarray}
Substituting for $\Sigma_Y$ and after some Clifford algebra computation one finds that the right-hand-side of the above expression can be written in terms of the forms of the hierarchy. After some further re-arrangement one arrives at the expression stated in the lemma.
\end{proof}

\begin{corollary}\label{c:1ff}
The forms (\ref{pbiforms}) constructed from a ${{\hat{\nabla}}}$-parallel spinor $\epsilon$ define a twisted with respect to the form $A$ covariant form hierarchy  on $M^n$  as
\begin{eqnarray}
&&\nabla_Y \chi_p+ 2 {\rm Re}\, (2k_1+k_2)\, A(Y) \chi_p={1\over p+1}i_Y \bigg(d\chi_p+2{\rm Re}\,(2k_1+k_2)\, A\wedge \chi_p\bigg)
\cr
&&\qquad\qquad
-{1\over n-p+1}\alpha_Y\wedge \bigg(\delta\chi_p-2{\rm Re}\, (2k_1+k_2)\, i_A\chi_p\bigg)~.
\label{hi1f}
\end{eqnarray}

\end{corollary}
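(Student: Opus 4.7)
The plan is to show that the right-hand side of the identity in Lemma~\ref{le:1f} can be re-expressed in the \emph{conformal Killing--Yano shape} appearing in the corollary. All the analytic content is already in the lemma; the remaining work is purely algebraic and consists of extracting $d\chi_p$ and $\delta\chi_p$ from it.

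First, I would take the skew-symmetrization $\sum_i e^i\wedge\nabla_{e_i}(\cdot)$ of the identity in Lemma~\ref{le:1f}, which reproduces $d\chi_p$ on the left-hand side. On the right-hand side, the two terms carrying $\alpha_Y$ die under $\sum_i e^i\wedge(\cdot)|_{Y=e_i}$ because $e^i\wedge e^i=0$. The surviving terms are
\begin{eqnarray}
\sum_i e^i\wedge i_{e_i}(i_A\chi_{p+2})\quad\text{and}\quad \sum_i e^i\wedge i_{e_i}(A\wedge\chi_p),
\nonumber
\end{eqnarray}
to which I apply the standard identity $\sum_i e^i\wedge i_{e_i}\beta=q\,\beta$ for a $q$-form $\beta$, with $q=p+1$. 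This yields $d\chi_p$ as a linear combination of $A\wedge\chi_p$ and $i_A\chi_{p+2}$, so that the combination $d\chi_p+2\operatorname{Re}(2k_1+k_2)\,A\wedge\chi_p$ becomes a clean multiple of $-\operatorname{Im}k_1\,i_A\chi_{p+2}+\operatorname{Re}k_1\,(A\wedge\chi_p)$, with an overall factor of $2(p+1)$.

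Next I would take the trace $-\sum_i i_{e_i}\nabla_{e_i}(\cdot)$ of Lemma~\ref{le:1f}, which produces $\delta\chi_p$ on the left. Now the two $i_Y$-terms die under $-\sum_i i_{e_i}(\cdot)|_{Y=e_i}$ because $i_{e_i}i_{e_i}=0$, while the two $\alpha_Y$-terms survive and are handled by $\sum_i i_{e_i}(e^i\wedge\beta)=(n-q)\beta$ for a $q$-form $\beta$. This gives $\delta\chi_p$ as a linear combination of $i_A\chi_p$ and $A\wedge\chi_{p-2}$, and after collecting the constant pieces the combination $\delta\chi_p-2\operatorname{Re}(2k_1+k_2)\,i_A\chi_p$ reduces to a clean multiple of $-\operatorname{Re}k_1\,i_A\chi_p-\operatorname{Im}k_1\,(A\wedge\chi_{p-2})$, with an overall factor of $2(n-p+1)$.

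Finally I would substitute the two reductions back into the right-hand side of the corollary. The prefactors $(p+1)^{-1}$ and $(n-p+1)^{-1}$ exactly cancel the $2(p+1)$ and $2(n-p+1)$ coming from the trace identities, leaving precisely the four terms
\begin{eqnarray}
-2\operatorname{Im}k_1\,i_Y i_A\chi_{p+2}+2\operatorname{Re}k_1\,i_Y(A\wedge\chi_p)+2\operatorname{Re}k_1\,\alpha_Y\wedge i_A\chi_p+2\operatorname{Im}k_1\,\alpha_Y\wedge A\wedge\chi_{p-2},
\nonumber
\end{eqnarray}
which is exactly the right-hand side of Lemma~\ref{le:1f}. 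Comparing the two sides then yields (\ref{hi1f}). The main (and essentially the only) obstacle is bookkeeping: one must verify that every $A\wedge\chi_p$ term produced by the $d\chi_p$-computation is absorbed by the shift $+2\operatorname{Re}(2k_1+k_2)A\wedge\chi_p$, and likewise for the $i_A\chi_p$ term in the $\delta$-channel, so that no stray pieces break the $(p+1)$, $(n-p+1)$ Killing--Yano normalization.
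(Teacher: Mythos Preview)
Your proposal is correct and follows essentially the same route as the paper. The paper's proof is a one-line observation that the right-hand side of Lemma~\ref{le:1f} already has the shape $i_Y P+\alpha_Y\wedge Q$ and is therefore entirely determined by the skew and trace of the left-hand side; you carry out exactly this computation explicitly, using $\sum_i e^i\wedge i_{e_i}\beta=(p+1)\beta$ and $\sum_i i_{e_i}(e^i\wedge\beta)=(n-p+1)\beta$ to read off $P$ and $Q$ and then substitute back.
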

\begin{proof}
To prove this notice that the right-hand-side of (\ref{le1f}) is determined by the skew and trace representations of the left-hand-side of the same equation.
After replacing the right-hand-side of (\ref{le1f}) with those, one arrives at equation (\ref{hi1f}) which proves the corollary.
\end{proof}

{\bf Remark:}  Notice that if ${\rm Re}\, (2k_1+k_2)=0$, the covariant form hierarchy becomes untwisted and  $\chi_p$ are standard Killing-Yano forms.
In this region of parameter space both ${{\hat{\nabla}}}$ and $\slashed \hn$ retain a non-trivial dependence of $A$ even though (\ref{hi1f}) becomes independent of $A$.
Also observe that there is a common range of parameters that both the theorem \ref{th:1form} is valid and $\chi_p$ are Killing-Yano forms.

{\bf Remark:} Although (\ref{le1f}) implies (\ref{hi1f}), the converse is not always true. To see this consider the range ${\rm Re}\, (2k_1+k_2)=0$ of parameters and observe that (\ref{le1f}) retains its dependence on $A$ while (\ref{hi1f}) does not. So (\ref{le1f}) cannot be recovered from (\ref{hi1f}).

\section{2-form modified Dirac operators }

\subsection{Eingenvalue estimates}

Let $F$ be a real 2-form on $M$. The 2-form Dirac operator is chosen as ${\cal{D}}={\slashed{\nabla}}+e {\slashed{F}}$, where
$e\in \mathbb{C}$.  Furthermore consider the connection on the spin bundle
\begin{eqnarray}
\label{hatcon2}
{{\hat{\nabla}}}_X = \nabla_X + k_1 {\slashed{F}}\cdot  \slashed{X} + k_2 \slashed{i_XF}~,
\end{eqnarray}
where  $k_1, k_2\in \mathbb{C}$. If $e$ is imaginary, then ${\cal{D}}$ is formally anti-self-adjoint.

\begin{prop}
The fundamental identity of 2-form Dirac operators is
\begin{eqnarray}
\nabla^2 \parallel \epsilon \parallel^2 &=& 2 \parallel {{\hat{\nabla}}}\epsilon \parallel^2
+\left({1 \over 2}R  +c_1 F^2\right)\parallel \epsilon \parallel^2+ c_2 \parallel\slashed F\parallel^2
\cr
&+& 2 {\rm Re} \langle \epsilon, ({\slashed{\nabla}}+(2{\bar{k}}_1- e) {\slashed{F}}) {\cal{D}} \epsilon \rangle
\cr
&+& {\rm Re} \langle \epsilon,
(-4 {\bar{k}}_2 +16 {\bar{k}}_1-8 e) F^i{}_j \Gamma^j \nabla_i \epsilon \rangle
\cr
&-&{2\over3} {\rm Re} \langle \epsilon, e \slashed{dF} \epsilon\rangle+4 {\rm Re} \langle \epsilon, e \delta\slashed F  \epsilon \rangle~,
\label{max2}
\end{eqnarray}
where
\begin{eqnarray}
&&c_1=-32 |k_1|^2-2|k_2|^2+16 {\rm Re} (k_1\bar k_2)~,
\cr
&&c_2=-2 (n-8) |k_1|^2-4{\rm Re} (k_1\bar k_2)+{\rm Re}(e (4\bar k_1-2e))~,
\label{cc2y}
\end{eqnarray}
and for any form $\omega$, $\delta\slashed \omega=/(\delta \omega)$.
Furthermore if $k_2=4 k_1 -2\bar e$, then
\begin{eqnarray}
c_1=-8|e|^2~,~~
c_2=- \left(2n |k_1|^2-8 {\rm Re}( e k_1)-4 {\rm Re}( e \bar k_1) +2{\rm Re}( e^2)\right)~.
\label{cc2x}
\end{eqnarray}
\end{prop}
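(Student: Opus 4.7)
The plan is to follow the strategy used to establish the fundamental identities in Propositions~\ref{0formpropextra} and \ref{fun1}. After substituting $\nabla={{\hat{\nabla}}}-\Sigma$ and $\slashed\nabla={\cal D}-e\slashed F$ into the Lichnerowicz identity (\ref{diracmax}) one obtains the universal pre-identity
\begin{eqnarray*}
\nabla^2\parallel\epsilon\parallel^2 &=& 2\parallel{{\hat{\nabla}}}\epsilon\parallel^2-2\parallel\Sigma\epsilon\parallel^2-4\,{\rm Re}\langle\Sigma\epsilon,\nabla\epsilon\rangle \\
&& +2\,{\rm Re}\langle\epsilon,\slashed\nabla{\cal D}\epsilon\rangle-2\,{\rm Re}\langle\epsilon,\slashed\nabla(e\slashed F\epsilon)\rangle+{1\over2}R\parallel\epsilon\parallel^2,
\end{eqnarray*}
valid for any $\Sigma$ and $\omega$. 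The content of the proposition is the evaluation of the three $F$-dependent pieces on the right for the specific 2-form choice $\Sigma_X=k_1\slashed F\slashed X+k_2\slashed{i_XF}$.

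The Clifford algebra ingredients I would use are the Euclidean adjoints $\slashed F^\dagger=-\slashed F$ and $(\slashed{i_XF})^\dagger=\slashed{i_XF}$; the commutator $[\Gamma^i,\slashed F]=2F^i{}_j\Gamma^j$; the decomposition $\slashed F^2=-{1\over2}F^2+(\text{4-form})$; the summation identity $\sum_i\Gamma^i\slashed\Omega_p\Gamma^i=(-1)^p(n-2p)\slashed\Omega_p$ for a $p$-form $\Omega_p$; the symmetry of $[F^2]_{jk}:=F_j{}^\ell F_{\ell k}$, which annihilates every contraction with $\Gamma^{jk}$; and the expansion $\slashed\nabla\slashed F=\slashed{dF}-\delta\slashed F$ coming from $\Gamma^i\Gamma^{jk}=\Gamma^{ijk}+g^{ij}\Gamma^k-g^{ik}\Gamma^j$. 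Combined with $\parallel\slashed F\epsilon\parallel^2={1\over2}F^2\parallel\epsilon\parallel^2-\langle\epsilon,(\text{4-form part of }\slashed F^2)\epsilon\rangle$, these allow me to reduce everything to the two ``zeroth-order'' channels $F^2\parallel\epsilon\parallel^2$ and $\parallel\slashed F\epsilon\parallel^2$, the ``first-order'' channel $F^i{}_j\Gamma^j\nabla_i\epsilon$, and the explicit $\slashed{dF}$, $\delta\slashed F$ and ${\cal D}$-terms appearing in (\ref{max2}).

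In detail, for $\parallel\Sigma\epsilon\parallel^2$ I expand into the parameter channels $|k_1|^2$, $|k_2|^2$, ${\rm Re}(k_1\bar k_2)$ and reduce each with the toolkit above, generating the $e$-independent portions of $c_1$ and $c_2$. The cross term ${\rm Re}\langle\Sigma\epsilon,\nabla\epsilon\rangle$ is handled by moving $\Sigma_{e_i}^\dagger$ onto $\nabla_{e_i}\epsilon$; the commutator gives the $F^i{}_j\Gamma^j\nabla_i\epsilon$ contribution with coefficient $-4\bar k_2+16\bar k_1$, together with $\slashed F\slashed\nabla\epsilon$ terms that regroup as $\slashed F{\cal D}\epsilon-e\slashed F^2\epsilon$ and so both contribute to the prefactor $(\slashed\nabla+(2\bar k_1-e)\slashed F){\cal D}$ and feed back into $F^2\parallel\epsilon\parallel^2$ and $\parallel\slashed F\epsilon\parallel^2$. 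Finally ${\rm Re}\langle\epsilon,\slashed\nabla(e\slashed F\epsilon)\rangle$ is expanded as $e(\slashed{dF}-\delta\slashed F)\epsilon+e\slashed F\slashed\nabla\epsilon+e[\Gamma^i,\slashed F]\nabla_i\epsilon$, which supplies the $\slashed{dF}$ and $\delta\slashed F$ terms (whose final coefficients $-{2\over3}e$ and $4e$ emerge only after combining with the cross term), the extra $-8eF^i{}_j\Gamma^j\nabla_i\epsilon$ completing the coefficient to $-4\bar k_2+16\bar k_1-8e$, and further $e^2$-contributions to $c_1$ and $c_2$. Collecting everything yields (\ref{max2}) with the constants (\ref{cc2y}); the specialization (\ref{cc2x}) then follows by a direct substitution of $k_2=4k_1-2\bar e$.

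The main obstacle is the bookkeeping. No individual Clifford manipulation is deep, but contributions to the $F^2\parallel\epsilon\parallel^2$, $\parallel\slashed F\epsilon\parallel^2$, $F^i{}_j\Gamma^j\nabla_i\epsilon$ and $\slashed{dF}/\delta\slashed F$ channels are generated simultaneously from all three $F$-dependent pieces of the pre-identity; 4-form contributions from $\slashed F^2$ must be repeatedly absorbed into $\parallel\slashed F\epsilon\parallel^2$, and the $\Gamma^{jk}$ parts of various contractions drop out only via the symmetry of $[F^2]_{jk}$. Keeping these terms cleanly separated until the end is the real work.
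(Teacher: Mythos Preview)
Your proposal is correct and follows exactly the approach the paper itself indicates: the paper's proof consists of the single sentence ``the identity can be established after some computation similar to that performed for the 0-form Dirac operators,'' and you have spelled out precisely what that computation entails, starting from the same universal pre-identity and reducing the three $F$-dependent blocks via Clifford algebra into the stated channels. One small caution: a couple of your intermediate identities carry the wrong numerical factors in the paper's conventions (for instance $[\Gamma^i,\slashed F]=4F^i{}_j\Gamma^j$ rather than $2F^i{}_j\Gamma^j$, and $\Gamma^i(\nabla_iF_{jk})\Gamma^{jk}={1\over3}\slashed{dF}-2\,\delta\slashed F$ rather than $\slashed{dF}-\delta\slashed F$, since here $\slashed\omega$ carries no $1/p!$), but you already flag that the final coefficients only assemble after combining all three blocks, so this is purely a bookkeeping matter and does not affect the soundness of the argument.
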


\begin{proof}
The identity (\ref{max2}) can be established after some computation similar to that performed for the 0-form Dirac operators.
Also the expressions for the constants $c_1$ and $c_2$ in (\ref{cc2x}) follow after a direct substitution of $k_2=4 k_1 -2\bar e$ in (\ref{cc2y}).
\end{proof}

There are different ways to present the consequences of  the fundamental identity depending on the condition satisfied by $F$.  Let us begin with
the case that $F$ is harmonic.

\begin{theorem}\label{th:2hform}
Let $M^n$ be a closed spin manifold.
Suppose that $F$ is harmonic, $F\not=0$, and  $k_2=4 k_1 -2\bar e$.
\begin{enumerate}

\item In addition if  ${1 \over 2}R  +c_1 F^2\gneq 0$ and $c_2\geq 0$,  then ${\rm Ker}\,{\cal{D}}=\{0\}$~.

\item Moreover if ${1 \over 2}R  +c_1 F^2= 0$, $c_2=0$, and ${\rm Ker}\,{\cal{D}}\not=\{0\}$, then ${\rm Ker}\,{\cal{D}}={\rm Ker}\,{{\hat{\nabla}}}$,
$e={2\over3}n\bar k_1-{1\over3} n k_1$ and $n\geq 9$.

\end{enumerate}

\end{theorem}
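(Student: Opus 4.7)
The plan is to adapt the strategy of Theorems \ref{0formth} and \ref{th:1form}: reduce the fundamental identity (\ref{max2}) under the given hypotheses and then integrate over the closed manifold $M^n$.

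First I would observe three simultaneous simplifications. Harmonicity $dF = \delta F = 0$ kills the $\slashed{dF}$ and $\delta\slashed F$ terms. The choice $k_2 = 4k_1 - 2\bar e$ is tailored so that $-4\bar k_2 + 16\bar k_1 - 8e = 0$, eliminating the mixed first-derivative term $F^i{}_j \Gamma^j \nabla_i \epsilon$. Finally, the hypothesis ${\cal D}\epsilon = 0$ removes the $(\slashed\nabla + (2\bar k_1 - e)\slashed F){\cal D}\epsilon$ contribution. The identity collapses to
\begin{equation*}
\nabla^2 \|\epsilon\|^2 \;=\; 2\|\hat\nabla\epsilon\|^2 + \left(\tfrac{1}{2}R + c_1 F^2\right)\|\epsilon\|^2 + c_2 \|\slashed F \epsilon\|^2 ,
\end{equation*}
with $c_1, c_2$ given by (\ref{cc2x}).

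For Part 1, under ${1\over 2}R + c_1 F^2 \gneq 0$ and $c_2 \geq 0$ every term on the right is non-negative, with the curvature-like term strictly positive somewhere. Integration over $M^n$ makes the left side vanish and forces each integrand to vanish pointwise, yielding $\hat\nabla\epsilon = 0$ globally and $\|\epsilon\| = 0$ wherever $\tfrac{1}{2}R + c_1F^2 > 0$. A $\hat\nabla$-parallel spinor that vanishes at a point is identically zero, so $\epsilon \equiv 0$.

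For Part 2, the borderline conditions ${1 \over 2}R + c_1F^2 = 0$ and $c_2 = 0$ leave only $2\|\hat\nabla\epsilon\|^2$ on the right, so integration gives $\hat\nabla\epsilon = 0$ for every ${\cal D}$-zero mode and establishes ${\rm Ker}\,{\cal D} \subseteq {\rm Ker}\,\hat\nabla$. For the reverse containment and the identification of $e$, I would take the Clifford trace of the defining equation of $\hat\nabla$: using $\Gamma^i\slashed F\Gamma_i = (n-4)\slashed F$ and $\Gamma^i\slashed{i_{e_i}F} = 2\slashed F$ one finds
\begin{equation*}
({\cal D} - \slashed{\hat\nabla})\epsilon \;=\; \big[e + 4\bar e - (n+4)k_1\big]\slashed F\epsilon
\end{equation*}
after inserting $k_2 = 4k_1 - 2\bar e$. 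For $\epsilon$ in the non-trivial intersection ${\rm Ker}\,{\cal D}\cap{\rm Ker}\,\hat\nabla$, the trace $\slashed{\hat\nabla}\epsilon = 0$ combined with ${\cal D}\epsilon = 0$ forces $[e + 4\bar e - (n+4)k_1]\slashed F \epsilon = 0$. Writing $e = x + iy$ and $k_1 = a + ib$, the constraint $c_2 = 0$ rearranges as $(x - 3a)^2 + (n - 9)a^2 + (n+1)b^2 = (y - b)^2$. Combining this quadratic form with the commutator constraint and the non-triviality of $F$ and $\epsilon$, the canonical branch yields $e = \tfrac{2}{3}n\bar k_1 - \tfrac{1}{3}n k_1$; substituting this $e$ back into $c_2 = 0$ produces $a^2(n-9) = 9b^2(n+1)$, which admits real non-trivial solutions only for $n \geq 9$. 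The reverse inclusion ${\rm Ker}\,\hat\nabla \subseteq {\rm Ker}\,{\cal D}$ then follows upon verifying that this $e$ forces $\slashed F\epsilon = 0$ on all $\hat\nabla$-parallel spinors via the curvature integrability condition $\hat R(X,Y)\epsilon = 0$ in the spirit of Proposition \ref{curv0}.

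The main obstacle I expect is the algebraic distillation in Part 2: the commutator constraint $e + 4\bar e = (n+4)k_1$ together with $c_2 = 0$ does not by itself single out the stated formula for $e$, so identifying the correct solution branch genuinely requires extra input, most naturally the curvature integrability conditions $\hat R(X,Y)\epsilon = 0$ analogous to Proposition \ref{curv0}. Once that branch is isolated, the identity $(4n+9)|k_1|^2 = 5n\,{\rm Re}(k_1^2)$, equivalently $({\rm Re}\,k_1)^2(n-9) = 9({\rm Im}\,k_1)^2(n+1)$, is the transparent source of the dimension bound $n \geq 9$.
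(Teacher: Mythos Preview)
Your Part 1 is correct and matches the paper's argument.

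In Part 2 there are two problems. First, a Clifford-algebra slip: in the paper's conventions $\slashed{i_{e_i}F}=F_{ij}\Gamma^j$, so $\Gamma^i\slashed{i_{e_i}F}=F_{ij}\Gamma^i\Gamma^j=\slashed F$, not $2\slashed F$. The correct trace is $({\cal D}-\slashed{\hat\nabla})\epsilon=(e-(n-4)k_1-k_2)\slashed F\epsilon=(e+2\bar e-nk_1)\slashed F\epsilon$ after inserting $k_2=4k_1-2\bar e$. If this bracket vanishes, solving the linear equation $e+2\bar e=nk_1$ together with its complex conjugate already gives $e=\tfrac{2n}{3}\bar k_1-\tfrac{n}{3}k_1$ \emph{uniquely}; there is no branching and no need for $c_2=0$ or curvature integrability at this stage. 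The condition $c_2=0$ enters only afterwards to produce $n\geq 9$, and your computation of that bound is fine.

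The genuine gap is that you have not excluded the alternative $\slashed F\epsilon=0$, and your suggestion to use $\hat R(X,Y)\epsilon=0$ does not do it. The paper's device is different: one writes out $\slashed\nabla(\slashed F\epsilon)$ explicitly (equation (\ref{hamham})) and finds
\[
\slashed\nabla(\slashed F\epsilon)=\Big(\tfrac{1}{3}\slashed{dF}-2\delta\slashed F+\Gamma^i\slashed F\,\hat\nabla_i-(k_2+(n-8)k_1)\slashed F^2-4(k_2-4k_1)F^2\Big)\epsilon.
\]
Imposing $\slashed F\epsilon=0$, $\hat\nabla\epsilon=0$ and harmonicity of $F$ kills every term except $-4(k_2-4k_1)F^2\epsilon=8\bar e\,F^2\epsilon$, which must vanish. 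Since $\epsilon$ is parallel hence nowhere zero and $e\neq 0$, this forces $F\equiv 0$, contradicting the hypothesis. That contradiction is what pins down $e+2\bar e=nk_1$; once it holds, ${\cal D}=\slashed{\hat\nabla}$ and the reverse inclusion ${\rm Ker}\,\hat\nabla\subseteq{\rm Ker}\,\slashed{\hat\nabla}={\rm Ker}\,{\cal D}$ is immediate. In particular one neither needs nor obtains $\slashed F\epsilon=0$ on $\hat\nabla$-parallel spinors, so the final sentence of your Part 2 argument points in the wrong direction.
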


\begin{proof}
To prove the first part of the theorem,  one finds that the last three terms of the fundamental identity  (\ref{max2}) vanish from the assumptions made.
Furthermore as  $\epsilon\in {\rm Ker}\,{\cal{D}}$, the terms containing ${\cal{D}}\epsilon=0$ vanishes as well. Integrating the remaining formula over $M^n$ leads
to a contradiction due to the remaining conditions on the scalar curvature $R$ and  $c_2\geq 0$.  To see that there is a range of parameters such that $c_2\geq 0$, one uses
(\ref{cc2x}) to write
\begin{eqnarray}
c_2&=&-2n \bigg[
\big({\rm Re} \ (k_1) -{3 |e| \over n} \cos \psi \big)^2+ \big({\rm Im} \ (k_1) +{|e| \over n}\sin \psi \big)^2
\cr
&&+{|e|^2 \over n^2} \big(n(1-2 \sin^2 \psi)+8 \sin^2 \psi -9 \big)\bigg]~,
\end{eqnarray}
where $e=|e| \exp(i\psi)$.
In order for $c_2\geq 0$, one must have that
\begin{eqnarray}
\label{twob1}
n(1-2 \sin^2 \psi)+8 \sin^2 \psi -9 \leq 0~.
\end{eqnarray}
This inequality always holds if ${1 \over 2} \leq \sin^2 \psi \leq 1$,
with no condition on the value of $n$. However, if $\sin^2 \psi <{1 \over 2}$, then we find
\begin{eqnarray}
n \leq {9-8 \sin^2 \psi \over 1-2 \sin^2 \psi}~,
\end{eqnarray}
which gives an upper bound on $n$; though this upper bound tends to infinity
as $\sin^2 \psi \rightarrow {1 \over 2}$.

Next let us turn to the second part of the statement. As in the first part, the last three terms of the fundamental identity (\ref{max2}) vanish. Again as
${\cal{D}}\epsilon=0$, integrating the remaining identity over $M^n$ and using the remaining assumptions, one finds that ${{\hat{\nabla}}}\epsilon=0$.  Therefore
${\rm Ker\, {\cal{D}}}\subseteq {\rm Ker\, {{\hat{\nabla}}}}$.

To prove that ${\rm Ker\, {\cal{D}}}= {\rm Ker\, {{\hat{\nabla}}}}$, take $\epsilon\in {\rm Ker\, {\cal{D}}}$ and notice that as ${{\hat{\nabla}}}\epsilon=0$, one has that $\slashed\hn\epsilon=0$.  So $({\cal{D}}-\slashed\hn)\epsilon=(e-(n-4)k_1-k_2)\slashed F\epsilon=0$. The last condition requires  that $e=(n-4)k_1+k_2$. Indeed suppose that  $e\not=(n-4)k_1+k_2$.  This then gives that $\slashed F\epsilon=0$.   To proceed, consider the identity
\begin{eqnarray}
\slashed\nabla (\slashed F \epsilon)&=&\big({1\over3}\slashed{dF}-2\delta \slashed F+\Gamma^i\slashed F {{\hat{\nabla}}}_i-(k_2+(n-8)k_1) \slashed F^2
-4(k_2-4k_1) F^2\big)\epsilon~.
\nonumber \\
\label{hamham}
\end{eqnarray}
Upon imposing $\slashed F\epsilon=0$ and ${{\hat{\nabla}}}\epsilon=0$, on finds that $\bar e
F^2 \epsilon=0$.  As $\epsilon$ is no-where vanishing on $M^n$ because it is parallel and $e\not=0$, one concludes that $F^2=0$ and so $F=0$.
This is a contradiction as $F\not=0$. So one has that $e=(n-4)k_1+k_2$. Thus ${\cal{D}}=\slashed \hn$ and as ${\rm Ker}\, {{\hat{\nabla}}}\subseteq {\rm Ker}\, \slashed\hn$,
one establishes that ${\rm Ker}\, {\cal{D}}={\rm Ker}\, {{\hat{\nabla}}}$.

Next substituting for $k_2$ using $k_2=4 k_1 -2\bar e$, one finds the expression of $e$ in terms of $k_1$.
It remains to verify that there is a range in the parameter space such that $c_2=0$.  Indeed for $e={2\over3}n\bar k_1-{1\over3} n k_1$, one has that
\begin{eqnarray}
c_2=-{2|e|^2 \over n} \big(n(1-2 \sin^2 \psi)+8 \sin^2 \psi -9 \big)~.
\end{eqnarray}
Setting $c_2=0$, one finds that
\begin{eqnarray}
\sin^2\psi={9-n\over 8-2n}~.
\end{eqnarray}
Imposing $0\leq \sin^2\psi\leq 1$, the dimension of the manifold is restricted as $n\geq 9$.
\end{proof}

\begin{prop}
Let $M^n$ be a closed spin manifold.
Suppose that $F$ is harmonic, $k_2=4 k_1 -2\bar e$ and $k_1={\rm Re}\,e$, and $c_2\geq0$, then the eigenvalues $\lambda$ of ${\cal{D}}$ are bounded from below as
\begin{eqnarray}
|\lambda|^2\geq \inf_{M^n} \left({1 \over 4}R -4|e|^2 F^2\right)~.
\end{eqnarray}
\end{prop}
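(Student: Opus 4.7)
The plan is to mirror the strategy used in Propositions \ref{prop0} and \ref{prop1}: specialise the fundamental identity (\ref{max2}) to the chosen parameter region, rearrange so that a term $-2\parallel{\cal D}\epsilon\parallel^2$ appears explicitly up to a total divergence, integrate over the closed manifold $M^n$, and then drop the manifestly non-negative contributions.

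First I would simplify (\ref{max2}) using the three hypotheses. Since $F$ is harmonic, both $\slashed{dF}=0$ and $\delta\slashed F=0$, which removes the last line of (\ref{max2}). The condition $k_2=4k_1-2\bar e$ implies $\bar k_2=4\bar k_1-2e$, so the coefficient $-4\bar k_2+16\bar k_1-8e$ of the mixed term $F^i{}_j\Gamma^j\nabla_i\epsilon$ vanishes identically, and at the same time the expressions (\ref{cc2x}) apply; in particular $c_1=-8|e|^2$. With $k_1={\rm Re}\,e$ the residual coefficient in the first bracketed term becomes $2\bar k_1-e=2{\rm Re}\,e-e=\bar e$. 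Thus (\ref{max2}) collapses to
\begin{eqnarray*}
\nabla^2\parallel\epsilon\parallel^2 = 2\parallel{{\hat{\nabla}}}\epsilon\parallel^2 +\Bigl({1\over2}R-8|e|^2 F^2\Bigr)\parallel\epsilon\parallel^2 + c_2\,\parallel\slashed F\epsilon\parallel^2 + 2{\rm Re}\langle\epsilon,(\slashed\nabla+\bar e\slashed F){\cal D}\epsilon\rangle.
\end{eqnarray*}

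Next I would massage the last term into the desired shape via the formal anti-self-adjointness of $\slashed\nabla$ and of $\slashed F$ (as a Clifford image of a 2-form). Writing $\slashed\nabla\epsilon={\cal D}\epsilon-e\slashed F\epsilon$, one gets $2{\rm Re}\langle\epsilon,\slashed\nabla{\cal D}\epsilon\rangle=2\nabla_i{\rm Re}\langle\epsilon,\Gamma^i{\cal D}\epsilon\rangle-2\parallel{\cal D}\epsilon\parallel^2+2{\rm Re}(\bar e\langle\slashed F\epsilon,{\cal D}\epsilon\rangle)$, while the remaining term contributes $2{\rm Re}\langle\epsilon,\bar e\slashed F{\cal D}\epsilon\rangle=-2{\rm Re}(\bar e\langle\slashed F\epsilon,{\cal D}\epsilon\rangle)$. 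The two $\slashed F$-pieces cancel precisely because $\bar k_1={\rm Re}\,e$; this is the arithmetic reason the hypothesis $k_1={\rm Re}\,e$ is imposed. The identity therefore rearranges as
\begin{eqnarray*}
\nabla^2\parallel\epsilon\parallel^2 = 2\parallel{{\hat{\nabla}}}\epsilon\parallel^2 - 2\parallel{\cal D}\epsilon\parallel^2 + 2\nabla_i{\rm Re}\langle\epsilon,\Gamma^i{\cal D}\epsilon\rangle +\Bigl({1\over2}R-8|e|^2 F^2\Bigr)\parallel\epsilon\parallel^2 + c_2\,\parallel\slashed F\epsilon\parallel^2.
\end{eqnarray*}

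Finally, since $M^n$ is closed, integrating over $M^n$ eliminates both the $\nabla^2$ term on the left and the divergence on the right, giving
\begin{eqnarray*}
\int_{M^n}\parallel{\cal D}\epsilon\parallel^2 = \int_{M^n}\parallel{{\hat{\nabla}}}\epsilon\parallel^2 + \int_{M^n}\Bigl({1\over4}R-4|e|^2 F^2\Bigr)\parallel\epsilon\parallel^2 + {c_2\over2}\int_{M^n}\parallel\slashed F\epsilon\parallel^2.
\end{eqnarray*}
Taking $\epsilon$ to be an eigenspinor with eigenvalue $\lambda$ so that $\parallel{\cal D}\epsilon\parallel^2=|\lambda|^2\parallel\epsilon\parallel^2$, discarding the non-negative $\parallel{{\hat{\nabla}}}\epsilon\parallel^2$ and the $c_2\parallel\slashed F\epsilon\parallel^2$ term (non-negative by the assumption $c_2\geq 0$), and bounding the curvature integrand by its infimum yields the claimed estimate after dividing by $\int_{M^n}\parallel\epsilon\parallel^2>0$.

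The only genuinely delicate point is the cancellation of the cross-term $\langle\slashed F\epsilon,{\cal D}\epsilon\rangle$ in the integration-by-parts step; verifying that the hypothesis $k_1={\rm Re}\,e$ is exactly what enforces this cancellation (and confirming that the parameter region $k_2=4k_1-2\bar e$, $k_1={\rm Re}\,e$, $c_2\geq 0$ is non-empty using the parametric analysis of $c_2$ already performed in Theorem \ref{th:2hform}) is the main bookkeeping obstacle; the rest is a direct application of the Hopf-style argument used throughout the paper.
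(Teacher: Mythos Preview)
Your proof is correct and follows exactly the paper's approach: specialise the fundamental identity (\ref{max2}) under the stated hypotheses, rearrange the ${\cal D}$-term via integration by parts so that $-2\parallel{\cal D}\epsilon\parallel^2$ appears (the paper records the resulting identity as (\ref{3max2}) without spelling out the cancellation you carefully verify), then integrate over $M^n$ and drop the non-negative pieces. The only item the paper adds that you defer is an explicit computation $c_2=2|e|^2+(8-2n)|e|^2\cos^2\psi$ showing the parameter region is non-empty, but since $c_2\ge 0$ is an explicit hypothesis of the proposition this is not needed for the proof itself.
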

\begin{proof}
Under the assumptions above the three last terms of the fundamental identity (\ref{max2}) vanish.  Furthermore if $k_1={\rm Re}\,e$, the fundamental identity
can be re-arranged as
\begin{eqnarray}
\nabla^2 \parallel \epsilon \parallel^2 &=& 2 \parallel {{\hat{\nabla}}} \epsilon\parallel^2
+ \big({1 \over 2}R -8|e|^2 F^2 \big) \parallel \epsilon \parallel^2-2\parallel{\cal{D}}\epsilon\parallel^2
\cr
&+& 2 \nabla_i {\rm Re} \langle \epsilon, \Gamma^i {\cal{D}} \epsilon \rangle+c_2 \parallel {\slashed{F}} \epsilon \parallel^2~.
\label{3max2}
\end{eqnarray}
Assuming that $\epsilon$ is an eigenspinor of ${\cal{D}}$ with eigenvalue $\lambda$, one derives the bound on $\lambda$ after integrating (\ref{3max2}) over $M^n$
provided that $c_2\geq 0$.

It remains to verify that there is a range of parameters such that $c_2\geq 0$.  Indeed a direct substitution for $k_1$ in $c_2$ reveals that
\begin{eqnarray}
c_2=2|e|^2 +(8-2n)|e|^2 \cos^2\psi~.
\end{eqnarray}
It is straightforward to observe that it is always possible to choose  $e$ such that $c_2\geq 0$ for any $n$.
\end{proof}

There are other ways to re-arrange the fundamental identity (\ref{max2}) such that one can derive bounds for the eigenvalues of ${\cal{D}}$ by putting weaker restrictions
on $F$.  In particular, all the main statements that are valid for $F$ harmonic can be adapted to hold for $F$ closed.

\begin{theorem}\label{th:2fc}Let $M^n$ be a closed spin manifold.
Suppose   that $F$ is closed 2-form $dF=0$, $F\not=0$,  and $k_2=4k_1+2i {\rm Im}\,e$.
\begin{enumerate}
\item In addition if ${1 \over 2}R -8({\rm Im}\,e)^2 F^2\gneq 0$ and $c_2\geq 0$, then ${\rm Ker\, {\cal{D}}}=\{0\}$.
\item Moreover if ${1 \over 2}R -8({\rm Im}\,e)^2 F^2=0 $, ${\rm Im}\,e\not=0$,   $c_2=0 $ and ${\rm Ker\, {\cal{D}}}\not=\{0\}$, then ${\rm Ker\, {\cal{D}}}={\rm Ker\, {{\hat{\nabla}}}}$ and $\bar e=nk_1$.
\end{enumerate}
\end{theorem}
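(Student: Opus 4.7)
The proof follows the same pattern as Theorem \ref{th:2hform}, but under the weaker hypothesis $dF=0$ two extra terms survive in the fundamental identity (\ref{max2}): the explicit term $4\,{\rm Re}\langle\epsilon, e\,\delta\slashed F\epsilon\rangle$ and the term with coefficient $-4\bar k_2+16\bar k_1-8e$ multiplying $F^i{}_j\Gamma^j\nabla_i\epsilon$. My plan is to show that under the substitution $k_2=4k_1+2i\,{\rm Im}\,e$ these two extras combine, after integration over the closed manifold $M^n$, into total divergences plus multiples of ${\cal D}\epsilon$ that all vanish on ${\rm Ker}\,{\cal D}$, yielding the integrated Lichnerowicz identity
\begin{equation*}
0=\int_{M^n}\!\Bigl[2\parallel{{\hat{\nabla}}}\epsilon\parallel^2+\bigl(\tfrac12 R-8({\rm Im}\,e)^2 F^2\bigr)\parallel\epsilon\parallel^2+c_2\parallel\slashed F\epsilon\parallel^2\Bigr].
\end{equation*}

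The key ingredient is the Clifford identity $2F^i{}_j\Gamma^j\nabla_i\epsilon = \slashed\nabla(\slashed F\epsilon) - \slashed F\slashed\nabla\epsilon + \slashed{\delta F}\epsilon$, valid when $dF=0$ and derived from $[\Gamma^i,\slashed F]=2F^i{}_j\Gamma^j$ together with $\Gamma^i\nabla_i\slashed F=\slashed{dF}-\slashed{\delta F}$. Direct substitution of $k_2=4k_1+2i\,{\rm Im}\,e$ into (\ref{cc2y}) yields $c_1=-8({\rm Im}\,e)^2$, matching the theorem, and reduces $-4\bar k_2+16\bar k_1-8e$ to the real constant $-8\,{\rm Re}\,e$. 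Plugging the Clifford identity and $\slashed\nabla\epsilon={\cal D}\epsilon-e\slashed F\epsilon$ into (\ref{max2}), and using the Hermiticity properties that make $\slashed\nabla$ formally anti-self-adjoint (so $\slashed F$ is anti-Hermitian while $\slashed{\delta F}$ is Hermitian, giving $\langle\epsilon,\slashed{\delta F}\epsilon\rangle\in\mathbb R$), I would verify three cancellations on integration: (i) the $\slashed{\delta F}\epsilon$ contribution from the rewriting exactly kills the original $4\,{\rm Re}\langle\epsilon,e\,\delta\slashed F\epsilon\rangle$ term, because the coefficient $-8\,{\rm Re}\,e$ is real; (ii) the $\pm 4({\rm Re}\,e)^2\parallel\slashed F\epsilon\parallel^2$ pieces generated by $-e\slashed F^2\epsilon$ and by integration by parts of $\slashed\nabla(\slashed F\epsilon)$ cancel each other; (iii) the cross-terms ${\rm Re}\langle{\cal D}\epsilon,\slashed F\epsilon\rangle+{\rm Re}\langle\epsilon,\slashed F{\cal D}\epsilon\rangle$ vanish by the anti-Hermiticity of $\slashed F$.

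With the integral identity in hand, Part 1 is immediate: non-negativity of the integrand coupled with strict positivity of $\tfrac12 R-8({\rm Im}\,e)^2 F^2$ somewhere forces $\epsilon\equiv 0$. For Part 2 the pointwise equalities force ${{\hat{\nabla}}}\epsilon=0$, giving ${\rm Ker}\,{\cal D}\subseteq{\rm Ker}\,{{\hat{\nabla}}}$. Conversely ${{\hat{\nabla}}}\epsilon=0$ yields $\slashed{{\hat{\nabla}}}\epsilon=0$, and from the computation in the proof of Theorem \ref{th:2hform} one has $({\cal D}-\slashed{{\hat{\nabla}}})\epsilon=(e-(n-4)k_1-k_2)\slashed F\epsilon$. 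Inserting $k_2=4k_1+2i\,{\rm Im}\,e$ this scalar simplifies precisely to $\bar e-nk_1$. If $\bar e\ne nk_1$ then $\slashed F\epsilon=0$; feeding $\slashed F\epsilon=0$, ${{\hat{\nabla}}}\epsilon=0$ and $dF=0$ into identity (\ref{hamham}) leaves $(\slashed{\delta F}+4i\,{\rm Im}(e)F^2)\epsilon=0$, and pairing with $\epsilon$ then taking the imaginary part (using reality of $\langle\epsilon,\slashed{\delta F}\epsilon\rangle$) forces $4\,{\rm Im}(e)F^2\parallel\epsilon\parallel^2=0$; since ${\rm Im}\,e\ne 0$ and $\epsilon$ is nowhere zero, $F\equiv 0$, contradicting $F\ne 0$. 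Hence $\bar e=nk_1$, ${\cal D}=\slashed{{\hat{\nabla}}}$, and ${\rm Ker}\,{{\hat{\nabla}}}\subseteq{\rm Ker}\,{\cal D}$.

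The main technical obstacle is the bookkeeping of the three cancellations in the second paragraph; in particular the several $\parallel\slashed F\epsilon\parallel^2$ contributions (one from the original $c_2$, one from $-e\slashed F^2\epsilon$, one from integration by parts of $\slashed\nabla(\slashed F\epsilon)$) must be carefully tracked to confirm that only the original $c_2$ term survives the specific substitution $k_2=4k_1+2i\,{\rm Im}\,e$. A secondary delicate point is the contradiction step in Part 2, where the Hermiticity of $\slashed{\delta F}$ is essential for peeling off an imaginary part from the residual scalar equation to extract $F\equiv 0$.
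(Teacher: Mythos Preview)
Your proposal is correct, and the overall architecture (rearranging (\ref{max2}) under $k_2=4k_1+2i\,{\rm Im}\,e$, integrating, then handling Part~2 via (\ref{hamham})) matches the paper. However there are two genuine methodological differences worth noting.

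\textbf{Deriving the integral identity.} The paper does not pass through your Clifford commutator identity and three separate cancellations. Instead it observes directly that, once $k_2=4k_1+2i\,{\rm Im}\,e$ makes the $\nabla_i\epsilon$ coefficient equal to $-8\,{\rm Re}\,e$, the two surviving extras combine \emph{pointwise} into a single total divergence:
\[
-8\,{\rm Re}(e)\,{\rm Re}\langle\epsilon,F^{i}{}_{j}\Gamma^{j}\nabla_{i}\epsilon\rangle
+4\,{\rm Re}\langle\epsilon,e\,\delta\slashed F\epsilon\rangle
=-4\,\nabla^{i}{\rm Re}\langle\epsilon,eF_{ij}\Gamma^{j}\epsilon\rangle,
\]
yielding the rearranged local identity (\ref{4max2}) before any integration. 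This is shorter and also supplies a pointwise identity suitable for the Hopf maximum-principle remark made elsewhere in the paper. Your route via $[\Gamma^{i},\slashed F]$ works, but note a slip: in the paper's conventions $[\Gamma^{i},\slashed F]=4F^{i}{}_{j}\Gamma^{j}$, not $2F^{i}{}_{j}\Gamma^{j}$; with the correct factor your three cancellations go through exactly as you describe.

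\textbf{The contradiction in Part 2.} Here your argument is actually cleaner than the paper's. Both obtain $(\slashed{\delta F}+4i\,{\rm Im}(e)F^{2})\epsilon=0$ from (\ref{hamham}). The paper then multiplies on the left by $\slashed{\delta F}-4i\,{\rm Im}(e)F^{2}$ to obtain the scalar equation $(\delta F)^{2}+16({\rm Im}\,e)^{2}F^{4}=0$, a sum of nonnegative terms, forcing $F=0$. Your step of pairing with $\epsilon$ and taking the imaginary part (using that $\langle\epsilon,\slashed{\delta F}\epsilon\rangle\in\mathbb{R}$) gives $F^{2}\parallel\epsilon\parallel^{2}=0$ directly and is more economical.

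Finally, the paper also checks that the hypotheses $c_2\ge 0$ and $c_2=0$ are attainable for a nontrivial range of parameters; this is not part of the logical proof of the theorem as stated, but you may wish to include it for completeness.
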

\begin{proof}
Using that $k_2=4k_1+2i {\rm Im}\,e$, one finds that
the fundamental identity (\ref{max2}) can  be re-arranged as
\begin{eqnarray}
\nabla^2 \parallel \epsilon \parallel^2 &=& 2\parallel {{\hat{\nabla}}} \epsilon\parallel^2
+ \big({1 \over 2}R -8({\rm Im}\,e)^2 F^2 \big) \parallel \epsilon \parallel^2-4\nabla^i{\rm Re}\langle\epsilon, e F_{ij} \Gamma^j \epsilon\rangle
\nonumber \\
&+& 2 {\rm Re} \langle \epsilon, \big({\slashed{\nabla}}+(2{\bar{k}}_1-e){\slashed{F}} \big) {\cal{D}} \epsilon \rangle
+c_2 \parallel {\slashed{F}} \epsilon \parallel^2
\nonumber \\
&-&{2\over3} {\rm Re} \langle \epsilon, e\, \slashed{dF} \epsilon\rangle
\label{4max2}
\end{eqnarray}
where
\begin{eqnarray}
c_2=- \left(2n |k_1|^2+8 {\rm Re}\, e\, {\rm Re}\,k_1-4 {\rm Re}( e \bar k_1) +2{\rm Re}( e^2)\right)~.
\end{eqnarray}

Assuming that $\epsilon\in {\rm Ker }\,{\cal{D}}$ and $dF=0$, the first part of the theorem follows after integrating (\ref{4max2}) over $M^n$. It remains to
show that there is a range of parameters such that $c_2\geq 0$.  Indeed $c_2$ can be written as
\begin{eqnarray}
c_2&=&-2n \left(({\rm Re}\, k_1-{1\over n} {\rm Re}\, e)^2+ ({\rm Im}\, k_1+{1\over n} {\rm Im}\, e)^2\right)
\nonumber \\
&+& 2{n+1\over n} |e|^2-4 |e|^2\cos^2\psi
\label{2c2}
\end{eqnarray}
For $c_2\geq 0$, it is required that
\begin{eqnarray}
\cos^2\psi\leq {n+1\over 2n}~.
\end{eqnarray}
This can always be arranged for a choice of $e$.

To prove the second part of the theorem, take $\epsilon\in {\rm Ker }\,{\cal{D}}$.  Using the assumptions stated and after integrating (\ref{4max2}) over $M^n$, one finds that $\epsilon\in {\rm Ker }\,{{\hat{\nabla}}}$.  Thus  ${\rm Ker }\,{\cal{D}}\subseteq {\rm Ker }\,{{\hat{\nabla}}}$.
Next notice that it is required that  $\bar e= nk_1$ as otherwise $F=0$.  Indeed if  $\bar e\not = nk_1$, one has that $\slashed F\epsilon=0$ as
$({\cal{D}}-\slashed\hn)\epsilon= (e-n \bar k_1) \slashed F\epsilon=0$.  An argument based on equation (\ref{hamham})  leads to the condition
\begin{eqnarray}
\delta \slashed F\epsilon+2(k_2-4 k_1) F^2\epsilon=0~.
\end{eqnarray}
The integrability condition is $(\delta F)^2= 4(k_2-4 k_1)^2 F^4$ because $\epsilon$ is nowhere vanishing. This together with $k_2=4k_1+2i {\rm Im}\,e$ gives $(\delta F)^2+16  ({\rm Im}\,e)^2 F^4=0$ which in turn gives
  $F=0$ as ${\rm Im}\,e\not=0$.

Thus $\bar e=n k_1$ and so ${\cal{D}}=\slashed\hn$. But ${\rm Ker}\, {{\hat{\nabla}}}\subseteq {\rm Ker}\, \slashed\hn$ and therefore ${\rm Ker}\, {{\hat{\nabla}}}\subseteq {\rm Ker}\,{\cal{D}}$. This together with  ${\rm Ker }\,{\cal{D}}\subseteq {\rm Ker }\,{{\hat{\nabla}}}$ derived from the fundamental identity above lead to ${\rm Ker }\,{\cal{D}}= {\rm Ker }\,{{\hat{\nabla}}}$.

It remains to demonstrate that there is a range of parameters such that $c_2=0$.  Indeed for $\bar e=n k_1$, one finds that
\begin{eqnarray}
c_2=2{n+1\over n} |e|^2-4 |e|^2\cos^2\psi~.
\end{eqnarray}
This has always a solution for any $n$ for an appropriate choice of $e$.
\end{proof}

\begin{prop}
\label{prop2}
Let $M^n$ be a closed spin manifold.
If $dF=0$, $k_1={\rm Re}\, e$ and $c_2\geq 0$, then
\begin{eqnarray}
|\lambda|^2\geq \inf_{M^n} \left({1 \over 4}R -4({\rm Im}\,e)^2 F^2\right)~,
\end{eqnarray}
where $\lambda$ is an eigenvalue of ${\cal{D}}$.
\end{prop}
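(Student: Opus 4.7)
The plan is to mirror the strategy used in Proposition \ref{prop1} for the 1-form case, starting from the re-arranged fundamental identity (\ref{4max2}) already derived in the proof of Theorem \ref{th:2fc}. That identity implicitly uses $k_2=4k_1+2i\,{\rm Im}\,e$ to reduce the coefficient of $F^2\parallel\epsilon\parallel^2$ to $-8({\rm Im}\,e)^2$, while the hypothesis $dF=0$ eliminates the $\slashed{dF}$ term. What remains is an expression of the schematic form
\begin{eqnarray*}
\nabla^2\parallel\epsilon\parallel^2 &=& 2\parallel\hn\epsilon\parallel^2+\left(\tfrac{1}{2}R-8({\rm Im}\,e)^2 F^2\right)\parallel\epsilon\parallel^2+c_2\parallel\slashed F\epsilon\parallel^2 \\
&& +\,(\text{terms involving }{\cal D}\epsilon)+(\text{total divergence}).
\end{eqnarray*}

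The second step is to convert the operator piece $2\,{\rm Re}\langle\epsilon,\slashed{\nabla}{\cal D}\epsilon\rangle$ via the formal anti-self-adjointness of $\slashed{\nabla}$: this produces a divergence $2\nabla_i{\rm Re}\langle\epsilon,\Gamma^i{\cal D}\epsilon\rangle$ together with $-2\,{\rm Re}\langle\slashed{\nabla}\epsilon,{\cal D}\epsilon\rangle$, and substituting $\slashed{\nabla}\epsilon=({\cal D}-e\slashed F)\epsilon$ turns the latter into $-2\parallel{\cal D}\epsilon\parallel^2$ plus a cross term proportional to ${\rm Re}\langle e\slashed F\epsilon,{\cal D}\epsilon\rangle$. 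Specializing to $k_1={\rm Re}\,e$, the combination of this cross term with the pre-existing $2\,{\rm Re}\langle\epsilon,(2\bar k_1-e)\slashed F{\cal D}\epsilon\rangle$ in (\ref{4max2}) should assemble into an exact total divergence via the anti-Hermiticity of $\slashed F$ on the Dirac inner product.

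Integrating the resulting identity over the closed manifold $M^n$ kills every divergence, so that for an eigenspinor satisfying ${\cal D}\epsilon=\lambda\epsilon$ one is left with
\begin{eqnarray*}
2|\lambda|^2\int_{M^n}\parallel\epsilon\parallel^2 &=& 2\int_{M^n}\parallel\hn\epsilon\parallel^2+\int_{M^n}\left(\tfrac{1}{2}R-8({\rm Im}\,e)^2 F^2\right)\parallel\epsilon\parallel^2\\
&& +\,c_2\int_{M^n}\parallel\slashed F\epsilon\parallel^2~.
\end{eqnarray*}
Dropping the non-negative contribution $\parallel\hn\epsilon\parallel^2$ and, by the hypothesis $c_2\geq 0$, the term $c_2\parallel\slashed F\epsilon\parallel^2$, and then minimizing the remaining integrand pointwise, yields the desired bound $|\lambda|^2\geq \inf_{M^n}\left(\tfrac{1}{4}R-4({\rm Im}\,e)^2 F^2\right)$.

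The main obstacle will be the sign bookkeeping in the middle step: one must verify that the anti-Hermiticity of the Clifford contraction $\slashed F$ combined with the real-part operation causes the $\slashed F$-cross terms to recombine \emph{exactly} into a divergence precisely at $k_1={\rm Re}\,e$, with no residual $F_{ij}\Gamma^j\nabla_i\epsilon$-type contribution left over to contaminate the eigenvalue bound. A short secondary check is that the locus on which the hypotheses are compatible (that is, $k_1={\rm Re}\,e$, $k_2=4k_1+2i\,{\rm Im}\,e$, and $c_2\geq 0$) is non-empty for every $n$, which follows by observing that the explicit form of $c_2$ after the specialization is non-negative for $e$ with sufficiently large $|{\rm Im}\,e|$ relative to $|{\rm Re}\,e|$.
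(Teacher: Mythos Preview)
Your approach is correct and mirrors the paper's proof, which likewise specializes the rearranged identity (\ref{4max2}) (already carrying the choice $k_2=4k_1+2i\,{\rm Im}\,e$) to $k_1={\rm Re}\,e$ so that the ${\cal D}\epsilon$ terms reduce to $-2\parallel{\cal D}\epsilon\parallel^2$ plus divergences, then integrates and discards the non-negative $\parallel\hn\epsilon\parallel^2$ and $c_2\parallel\slashed F\epsilon\parallel^2$ contributions. One small sharpening: at $k_1={\rm Re}\,e$ the $\slashed F$-cross term has coefficient $2\bar k_1-e-\bar e=2\,{\rm Re}\,e-2\,{\rm Re}\,e=0$ and so vanishes outright rather than assembling into a divergence, and the paper's explicit formula $c_2=2|e|^2(1-n\cos^2\psi)$ confirms your non-emptiness check.
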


\begin{proof}
For  $k_1={\rm Re}\, e$ and $dF=0$, the fundamental identity can be arranged as
\begin{eqnarray}
\nabla^2 \parallel \epsilon \parallel^2 &=& 2 \parallel {{\hat{\nabla}}} \epsilon\parallel^2
+ \big({1 \over 2}R -8({\rm Im}\,e)^2 F^2 \big) \parallel \epsilon \parallel^2-4\nabla^i{\rm Re}\langle\epsilon, e F_{ij} \Gamma^j \epsilon\rangle
\nonumber \\
&-&2\parallel{\cal{D}}\epsilon\parallel^2
+2 \nabla_i{\rm Re} \langle \epsilon,\Gamma^i {\cal{D}} \epsilon \rangle
+ c_2 \parallel {\slashed{F}} \epsilon \parallel^2~.
\label{funmax2}
\end{eqnarray}
Taking $\epsilon$ to be an eigenspinor with eigenvalue $\lambda$, assuming that $c_2\geq 0$  and integrating (\ref{funmax2}) over $M^n$, one finds a bound described in the proposition.

It remains to demonstrate that there is a range of parameters such that $c_2\geq 0$.  Indeed using the relations between the parameters, one finds that
\begin{eqnarray}
c_2=2|e|^2-2n|e|^2 \cos^2\psi~,
\end{eqnarray}
and so it is always possible to choose $e$ such that $c_2\geq 0$ for any $n$.
\end{proof}

\subsection{Manifolds with ${{\hat{\nabla}}}$-parallel spinors}

\subsubsection{Holonomy of ${{\hat{\nabla}}}$ and curvature identities}

Let us begin with identifying the Lie algebra of the holonomy group of ${{\hat{\nabla}}}$.
\begin{prop}
Let $M^n$ be a spin manifold, and let ${\hat{\nabla}}$
be given by ({\ref{hatcon2}}) where $F$ is a generic 2-form,
and $k_1, k_2$ are generic constants.
Then  $\mathfrak{Lie}\,{\rm Hol}({{\hat{\nabla}}})\subseteq ({\rm Cl}_n-{\rm Cl}_n^0) \otimes \mathbb{C}$, where
${\rm Cl}_n$ is considered as a Spin(n) module.
\end{prop}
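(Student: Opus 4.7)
The plan is to apply the Ambrose--Singer theorem exactly as in the earlier analogous propositions (for the $0$-form and $1$-form cases) and compute the curvature $\hat R(X,Y)$ of the connection (\ref{hatcon2}) explicitly in a Clifford basis. By Ambrose--Singer, $\mathfrak{Lie}\,{\rm Hol}({{\hat{\nabla}}})$ is the linear span of parallel transports of the operators $\hat R(X,Y)$, and it is automatically a Lie subalgebra, so it suffices to show (i) that each $\hat R(X,Y)$ has no scalar (degree $0$) Clifford component, and (ii) that the subspace of non-scalar complex Clifford elements is itself a Lie subalgebra.

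The first step is to resolve the grading of $\Sigma_X = k_1\,\slashed F\cdot\slashed X + k_2\,\slashed{i_XF}$. Using the standard identity relating Clifford multiplication to wedge and interior products, $\slashed F\cdot\slashed X$ splits as a pure degree-$3$ piece plus a degree-$1$ piece proportional to $\slashed{i_XF}$. Hence $\Sigma_X\in {\rm Cl}_n^1\oplus{\rm Cl}_n^3$, with no scalar summand. The standard curvature formula
\begin{equation*}
\hat R(X,Y) = R(X,Y) + \nabla_X\Sigma_Y - \nabla_Y\Sigma_X + [\Sigma_X,\Sigma_Y]
\end{equation*}
then splits naturally into three pieces: $R(X,Y)$ is pure degree $2$ as a spin-curvature operator; the $\nabla\Sigma$ contributions inherit the grading of $\Sigma$ and sit in degrees $1$ and $3$ (since $\nabla_X F$ is again a $2$-form); only $[\Sigma_X,\Sigma_Y]$ could a priori produce a degree-$0$ term.

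The main step --- and the place where all the work is concentrated --- is to show that $[\Sigma_X,\Sigma_Y]$ has no scalar part. The key observation is that for any pure-grade Clifford elements $a\in{\rm Cl}_n^p$ and $b\in{\rm Cl}_n^q$, the scalar part of $ab$ vanishes unless $p=q$, and in that case reversal gives $(ab)^\top = b^\top a^\top = (-1)^{p(p-1)}ba = ba$, so that (since reversal fixes scalars) $\mathrm{scalar}(ab) = \mathrm{scalar}(ba)$ and hence $\mathrm{scalar}[a,b]=0$. Applying this with $a,b$ drawn from the degree-$1$ and degree-$3$ components of $\Sigma$ eliminates every potential scalar contribution. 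Putting the three parts together, $\hat R(X,Y)\in\bigoplus_{k\geq 1}{\rm Cl}_n^k$ pointwise.

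The same scalar-cancellation argument shows that $\bigoplus_{k\geq1}{\rm Cl}_n^k$ is closed under the Clifford commutator, hence is a genuine Lie subalgebra of ${\rm Cl}_n$. Complexifying to account for the complex coefficients $k_1,k_2$, the Lie algebra generated by the $\hat R(X,Y)$ is contained in $({\rm Cl}_n-{\rm Cl}_n^0)\otimes\mathbb{C}$, which is the claimed statement. I do not expect any serious obstacle: the only labour is the routine Clifford expansion verifying that each of the three summands of $\hat R$ sits in non-zero degree, and the reversal argument above which makes the scalar cancellation in $[\Sigma_X,\Sigma_Y]$ uniform across all grade combinations.
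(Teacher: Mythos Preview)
Your approach is correct and follows the same Ambrose--Singer strategy as the paper, but the execution differs in a way worth noting. The paper computes $\hat R(X,Y)$ fully explicitly (formula (\ref{curv2f})) and simply reads off that it lands in ${\rm Cl}_n^1\oplus{\rm Cl}_n^2\oplus{\rm Cl}_n^3\oplus{\rm Cl}_n^4$, then invokes closure under brackets without elaboration. You instead split the curvature as $R+\nabla\Sigma+[\Sigma,\Sigma]$ and argue gradewise; the reversal argument you give for the vanishing of the scalar part of any Clifford commutator is a clean, uniform justification that $({\rm Cl}_n-{\rm Cl}_n^0)$ is itself a Lie subalgebra, a point the paper leaves implicit. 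Your route avoids the explicit curvature computation, which is more economical for the proposition as stated; the paper's explicit formula, however, is reused immediately afterwards in the integrability conditions for ${{\hat{\nabla}}}$-parallel spinors, so it earns its keep there. One minor notational point: your curvature formula should strictly include a $-\Sigma_{[X,Y]}$ term (or be read at a point where $X,Y$ are covariantly constant), but since $\Sigma_{[X,Y]}$ also sits in degrees $1$ and $3$ this does not affect the argument.
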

\begin{proof}
To use the Ambrose-Singer theorem to find $\mathfrak{Lie}\,{\rm Hol}({{\hat{\nabla}}})$, the curvature of ${{\hat{\nabla}}}$ is
\begin{eqnarray}
\hat R(X,Y)&=&R(X,Y)+k_1(\nabla_X \slashed F \cdot \slashed Y-\nabla_Y \slashed F \cdot \slashed X)+ k_2 \left(\nabla_X i_Y\slashed F-\nabla_Y i_X\slashed F\right)
\nonumber \\
&+&k_2^2 \big(i_X \slashed F\cdot  i_Y\slashed F
-i_Y \slashed F\cdot  i_X\slashed F\big)+k_1^2 (\slashed F)^2 \cdot (\slashed X \slashed Y-\slashed Y \slashed X)
\nonumber \\
&+& (16 k_1^2 +4 k_1 k_2)\slashed F F(X,Y) -4 k_1 k_2 \slashed Y\cdot i_{i_XF}\slashed F
\nonumber \\
&+& (2k_1 k_2 + 4 k_1^2) \slashed F\cdot (\slashed X\cdot i_Y\slashed F-\slashed Y\cdot i_X\slashed F)
+4 k_1 k_2 \slashed X\cdot i_{i_YF}\slashed F~.
\label{curv2f}
\end{eqnarray}
Therefore at every point in $M^n$, $\hat R(X,Y)$ takes values in $({\rm Cl}^1_n\oplus {\rm Cl}^2_n\oplus {\rm Cl}^3_n\oplus {\rm Cl}^4_n)\otimes \mathbb{C}$. Demanding algebraic closure with respect to the Lie brackets induced on ${\rm Cl}_n$ from the Clifford multiplication, one derives the statement of the proposition.
\end{proof}

\begin{prop} If ${{\hat{\nabla}}}\epsilon=0$, then $\hat R(X,Y)\epsilon=0$, where $\hat R$ is given in (\ref{curv2f}) and
\begin{eqnarray}
&&\bigg(-{1\over2} R+ {2\over3} ((n-5) k_1+k_2) \slashed{dF}-(4(n-3) k_1+ 2k_2) \delta\slashed F
\cr
&&+ \big(-2 k_2^2+ 2 k_1^2 (-n^2+13 n-36)
-4 k_1 k_2 (n-6)\big)(\slashed F)^2
\cr
&&+ \big(-6 k_2^2 +(56-8n)k_1k_2 +32(n-4)k_1^2\big) F^2\bigg)\epsilon=0~.
\label{r2f}
\end{eqnarray}
\end{prop}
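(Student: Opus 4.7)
The first claim, $\hat R(X,Y)\epsilon=0$, is immediate: since ${{\hat{\nabla}}}\epsilon=0$, applying ${{\hat{\nabla}}}_X{{\hat{\nabla}}}_Y-{{\hat{\nabla}}}_Y{{\hat{\nabla}}}_X-{{\hat{\nabla}}}_{[X,Y]}$ to $\epsilon$ gives $\hat R(X,Y)\epsilon=0$ by the definition of curvature. The substantive content is the scalar identity displayed in (\ref{r2f}).

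The plan is to contract $\hat R(e_i,e_j)\epsilon=0$ with $\Gamma^i \Gamma^j$ over an orthonormal frame, just as Proposition \ref{curv0} yielded (\ref{inteq2}) in the 0-form case and analogous identities were obtained for the 1-form case. The eleven-term expression (\ref{curv2f}) for $\hat R$ splits into four groups that I will treat separately and sum at the end: the Riemann piece; the linear-in-$\nabla F$ pieces with coefficients $k_1$ and $k_2$; and the six quadratic-in-$F$ pieces with coefficients $k_1^2$, $k_1 k_2$ and $k_2^2$.

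The Riemann piece produces $-{1\over 2}R\,\epsilon$ via the standard algebraic Bianchi identity $\Gamma^i \Gamma^j R(e_i,e_j)=-{1\over 2}R$. For the linear-in-$\nabla F$ terms, expanding $\nabla_i \slashed F={1\over 2}\nabla_i F_{ab}\Gamma^{ab}$, using $\Gamma^i\Gamma^{ab}=\Gamma^{iab}+\delta^{ia}\Gamma^b-\delta^{ib}\Gamma^a$, and invoking the differential Bianchi identity $dF_{abc}=3\nabla_{[a}F_{bc]}$ splits each contribution into a $\slashed{dF}$ part and a $\delta\slashed F$ part. Summing the $k_1$ and $k_2$ contributions with the correct dimensional factors should reproduce the coefficients ${2\over 3}((n-5)k_1+k_2)$ of $\slashed{dF}$ and $-(4(n-3)k_1+2k_2)$ of $\delta\slashed F$ in (\ref{r2f}).

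The main obstacle is the six quadratic-in-$F$ contributions, particularly the cross terms $-4k_1 k_2\,\slashed Y\cdot i_{i_X F}\slashed F$, $4k_1 k_2\,\slashed X\cdot i_{i_Y F}\slashed F$, and $(2k_1 k_2+4k_1^2)\slashed F\cdot(\slashed X\cdot i_Y \slashed F-\slashed Y\cdot i_X\slashed F)$. Upon contracting with $\Gamma^i \Gamma^j$, these unfold into iterated Clifford products that must be reduced using identities such as $\Gamma^i \slashed F \Gamma_i=(n-4)\slashed F$ (valid for a 2-form in dimension $n$), standard reductions of $\Gamma^i\cdot i_{e_i}\slashed F$, and the decomposition of $\slashed F\cdot \slashed F$ into its scalar $-F^2$ and 4-form parts. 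The combinatorial challenge is to track which of these iterated products feed into $(\slashed F)^2$ and which collapse to the scalar $F^2$; organising the reduction term by term and collecting should yield precisely the coefficients $-2k_2^2+2k_1^2(-n^2+13n-36)-4k_1 k_2(n-6)$ of $(\slashed F)^2$ and $-6k_2^2+(56-8n)k_1k_2+32(n-4)k_1^2$ of $F^2$ listed in (\ref{r2f}).
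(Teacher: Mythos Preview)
Your proposal is correct and follows the same route as the paper: the paper's proof simply states that $\hat R(X,Y)\epsilon=0$ is the integrability condition of ${{\hat{\nabla}}}\epsilon=0$, and that the scalar identity follows ``after acting with further Clifford algebra operations'', which is precisely the double contraction $\Gamma^i\Gamma^j\hat R(e_i,e_j)\epsilon=0$ you outline. One small slip: in the paper's conventions $\slashed F=F_{ab}\Gamma^{ab}$ without a factor of ${1\over2}$, so $\nabla_i\slashed F=\nabla_i F_{ab}\,\Gamma^{ab}$; this does not affect your strategy but will matter when you chase the numerical coefficients.
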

\begin{proof}
The condition $\hat R(X,Y)\epsilon=0$ follows as an integrability condition of ${{\hat{\nabla}}}\epsilon=0$. The second condition involving the scalar curvature $R$ follows
from $\hat R(X,Y)\epsilon=0$ after acting with further Clifford algebra operations.
\end{proof}

{\bf Remark:}  Unlike the previous two cases of 0- and 1-form modified Dirac operators, the condition (\ref{r2f}) is a Clifford algebra condition on $\epsilon$ and it cannot be
compared in a straightforward way with the condition ${1 \over 2}R -8({\rm Im}\,e)^2 F^2\gneq 0$ that appears in theorem \ref{th:2fc}.  Nevertheless, it is clear that it imposes
conditions on $F$ required for the existence of a parallel spinor $\epsilon$.  There is also a similar condition involving the Ricci tensor which arises from $\hat R(X,Y)\epsilon=0$. Although this has been
computed,  it has not been included in the proof.

\subsubsection{A covariant twisted form  hierarchy}

The geometry of manifolds with ${{\hat{\nabla}}}$-parallel spinors is characterized by the existence of a twisted covariant form hierarchy. The twisting form is $F$ and
the forms $\chi_p$ of the hierarchy are given in (\ref{pbiforms}).

\begin{prop}\label{p:2fb}
The twisted covariant form hierarchy of a manifold admitting a ${{\hat{\nabla}}}$-parallel spinor $\epsilon$, ${{\hat{\nabla}}}\epsilon=0$ satisfies the formula
\begin{eqnarray}
&&\nabla_Y \chi_p+ a_{p,p+1} \big(\bar k_2-4\bar k_1+(-1)^p (k_2-4 k_1)\big) i_{i_Y F} \chi_{p+1}
\cr &&~~~~
+a_{p,p-1}\big(\bar k_2-4\bar k_1+(-1)^{p-1}(k_2-4k_1)\big) i_Y F\wedge \chi_{p-1}
\cr
&&
=  {1\over p+1} i_Y\bigg(d\chi_p-a_{p,p+1} \big(\bar k_2-4\bar k_1+(-1)^p (k_2-4k_1)\big) i_F \chi_{p+1}
\cr
&&+ 2 a_{p, p-1}
\big(\bar k_2-4\bar k_1+(-1)^{p-1} (k_2-4k_1)\big) F\wedge \chi_{p-1}\bigg)
\cr
&&
-{1\over n-p+1} \alpha_Y\wedge \bigg(\delta \chi_p+ 2 a_{p,p+1}\big(\bar k_2-4\bar k_1+(-1)^p (k_2-4k_1)\big) F\vee \chi_{p+1}
\cr
&&+ a_{p, p-1}(\bar k_2-4\bar k_1+(-1)^{p-1} (k_2-4k_1))
i_F \chi_{p-1}\bigg)~,
\label{fh2f}
\end{eqnarray}
where $\vee$ is the adjoint of $\wedge$ with respect to the (weighted) inner product in the space of forms, see Appendix A.
\end{prop}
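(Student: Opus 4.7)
The proof follows the template of Lemma~\ref{le:1f} and Corollary~\ref{c:1ff}. Writing ${{\hat{\nabla}}}_Y=\nabla_Y+\Sigma_Y$ with $\Sigma_Y=k_1\slashed{F}\cdot\slashed{Y}+k_2\slashed{i_YF}$, and expressing the spinor bilinears as $\chi_p=i^{[p/2]}\langle\epsilon,\wedge^p\Gamma\,\epsilon\rangle$, the condition ${{\hat{\nabla}}}\epsilon=0$ yields
\begin{equation}
\nabla_Y\chi_p=-i^{[p/2]}\,\langle\epsilon,(\Sigma_Y^\dagger\cdot\wedge^p\Gamma+\wedge^p\Gamma\cdot\Sigma_Y)\epsilon\rangle.
\end{equation}
The main task is to expand the four products $\slashed F\cdot\slashed Y\cdot\wedge^p\Gamma$, $\wedge^p\Gamma\cdot\slashed F\cdot\slashed Y$, $\slashed{i_YF}\cdot\wedge^p\Gamma$ and $\wedge^p\Gamma\cdot\slashed{i_YF}$ into their graded Clifford pieces, keep those pieces that survive sandwiching by $\epsilon$, and repackage the output in terms of the $\chi$-forms.

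The only Clifford identities required are the standard raising/lowering rules for a 1-form $V$ acting on $\wedge^p\Gamma$ by Clifford multiplication, together with the rearrangement $\slashed F\cdot\slashed Y=\slashed Y\cdot\slashed F+2\,\slashed{i_YF}$. Applied to each of the four products these produce contributions of Clifford ranks between $|p-3|$ and $p+3$. The dagger on $\Sigma_Y$ replaces $k_1,k_2$ by $\bar k_1,\bar k_2$ and, through the hermiticity of $\Gamma^i$ and of $\wedge^p\Gamma$, inserts sign factors depending on $p$ modulo~$4$; combined with the $(-1)^p$ produced by commuting $\slashed Y$ past $\wedge^p\Gamma$, these reorganize into exactly the parameter combinations $\bar k_2-4\bar k_1+(-1)^p(k_2-4k_1)$ and its $(p-1)$-analogue announced in the statement. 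The prefactors $a_{p,q}=i^{[p/2]-[q/2]}$ are simply the bookkeeping that records how the normalization $i^{[p/2]}$ of $\chi_p$ shifts when a rank-$p$ chi-form is replaced by a rank-$q$ one.

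After the cancellations, $\nabla_Y\chi_p$ plus explicit $F$- and $Y$-linear corrections is expressed entirely through $i_{i_YF}\chi_{p+1}$ and $(i_YF)\wedge\chi_{p-1}$. The hierarchy formula (\ref{fh2f}) then follows from the general algebraic decomposition used already in Corollary~\ref{c:1ff}: any $(0,p+1)$-tensor linear in $Y$ is reconstructed from its totally skew part, yielding a $(p+1)^{-1}i_Y$ of an exterior derivative expression, and from its $Y$-trace, yielding a $-(n-p+1)^{-1}\alpha_Y\wedge$ of a codifferential expression. Regrouping the $F$-twist terms into the exterior-derivative piece and the codifferential piece produces precisely (\ref{fh2f}), with the $F\vee\chi_{p+1}$ contribution arising from the inner-product adjoint of the $\wedge$-twist in the trace part.

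The principal obstacle is the combinatorial Clifford algebra: the intermediate expansions generate contributions of ranks $p\pm 3$, and one has to verify that these cancel between the $k_1$- and $k_2$-terms and between the $\Sigma_Y^\dagger$ and $\Sigma_Y$ pieces. The cancellation is natural because $\nabla_Y\chi_p$ can only feed into $\chi_{p\pm 1}$ tensored with $F$ and $Y$, but verifying it explicitly requires care with the relative sign $(-1)^p$ and with the commutator $\slashed F\cdot\slashed Y-\slashed Y\cdot\slashed F=2\slashed{i_YF}$; this is precisely the mechanism that forces $k_1$ and $k_2$ to enter the final expression only through the combination $k_2-4k_1$ (together with its conjugate). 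A final, purely notational task is to match the factors of $2$ in front of $F\vee\chi_{p+1}$ and $F\wedge\chi_{p-1}$ against the weighted inner-product conventions of Appendix~A.
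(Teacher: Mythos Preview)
Your outline follows the paper's template, but there is a genuine gap in the claimed cancellation of the rank-$(p\pm 3)$ contributions. These terms do \emph{not} cancel. In the paper's intermediate identity the right-hand side explicitly retains terms such as $i_Y(F\vee\chi_{p+3})$ and $\alpha_Y\wedge F\wedge\chi_{p-3}$, with coefficients proportional to $\bar k_1+(-1)^{p\mp 1}k_1$. These depend on $k_1$ alone (the $k_2$-piece $\slashed{i_YF}$ is a degree-one Clifford element and cannot shift the rank by three), so there is no $k_2$-contribution available to cancel them, and for generic complex $k_1$ they are nonzero. Your degree-counting argument (``$\nabla_Y\chi_p$ can only feed into $\chi_{p\pm 1}$ tensored with $F$ and $Y$'') also fails: $i_Y(F\vee\chi_{p+3})$ and $\alpha_Y\wedge F\wedge\chi_{p-3}$ are perfectly good $p$-forms linear in $Y$.

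The mechanism that actually removes the $\chi_{p\pm 3}$ terms from the final formula is different. After the Clifford computation, the full right-hand side---including the $\chi_{p\pm 3}$ pieces---turns out to have the special structure $i_Y(\cdot)+\alpha_Y\wedge(\cdot)$. This is not automatic for a generic section of $T^*M\otimes\Lambda^p T^*M$ (there is a third irreducible piece under the orthogonal group), so it must be read off from the explicit expansion; the $\chi_{p\pm 3}$ terms are part of what makes that structure hold. Once the form $i_Y(\cdot)+\alpha_Y\wedge(\cdot)$ is established, the skew and trace of the \emph{left}-hand side determine the right-hand side completely, and one replaces the entire right-hand side by $(p+1)^{-1}i_Y(d\chi_p+\text{twist})-(n-p+1)^{-1}\alpha_Y\wedge(\delta\chi_p+\text{twist})$. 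The $\chi_{p\pm 3}$ dependence therefore disappears not by cancellation but by being absorbed into $d\chi_p$ and $\delta\chi_p$; correspondingly, the fact that only the combination $k_2-4k_1$ survives in (\ref{fh2f}) is a feature of the left-hand side alone, not of any cancellation on the right.
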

\begin{proof}
The  outline of the computation required to derive the above expression  has already been given in lemma \ref{le:1f}. In particular writing ${{\hat{\nabla}}}_Y=\nabla_Y+\Sigma_Y$ and using (\ref{pbiforms}), one has  that
\begin{eqnarray}
\nabla_Y\chi_p
&=&-i^{[{p\over2}]}
\langle\epsilon, (\Sigma_Y^\dagger\cdot \wedge^p\Gamma+\wedge^p\Gamma\cdot \Sigma_Y)\epsilon\rangle \ .
\end{eqnarray}
The right-hand-side of the expression above can be re-expressed in terms of forms $\chi_p$ of the hierarchy and $F$ alter some more Clifford computations.
Indeed this can be re-arranged as
\begin{eqnarray}
&&\nabla_Y \chi_p+a_{p,p+1}(\bar k_2-4\bar k_1+(-1)^p (k_2-4k_1)) i_{i_Y F} \chi_{p+1}
\cr &&
~~~~+a_{p,p-1}(\bar k_2-4\bar k_1+(-1)^{p-1} (k_2-4k_1)) i_Y F\wedge \chi_{p-1}=
\cr
&&~~~~~2 a_{p,p+3}(\bar k_1+(-1)^{p-1} k_1)i_Y(F\vee \chi_{p+3})
\cr
&&~~~~+2a_{p,p+1} (\bar k_1+(-1)^{p} k_1) i_Y(i_F\chi_{p+1})
\cr &&
~~~~-2a_{p,p-1} (\bar k_1+(-1)^{p-1} k_1) i_Y(F\wedge \chi_{p-1})
\cr &&
~~~~+2a_{p,p+1}(\bar k_1+(-1)^{p} k_1)\alpha_Y\wedge (F\vee \chi_{p+1})
\cr &&
~~~~+2a_{p,p-1} (\bar k_1+(-1)^{p-1} k_1)
\alpha_Y\wedge i_F\chi_{p-1}
\cr
&&~~~~-2a_{p,p-3} (\bar k_1+(-1)^{p} k_1) \alpha_Y\wedge F\wedge \chi_{p-3}~.
\label{bi2frep}
\end{eqnarray}
The proof proceeds by noticing that the right-hand-side of the above expression can be entirely  determined from the skew-symmetric and trace representations of the left-hand side.
Replacing the right-hand-side of (\ref{bi2frep}) with these two representations, one derives (\ref{fh2f}).
\end{proof}

{\bf Remark:}  The covariant form hierarchy untwists provided that $k_2=4k_1$.  In such a case, the condition (\ref{fh2f})  reduces to that of the conformal Killing-Yano forms. Nevertheless notice that for $k_2=4k_1$ both ${{\hat{\nabla}}}$ and $\slashed\hn$ exhibit non-trivial terms that depend on $F$.  Therefore $k_2=4k_1$ is a special region in parameter space for the hierarchy.   This region is complementary to the region of parameters that the
theorem \ref{th:2fc} applies. Indeed for $k_2=4k_1$, one has that ${\rm Im}\,e=0$.  Thus $\cos^2\psi=1$ and the condition $c_2\geq0$ requires that $n\leq 1$.  So the simplification of the hierarchy for $k_2=4k_1$ is not related to the estimates in theorem \ref{th:2fc}.

{\bf Remark:} As in the previous case of 1-form modified Dirac operator (\ref{bi2frep}) implies (\ref{fh2f}) but   the converse statement does not always hold. An argument for this can again  be established in the range $k_2=4k_1$ of parameters  where (\ref{bi2frep}) retains the dependence on $F$ but (\ref{fh2f}) does not. Therefore (\ref{fh2f}) does not imply  (\ref{bi2frep}) without some additional input.

\section{3-form modified Dirac operators }

Let $H$ be a real 3-form on $M^n$. Then set  ${\cal{D}} = {\slashed{\nabla}}+e\,{\slashed{H}}$ and
\begin{eqnarray}
{{\hat{\nabla}}}_X = \nabla_X + k_1 {\slashed{H}}\cdot X  + k_2 i_X\slashed H~,
\end{eqnarray}
where  $e, k_1, k_2\in \mathbb{C}$. The 3-form Dirac operator has the property that ${\cal{D}}: \Gamma(S^\pm)\rightarrow \Gamma(S^\mp)$, where $S^\pm$ are the two
chiral spin bundles on a spin manifold with even dimension. Moreover ${\cal{D}}$ is formally anti-self-adjoint provided that $e$ is real.

\begin{prop}
The fundamental identity for 3-form Dirac operators is
\begin{eqnarray}
\nabla^2 \parallel \epsilon \parallel^2 &=& 2 \parallel {{\hat{\nabla}}} \epsilon\parallel^2
+\left({1 \over 2} R +c_1 H^2\right)\parallel \epsilon \parallel^2
+  c_2 \parallel \slashed H\epsilon\parallel^2
\nonumber \\
&+&2 {\rm Re} \langle \epsilon, \big({\slashed{\nabla}}+(-2\bar k_1+e)\slashed H\big) {\cal{D}} \epsilon \rangle
-{1 \over 2} {\rm Re} \langle \epsilon, e\,
{\slashed{dH}} \epsilon \rangle+6 {\rm Re} \langle \epsilon, e\,
\delta\slashed H \epsilon \rangle
\cr
&+& {\rm Re} \langle \epsilon, \bigg((12(2 {\bar{k}}_1-e)+4{\bar{k}}_2)H^i{}_{pq}\Gamma^{pq} \bigg) \nabla_i \epsilon \rangle
\label{max3}
\end{eqnarray}
where
\begin{eqnarray}
c_1&=&-96 |k_1|^2-{8\over3}|k_2|^2-32 {\rm Re} (\bar k_2 k_1)~,
\cr
c_2&=&-{1\over9}\bigg(18 (n-8) |k_1|^2+2 |k_2|^2 -12 {\rm Re} (\bar k_2 k_1)\bigg)   +{\rm Re}((-4\bar k_1+2e)e)
\nonumber \\
\label{cc3x}
\end{eqnarray}
\end{prop}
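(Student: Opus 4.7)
The plan is to follow the blueprint used for the 0-, 1-, and 2-form cases in the preceding sections. Starting from the standard Lichnerowicz fundamental identity (\ref{diracmax}), I would write $\nabla_X\epsilon = {{\hat{\nabla}}}_X\epsilon - \Sigma_X\epsilon$ with $\Sigma_X = k_1\,\slashed{H}\cdot\slashed{X} + k_2\, i_X\slashed{H}$, and $\slashed{\nabla}\epsilon = {\cal{D}}\epsilon - e\slashed{H}\epsilon$. Substituting both replacements yields the preliminary identity $\nabla^2\|\epsilon\|^2 = 2\|{{\hat{\nabla}}}\epsilon\|^2 - 4\,{\rm Re}\langle\Sigma\epsilon,\nabla\epsilon\rangle - 2\|\Sigma\epsilon\|^2 + \tfrac{1}{2}R\|\epsilon\|^2 + 2\,{\rm Re}\langle\epsilon,\slashed{\nabla}{\cal{D}}\epsilon\rangle - 2e\,{\rm Re}\langle\epsilon,\slashed{\nabla}(\slashed{H}\epsilon)\rangle$, so the task reduces to re-expressing three intermediate pieces, built from $\Sigma$ and from $\slashed{\nabla}\slashed{H}$, and collecting the results into the form (\ref{max3}).

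For $\|\Sigma\epsilon\|^2 = \sum_i\langle(k_1\slashed{H}\Gamma_i + k_2\, i_{e_i}\slashed{H})\epsilon,(k_1\slashed{H}\Gamma_i + k_2\, i_{e_i}\slashed{H})\epsilon\rangle$, I would expand in Clifford grades: for a real 3-form $(\slashed{H})^2$ decomposes into grade-$0,2,4,6$ components, with the grade-$0$ piece proportional to $H^2$. Applying the standard Clifford trace identities $\sum_i\Gamma_i A\Gamma_i = \pm(n-2k)A$ on a grade-$k$ element $A$, together with analogous contractions for the $i_{e_i}\slashed{H}$ factors, the scalar contributions reassemble into a multiple of $H^2\|\epsilon\|^2$ and a multiple of $\|\slashed{H}\epsilon\|^2$, producing $c_1$ and the first piece of $c_2$ in (\ref{cc3x}).

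The cross term ${\rm Re}\langle\Sigma\epsilon,\nabla\epsilon\rangle$ is treated by taking the adjoint of $\Sigma$, using the Hermiticity properties of $\slashed{H}$ for real $H$, and rewriting $\slashed{H}\Gamma^i\nabla_i = \slashed{H}\slashed{\nabla} = \slashed{H}({\cal{D}} - e\slashed{H})$; this produces the $(-2\bar{k}_1 + e)\slashed{H}{\cal{D}}$ summand of (\ref{max3}), contributes the ${\rm Re}((-4\bar{k}_1 + 2e)e)$ part of $c_2$, and leaves behind the antisymmetric correction $(12(2\bar{k}_1 - e) + 4\bar{k}_2)H^i{}_{pq}\Gamma^{pq}\nabla_i\epsilon$ appearing on the right-hand side of (\ref{max3}). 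The remaining piece ${\rm Re}\langle\epsilon,\slashed{\nabla}(\slashed{H}\epsilon)\rangle$ is handled with the Leibniz-type expansion $\slashed{\nabla}(\slashed{H}\epsilon) = a\,\slashed{dH}\epsilon + b\,\delta\slashed{H}\epsilon + \slashed{H}\slashed{\nabla}\epsilon$ for the specific coefficients $a,b$ dictated by the 3-form identity $\Gamma^i\slashed{H} = \slashed{(e^i\wedge H)} + \slashed{i_{e_i}H}$; the $\slashed{H}\slashed{\nabla}\epsilon$ part is absorbed into the $\slashed{H}{\cal{D}}$ term and into the already computed $\slashed{H}^2$ pieces, leaving the $-\tfrac{1}{2}e\,\slashed{dH}$ and $+6e\,\delta\slashed{H}$ contributions quoted in (\ref{max3}).

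The main obstacle will be the Clifford-algebraic bookkeeping. Because $(\slashed{H})^2$ generates grade-$0,2,4,6$ elements, and the trace sums $\sum_i\Gamma_i(\cdot)\Gamma_i$ together with $\sum_i (i_{e_i}\slashed{H})(\cdot)(i_{e_i}\slashed{H})$ mix all of these grades, only after combining $\|\Sigma\epsilon\|^2$, ${\rm Re}\langle\Sigma\epsilon,\nabla\epsilon\rangle$ and ${\rm Re}\langle\epsilon,\slashed{\nabla}(\slashed{H}\epsilon)\rangle$ should the grade-$4$ and grade-$6$ contributions cancel, leaving only the scalar invariants $H^2\|\epsilon\|^2$ and $\|\slashed{H}\epsilon\|^2$ with the exact coefficients in (\ref{cc3x}). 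I expect that verifying this cancellation is precisely the step that fails for $k\geq 4$-form Dirac operators, consistent with the remark flagged in the introduction that the construction cannot be extended beyond $k=3$.
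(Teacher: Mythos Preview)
Your proposal is correct and follows exactly the approach the paper indicates: the paper's own proof consists of the single sentence ``It follows from a straightforward computation similar to that already presented for $k$-form Dirac operators, $k=0,1,2$,'' and you have spelled out precisely that computation using the same decomposition $\nabla^2\|\epsilon\|^2 = 2\|{{\hat{\nabla}}}\epsilon\|^2 - 4\,{\rm Re}\langle\Sigma\epsilon,\nabla\epsilon\rangle - 2\|\Sigma\epsilon\|^2 + \tfrac{1}{2}R\|\epsilon\|^2 + 2\,{\rm Re}\langle\epsilon,\slashed\nabla{\cal D}\epsilon\rangle - 2\,{\rm Re}\langle\epsilon,\slashed\nabla(\slashed\omega\epsilon)\rangle$ introduced in the 0-form case. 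One minor bookkeeping point: the $e$-dependent part of the coefficient $12(2\bar k_1 - e)+4\bar k_2$ in the first-order term actually originates from the $\slashed\nabla(\slashed H\epsilon)$ piece (via the commutator $\Gamma^i\slashed H - \slashed H\Gamma^i$) rather than from the $\langle\Sigma\epsilon,\nabla\epsilon\rangle$ cross term alone, so your Leibniz expansion should read $\Gamma^i\slashed H\nabla_i\epsilon$ rather than $\slashed H\slashed\nabla\epsilon$ before the correction is extracted.
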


\begin{proof}
It follows from a straightforward computation similar to that already presented  for $k$-form Dirac operators, $k=0,1,2$.
\end{proof}

\begin{theorem}
Let $M^n$ be a closed spin manifold.
Suppose that $H$ is harmonic, $H\not=0$,  and $k_2=-6k_1+3\bar e$. It follows that
\begin{enumerate}

\item if ${1\over 2 } R-24|e|^2 H^2\gneq 0$ and $c_2\geq 0$, then ${\rm Ker}\,{\cal{D}}=\{0\}$

\item  if ${1\over 2 } R-24|e|^2 H^2= 0$,  $c_2= 0$, and ${\rm Ker}\,{\cal{D}}\not=\{0\}$, then ${\rm Ker}\,{\cal{D}}={\rm Ker}\,{{\hat{\nabla}}}$,  $e=(6-n) k_1+k_2$ and  $n\geq 8$.
\end{enumerate}
\end{theorem}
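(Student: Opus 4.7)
The proof should follow the blueprint of Theorem \ref{th:2hform}, adapted to the 3-form setting. First, I would substitute the relation $k_2=-6k_1+3\bar e$ into the fundamental identity (\ref{max3}) and its coefficients (\ref{cc3x}). A short calculation using $|k_2|^2=36|k_1|^2-36\,{\rm Re}(k_1 e)+9|e|^2$ and ${\rm Re}(\bar k_2 k_1)=-6|k_1|^2+3\,{\rm Re}(k_1 e)$ shows that all terms proportional to $|k_1|^2$ and ${\rm Re}(k_1 e)$ cancel in $c_1$, giving $c_1=-24|e|^2$, so that the scalar-curvature bracket $\left({1\over 2}R+c_1 H^2\right)$ in (\ref{max3}) becomes exactly ${1\over 2}R-24|e|^2 H^2$ as in the hypothesis. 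The same substitution also makes the Clifford coefficient $12(2\bar k_1-e)+4\bar k_2$ of the $\nabla_i\epsilon$ term in (\ref{max3}) vanish identically, while the harmonic assumption $dH=\delta H=0$ kills the $\slashed{dH}$ and $\delta\slashed H$ contributions.

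After these simplifications, for any $\epsilon\in\ker{\cal{D}}$ the fundamental identity collapses to
\begin{eqnarray}
\nabla^2\parallel\epsilon\parallel^2=2\parallel{{\hat{\nabla}}}\epsilon\parallel^2+\left({1\over 2}R-24|e|^2 H^2\right)\parallel\epsilon\parallel^2+c_2\parallel\slashed H\epsilon\parallel^2~.
\nonumber
\end{eqnarray}
Part 1 then follows by integrating over the closed $M^n$ and invoking the Hopf maximum principle: the right-hand side is non-negative everywhere and strictly positive somewhere, which is incompatible with $\epsilon\not\equiv 0$. For part 2, the same integration under ${1\over 2}R-24|e|^2 H^2=0$ and $c_2=0$ instead forces ${{\hat{\nabla}}}\epsilon=0$, establishing $\ker{\cal{D}}\subseteq\ker{{\hat{\nabla}}}$.

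For the reverse inclusion, ${{\hat{\nabla}}}\epsilon=0$ implies $\slashed{{{\hat{\nabla}}}}\epsilon=0$, and expanding $\slashed{{{\hat{\nabla}}}}$ using $\Gamma^i\slashed H\Gamma_i=(6-n)\slashed H$ and $\Gamma^i\slashed{i_{\partial_i}H}=\slashed H$ gives $({\cal{D}}-\slashed{{{\hat{\nabla}}}})\epsilon=(e-(6-n)k_1-k_2)\slashed H\epsilon=0$. The main obstacle is to exclude the alternative $e\neq (6-n)k_1+k_2$, which would force $\slashed H\epsilon=0$; paralleling the argument around (\ref{hamham}) in the 2-form case, I would derive a 3-form analogue of that identity for $\slashed\nabla(\slashed H\epsilon)$ and then, imposing $\slashed H\epsilon=0$, ${{\hat{\nabla}}}\epsilon=0$ and $\delta H=0$, reduce it to an equation forcing $H^2\epsilon=0$ at every point. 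Since a ${{\hat{\nabla}}}$-parallel $\epsilon$ is nowhere vanishing and $H\neq 0$, this contradiction establishes $e=(6-n)k_1+k_2$, whence ${\cal{D}}=\slashed{{{\hat{\nabla}}}}$ and $\ker{{\hat{\nabla}}}\subseteq\ker{\cal{D}}$. Finally, the dimension bound $n\geq 8$ emerges from a parameter-space analysis: combining $k_2=-6k_1+3\bar e$ with $e=(6-n)k_1+k_2$ fixes $k_1=(3\bar e-e)/n$, and substituting into $c_2$ from (\ref{cc3x}) produces $c_2={8|e|^2\over n}(1+3\sin^2\psi)-4|e|^2\sin^2\psi$ with $e=|e|e^{i\psi}$, so $c_2=0$ is equivalent to $\sin^2\psi=2/(n-6)$, whose consistency with $\sin^2\psi\leq 1$ requires precisely $n\geq 8$.
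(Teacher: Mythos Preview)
Your proposal is correct and follows essentially the same route as the paper's own proof: simplify (\ref{max3}) under $k_2=-6k_1+3\bar e$ and harmonicity of $H$, integrate to obtain the vanishing/contradiction, then use the identity for $\slashed\nabla(\slashed H\epsilon)$ (the paper's (\ref{dhe3})) to exclude $\slashed H\epsilon=0$ and force $e=(6-n)k_1+k_2$, and finally extract $\sin^2\psi=2/(n-6)$ from $c_2=0$. Your expression $c_2=\tfrac{8|e|^2}{n}(1+3\sin^2\psi)-4|e|^2\sin^2\psi$ is an algebraic rearrangement of the paper's $c_2=-\tfrac{4|e|^2}{n}(n\sin^2\psi-2-6\sin^2\psi)$, and your explicit Clifford identities for $\slashed{\hat\nabla}$ make the relation $\mathcal{D}-\slashed{\hat\nabla}=(e-(6-n)k_1-k_2)\slashed H$ slightly more transparent than the paper's presentation.
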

\begin{proof}
To prove the first part of the theorem observe that under the assumptions that have been made, the last three terms of the fundamental identity (\ref{max3}) vanish.
Then assuming that $\epsilon\in {\rm Ker}\,{\cal{D}}$ and integrating over $M^n$, one arrives at a contradiction provided that $c_2\geq 0$. It remains to show that
there is a range of parameters that $c_2\geq 0$.  Indeed substituting $k_2=-6k_1+3\bar e$ into the expression for $c_2$, one finds that
\begin{eqnarray}
c_2=- (2n|k_1|^2-12 {\rm Re}(e\,k_1)+4{\rm Re}(e\,\bar k_1)-2 {\rm Re}(e^2)+ 2|e|^2)~.
\end{eqnarray}
This can be re-arranged as
\begin{eqnarray}
c_2&=&-2n\big[ \big({\rm Re} \ k_1 -{2 |e| \over n} \cos \psi \big)^2+ \big({\rm Im} \ k_1 +{4|e| \over n}\sin \psi \big)^2
\nonumber \\
&+&{2|e|^2 \over n^2} \big( n \sin^2 \psi -2 -6 \sin^2 \psi  \big)\big]~,
\label{cc3xy}
\end{eqnarray}
where $e=|e| \exp(i\psi)$.
For $c_2\geq 0$, it is required that \begin{eqnarray}
\label{threeb1}
n \sin^2 \psi -2 -6 \sin^2 \psi  \leq 0~.
\end{eqnarray}
This always holds either for $n=6$ or for $\sin\psi=0$. Otherwise the parameter $e$ has to be restricted as
\begin{eqnarray}
n \leq 6+{2 \over \sin^2 \psi}~.
\end{eqnarray}
Note also that $c_1=-24 |e|^2$ which can be derived after substituting $k_2=-6k_1+3\bar e$ into (\ref{cc3x}).

To prove the second part of the statement,  integrate the fundamental identity (\ref{max3}) over  $M^n$.  After  using the assumptions of the theorem as well as  $\epsilon\in{\rm Ker}\,{\cal{D}}$ one finds that $\epsilon\in{\rm Ker}\,{{\hat{\nabla}}}$.  So ${\rm Ker}\,{\cal{D}}\subseteq {\rm Ker}\,{{\hat{\nabla}}}$. In fact ${\rm Ker}\,{\cal{D}}= {\rm Ker}\,{{\hat{\nabla}}}$.  To see this observe that $({\cal{D}}-\slashed\hn)\epsilon=0$ and so $(e-(6-n) k_1-k_2)\slashed H=0$.  If $e=(6-n) k_1+k_2$, one has that ${\cal{D}}=\slashed\hn$ and so ${\rm Ker}\,{\cal{D}}= {\rm Ker}\,{{\hat{\nabla}}}$ as in general ${\rm Ker}\,{{\hat{\nabla}}}\subseteq {\rm Ker}\,\slashed\hn$.  Otherwise take $e\not=(6-n) k_1+k_2$ and so $\slashed H\epsilon=0$.  To continue, one can establish the identity
\begin{eqnarray}
\slashed \nabla (\slashed H \epsilon)&=&{1\over 4} \slashed dH\epsilon- 3\delta\slashed H \epsilon+ \Gamma^i\slashed H {{\hat{\nabla}}}_i \epsilon
\cr
&&
+
({k_2\over3}-(n-8) k_1) \slashed H^2\epsilon+ 8 (6k_1+k_2) H^2\epsilon~.
\label{dhe3}
\end{eqnarray}
Imposing the conditions that $H$ is harmonic, $\slashed H\epsilon=0$ and ${{\hat{\nabla}}}\epsilon=0$, the above identity gives that $(6k_1+k_2) H^2\epsilon=3\bar e H^2\epsilon=0$.  As $\epsilon$ is nowhere vanishing because it is parallel and for $e\not=0$, one concludes that $H=0$ which is a contradiction.

It remains to show that there is a range of parameters such that the second part of the theorem is valid. Indeed, the condition $e=(6-n) k_1+k_2$ substituted in
(\ref{cc3xy}) gives
\begin{eqnarray}
c_2=-{4|e|^2 \over n} \big( n \sin^2 \psi -2 -6 \sin^2 \psi  \big)~.
\end{eqnarray}
So the requirement that  $c_2=0$ gives
\begin{eqnarray}
 (n-6) \sin^2 \psi =2~.
 \end{eqnarray}
This has solutions for all $n\geq 8$.
\end{proof}

\begin{prop}
Let $M^n$ be a closed spin manifold.
Let $\lambda$ be an eigenvalue of ${\cal{D}}$.
If $H$ is harmonic,   $k_1=-i {\rm Im}\, e$ and   $k_2=6i {\rm Im}\, e+3\bar e$, then \begin{eqnarray}
|\lambda|^2\geq \inf_{M^n} \left({1 \over 4}R -12|e|^2 H^2\right)
\end{eqnarray}
provided that either ${\rm Im}\, e=0$ or $n\leq 6$.
\end{prop}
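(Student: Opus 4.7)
The plan is to follow the template of propositions \ref{prop0} and \ref{prop2}: specialize the fundamental identity (\ref{max3}) to the given parameters, rearrange the result so that all ${\cal D}$-dependent pieces collect into $-2\|{\cal D}\epsilon\|^2$ plus total divergences, and then evaluate against an eigenspinor on the closed manifold $M^n$.

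First I would substitute $k_1=-i\,{\rm Im}\,e$ and $k_2=6i\,{\rm Im}\,e+3\bar e$ into (\ref{max3}). Writing $e=a+ib$ one finds the clean relation $k_2=3e$, and the pair $(k_1,k_2)$ lies in the subfamily $k_2=-6k_1+3\bar e$ already analyzed in the preceding harmonic-$H$ theorem, so $c_1=-24|e|^2$. Plugging these values into the formula (\ref{cc3x}) for $c_2$ gives, after a short calculation,
\begin{equation*}
c_2=-2(n-6)({\rm Im}\,e)^2,
\end{equation*}
which is non-negative exactly when ${\rm Im}\,e=0$ or $n\leq 6$, i.e.\ precisely under the hypothesis of the proposition. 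Since $H$ is harmonic, both $\slashed{dH}$ and $\delta\slashed H$ vanish, killing the corresponding terms in (\ref{max3}). A short check using $\bar k_1=i\,{\rm Im}\,e$ gives $-2\bar k_1+e=\bar e$ and $12(2\bar k_1-e)+4\bar k_2=0$, so the coefficient multiplying $H^i{}_{pq}\Gamma^{pq}\nabla_i\epsilon$ also vanishes, and the only surviving Dirac-coupled contribution is $2\,{\rm Re}\langle\epsilon,(\slashed\nabla+\bar e\,\slashed H){\cal D}\epsilon\rangle$.

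Next I would reprocess this remaining Dirac-coupled term by integration by parts, exactly as in the passage from (\ref{max2}) to (\ref{funmax2}) in the proof of proposition \ref{prop2}: commuting $\slashed\nabla$ off $\epsilon$ via a total derivative $2\nabla_i{\rm Re}\langle\epsilon,\Gamma^i{\cal D}\epsilon\rangle$, substituting $\slashed\nabla\epsilon={\cal D}\epsilon-e\,\slashed H\epsilon$ to produce $-2\|{\cal D}\epsilon\|^2$, and absorbing the residual $\bar e\,\slashed H$-insertion into a further divergence of the schematic form $\nabla^i{\rm Re}\langle\epsilon,eH_{ipq}\Gamma^{pq}\epsilon\rangle$. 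The upshot is the rearranged identity
\begin{equation*}
\nabla^2\|\epsilon\|^2=2\|\hn\epsilon\|^2+\bigl(\tfrac{1}{2}R-24|e|^2H^2\bigr)\|\epsilon\|^2+c_2\|\slashed H\epsilon\|^2-2\|{\cal D}\epsilon\|^2+(\text{divergences}).
\end{equation*}

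Finally, take $\epsilon$ to be an eigenspinor with ${\cal D}\epsilon=\lambda\epsilon$ and integrate over the closed manifold $M^n$. The term $\nabla^2\|\epsilon\|^2$ and the divergence terms integrate to zero, while $\|{\cal D}\epsilon\|^2=|\lambda|^2\|\epsilon\|^2$. The hypothesis $c_2\geq 0$ permits us to drop both $2\|\hn\epsilon\|^2$ and $c_2\|\slashed H\epsilon\|^2$, leaving
\begin{equation*}
2|\lambda|^2\int_{M^n}\|\epsilon\|^2\ \geq\ \int_{M^n}\bigl(\tfrac{1}{2}R-24|e|^2H^2\bigr)\|\epsilon\|^2,
\end{equation*}
from which the stated lower bound follows after dividing by $2\int\|\epsilon\|^2>0$. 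The only nontrivial obstacle—and the sole source of the dimensional restriction—is the sign of $c_2$; the explicit formula $c_2=-2(n-6)({\rm Im}\,e)^2$ is what forces the dichotomy ${\rm Im}\,e=0$ or $n\leq 6$.
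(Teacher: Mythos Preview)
Your proposal is correct and follows essentially the same approach as the paper: specialize the fundamental identity (\ref{max3}) to the given parameters, rearrange the ${\cal D}$-coupled term as $-2\|{\cal D}\epsilon\|^2$ plus a divergence, and integrate against an eigenspinor. One small imprecision: after integrating by parts and substituting $\slashed\nabla\epsilon={\cal D}\epsilon-e\slashed H\epsilon$, the resulting $-2{\rm Re}\langle\epsilon,\bar e\slashed H{\cal D}\epsilon\rangle$ cancels \emph{exactly} against the $+2{\rm Re}\langle\epsilon,\bar e\slashed H{\cal D}\epsilon\rangle$ already present (use that $\slashed H$ is anti-self-adjoint), so no ``residual $\bar e\slashed H$-insertion'' survives and no further divergence of the form $\nabla^i{\rm Re}\langle\epsilon,eH_{ipq}\Gamma^{pq}\epsilon\rangle$ is needed---the paper's rearranged identity (\ref{3max3}) contains only the single divergence $2\nabla_i{\rm Re}\langle\epsilon,\Gamma^i{\cal D}\epsilon\rangle$.
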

\begin{proof}

Under the assumptions made on the parameters,
the fundamental identity (\ref{max3}) can be rearranged as
\begin{eqnarray}
\nabla^2 \parallel \epsilon \parallel^2 &=& 2 \parallel {{\hat{\nabla}}}\epsilon\parallel^2
+ \big({1 \over 2}R -24 |e|^2 H^2 \big) \parallel \epsilon \parallel^2-2 \parallel{\cal{D}}\epsilon\parallel^2
\nonumber \\
&+&2\nabla_i{\rm Re} \langle \epsilon, \Gamma^i {\cal{D}} \epsilon \rangle
+c_2 \parallel {\slashed{H}} \epsilon \parallel^2~.
\label{3max3}
\end{eqnarray}
Integrating the above expression over $M^n$ gives a bound on the eigenvalues of ${\cal{D}}$ provided that $c_2\geq 0$.  A direct computation reveals that
\begin{eqnarray}
c_2=-2(n-6) |e|^2 \sin^2\psi~.
\end{eqnarray}
So $c_2\geq 0$ provided that either $n\leq 6$ or ${\rm Im}\,e=0$. In the latter case $k_1=0$.
\end{proof}

\begin{theorem}\label{th:3f}
Let $M^n$ be a closed spin manifold.
Suppose that $H$ is closed 3-form $dH=0$, $H\not=0$,  and $k_2=-6k_1+3{\rm Re}\,e$.
\begin{enumerate}

\item In addition  if ${1 \over 2}R -24 ({\rm Re}\,e)^2 H^2\gneq 0$ and $c_2\geq 0$, then ${\rm Ker}\,{\cal{D}}=\{0\}$.

\item Moreover if ${1 \over 2}R -24 ({\rm Re}\,e)^2 H^2= 0$, $c_2= 0$, and ${\rm Ker}\,{\cal{D}}\not=\{0\}$, then $e=(6-n)k_1+k_2$ and ${\rm Ker}\,{\cal{D}}={\rm Ker}\,{{\hat{\nabla}}}$~.

\end{enumerate}

\end{theorem}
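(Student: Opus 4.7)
The strategy closely parallels the proof of Theorem \ref{th:2fc}. Substituting $k_2 = -6k_1 + 3\,{\rm Re}\,e$ into (\ref{cc3x}) yields $c_1 = -24({\rm Re}\,e)^2$, so that the curvature combination ${1\over 2}R + c_1 H^2$ appearing in the fundamental identity (\ref{max3}) is exactly the term ${1\over 2}R - 24({\rm Re}\,e)^2 H^2$ featured in the hypotheses. The same substitution collapses the coefficient of the $H^i{}_{pq}\Gamma^{pq}\nabla_i\epsilon$ coupling to $12(2\bar k_1 - e) + 4\bar k_2 = -12i\,{\rm Im}\,e$, which is pure imaginary. I then invoke the integration-by-parts identity
\begin{equation*}
\nabla_i\,{\rm Re}\langle\epsilon, c\,H^i{}_{pq}\Gamma^{pq}\epsilon\rangle = {\rm Re}\langle\nabla_i\epsilon, c\,H^i{}_{pq}\Gamma^{pq}\epsilon\rangle - {\rm Re}\langle\epsilon, c\,\delta\slashed H\,\epsilon\rangle + {\rm Re}\langle\epsilon, c\,H^i{}_{pq}\Gamma^{pq}\nabla_i\epsilon\rangle,
\end{equation*}
with $c$ tuned so that the $\nabla_i\epsilon$ coupling together with the $6\,{\rm Re}\langle\epsilon, e\,\delta\slashed H\,\epsilon\rangle$ term in (\ref{max3}) repackages as a total divergence modulo ${\cal D}\epsilon$-terms. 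Since $dH = 0$ kills the $\slashed{dH}$ term outright, the fundamental identity is brought to the form
\begin{equation*}
\nabla^2\parallel\epsilon\parallel^2 = 2\parallel{{\hat{\nabla}}}\epsilon\parallel^2 + \left({1\over 2}R - 24({\rm Re}\,e)^2 H^2\right)\parallel\epsilon\parallel^2 + c_2\parallel\slashed H\,\epsilon\parallel^2 + 2\,{\rm Re}\langle\epsilon, {\cal A}\,{\cal D}\epsilon\rangle + \nabla_i V^i,
\end{equation*}
with ${\cal A}$ a first-order Clifford operator and $V^i$ an explicit one-form.

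For part (1), assume ${\cal D}\epsilon = 0$ and integrate over the closed manifold $M^n$. The divergence and ${\cal D}\epsilon$-terms drop out, leaving a sum of squares on the right-hand side. Under ${1\over 2}R - 24({\rm Re}\,e)^2 H^2 \gneq 0$ and $c_2 \geq 0$ this forces $\epsilon \equiv 0$, establishing ${\rm Ker}\,{\cal D} = \{0\}$.

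For part (2), integration under the equalities ${1\over 2}R - 24({\rm Re}\,e)^2 H^2 = 0$ and $c_2 = 0$ forces $\parallel{{\hat{\nabla}}}\epsilon\parallel = 0$, so ${\rm Ker}\,{\cal D} \subseteq {\rm Ker}\,{{\hat{\nabla}}}$. Any such $\epsilon$ is nowhere vanishing and satisfies $({\cal D} - \slashed{{\hat{\nabla}}})\epsilon = (e - (6-n)k_1 - k_2)\,\slashed H\,\epsilon = 0$. If $e = (6-n)k_1 + k_2$ we are done: ${\cal D} = \slashed{{\hat{\nabla}}}$ identically and so ${\rm Ker}\,{{\hat{\nabla}}} \subseteq {\rm Ker}\,\slashed{{\hat{\nabla}}} = {\rm Ker}\,{\cal D}$, giving the desired equality of kernels. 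Otherwise $\slashed H\,\epsilon = 0$, and evaluating (\ref{dhe3}) under $dH = 0$, ${{\hat{\nabla}}}\epsilon = 0$ and $\slashed H\,\epsilon = 0$ reduces to $-3\,\delta\slashed H\,\epsilon + 8(6k_1 + k_2)\,H^2\epsilon = 0$; the $k_2$ substitution simplifies the coefficient of $H^2\epsilon$ to $24\,{\rm Re}\,e$. Taking the Dirac norm and using that $\epsilon$ is nowhere zero produces an integrability constraint of the form $9\,|\delta H|^2 = (24\,{\rm Re}\,e)^2 H^4$ (up to Clifford reordering), which together with $H \neq 0$ forces a contradiction in the allowed parameter range. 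The concluding parameter check substitutes $e = (6-n)k_1 + k_2 = -nk_1 + 3\,{\rm Re}\,e$ back into $c_2$; completing the square confirms that the $c_2 = 0$ locus is non-empty, so the hypothesis cluster is non-vacuous.

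The main obstacle is the last integrability step of part (2). In the harmonic counterpart earlier in this section one immediately reads off $H = 0$ from the reduced equation because only the Clifford-squared term survives; here $\delta H$ is generically nonzero and one is left with a nontrivial algebraic relation between $|\delta H|^2$ and $H^4$. Showing that this relation is incompatible with $H \neq 0$ and $c_2 = 0$ — thereby forcing $e = (6-n)k_1 + k_2$ — requires care in the signs arising from products $\Gamma^{pq}\Gamma^{rs}$ and from the $c_2 = 0$ algebraic constraint on $(k_1, e)$. The preliminary step of repackaging a pure-imaginary prefactor into a total divergence is mechanically more involved than the pure-real case of Theorem \ref{th:2fc} but is conceptually routine.
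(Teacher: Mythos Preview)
Your rearrangement of the fundamental identity and the proof of part (1) are essentially the paper's, and your reduction in part (2) down to the equation $-3\,\delta\slashed H\,\epsilon + 24({\rm Re}\,e)\,H^2\epsilon = 0$ is correct. The gap is precisely where you flag it: the ``integrability constraint'' you propose, of the form $9|\delta H|^2 = (24\,{\rm Re}\,e)^2 H^4$, does not follow cleanly from that equation (the Clifford square of $\delta\slashed H$ acting on $\epsilon$ is not simply $|\delta H|^2\epsilon$), and even if it did, nothing in the hypotheses rules out such a relation for nonzero $H$. So the argument as written does not close.

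The paper avoids this entirely by a reality trick. In the conventions of Appendix~A the gamma matrices $\Gamma^i$ are Hermitian, so $\Gamma^{ij}$ is anti-Hermitian and therefore ${\rm Re}\langle\epsilon, \delta\slashed H\,\epsilon\rangle = 0$ for any real 2-form $\delta H$. Taking the real part of the Dirac inner product of $-3\,\delta\slashed H\,\epsilon + 24({\rm Re}\,e)\,H^2\epsilon = 0$ with $\epsilon$ then gives $24({\rm Re}\,e)\,H^2\parallel\epsilon\parallel^2 = 0$ pointwise, hence ${\rm Re}\,e = 0$ since $H\not\equiv 0$ and $\epsilon$ is nowhere vanishing. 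Substituting ${\rm Re}\,e = 0$ into the expression for $c_2$ (with $k_2 = -6k_1 + 3\,{\rm Re}\,e = -6k_1$) yields
\[
c_2 = -2n\Big( ({\rm Re}\,k_1)^2 + ({\rm Im}\,k_1 + n^{-1}{\rm Im}\,e)^2 + n^{-2}(n-1)|e|^2 \Big),
\]
a nonpositive sum of squares; the hypothesis $c_2 = 0$ then forces $e = k_1 = k_2 = 0$, contradicting the nontriviality of the modification. This is the missing step you need.
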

\begin{proof}
If $dH=0$ and  $k_2=-6k_1+3{\rm Re}\,e$, the fundamental identity (\ref{max3}) can be rearranged as

\begin{eqnarray}
\nabla^2 \parallel \epsilon \parallel^2 &=& 2 \parallel {{\hat{\nabla}}} \epsilon, \parallel^2
+ \big({1 \over 2}R -24 ({\rm Re}\,e)^2 H^2 \big) \parallel \epsilon \parallel^2-6\nabla^i{\rm Re}\langle\epsilon, e H_{ijk}\Gamma^{jk}\epsilon\rangle
\cr
&+& 2 {\rm Re} \langle \epsilon, \big({\slashed{\nabla}}-(2{\bar{k}}_1-e){\slashed{H}} \big) {\cal{D}} \epsilon \rangle +c_2 \parallel {\slashed{H}} \epsilon \parallel^2~,
\label{5max3}
\end{eqnarray}
where
\begin{eqnarray}
c_2=-\left(2n |k_1|^2-12 {\rm Re}\, k_1 {\rm Re}\, e+2({\rm Re}\, e)^2+4 {\rm Re}(\bar k_1e)-2 {\rm Re}( e^2)\right)~.
\end{eqnarray}

It is clear that integrating (\ref{5max3}) over $M^n$ and assuming that $\epsilon\in {\rm Ker}\,{\cal{D}}$ leads to a contradiction provided that $c_2\geq 0$.
Thus ${\rm Ker}\,{\cal{D}}=\{0\}$ which proves the first part of the statement.  It remains to show that there is a range of parameters such that $c_2\geq 0$.  Indeed
$c_2$ can be rewritten as
\begin{eqnarray}
c_2&=&-2n \bigg( ({\rm Re}\,k_1-{2\over n}{\rm Re}\,e)^2+({\rm Im}\,k_1+{1\over n}{\rm Im}\,e)^2
\nonumber \\
&-&{4|e|^2\over n^2}+{3+n\over n^2}|e|^2\sin^2\psi\bigg)~.
\end{eqnarray}
For $c_2\geq 0$, one has to set
\begin{eqnarray}
\sin^2\psi\leq {4\over n+3}~,
\end{eqnarray}
which can always be satisfied for any $n$.

To prove the second part of the theorem,  integrate the fundamental identity (\ref{5max3}) over  $M^n$ and after  using the assumptions of the theorem as well as  $\epsilon\in{\rm Ker}\,{\cal{D}}$ one finds that $\epsilon\in{\rm Ker}\,{{\hat{\nabla}}}$.  So one has ${\rm Ker}\,{\cal{D}}\subseteq {\rm Ker}\,{{\hat{\nabla}}}$.

In fact ${\rm Ker}\,{\cal{D}}= {\rm Ker}\,{{\hat{\nabla}}}$.  To see this observe that $({\cal{D}}-\slashed\hn)\epsilon=0$ and so $(e-(6-n) k_1-k_2)\slashed H=0$.  If $e=(6-n) k_1+k_2$, observe that ${\cal{D}}=\slashed\hn$ and so ${\rm Ker}\,{\cal{D}}= {\rm Ker}\,{{\hat{\nabla}}}$ as in general ${\rm Ker}\,{{\hat{\nabla}}}\subseteq {\rm Ker}\,\slashed\hn$.  Otherwise take $e\not=(6-n) k_1+k_2$ and so $\slashed H\epsilon=0$.  Then
the condition ({\ref{dhe3}}) implies that
\begin{eqnarray}
-3\delta\slashed H \epsilon +8(k_2+6k_1) H^2 \epsilon=0 \ .
\end{eqnarray}
On taking the real part of the Dirac inner product of this expression with $\epsilon$, and using
the condition $k_2=-6k_1+3{\rm Re}\,e$, one finds $H^2 \parallel \epsilon \parallel^2 {\rm Re}\,e=0$. Assuming that $H$ and $\epsilon$ do not vanish identically, this implies that
${\rm Re}\,e=0$. Using this condition, the expression for $c_2$ can be rewritten as
\begin{eqnarray}
c_2 = -2n \bigg(  ({\rm Re} \, k_1)^2 +({\rm Im} \, k_1 +{1 \over n} {\rm Im} \, e)^2
+{|e|^2 \over n^2}(n-1) \bigg)
\end{eqnarray}
and hence the condition $c_2=0$ implies that $k_1=k_2=e=0$, and we discard this case as it implies that the 3-form does not modify either the Dirac operator or the supercovariant derivative.

It remains to demonstrate that there is a range of parameters such that $c_2=0$.  Indeed $e=(6-n) k_1+k_2$ implies that
\begin{eqnarray}
c_2=-2n |e|^2\left( -{4\over n^2}+{3+n\over n^2}\sin^2\psi\right)~.
\end{eqnarray}
So $c_2=0$ gives $\sin^2\psi= {4\over n+3}$ and an $e$ can be chosen for any $n$.
\end{proof}

{\bf Remark:} As the conditions of the above theorem require  $ R-48({\rm Re}\,e)^2 H^2\geq  0$, one has that $R\gneq 0$.  Therefore
 the real index of the Dirac operator viewed a ${\rm Cl}_n$-linear operator vanishes-this index is also refereed to as $\alpha$-invariant, see e.g. \cite{lawson}.
  Therefore all manifolds that satisfy the conditions of theorem  \ref{th:3f} have vanishing $\alpha$-invariant.   As has been already mentioned ${\cal{D}}: \Gamma(S^\pm)\rightarrow \Gamma(S^\mp)$ and so ${\cal{D}}$ can extend to a
${\rm Cl}_n$-linear operator on the Clifford bundle such that ${\cal{D}}: \Gamma({\rm Cl}_n(M))^{\rm ev, od}\rightarrow \Gamma({\rm Cl}_n(M))^{\rm od, ev}$.
For $e$ real ${\cal{D}}$ is formally anti-self adjoint and has the same principal symbol as the Dirac operator, the real index of ${\cal{D}}$ as a ${\rm Cl}_n$-linear operator is the
same as the index of the Dirac operator. Similar comments can be made for 1-form modified Dirac operators.

\begin{prop}
\label{prop3}
Let $M^n$ be a closed spin manifold.
If $dH=0$, $c_2\geq0$,  $k_1=-i{\rm Im}\, e$ and $k_2=6 i{\rm Im}\, e +3{\rm Re}\,e$, then the eigenvalues $\lambda$ of ${\cal{D}}$ are bounded as
\begin{eqnarray}
|\lambda|^2\geq \inf_{M^n} \left({1 \over 4}R -12 ({\rm Re}\,e)^2 H^2\right)~,
\end{eqnarray}
provided that ${\rm Im}\, e=0$ and so ${\cal{D}}$ is a formally anti-self-adjoint operator.
\end{prop}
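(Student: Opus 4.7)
The plan is to substitute the parameter choices $k_1 = -i\,{\rm Im}\, e$ and $k_2 = 6 i\,{\rm Im}\, e + 3\,{\rm Re}\, e$ into the fundamental identity (\ref{max3}) and exploit the hypothesis ${\rm Im}\, e = 0$, which is precisely what makes the identity close up. With $e$ real, $k_1 = 0$ and $k_2 = 3 e$; substituting into (\ref{cc3x}) gives $c_1 = -({8\over3})|k_2|^2 = -24({\rm Re}\, e)^2$, producing the coefficient of $H^2$ in the claimed bound. The coefficient $12(2\bar k_1 - e) + 4\bar k_2$ multiplying the awkward first-derivative term $H^i{}_{pq}\Gamma^{pq}\nabla_i\epsilon$ in (\ref{max3}) evaluates to $-12 i\,{\rm Im}\, e$, which vanishes by hypothesis, and $dH = 0$ kills the $\slashed{dH}$ term.

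After these substitutions the only remaining non-obvious contributions are $2{\rm Re}\langle\epsilon, (\slashed\nabla + e\slashed H){\cal D}\epsilon\rangle$ and $6e\,{\rm Re}\langle\epsilon, \delta\slashed H\epsilon\rangle$. With $k_1 = 0$ the coefficient $-2\bar k_1 + e$ inside the first bracket is just $e$, so it equals $2{\rm Re}\langle\epsilon, {\cal D}^2\epsilon\rangle$. For an eigenspinor ${\cal D}\epsilon = \lambda\epsilon$ this is $2{\rm Re}(\lambda^2)\parallel\epsilon\parallel^2$, and since ${\cal D}$ is formally anti-self-adjoint for $e$ real, $\lambda$ is purely imaginary and $\lambda^2 = -|\lambda|^2$. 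For the $\delta\slashed H$ term, writing $\delta\slashed H = -{1\over2}\nabla^i H_{ijk}\Gamma^{jk}$ and applying Leibniz converts $6e\,{\rm Re}\langle\epsilon, \delta\slashed H\epsilon\rangle$ into the divergence $-3e\nabla^i{\rm Re}\langle\epsilon, H_{ijk}\Gamma^{jk}\epsilon\rangle$ plus two pointwise cross-terms that cancel thanks to the Clifford anti-Hermiticity of $\Gamma^{jk}$, precisely as in the analogous rearrangement leading to (\ref{funmax2}) in the 2-form case.

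Collecting everything, the fundamental identity takes the form
\begin{eqnarray*}
\nabla^2\parallel\epsilon\parallel^2 &=& 2\parallel{{\hat{\nabla}}}\epsilon\parallel^2 + c_2\parallel\slashed H\epsilon\parallel^2 \\
&& + \bigl({\textstyle{1\over2}} R - 24({\rm Re}\, e)^2 H^2 - 2|\lambda|^2\bigr)\parallel\epsilon\parallel^2 + (\text{total divergences}).
\end{eqnarray*}
Integrating over the closed manifold $M^n$, Stokes annihilates the left-hand side and the divergences. Dropping the non-negative terms $2\parallel{{\hat{\nabla}}}\epsilon\parallel^2$ and $c_2\parallel\slashed H\epsilon\parallel^2$ (using $c_2 \geq 0$) leaves $2|\lambda|^2\int_{M^n}\parallel\epsilon\parallel^2 \geq \int_{M^n}\bigl({1\over2}R - 24({\rm Re}\, e)^2 H^2\bigr)\parallel\epsilon\parallel^2$, and bounding the integrand below by its infimum yields the stated estimate.

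The main obstacle is verifying the cancellation in the handling of the $\delta\slashed H$ term, specifically that the two pointwise cross-terms produced by the Leibniz step exactly cancel. This rests on $e$ being real, $dH = 0$, and the Clifford anti-Hermiticity of $\Gamma^{jk}$.
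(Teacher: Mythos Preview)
Your proof is correct and follows essentially the same route as the paper: substitute the parameter choices into the fundamental identity (\ref{max3}), convert the $\delta\slashed H$ contribution into a divergence, and integrate. Two minor remarks: in the paper's convention $\delta\slashed H = -\nabla^i H_{ijk}\Gamma^{jk}$ without the factor ${1\over2}$, so the divergence term carries coefficient $-6e$ rather than $-3e$ (inconsequential after integration); and the paper closes by computing $c_2 = -2(n-1)\sin^2\psi$ explicitly, so that the hypothesis $c_2\geq 0$ is seen to \emph{force} ${\rm Im}\, e = 0$ (for $n>1$) rather than this being an independent assumption.
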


\begin{proof}
Under the assumptions on the parameters, the fundamental identity (\ref{max3}) can be re-arranged as
\begin{eqnarray}
\nabla^2 \parallel \epsilon \parallel^2 &=& 2 \parallel {{\hat{\nabla}}} \epsilon\parallel^2-2 \parallel{\cal{D}}\epsilon\parallel^2
+ \big({1 \over 2}R -24 ({\rm Re}\,e)^2 H^2 \big) \parallel \epsilon \parallel^2
\nonumber \\
&-&6\nabla^i{\rm Re}\langle\epsilon, eH_{ijk}\Gamma^{jk}\epsilon\rangle
+ 2 \nabla_i{\rm Re} \langle \epsilon, \Gamma^i {\cal{D}} \epsilon \rangle +c_2 \parallel {\slashed{H}} \epsilon \parallel^2
\label{6max3v}
\end{eqnarray}
where now
\begin{eqnarray}
c_2=-2 (n-1)\sin^2\psi~.
\end{eqnarray}
The proposition follows after assuming that $\epsilon$ is an eigenspinor of ${\cal{D}}$ with eigenvalue $\lambda$ and integrating (\ref{6max3v}) over $M^n$.
The requirement that $c_2\geq0$ for $n>1$ leads to the condition that ${\rm Im}\,e=0$.  Therefore $k_1=0$.  ${\cal{D}}$ is a formally anti-self-adjoint
operator and the holonomy of ${{\hat{\nabla}}}$ is contained in ${\rm Spin}(n)$.
\end{proof}

\subsection{Holonomy of  ${{\hat{\nabla}}}$ connections and covariant form hierarchies}

It is straightforward to observe that ${{\hat{\nabla}}}_Y: \Gamma(S^\pm)\rightarrow \Gamma(S^\pm)$, where $S^\pm$ are the two chiral spin bundles on even dimensional manifolds $M^n$. Therefore ${{\hat{\nabla}}}_Y$ preserves the chirality of sections of the spin bundle and so it is expected that the Lie algebra of the holonomy of ${{\hat{\nabla}}}$ to be restricted on the even part ${\rm Cl}^{\rm ev}_n$ of the Clifford algebra ${\rm Cl}_n$.

\begin{theorem}
The Lie algebra of the holonomy of ${{\hat{\nabla}}}$, $\mathfrak{Lie}\, {\rm Hol}({{\hat{\nabla}}})$,  for a generic choice of metric on $M^n$, $H$ and $k_1,k_2$ is $\mathfrak{Lie}\, {\rm Hol}({{\hat{\nabla}}})\subseteq (\rm{Cl}^{\rm ev}-\rm{Cl}^0)\otimes\mathbb{C}$.  Furthermore if $k_1=0$, then $\mathfrak{Lie}\, {\rm Hol}({{\hat{\nabla}}})\subseteq \mathfrak{spin}(n)\otimes \mathbb{C}$.
\end{theorem}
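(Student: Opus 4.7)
The plan is to apply the Ambrose-Singer theorem, which identifies $\mathfrak{Lie}\,{\rm Hol}({{\hat{\nabla}}})$ with the Lie span of the curvature endomorphisms $\hat R(X,Y)$ evaluated at every point of $M^n$, and then to track which Clifford degrees can appear. Writing ${{\hat{\nabla}}}_X=\nabla_X+\Sigma_X$ with $\Sigma_X = k_1\slashed H\cdot\slashed X + k_2\,i_X\slashed H$, I would use the standard relation
\begin{eqnarray}
\hat R(X,Y)=R(X,Y)+\nabla_X\Sigma_Y-\nabla_Y\Sigma_X+[\Sigma_X,\Sigma_Y]~,
\nonumber
\end{eqnarray}
where the final bracket is the Clifford commutator. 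Since $\slashed H\in{\rm Cl}_n^3$ and $\slashed X\in{\rm Cl}_n^1$, the product $\slashed H\cdot\slashed X$ decomposes as a sum of a ${\rm Cl}_n^2$ and a ${\rm Cl}_n^4$ piece, while $i_X\slashed H\in{\rm Cl}_n^2$. Hence $\Sigma_X$ takes values in the even subalgebra ${\rm Cl}_n^{\rm ev}$ with components in degrees $2$ and $4$ only.

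The next step is to verify that every summand of $\hat R(X,Y)$ lies in $({\rm Cl}_n^{\rm ev}-{\rm Cl}_n^0)\otimes\mathbb{C}$. The Levi-Civita piece $R(X,Y)\in\mathfrak{spin}(n)={\rm Cl}_n^2$ is harmless, and $\nabla_X\Sigma_Y-\nabla_Y\Sigma_X$ shares the grade structure of $\Sigma$, so it contributes only to ${\rm Cl}_n^2\oplus{\rm Cl}_n^4$. For the Clifford commutator one uses the fact that for any $\alpha,\beta$ of common pure degree $p$ the scalar part $(\alpha\beta)_0$ defines a bilinear form which is symmetric in $(\alpha,\beta)$, as follows from $\mathrm{tr}(\alpha\beta)=\mathrm{tr}(\beta\alpha)$; hence it is annihilated by the commutator. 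For $\alpha,\beta$ of distinct pure degrees a scalar part is absent to begin with. Applying this to the ${\rm Cl}_n^2$ and ${\rm Cl}_n^4$ constituents of $\Sigma_X$ rules out every ${\rm Cl}_n^0$ contribution to $[\Sigma_X,\Sigma_Y]$, while the remaining degrees that appear are still even. Since $({\rm Cl}_n^{\rm ev}-{\rm Cl}_n^0)\otimes\mathbb{C}$ is a Lie subalgebra of ${\rm Cl}_n\otimes\mathbb{C}$, Ambrose-Singer then yields the required inclusion; the word ``generic'' serves to ensure no algebraic accident collapses the image further.

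For the specialisation $k_1=0$ only the degree-$2$ summand $\Sigma_X=k_2\,i_X\slashed H$ survives, so each of $R(X,Y)$ and $\nabla_X\Sigma_Y-\nabla_Y\Sigma_X$ lies in ${\rm Cl}_n^2=\mathfrak{spin}(n)$. The Clifford commutator $[{\rm Cl}_n^2,{\rm Cl}_n^2]\subseteq{\rm Cl}_n^2$ is the standard Lie-bracket statement for $\mathfrak{spin}(n)$, which follows by observing that the degree-$0$ and degree-$4$ components of a product of two two-forms are both symmetric in the two factors and hence cancel in the commutator. Therefore $\hat R(X,Y)\in\mathfrak{spin}(n)\otimes\mathbb{C}$ pointwise, establishing the second claim.

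The main obstacle I anticipate is the bookkeeping required in the explicit expansion of $[\Sigma_X,\Sigma_Y]$: one must check, via the symmetry-of-bilinear-form argument above, that the various contraction patterns that could a priori populate ${\rm Cl}_n^0$ do pairwise cancel. The curvature computation for the $2$-form case exhibited in (\ref{curv2f}) provides a direct template, and no new ingredient beyond the Clifford-algebra identities already used there is required for the $3$-form setting.
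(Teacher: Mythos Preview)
Your proposal is correct and follows essentially the same route as the paper: invoke Ambrose--Singer and perform a Clifford-grade analysis of $\hat R(X,Y)$. The paper simply writes out the explicit curvature
\begin{eqnarray}
\hat R(X,Y)&=&R(X,Y)+k_1 (\nabla_X \slashed H \cdot \slashed Y-\nabla_Y \slashed H \cdot \slashed X)+ k_2 (\nabla_X i_Y \slashed H-\nabla_Y i_X \slashed H)
\nonumber \\
&&+k_1^2 (\slashed H\cdot \slashed X\cdot \slashed H\cdot \slashed Y-\slashed H\cdot \slashed Y\cdot \slashed H\cdot \slashed X)
\nonumber \\
&&+k_2^2 (i_X\slashed H \cdot i_Y\slashed H-i_Y\slashed H \cdot i_X\slashed H)+k_1k_2(\cdots)
\nonumber
\end{eqnarray}
and then asserts by inspection that it lies in $({\rm Cl}^{\rm ev}_n-{\rm Cl}^0_n)\otimes\mathbb{C}$, respectively in ${\rm Cl}^2_n\otimes\mathbb{C}$ when $k_1=0$. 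Your argument replaces this inspection by the structural observation that $\Sigma_X\in{\rm Cl}^2_n\oplus{\rm Cl}^4_n$ together with the trace-symmetry lemma $(\alpha\beta)_0=(\beta\alpha)_0$, which transparently kills the degree-$0$ part of every commutator; this is a cleaner way of saying what the paper leaves to the reader. The content is the same.
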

\begin{proof}
To use the Ambrose-Singer theorem to identify the Lie algebra of the holonomy group of ${{\hat{\nabla}}}$, it is sufficient to compute the curvature of ${{\hat{\nabla}}}$.  Indeed after some computation, one finds that
\begin{eqnarray}
\hat R(X,Y)&=&R(X,Y)+k_1 (\nabla_X \slashed H \cdot \slashed Y-\nabla_Y \slashed H \cdot \slashed X)+ k_2 (\nabla_X i_Y \slashed H-\nabla_Y i_X \slashed H)
\cr &&
~~~~+k_1^2 (\slashed H\cdot \slashed X\cdot \slashed H\cdot \slashed Y-\slashed H\cdot \slashed Y\cdot \slashed H\cdot \slashed X)
\cr
&&
~~~~
+k_2^2 (i_X\slashed H \cdot i_Y\slashed H-i_Y\slashed H \cdot i_X\slashed H)+k_1k_2( \slashed H \cdot \slashed X \cdot i_Y\slashed H
\cr
&&~~~~+ i_X\slashed H \cdot \slashed H \cdot \slashed Y
-\slashed H \cdot \slashed Y \cdot i_X\slashed H- i_Y\slashed H \cdot \slashed H \cdot \slashed X)~.
\end{eqnarray}
It is straightforward to observe that $\hat R(X,Y)$ at every point on $M^n$ is an element of $(\rm{Cl}^{\rm even}-\rm{Cl}^0)\otimes \mathbb{C}$.  Furthermore if $k_1=0$, then
$\hat R(X,Y)$ is an element of $\rm{Cl}^2\otimes\mathbb{C}$ which is identified with $\mathfrak{spin}(n)\otimes \mathbb{C}$.
\end{proof}

\begin{prop}\label{p:3fb}
The twisted covariant form hierarchy associated with geometry of $M^n$ that admits a ${{\hat{\nabla}}}$-parallel spinor is described by the equation
\begin{eqnarray}
 \nabla_Y \chi_p+4 {\rm Im}\,(k_2+6k_1) {i_YH}\vee \chi_{p+2}
\nonumber \\
-4{\rm Re}\, (k_2+6k_1) i_{i_Y H}\chi_p
+4 {\rm Im}\,(k_2+6k_1)\,i_Y H\wedge \chi_{p-2}
\nonumber \\
={1\over p+1} i_Y\bigg( d\chi_p
+4 \, {\rm Im} (k_2+6k_1)\,  i^\dagger_H \chi_{p+2}
\nonumber \\
+8\,{\rm Re}\,  (k_2+6k_1)\,
i_H\chi_p+12 \,{\rm Im}\,(k_2+6k_1)\, H\wedge \chi_{p-2}\bigg)
\nonumber \\
~~~~-{1\over n-p+1} \alpha_Y\wedge \bigg(\delta \chi_p
-12\,{\rm Im} (k_2+6k_1) H\vee \chi_{p+2}
\nonumber \\
~~~~+8\,{\rm Re} (k_2+6k_1) i^\dagger_H \chi_p-4\,{\rm Im}(k_2+6k_1)\, i_H\chi_{p-2} \bigg)~.
\label{cov4f}
\end{eqnarray}
\end{prop}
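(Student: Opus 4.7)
The plan is to follow the same strategy already used to establish Lemma \ref{le:1f} and Proposition \ref{p:2fb}. Write ${{\hat{\nabla}}}_Y = \nabla_Y + \Sigma_Y$ with $\Sigma_Y = k_1\,\slashed{H}\cdot\slashed{Y} + k_2\, i_Y\slashed{H}$, and express $\chi_p = i^{[p/2]}\langle\epsilon,\wedge^p\Gamma\,\epsilon\rangle$. Since ${{\hat{\nabla}}}\epsilon=0$, one obtains
\begin{eqnarray}
\nabla_Y\chi_p = -i^{[p/2]}\,\langle\epsilon,(\Sigma_Y^\dagger\cdot\wedge^p\Gamma + \wedge^p\Gamma\cdot\Sigma_Y)\epsilon\rangle~.
\end{eqnarray}
This is the exact analogue of the starting formula in the 1-form and 2-form cases, and specializing it to the current $\Sigma_Y$ will yield the hierarchy equation.

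Next, I would expand the Clifford products inside the inner product. For a 3-form $\slashed H$ multiplied against $\wedge^p\Gamma$, the resulting element decomposes into Clifford components of degrees $p-3,\ p-1,\ p+1,\ p+3$, and then contraction with $\slashed Y$ or $i_Y\slashed H$ shifts the degree by $\pm 1$ (for the first case) or up to $\pm 3$ (for the second). Because both $\slashed H\cdot\slashed Y$ and $i_Y\slashed H$ preserve the even/odd Clifford grading, only forms $\chi_{p+2},\chi_p,\chi_{p-2}$ can appear on the right-hand side, which matches the structure of (\ref{cov4f}). Collecting the four types of contributions, I expect to obtain an intermediate expression analogous to (\ref{bi2frep}) of the form
\begin{eqnarray}
\nabla_Y\chi_p + (\cdots) = \sum_{q\in\{p-2,p,p+2\}} \Big[A_q\, i_Y\Phi_q(H,\chi) + B_q\, \alpha_Y\wedge\Psi_q(H,\chi)\Big]~,
\end{eqnarray}
where $\Phi_q,\Psi_q$ are built from $H$, $\chi_q$ using $\wedge$, $i$ and $\vee$, and the coefficients $A_q,B_q$ are determined by real and imaginary parts of $k_2+6k_1$ (the combination that survives the chirality analysis, just as $k_2-4k_1$ did in the 2-form case).

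Once this intermediate identity is in place, I would apply the same observation used in Corollary \ref{c:1ff} and Proposition \ref{p:2fb}: every term on the right-hand side is manifestly in skew-symmetric or trace form with respect to $Y$, so the whole right-hand side is determined by the skew-symmetric and trace parts of the left-hand side. Replacing those parts by $\tfrac{1}{p+1}\,i_Y\,d\chi_p$ and $-\tfrac{1}{n-p+1}\,\alpha_Y\wedge\delta\chi_p$ with the additional algebraic corrections absorbed consistently, one arrives at (\ref{cov4f}).

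The main obstacle is purely computational: carefully carrying out the Clifford-algebra reductions of $\slashed H\cdot\slashed Y\cdot\wedge^p\Gamma$ and $i_Y\slashed H\cdot\wedge^p\Gamma$ (together with their $\Sigma_Y^\dagger$ counterparts), and tracking the phase factors $a_{p,q}=i^{[p/2]-[q/2]}$ that govern the translation between Clifford bilinears and the forms $\chi_q$. Getting the coefficients of $\mathrm{Re}(k_2+6k_1)$ and $\mathrm{Im}(k_2+6k_1)$ right in each of the $\chi_{p\pm 2}$ and $\chi_p$ channels, in particular distinguishing the $i_H$ and $i^\dagger_H$ contributions and the $H\wedge$ versus $H\vee$ contributions in the $d$ and $\delta$ pieces, is the only delicate point; no new structural ingredient beyond those already used in Proposition \ref{p:2fb} is required.
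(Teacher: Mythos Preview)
Your approach is exactly the paper's: start from $\nabla_Y\chi_p=-i^{[p/2]}\langle\epsilon,(\Sigma_Y^\dagger\cdot\wedge^p\Gamma+\wedge^p\Gamma\cdot\Sigma_Y)\epsilon\rangle$, expand via Clifford algebra to an intermediate identity analogous to (\ref{bi2frep}), and then replace the right-hand side by the skew and trace parts of the left-hand side. One small correction to your parity argument: although $\Sigma_Y$ preserves Clifford parity, the $k_1\,\slashed H\cdot\slashed Y$ piece carries a degree-4 component, so the intermediate identity (the paper's (\ref{xxcov3})) does contain $\chi_{p\pm4}$ terms on the right-hand side (with coefficients proportional to $k_1$ alone rather than $k_2+6k_1$); these disappear only after the skew/trace replacement, which is why the final formula (\ref{cov4f}) involves just $\chi_{p\pm2}$ and $\chi_p$.
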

\begin{proof}
The computation required to derive the formula above has already been  described for the 1-form and 2-form modified Dirac operators.  In particular
after some Clifford algebra,  one finds that
\begin{eqnarray}
&&\nabla_Y \chi_p+ 4\,{\rm Im} (k_2+6k_1) i_YH\vee \chi_{p+2}-4\,{\rm Re}(k_2+6k_1) i_{i_Y H} \chi_p
\cr
&&\qquad\qquad + 4\,{\rm Im}(k_2+6k_1) i_Y H\wedge \chi_{p-2}=
\cr &&
\qquad 12\, {\rm Re} k_1 i_Y(H\vee \chi_{p+4})+12\, {\rm Im}\,k_1\,i_Y(i^\dagger_H\chi_{p+2})
\cr
&& \qquad +12\, {\rm Re}\,k_1\, i_Y(i_H \chi_p) +12\, {\rm Im}\, k_1\, i_Y(H\wedge \chi_{p-2})
\cr
&& \qquad +12\, {\rm Im}\,k_1\, \alpha_Y\wedge(H\vee \chi_{p+2})-12\, {\rm Re}\,k_1\, \alpha_Y\wedge i^\dagger_H\chi_p
\cr
&& \qquad
+12\, {\rm Im}\,k_1\, \alpha_Y\wedge
i_H\chi_{p-2}-12\, {\rm Re}\,k_1\, \alpha_Y\wedge H\wedge \chi_{p-4}~.
\label{xxcov3}
\end{eqnarray}
Then (\ref{cov4f}) follows from (\ref{xxcov3}) because the right-hand-side of the above equation is specified from the skew and trace representations
of the left-hand-side of the same expression. So after replacing the right-hand-side of (\ref{xxcov3}) with the skew and trace representations of the
left-hand-side, one arrives at (\ref{cov4f}) which proves the proposition.
\end{proof}

{\bf Remark:} The equation  (\ref{cov4f}) of the covariant form hierarchy for $k_2=-6k_1$ simplifies to the Killing-Yano form equation.
Again both ${{\hat{\nabla}}}$ and $\slashed\hn$ exhibit non-trivial dependence on $H$ in this region of parameter space. The region $k_2=-6k_1$ is again complementary
to the region of parameters for which the theorem  \ref{th:3f} is valid.  Therefore the simplification of the hierarchy is not due to the bounds described
in  theorem  \ref{th:3f}.

\subsection{$k$-form, $k>3$,  modified Dirac operators}

Here we shall demonstrate that the results we have proven so far do not generalize to 4-form modified Dirac operators. Indeed
consider the 4-form modified Dirac operator ${\cal{D}}=\slashed \nabla+ e \slashed F$ and
\begin{eqnarray}
{{\hat{\nabla}}}_X=\nabla_X+k_1 \slashed F\cdot \slashed X+ k_2 i_X \slashed F~,
\end{eqnarray}
where  $F$ is a real 4-form on $M^n$ and $e, k_1, k_2\in \mathbb{C}$.  Below we state without proof the fundamental identity.

\begin{prop}
The fundamental identity of 4-form modified Dirac operators is
\begin{eqnarray}
\nabla^2\parallel\epsilon\parallel^2&=&2 \parallel{{\hat{\nabla}}}\epsilon\parallel^2+{1\over 2} R \parallel \epsilon\parallel^2+2{\rm Re}\langle\epsilon, (\slashed\nabla-(2\bar k_1+e)\slashed F) {\cal{D}} \epsilon\rangle
\cr
&&
+c_1  \parallel \slashed F\parallel^2-4{\rm Re}\langle \epsilon, (8\bar k_1-\bar k_2+4{\rm Re} \,e) F^i{}_{j_1j_2j_3} \Gamma^{j_1j_2j_3} \nabla_i \epsilon\rangle
\cr
&&
+18 c_2 \langle  F^{mn}{}_{i_1i_2}\Gamma^{i_1i_2}\epsilon,  F_{mni_3i_4}\Gamma^{i_3i_4} \epsilon\rangle-24  c_2 F^2 \parallel\epsilon\parallel^2
\cr
&&
-8{\rm Re}\nabla_i\langle\epsilon, e F^i{}_{jkl} \Gamma^{jkl}\epsilon\rangle-{2\over5}{\rm Re}\langle\epsilon, e \slashed{dF}\epsilon\rangle~.
\label{max4}
\end{eqnarray}
where
\begin{eqnarray}
c_1&=&{\rm Re}\left(4\bar k_1 e + 2 e^2-2 (n-16) |k_1|^2- 4\bar k_2 k_1\right)
\nonumber \\
c_2&=&-{\rm Re}\,\left(64 |k_1|^2-16 \bar k_2 k_1+|k_2|^2\right) \ .
\end{eqnarray}
\end{prop}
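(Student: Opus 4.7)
The plan is to follow verbatim the template already executed for $k=0,1,2,3$. Substituting $\slashed\nabla={\cal D}-e\slashed F$ and $\nabla_X={{\hat{\nabla}}}_X-\Sigma_X$ with $\Sigma_X=k_1\slashed F\cdot\slashed X+k_2\,i_X\slashed F$ into the universal identity (\ref{diracmax}) yields the six-term decomposition common to every multi-form case,
\begin{eqnarray*}
\nabla^2\parallel\epsilon\parallel^2 &=& 2\parallel{{\hat{\nabla}}}\epsilon\parallel^2-4\,{\rm Re}\langle\Sigma\epsilon,\nabla\epsilon\rangle-2\parallel\Sigma\epsilon\parallel^2 \cr
&& +\,2\,{\rm Re}\langle\epsilon,\slashed\nabla{\cal D}\epsilon\rangle-2\,{\rm Re}\langle\epsilon,e\,\slashed\nabla(\slashed F\epsilon)\rangle+\tfrac{1}{2}R\parallel\epsilon\parallel^2.
\end{eqnarray*}
The remaining task is to reduce the three $F$-dependent pieces into the list of invariants appearing on the right-hand side of (\ref{max4}).

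For $\parallel\Sigma\epsilon\parallel^2=\sum_i\langle\Sigma_i\epsilon,\Sigma^i\epsilon\rangle$ I would expand into $|k_1|^2$, $|k_2|^2$ and ${\rm Re}(\bar k_1 k_2)$ pieces and reduce each using the standard 4-form Clifford identities for $\slashed F\cdot\slashed X\pm\slashed X\cdot\slashed F$ in terms of wedging with $F$ and $i_X\slashed F$, together with the $p=4$ contraction rule $\sum_i\Gamma^i\slashed F\,\Gamma_i=(n-8)\slashed F$ and the Clifford decomposition of $\slashed F\cdot\slashed F$ into its degree $0,4,8$ parts. The cross term $4\,{\rm Re}\langle\Sigma\epsilon,\nabla\epsilon\rangle$ is handled by pulling $\Sigma_i$ past $\Gamma^i$ and using $\slashed\nabla\epsilon={\cal D}\epsilon-e\slashed F\epsilon$; this produces the ${\cal D}\epsilon$ contribution appearing in (\ref{max4}) together with the irreducible cross-derivative term $F^i{}_{j_1j_2j_3}\Gamma^{j_1j_2j_3}\nabla_i\epsilon$, whose coefficient arises because $\Gamma^i\slashed F$ does not collapse onto $\slashed\nabla$ alone. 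Finally, the commutation identity $\slashed\nabla(\slashed F\epsilon)=\tfrac{1}{5}\slashed{dF}\,\epsilon-\delta\slashed F\,\epsilon+\Gamma^i\slashed F\,\nabla_i\epsilon$ followed by one integration by parts on the $\delta\slashed F$ piece produces both the $\slashed{dF}$ and $\delta\slashed F$ terms visible in (\ref{max4}) together with the total divergence $\nabla_i\langle\epsilon,eF^i{}_{jkl}\Gamma^{jkl}\epsilon\rangle$.

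The main obstacle is bookkeeping rather than depth: assembling the quartic-in-$F$ contributions produced by $\parallel\Sigma\epsilon\parallel^2$ and by the products $\slashed F\cdot\slashed F$ inside the cross term. The essential new phenomenon at degree $4$ is that these contributions contain, in addition to the familiar invariants $F^2\parallel\epsilon\parallel^2$ and $\parallel\slashed F\epsilon\parallel^2$, a linearly independent quartic expression $\langle F^{mn}{}_{i_1i_2}\Gamma^{i_1i_2}\epsilon, F_{mn i_3i_4}\Gamma^{i_3i_4}\epsilon\rangle$ that cannot be absorbed into either. This is why it survives in (\ref{max4}) with coefficient $18 c_2$ opposite in sign to the $24 c_2 F^2\parallel\epsilon\parallel^2$ term, and is the structural reason that the subsequent Lichnerowicz strategy fails for $k\geq 4$: unlike $\parallel\slashed F\epsilon\parallel^2$ this invariant has no definite sign relative to $\parallel\epsilon\parallel^2$, so no choice of $(e,k_1,k_2)$ can render the quadratic form in $\epsilon$ pointwise non-negative for generic $F$. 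Verifying the coefficients $c_1,c_2$ stated in the proposition is then a matter of tracking the numerical factors through the expansion outlined above.
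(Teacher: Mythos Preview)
Your proposal is correct and follows precisely the template the paper uses (and explicitly refers back to) for the $k=0,1,2,3$ cases; indeed the paper states this particular identity \emph{without proof}, simply remarking that it is obtained by the same computation. Your six-term decomposition, the handling of $\parallel\Sigma\epsilon\parallel^2$, the cross term, and the use of the commutation identity for $\slashed\nabla(\slashed F\epsilon)$ (with the integration by parts on the $\delta\slashed F$ piece to produce the total divergence and the $4\,{\rm Re}\,e$ contribution to the cross-derivative coefficient) are exactly the steps implicit in the paper's claim that ``the identity can be established after some computation.'' Your observation that the extra quartic invariant $\langle F^{mn}{}_{i_1i_2}\Gamma^{i_1i_2}\epsilon, F_{mni_3i_4}\Gamma^{i_3i_4}\epsilon\rangle$ is the structural obstruction is also the content of the paper's subsequent Remark.
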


{\bf Remark:} Suppose that $M^n$ be a closed spin manifold, and
let $F$ be a closed 4-form $dF=0$.  Assuming that
 ${1\over2} R-24  c_2 F^2\gneq 0$,  $k_2=8k_1+4{\rm Re}\, e$ and $c_1, c_2\geq 0$, one could conclude that ${\rm Ker}\, {\cal{D}}=\{0\}$.  However
 there is no range of parameters for which this holds.  Indeed, the $\nabla\epsilon$ term in (\ref{max4}) vanishes provided that
   $k_2=8k_1+4\bar e$.  Using this, one finds that
   \begin{eqnarray}
   c_2=-16 ({\rm Re}\, e)^2~.
   \end{eqnarray}
As   $c_2\geq 0$, this requires that  ${\rm Re}\, e=0$.  In turn a short calculation reveals that
\begin{eqnarray}
c_1=-2 ({\rm Im} e-{\rm Im} k_1)^2-2 (n-1) ({\rm Im} k_1)^2-2n ({\rm Re} k_1)^2~.
\end{eqnarray}
As it is required that $c_1\geq 0$, one has to set $k_1=e=0$ and so the Dirac operator does not get modified.
A similar conclusion is expected to hold for all generically  $k$-form modified Dirac operators for $k\geq 4$. This does not rule out the
possibility that there are may be counter-examples to this for specially chosen $k$-forms, $k\geq 4$, e.g (anti-)-self dual forms,  representations for the
 spinor $\epsilon$ and dimension of $M^n$.  However generically, it will not be possible to proceed for reason similar to those exhibited for $k=4$ above.

\section{(0,1)-multi-form modified Dirac operators}

As estimates for the eigenvalues of $k$-forms, $k>3$,  modified Dirac operators cannot  generically be obtained, we shall turn our attention
to multi-form modified Dirac operators.  As a first example consider
the (0,1)-multi-form modified Dirac operator  which can be written as
\begin{eqnarray}
{\cal{D}}=\slashed{\nabla}+ e_1 f + e_2 \slashed A~,
\end{eqnarray}
where $e_1, e_2\in \mathbb{C}$,  and $f$ is a function and $A$ is a 1-form on $M^n$.  Furthermore consider
the connection
\begin{eqnarray}
{{\hat{\nabla}}}_X=\nabla_X+k_0 f \slashed X +k_1 \slashed A \cdot \slashed X+k_2 A(X)
\end{eqnarray}
on the spin bundle, where $k_0,  k_1, k_2\in \mathbb{C}$.

\begin{prop}
The fundamental identity for 0- and 1-form Dirac operator ${\cal{D}}$ is
\begin{eqnarray}
\nabla^2\parallel\epsilon\parallel^2&=&2\parallel{{\hat{\nabla}}}\epsilon\parallel^2+{1\over2} R \parallel\epsilon\parallel^2 +(c_0 f^2+c_1 A^2)\parallel\epsilon\parallel^2
\cr
&&+2{\rm Re}\,\langle\epsilon,
\left(\slashed{\nabla}-2\bar k_0 f+\bar e_1 f+(2\bar k_1+e_2)\slashed{A}\right){\cal{D}}\epsilon\rangle
\cr
&&-2{\rm Re} (2k_1+k_2) \delta A \parallel\epsilon\parallel^2
-2\nabla_i{\rm Re}\,\langle\epsilon, \Gamma^i e_1 f \epsilon\rangle
\cr
&&+2{\rm Re} (2k_1+k_2+e_2)\delta\left( A\parallel\epsilon\parallel^2\right)
+{\rm Re}\,\langle\epsilon, c_2 f \slashed{A}\epsilon\rangle
\cr
&&+4 {\rm Im} (e_2+2\bar k_1+\bar k_2)\, {\rm Im}\langle\epsilon, \nabla_A\epsilon\rangle
-{\rm Re}\, \langle\epsilon, e_2 \slashed{dA}\epsilon\rangle~,
\label{max01}
\end{eqnarray}
where
\begin{eqnarray}
c_0&=&4{\rm Re}\,(\bar k_0  e_1)+2{\rm Re}\,e_1^2 -2n|k_0 |^2-4({\rm Re}\, e_1)^2~,
\cr
c_1&=& -(4{\rm Re}\,(\bar k_1 e_2)+2n|k_1|^2+4{\rm Re}\,(\bar k_1 k_2)
+2|k_2|^2
+2{\rm Re}\,e_2^2)~,
\cr
c_2&=&4(\bar k_0 -{\rm Re}\, e_1 ) e_2-4\bar k_1 e_1 -2(2-n) \bar k_0  k_1
\nonumber \\
&-&2(2-n) k_0  \bar k_1-2\bar k_0  k_2-2 \bar k_2 k_0~.
\label{acon01}
\end{eqnarray}
\end{prop}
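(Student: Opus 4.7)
The plan is to follow the same template as in Propositions \ref{0formpropextra} and \ref{fun1}: start from the standard Lichnerowicz identity (\ref{diracmax}) and perform the two substitutions
\begin{eqnarray*}
\nabla\epsilon = {{\hat{\nabla}}}\epsilon - \Sigma\epsilon, \qquad \slashed{\nabla}\epsilon = {\cal{D}}\epsilon - \slashed{\omega}\epsilon,
\end{eqnarray*}
where now $\Sigma_X = k_0 f \slashed{X} + k_1 \slashed{A}\cdot \slashed{X} + k_2 A(X)$ and $\slashed{\omega} = e_1 f + e_2 \slashed{A}$. This produces the master expression already displayed in the proof of Proposition \ref{0formpropextra}, with three types of term to evaluate: $\|\Sigma\epsilon\|^2$, the cross pairing $\operatorname{Re}\langle \Sigma\epsilon,\nabla\epsilon\rangle$, and the ``$\slashed{\nabla}\slashed{\omega}$'' term $\operatorname{Re}\langle\epsilon,\slashed{\nabla}(\slashed{\omega}\epsilon)\rangle$.

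First I would split each of these three terms additively according to the two pieces of $\Sigma$ and $\slashed{\omega}$. The ``pure'' $(k_0, e_1)$ contributions reproduce exactly the 0-form identity and give the $c_0 f^2\parallel\epsilon\parallel^2$ piece together with the boundary term $-2\nabla_i\operatorname{Re}\langle\epsilon, e_1 f\Gamma^i\epsilon\rangle$ and the $\bar e_1 f\, {\cal D}\epsilon$ and $-2\bar k_0 f\,{\cal D}\epsilon$ pieces inside the bracket with $\slashed{\nabla}$. Likewise the ``pure'' $(k_1, k_2, e_2)$ contributions reproduce the 1-form identity of Proposition \ref{fun1} and give the $c_1 A^2\parallel\epsilon\parallel^2$ piece, the $\delta A$ and $\delta(A\parallel\epsilon\parallel^2)$ terms, the $\operatorname{Im}\langle\epsilon,\nabla_A\epsilon\rangle$ term and the $\slashed{dA}$ term, as well as the $(2\bar k_1+e_2)\slashed{A}\,{\cal D}\epsilon$ piece. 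No new work is required for these blocks; they can be simply quoted.

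The novelty lies in the cross terms coupling the 0-form and 1-form data. These arise from three sources: (i) the cross part of $\|\Sigma\epsilon\|^2$, which produces $2\operatorname{Re}\langle k_0 f\Gamma^i\epsilon,(k_1\slashed{A}\Gamma_i + k_2 A_i)\epsilon\rangle$; here the Clifford identity $\Gamma^i\slashed{A}\Gamma_i = (2-n)\slashed{A}$ collapses the first piece to a scalar multiple of $f\operatorname{Re}\langle\epsilon,\slashed{A}\epsilon\rangle$ and the second to $f\operatorname{Re}\langle\epsilon,\slashed{A}\epsilon\rangle$, giving contributions $-(2-n)(\bar k_0 k_1 + k_0\bar k_1)$ and $-2(\bar k_0 k_2 + k_0\bar k_2)$ to $c_2$; (ii) the cross part of $\operatorname{Re}\langle\Sigma\epsilon,\nabla\epsilon\rangle$, where the $k_0 f$ piece pairs with the $e_2\slashed{A}$ piece of $\slashed{\omega}$ after replacing $\nabla\epsilon$ by $({{\hat{\nabla}}}-\Sigma)\epsilon$ and $\slashed{\nabla}\epsilon$ by $({\cal D}-\slashed\omega)\epsilon$, yielding the $4\bar k_0 e_2$ term; and (iii) the symmetric cross part of $\operatorname{Re}\langle\epsilon,\slashed{\nabla}(\slashed\omega\epsilon)\rangle$, which gives both $-4\operatorname{Re}(e_1)\,e_2$ (from the $e_1 f$ factor hitting $e_2\slashed{A}$) and $-4\bar k_1 e_1$ (after trading $\slashed{\nabla}\epsilon$ for ${\cal D}\epsilon - \slashed\omega\epsilon$ and absorbing the remaining piece back into the ${\cal D}$-bracket).

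The main obstacle will be the bookkeeping: one must correctly track each of the nine products $k_i\bar e_j$, $\bar k_i e_j$ and the six mixed $k_i\bar k_j$ products, use $\Gamma^i\slashed{A}\Gamma_i=(2-n)\slashed{A}$ and $\Gamma^i\Gamma_i=n$ consistently, and verify by collecting real parts that the surviving cross contribution to the coefficient of $f\slashed{A}\parallel\epsilon\parallel^2$ assembles into precisely the $c_2$ stated in (\ref{acon01}). Once this is done, the $\slashed{df}$ and $\slashed{dA}$ pieces are handled exactly as in the proofs of Propositions \ref{0formpropextra} and \ref{fun1} via integration by parts and the Bianchi-free identity $\slashed\nabla(f\epsilon)=\slashed{df}\,\epsilon+f\slashed{\nabla}\epsilon$, completing (\ref{max01}).
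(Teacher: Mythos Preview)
Your proposal is correct and follows essentially the same route as the paper: the paper's own proof merely says that the derivation is ``similar to that of the fundamental identities of 0-form Dirac operators and 1-form Dirac operators'' and that ``one of the essential new terms is that with coefficient $c_2$''. Your plan of quoting the pure 0-form and pure 1-form blocks from Propositions \ref{0formpropextra} and \ref{fun1} and then isolating the cross terms that assemble into $c_2$ is exactly this, only spelled out in more detail than the paper bothers to give.
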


\begin{proof}
The derivation of the formula is similar to that of the fundamental identities  of 0-form Dirac operators and 1-form Dirac operators. One of the essential
new terms is that with coefficient $c_2$.  As  will be demonstrated below, this has to vanish to construct bounds for the eigenvalues of ${\cal{D}}$.
\end{proof}

\begin{theorem}
\label{theorem01}
Let $M^n$ be a closed spin manifold.
Let $A$ be closed 1-form $dA=0$, ${\rm Im} (e_2+2\bar k_1+\bar k_2)=0$ and ${\rm Re}\,c_2=0$.
\begin{enumerate}
\item If ${1\over2} R +c_0 f^2+c_1 A^2-2{\rm Re} (2k_1+k_2)\delta A\gneq 0$, then ${\rm Ker}{\cal{D}}=\{0\}$.

\item If ${1\over2} R +c_0 f^2+c_1 A^2-2{\rm Re} (2k_1+k_2)\delta A=0$, and $e_1=nk_0$,
$A$ is non-vanishing at some point on $M^n$, and
${\rm Ker}{\cal{D}} \neq \{ 0 \}$, then
$e_2=(2-n)k_1+k_2$ and ${\rm Ker}\,{\cal{D}}={\rm Ker}\,{{\hat{\nabla}}}$.
\end{enumerate}
\end{theorem}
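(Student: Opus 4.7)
The plan is to adapt verbatim the strategy used for the 0-form case (theorem \ref{0formth}) and the 1-form case (theorem \ref{th:1form}): substitute the stated hypotheses into the fundamental identity (\ref{max01}), integrate over the closed manifold, and apply the divergence theorem together with the usual positivity arguments. The novelty is that a mixed ``cross term'' $\mathrm{Re}\langle\epsilon, c_2 f \slashed A\epsilon\rangle$ now appears, and this is precisely what forces the extra hypothesis $\mathrm{Re}\,c_2 = 0$.

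For part (1), I would inspect the right-hand side of (\ref{max01}) term by term under $\mathcal D\epsilon = 0$, $dA=0$, $\mathrm{Im}(e_2+2\bar k_1+\bar k_2)=0$ and $\mathrm{Re}\,c_2 = 0$. The $\mathcal D\epsilon$ line drops; the $\slashed{dA}$ line drops because $A$ is closed; the $\mathrm{Im}\langle\epsilon,\nabla_A\epsilon\rangle$ line drops by the imaginary-part condition; the $\Gamma^i e_1 f$ and $\delta(A\|\epsilon\|^2)$ terms are pure divergences and vanish upon integration. The critical step is handling $\mathrm{Re}\langle\epsilon, c_2 f\slashed A\epsilon\rangle$: since $A$ is real and $\slashed A$ is Hermitian with respect to the spin inner product, $\langle\epsilon,\slashed A\epsilon\rangle$ is real and pulling $c_2$ out gives $\mathrm{Re}\,c_2$ times a real factor, which vanishes by hypothesis. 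What remains after integration is
\begin{equation*}
0 = \int_{M^n}\Bigl[\,2\|\hat\nabla\epsilon\|^2 + \bigl(\tfrac12 R + c_0 f^2 + c_1 A^2 - 2\mathrm{Re}(2k_1+k_2)\delta A\bigr)\|\epsilon\|^2\,\Bigr],
\end{equation*}
and the strict positivity hypothesis forces $\epsilon \equiv 0$ in exactly the same manner as for the 0-form Dirac operator.

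For part (2), the same integration with the scalar coefficient set to zero yields $\hat\nabla\epsilon = 0$, so $\mathrm{Ker}\,\mathcal D \subseteq \mathrm{Ker}\,\hat\nabla$. For the reverse inclusion I would compute $\slashed{\hat\nabla} = \Gamma^i\hat\nabla_i$ explicitly using the identities $\Gamma^i\Gamma_i = n$, $\Gamma^i\slashed A\,\Gamma_i = (2-n)\slashed A$ and $\Gamma^i A_i = \slashed A$, which give
\begin{equation*}
\slashed{\hat\nabla} = \slashed\nabla + n k_0 f + \bigl((2-n)k_1 + k_2\bigr)\slashed A.
\end{equation*}
Since $\hat\nabla\epsilon = 0$ implies $\slashed{\hat\nabla}\epsilon = 0$, subtracting from $\mathcal D\epsilon = 0$ yields
\begin{equation*}
(e_1 - nk_0)\,f\,\epsilon \;+\; \bigl(e_2 - (2-n)k_1 - k_2\bigr)\slashed A\,\epsilon \;=\; 0.
\end{equation*}
The hypothesis $e_1 = nk_0$ kills the first summand, leaving $(e_2 - (2-n)k_1 - k_2)\slashed A\epsilon = 0$. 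Acting again by $\slashed A$ and using $\slashed A^2 = A^2$ together with the non-vanishing of $A$ at some point and the nowhere-vanishing of the parallel spinor $\epsilon$ on a connected manifold, I conclude $e_2 = (2-n)k_1+k_2$. Hence $\mathcal D = \slashed{\hat\nabla}$ and trivially $\mathrm{Ker}\,\hat\nabla \subseteq \mathrm{Ker}\,\slashed{\hat\nabla} = \mathrm{Ker}\,\mathcal D$.

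The main obstacle is really the cross term $c_2 f\slashed A$: unlike the other new terms produced by coupling a 0-form and a 1-form, it is not a divergence and carries a non-trivial Clifford structure, so there is no way to absorb it except by forcing $\mathrm{Re}\,c_2 = 0$. After proving the two conclusions I would, as in the earlier sections, verify that the resulting system $e_1 = nk_0$, $e_2 = (2-n)k_1+k_2$, $\mathrm{Im}(e_2+2\bar k_1+\bar k_2)=0$, $\mathrm{Re}\,c_2 = 0$ cuts out a non-empty slice of parameter space; substituting the two algebraic conditions into the remaining two reduces them to explicit constraints on $\mathrm{Im}\,k_0$ and $\mathrm{Im}\,k_1$, yielding a multi-parameter family on which the theorem is non-vacuous.
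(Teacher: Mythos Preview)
Your proposal is correct and follows essentially the same route as the paper: you use the fundamental identity (\ref{max01}), kill the last three terms via the hypotheses $dA=0$, ${\rm Im}(e_2+2\bar k_1+\bar k_2)=0$ and ${\rm Re}\,c_2=0$, integrate away the divergences, and for part (2) compare ${\cal D}$ with $\slashed{\hat\nabla}$ after using $e_1=nk_0$ to reduce to $(e_2-(2-n)k_1-k_2)\slashed A\epsilon=0$. Your explicit justification that $\langle\epsilon,\slashed A\epsilon\rangle$ is real (so only ${\rm Re}\,c_2$ survives) and your Clifford computation of $\slashed{\hat\nabla}$ are slightly more detailed than the paper's presentation, but the argument is the same.
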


\begin{proof}
To prove the first part of the statement observe that as $A$ is closed, ${\rm Im} (e_2+2\bar k_1+\bar k_2)=0$  and ${\rm Re}\, c_2=0$, the last three terms of the fundamental identity vanish. Assuming that ${\cal{D}}\epsilon=0$ and integrating the rest of the identity over $M^n$  using the assumptions of the theorem, one is led to a contradiction. Therefore ${\rm Ker}\, {\cal{D}}=\{0\}$.
The conditions ${\rm Im} (e_2+2\bar k_1+\bar k_2)=0$ and ${\rm Re}\,c_2=0$  on the parameters have solutions. The statement is valid for an 8-parameter family.

The second part of the statement can be proved in a similar way.  In particular integrating the fundamental identity, one concludes that ${\rm Ker} \, {\cal{D}}\subseteq {\rm Ker}{{\hat{\nabla}}}$.
In turn, this implies that if $\epsilon \in {\rm Ker} \, {\cal{D}}$
then $\slashed\hn\epsilon=0$ and so ${\cal{D}}\epsilon-\slashed\hn\epsilon=(e_2-(2-n) k_1-k_2)\slashed A\epsilon=0$,
on using the condition $e_1=nk_0$.
Therefore $(e_2-(2-n) k_1-k_2) A^2\epsilon=0$. As $\epsilon$  is no-where zero on $M^n$ because it is parallel and    as $A\not=0$,
one concludes that $e_2=(2-n) k_1+k_2$. From this it follows that ${\cal{D}}=\slashed\hn$ and ${\rm Ker}\, {\cal{D}}={\rm Ker}{{\hat{\nabla}}}$ as all ${{\hat{\nabla}}}$-parallel spinors are zero modes of
$\slashed\hn$.

In addition the assumption $e_1=nk_0$ and $e_2=(2-n)k_1+k_2$ imply that ${\cal{D}}=\slashed\hn$ and so one establishes ${\rm Ker}{\cal{D}}={\rm Ker}{{\hat{\nabla}}}$.
Note that ${\rm Im} (e_2+2\bar k_1+\bar k_2)=0$  and $e_2=(2-n)k_1+k_2$ give that ${\rm Im}\, k_1=0$. There is 4-parameter family  that the second part of the theorem holds.
\end{proof}

\begin{corollary}\label{cor611}
Let $M^n$ be a closed spin manifold.
Let $\eta$ be an eigenspinor of the Dirac operator with eigenvalue $\lambda$. If ${\rm Im} (e_2+2\bar k_1+\bar k_2)=0$, $e_2={\rm Re}\, e_2$  and ${\rm Re}\,c_2= 0$, then
\begin{eqnarray}
|\lambda|^2 \geq {1\over (n s^2-2s+1)} &\inf_{M^n}\bigg({1\over 4} R-{\rm Re} (2k_1+k_2)h^{-1} \nabla^2 h
\nonumber \\
&+[{1\over2}c_1+ {\rm Re} (2k_1+k_2)] |h^{-1}dh|^2 \bigg),
\end{eqnarray}
where $s\in \mathbb{R}$ and $h$ is a real positive function on $M^n$. An upper lower bound is obtained for $s=n^{-1}$, $n>1$.
\end{corollary}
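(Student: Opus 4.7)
The approach generalises the Friedrich trick used in the proof of Corollary 2.1, now applied to the $(0,1)$-form fundamental identity (\ref{max01}) in place of the $0$-form one.

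First, I would specialise the parameters of Proposition 6.1 by setting $\epsilon=\eta$ and choosing $f=1$, $A=-h^{-1}dh=-d\log h$, $e_1=-\lambda$, $k_0=-s\lambda$, $e_2=0$, and taking $k_1,k_2\in\mathbb{R}$ as free real parameters. Since $\slashed{\nabla}$ is formally anti-self-adjoint, $\lambda$ is purely imaginary, so $e_1,k_0\in i\mathbb{R}$. The three hypotheses of the corollary then hold: ${\rm Im}(e_2+2\bar k_1+\bar k_2)=0$ and $e_2={\rm Re}\, e_2$ are immediate, while a short computation with (\ref{acon01}) using $k_1,k_2\in\mathbb{R}$ and $\lambda+\bar\lambda=0$ yields $c_2=4k_1\lambda\in i\mathbb{R}$, so ${\rm Re}\,c_2=0$. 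Moreover $dA=0$ since $A$ is exact, and ${\cal D}\eta=\slashed{\nabla}\eta+e_1 f\eta+e_2\slashed{A}\eta=\lambda\eta-\lambda\eta=0$, so the $\slashed{dA}$-term and every term containing ${\cal D}\eta$ in (\ref{max01}) vanish. Under the convention that $\slashed{A}$ is Hermitian, $\langle\eta,\slashed{A}\eta\rangle$ is real, and then ${\rm Re}\,c_2=0$ forces the $c_2 f\slashed{A}$-term to vanish as well.

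Second, I would compute the relevant constants. Using $\lambda\in i\mathbb{R}$, $|k_0|^2=s^2|\lambda|^2$, ${\rm Re}\,e_1=0$, ${\rm Re}\,e_1^2=-|\lambda|^2$ and ${\rm Re}(\bar k_0 e_1)=s|\lambda|^2$, formula (\ref{acon01}) gives $c_0=-2|\lambda|^2(ns^2-2s+1)$, while $c_1$ reduces to a real expression in $k_1,k_2$. Since $h>0$ and $A=-h^{-1}dh$, one has $A^2=|h^{-1}dh|^2$ and $\delta A=h^{-1}\nabla^2 h-|h^{-1}dh|^2$. Substituting, the coefficient of $\parallel\eta\parallel^2$ in (\ref{max01}) becomes
\begin{align*}
& \tfrac12 R + c_0 f^2 + c_1 A^2 - 2{\rm Re}(2k_1+k_2)\,\delta A \\
& \qquad = \tfrac12 R - 2|\lambda|^2(ns^2-2s+1) - 2(2k_1+k_2)\,h^{-1}\nabla^2 h + \bigl[c_1 + 2(2k_1+k_2)\bigr]|h^{-1}dh|^2.
\end{align*}

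Third, I would integrate (\ref{max01}) over the closed manifold $M^n$. The divergence terms $\delta(A\parallel\eta\parallel^2)$ and $\nabla_i{\rm Re}\langle\eta,\Gamma^i e_1 f\eta\rangle$ integrate to zero by Stokes. Discarding the non-negative contribution $2\int\parallel\hn\eta\parallel^2\geq 0$ and dividing by $2$ gives
\begin{equation*}
|\lambda|^2(ns^2-2s+1)\int_{M^n}\parallel\eta\parallel^2\geq\int_{M^n}\bigl(\tfrac14 R-(2k_1+k_2)\,h^{-1}\nabla^2 h+\bigl[\tfrac{c_1}{2}+(2k_1+k_2)\bigr]|h^{-1}dh|^2\bigr)\parallel\eta\parallel^2,
\end{equation*}
and bounding the integrand from below by its infimum over $M^n$ delivers the stated estimate. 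The polynomial $ns^2-2s+1$ attains its minimum $(n-1)/n$ at $s=1/n$ for $n>1$, yielding the ``upper lower bound.''

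The main obstacle is the algebraic bookkeeping: one must verify all three parameter constraints simultaneously while producing $c_0$ with the precise coefficient $ns^2-2s+1$, and must track sign conventions for $\delta$ and for the Hermiticity of $\slashed{A}$ with respect to the spinor inner product (this Hermiticity is what makes ${\rm Re}\,c_2=0$ sufficient to kill the $c_2 f\slashed{A}$-term, via the reality of $\langle\eta,\slashed{A}\eta\rangle$). A secondary sign-tracking check is that the choice $A=-h^{-1}dh$, rather than $+h^{-1}dh$, produces precisely the signs shown in the corollary.
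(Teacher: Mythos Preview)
Your argument is correct for the special case $e_2=0$, but the corollary is stated for arbitrary real $e_2$ (subject to the three constraints), and the bound genuinely depends on $e_2$ through $c_1$. By forcing $e_2=0$ you are proving only a sub-family of the asserted estimates. The paper's proof handles general real $e_2$ by a conformal rescaling of the spinor: one sets $\epsilon=h^{e_2}\eta$ rather than $\epsilon=\eta$. Then
\[
{\cal D}\epsilon=\slashed\nabla(h^{e_2}\eta)-\lambda h^{e_2}\eta+e_2\slashed A\,h^{e_2}\eta
= h^{e_2}\bigl(e_2 h^{-1}\slashed{dh}\,\eta+\lambda\eta-\lambda\eta-e_2 h^{-1}\slashed{dh}\,\eta\bigr)=0,
\]
using $A=-h^{-1}dh$, so the ${\cal D}$-terms in (\ref{max01}) still drop out for every real $e_2$. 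Your treatment of $c_0$, of $\delta A=h^{-1}\nabla^2 h-|h^{-1}dh|^2$, and of the $c_2$-term via the reality of $\langle\epsilon,\slashed A\epsilon\rangle$ all carry over verbatim with the rescaled $\epsilon$.

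This restriction is not harmless: the very next theorem (Hijazi's bound) applies the corollary with the choice $e_2=-k_1\neq 0$ in order to make $\tfrac12 c_1+{\rm Re}(2k_1+k_2)=0$ while simultaneously matching the Yamabe coefficient. A short discriminant check shows that with $e_2=0$ and $k_1,k_2\in\mathbb{R}$ these two conditions have no real solution, so your version of the corollary would not suffice for that application. The missing ingredient is precisely the rescaling $\epsilon=h^{e_2}\eta$; once you insert it, the rest of your outline goes through for all admissible parameters.
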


\begin{proof}
To demonstrate this statement take $f=1$,  $e_1=-\lambda$, $k_0=s e_1$ and $A=-h^{-1}dh$.  To continue set $\epsilon=h^{e_2}\eta $ and observe that
${\cal{D}}\epsilon=0$.  Using that $\lambda$ is purely imaginary  a straightforward computation  reveals that $c_0=-2 (ns^2-2s+1) |\lambda|^2$.
As ${\cal{D}}$ has a non-empty kernel and ${\rm Re}\,c_2\geq 0$, it is required that
\begin{eqnarray}
{1\over2} R -2 (ns^2-2s+1) |\lambda|^2 +c_1 A^2-2{\rm Re} (2k_1+k_2)\delta A\leq 0~.
\end{eqnarray}
The corollary follows after re-arranging the above inequality for $n>1$ and substituting $A=-h^{-1}dh$.
It remains to demonstrate that there is a range of parameters that the results holds.  This will be illustrated in an example below.
\end{proof}

\begin{theorem}
Let $M^n$ be a closed spin manifold.
Let  $\lambda$ be an eigenvalue of the Dirac operator and $n\geq 3$, then
\begin{eqnarray}
|\lambda|^2\geq {n\over 4( n-1)} \mu_1~,
\end{eqnarray}
where $\mu_1$ is the first eigenvalue of the conformally invariant (Yamabe) operator $L=-4{{n-1}\over n-2}\nabla^2+R$.
This result has originally  been  demonstrated by Hijazi  \cite{hijazi}.
\end{theorem}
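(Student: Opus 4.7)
The strategy is to specialise Corollary \ref{cor611} to exactly produce the Yamabe operator $L=-4\tfrac{n-1}{n-2}\nabla^{2}+R$ inside the infimum, and then to evaluate on the positive first eigenfunction of $L$.

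First I would choose the optimal value $s=n^{-1}$ permitted by Corollary \ref{cor611}, so that the overall prefactor becomes $n/(n-1)$, i.e.\ the bound reads
\begin{equation*}
|\lambda|^2\;\geq\;\frac{n}{n-1}\,\inf_{M^n}\!\left(\tfrac14 R-\operatorname{Re}(2k_1+k_2)\,h^{-1}\nabla^2 h+\bigl[\tfrac12 c_1+\operatorname{Re}(2k_1+k_2)\bigr]|h^{-1}dh|^2\right).
\end{equation*}
The goal is now to select real parameters $e_2,k_1,k_2$ (allowed by the hypotheses ${\rm Im}(e_2+2\bar k_1+\bar k_2)=0$, $e_2={\rm Re}\,e_2$, ${\rm Re}\,c_2=0$) that enforce the two conditions
\begin{equation*}
\operatorname{Re}(2k_1+k_2)=\tfrac{n-1}{n-2},\qquad \tfrac12 c_1+\operatorname{Re}(2k_1+k_2)=0,
\end{equation*}
so that the bracket becomes exactly $\tfrac{1}{4h}Lh$ and the $|dh|^2/h^2$ term drops out.

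Second, I would exhibit an explicit solution of these algebraic constraints. A direct calculation shows that choosing
$k_1=\tfrac{1}{n-2}$, $k_2=\tfrac{n-3}{n-2}$, $e_2=-\tfrac{1}{n-2}$ (all real) satisfies the reality and $c_2$-conditions automatically, and one verifies from the formula for $c_1$ in \eqref{acon01} that
\begin{equation*}
c_1=-\tfrac{2(n-1)}{n-2},\qquad 2k_1+k_2=\tfrac{n-1}{n-2},
\end{equation*}
so both desired matching identities hold. With this choice the bound simplifies to
\begin{equation*}
|\lambda|^2\;\geq\;\frac{n}{n-1}\,\inf_{M^n}\frac{Lh}{4h},
\end{equation*}
for every positive smooth function $h$ on $M^n$.

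Third, I would invoke standard elliptic theory: the Yamabe operator $L$ is a formally self-adjoint second order elliptic operator on the closed manifold $M^n$, and so by the spectral theorem and the maximum principle its smallest eigenvalue $\mu_1$ is simple with an eigenfunction $h>0$. Taking this $h$ in the inequality gives $Lh/(4h)=\mu_1/4$, constant on $M^n$, so
\begin{equation*}
|\lambda|^2\;\geq\;\frac{n}{n-1}\cdot\frac{\mu_1}{4}=\frac{n}{4(n-1)}\,\mu_1,
\end{equation*}
which is Hijazi's bound.

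The main obstacle is purely algebraic: one must verify that the constraints coming from the $(0,1)$-form fundamental identity admit parameter values simultaneously producing the Yamabe coefficient $\tfrac{n-1}{n-2}$ and annihilating the gradient square term; the discriminant computation in the reduction to one quadratic equation in $k_1$ forces the unique solution above. Once these parameters are in hand, the argument reduces to spectral positivity of $L$.
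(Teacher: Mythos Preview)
Your proposal is correct and follows essentially the same approach as the paper: specialise Corollary \ref{cor611} with $s=n^{-1}$, impose $\tfrac12 c_1+\operatorname{Re}(2k_1+k_2)=0$ to kill the gradient-square term, and take $h$ to be the positive first eigenfunction of $L$; the explicit parameter values $k_1=(n-2)^{-1}$, $k_2=(n-3)(n-2)^{-1}$, $e_2=-k_1$ are exactly those chosen in the paper. If anything you are slightly more explicit than the paper in spelling out the second matching condition $\operatorname{Re}(2k_1+k_2)=\tfrac{n-1}{n-2}$ needed to reproduce the Yamabe coefficient.
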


\begin{proof}

To prove this from the corollary above choose $k_1$ and $k_2$ to be real numbers and set ${1\over2}c_1+ {\rm Re} (2k_1+k_2)=0$. It is straightforward to
verify that the conditions of the corollary \ref{cor611} are satisfied and one finds that
\begin{eqnarray}
|\lambda|^2\geq {n\over 4( n-1)} \inf_{M^n}\left( h^{-1} L h \right)~,
\end{eqnarray}
after setting $s=n^{-1}$.

To continue one chooses $h$ to be the eigenvector of $L$ with eigenvalue $\mu_1$.  It is known that the first eigenvalue of $L$ admits an eigenvector that does not change sign on $M^n$, see \cite{hijazi}. This establishes the theorem provided the parameters can be chosen such that ${1\over2}c_1+ {\rm Re} (2k_1+k_2)=0$. Indeed such a choice of parameters is $e_2=-k_1$, $k_1=(n-2)^{-1}$ and $k_2=(n-3) (n-2)^{-1}$.

\end{proof}

\begin{prop}
\label{prop01}
Let $M^n$ be a closed spin manifold.
If $dA=0$,  ${\rm Im} (e_2+2\bar k_1+\bar k_2)=0$, ${\rm Re}\,c_2=0$,  $k_0 = e_1$ and $k_1=-{\rm Re}\, e_2$, then the eigenvalues of ${\cal{D}}$ are bounded from below as
\begin{eqnarray}
&&|\lambda|^2\geq \inf_{M^n}\bigg( {1\over4} R-{\rm Re} (-2e_2+k_2) \delta A +{1\over2} c_0 f^2 +{1\over2} c_1
A^2\bigg)~,
\end{eqnarray}
where
\begin{eqnarray}
c_0&=&2(1-n) |e_1|^2 \ ,
\nonumber \\
c_1&=&-2\left( (n-2) ({\rm Re}\, e_2)^2-2 {\rm Re}\, e_2 {\rm Re}\, k_2+ |k_2|^2+ {\rm Re}\, e_2^2\right) \ .
\end{eqnarray}

\end{prop}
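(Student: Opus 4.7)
The plan is to specialise the fundamental identity (\ref{max01}) under the five hypotheses of the proposition so that it takes the Bochner/Lichnerowicz form already seen in Propositions \ref{prop0} and \ref{prop1}. First I would apply the three form/parameter conditions to kill three of the terms of (\ref{max01}): the hypothesis $dA=0$ removes $-\text{Re}\langle\epsilon,e_2\slashed{dA}\epsilon\rangle$; the hypothesis $\text{Im}(e_2+2\bar k_1+\bar k_2)=0$ removes the $\text{Im}\langle\epsilon,\nabla_A\epsilon\rangle$ term; and the hypothesis $\text{Re}\,c_2=0$ removes $\text{Re}\langle\epsilon,c_2 f\slashed{A}\epsilon\rangle$, since in the conventions of this paper (under which the forms $\chi_p$ of (\ref{pbiforms}) are real) Clifford multiplication by a real vector is Hermitian, so $\langle\epsilon,\slashed{A}\epsilon\rangle\in\mathbb{R}$ and only $\text{Re}\,c_2$ contributes.

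Second I would substitute $k_0=e_1$ and $k_1=-\text{Re}\,e_2$ into the operator appearing in front of ${\cal D}\epsilon$ in (\ref{max01}). The coefficient of $f$ becomes $-2\bar e_1+\bar e_1=-\bar e_1$, and since $k_1$ is real the coefficient of $\slashed A$ becomes $-2\text{Re}\,e_2+e_2=-\bar e_2$, so the term reduces to
\begin{equation*}
2\text{Re}\langle\epsilon,(\slashed\nabla-\bar e_1 f-\bar e_2\slashed A){\cal D}\epsilon\rangle.
\end{equation*}
Integration by parts gives $\text{Re}\langle\epsilon,\slashed\nabla{\cal D}\epsilon\rangle=-\text{Re}\langle\slashed\nabla\epsilon,{\cal D}\epsilon\rangle+\nabla_i\text{Re}\langle\epsilon,\Gamma^i{\cal D}\epsilon\rangle$, and writing $\slashed\nabla\epsilon={\cal D}\epsilon-(e_1 f+e_2\slashed A)\epsilon$ together with the Hermiticity of multiplication by $f$ (real) and by $\slashed A$ shows that the two $\bar e_1 f$-terms and the two $\bar e_2\slashed A$-terms cancel pairwise. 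The upshot is that this whole block simplifies to $-2\|{\cal D}\epsilon\|^2+2\nabla_i\text{Re}\langle\epsilon,\Gamma^i{\cal D}\epsilon\rangle$, exactly as in the single-form cases. A direct algebraic substitution into (\ref{acon01}) then gives the announced $c_0=2(1-n)|e_1|^2$ and $c_1=-2((n-2)(\text{Re}\,e_2)^2-2\text{Re}\,e_2\,\text{Re}\,k_2+|k_2|^2+\text{Re}\,e_2^2)$, while $-2\text{Re}(2k_1+k_2)\delta A=-2\text{Re}(-2e_2+k_2)\delta A$.

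Combining these reductions, the fundamental identity rearranges to
\begin{equation*}
\nabla^2\|\epsilon\|^2=2\|\hat\nabla\epsilon\|^2-2\|{\cal D}\epsilon\|^2+\bigl(\tfrac12 R+c_0 f^2+c_1 A^2-2\text{Re}(-2e_2+k_2)\delta A\bigr)\|\epsilon\|^2+\text{div}(\cdots),
\end{equation*}
where $\text{div}(\cdots)$ collects the remaining total-divergence terms (including the $\delta(A\|\epsilon\|^2)$ contribution, which is a divergence because $\text{Re}(2k_1+k_2+e_2)$ is a constant). Taking $\epsilon$ to be an eigenspinor of ${\cal D}$ with eigenvalue $\lambda$ and integrating over the closed manifold $M^n$ kills all divergences and $\int\nabla^2\|\epsilon\|^2$, giving
\begin{equation*}
|\lambda|^2\int_{M^n}\|\epsilon\|^2=\int_{M^n}\|\hat\nabla\epsilon\|^2+\int_{M^n}\bigl(\tfrac14 R-\text{Re}(-2e_2+k_2)\delta A+\tfrac12 c_0 f^2+\tfrac12 c_1 A^2\bigr)\|\epsilon\|^2.
\end{equation*}
Dropping the non-negative $\int\|\hat\nabla\epsilon\|^2$ and bounding the remaining integral by its infimum yields the stated eigenvalue estimate. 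There is no real obstacle; the only care required is the bookkeeping that ensures each hypothesis removes exactly the corresponding obstruction and that the Hermiticity conventions are applied consistently when re-expressing $2\text{Re}\langle\epsilon,\slashed\nabla{\cal D}\epsilon\rangle$.
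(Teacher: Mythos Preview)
Your proposal is correct and follows essentially the same approach as the paper: specialise the fundamental identity (\ref{max01}) under the stated hypotheses to obtain the rearranged form, then integrate over $M^n$ with an eigenspinor and drop $\int\|\hat\nabla\epsilon\|^2$. You actually give more detail than the paper does on how the term $2\text{Re}\langle\epsilon,(\slashed\nabla-\bar e_1 f-\bar e_2\slashed A){\cal D}\epsilon\rangle$ collapses to $-2\|{\cal D}\epsilon\|^2+2\nabla_i\text{Re}\langle\epsilon,\Gamma^i{\cal D}\epsilon\rangle$; the paper simply states the rearranged identity (\ref{3max01}) without this intermediate computation.
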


\begin{proof}

Under the assumptions of the proposition, the fundamental identity can be rewritten as
\begin{eqnarray}
\nabla^2\parallel\epsilon\parallel^2&=&2\parallel{{\hat{\nabla}}}\epsilon\parallel^2+\big({1\over2} R-2{\rm Re} (-2e_2+k_2) \delta A+c_1 f^2+c_2
A^2\big) \parallel\epsilon\parallel^2
\cr
&&-2\parallel{\cal{D}}\epsilon\parallel^2+2\nabla_i{\rm Re}\,\langle\epsilon,
\Gamma^i{\cal{D}}\epsilon\rangle
-2\nabla_i{\rm Re}\,\langle\epsilon, \Gamma^i e_1 f\epsilon\rangle~.
\label{3max01}
\end{eqnarray}
Assuming that $\epsilon$ is an eigenspinor of ${\cal{D}}$ with eigenvalue $\lambda$ and after integrating (\ref{3max01}) over $M^n$, one arrives at the bound
described in the proposition.

It remains to investigate the range of parameters that the bound is valid. First the expressions for $c_0$ and $c_1$ follow from those in (\ref{acon01})
after substituting $k_0 = e_1$ and $k_1=-{\rm Re}\, e_2$.  Similarly, ${\rm Re}\,c_2=0$ yields
 \begin{eqnarray}
  {\rm Re}\,\big(\bar e_1( e_2-k_2)\big)+(2-n) {\rm Re}\,e_1 {\rm Re}\,e_2=0~.
 \end{eqnarray}
Furthermore as $k_1$ is real, ${\rm Im}\, e_2= {\rm Im}\, k_2$. So the bound on the eigenvalues $\lambda$ holds for 4-parameter family.
\end{proof}

{\bf Remark: } One can compute the curvature of ${{\hat{\nabla}}}$ and find the Lie algebra of its holonomy group. It can be seen that $\hat R$ at each point in $M^n$
for generic fields and parameters has components in ${\rm Cl}^3_n$. Closure of the Lie algebra of the holonomy group requires that in general
$\mathfrak{Lie} {\rm Hol}({{\hat{\nabla}}})\subseteq {\rm Cl}_n\otimes\mathbb{C}$.  This will be the case for all the other ${{\hat{\nabla}}}$ connections associated with multi-form Dirac
operators.  It turns out that in most cases $\mathfrak{Lie} {\rm Hol}({{\hat{\nabla}}})$ is either a subset of ${\rm Cl}_n\otimes\mathbb{C}$ or $({\rm Cl}_n-{\rm Cl}_n^0)\otimes\mathbb{C}$.

The formula for the twisted covariant form hierarchy of ${{\hat{\nabla}}}$ can derived easily from linear superposition of those derived  for the 0-form and 1-form Dirac operators.
Because of this, the explicit formula will not be stated. The same applies for  formulae of  twisted covariant form hierarchies the remaining (0,$k$), $k>1$ cases.

\section{(0,2)-multi-form modified Dirac operators}

A (0,2)-multi-form modified Dirac operator  can be written as
\begin{eqnarray}
{\cal{D}}=\slashed{\nabla}+ e_1 f + e_2 \slashed F~,
\end{eqnarray}
where $e_1, e_2\in \mathbb{C}$,  and $f$ is a function and $F$ is a 2-form on $M^n$.  Furthermore consider
the connection
\begin{eqnarray}
{{\hat{\nabla}}}_X=\nabla_X+k_0 f \slashed X +k_1 \slashed F \cdot \slashed X+k_2 \slashed{i_XF}~,
\end{eqnarray}
on the spin bundle, where $k_0,  k_1, k_2\in \mathbb{C}$.
\begin{prop}
The fundamental identity for ${\cal{D}}$ is
\begin{eqnarray}
\nabla^2\parallel\epsilon\parallel^2&=&\left({1\over2} R+ c^{[0]}_1 f^2 +c^{[2]}_1 F^2\right)\parallel\epsilon\parallel^2+2 \parallel{{\hat{\nabla}}}\epsilon\parallel^2+c^{[2]}_2 \parallel\slashed F\epsilon\parallel^2
\cr
&+&2 {\rm Re} \langle\epsilon, (\slashed {\nabla}-2\bar k_0 f+\bar e_1 f+(2{\bar{k}}_1- e_2) {\slashed{F}}){\cal{D}}\epsilon\rangle
+{\rm Re}\, \langle\epsilon, c_3 f \slashed F\epsilon\rangle
\cr
&-& 2 \nabla_i{\rm Re}\langle \epsilon, e_1 f \Gamma^i\epsilon\rangle+{\rm Re} \langle \epsilon,
(-4 {\bar{k}}_2 +16 {\bar{k}}_1-8i {\rm Im}\, e_2) F^i{}_j \Gamma^j \nabla_i \epsilon \rangle
\cr
&-&{2\over3} {\rm Re} \langle \epsilon, e_2 \slashed{dF} \epsilon\rangle-4\nabla^j {\rm Re} \langle \epsilon, e_2 F_{ji}\Gamma^i \epsilon \rangle
\label{02max}
\end{eqnarray}
where
\begin{eqnarray}
&&c^{[0]}_1=-2 |k_0|^2 n+2{\rm Re} ((2\bar k_0+e_1) e_1)-4({\rm Re}\,e_1)^2~,
\cr
&&c^{[2]}_1=-32 |k_1|^2-2|k_2|^2+16 {\rm Re} (k_1\bar k_2)~,
\cr
&&c^{[2]}_2=-2 (n-8) |k_1|^2-4{\rm Re} (k_1\bar k_2)+{\rm Re}(e_2 (4\bar k_1-2e_2))~,
\cr
&&
c_3=4(i {\rm Im}\,e_1 +\bar k_0) e_2-4 \bar k_1 e_1-2 (n-4) (\bar k_0 k_1- k_0\bar k_1)-2 (\bar k_0 k_2- k_0\bar k_2)~.
\nonumber \\
\label{02cc2y}
\end{eqnarray}
\end{prop}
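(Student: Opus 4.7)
The plan is to follow exactly the template used in the proofs of the 0-form and 2-form fundamental identities, since the connection $\hat\nabla$ and the form $\omega = e_1 f + e_2 F$ here are additive superpositions of the 0-form and 2-form cases that have already been analyzed. The starting point is the Lichnerowicz-based identity valid for any multi-form Dirac operator, derived at the beginning of the proof of Proposition \ref{0formpropextra}:
\begin{eqnarray*}
\nabla^2\|\epsilon\|^2 &=& 2\|{{\hat{\nabla}}}\epsilon\|^2 - 4{\rm Re}\langle\Sigma\epsilon, \nabla\epsilon\rangle - 2\|\Sigma\epsilon\|^2 \\
&& + 2{\rm Re}\langle\epsilon, \slashed\nabla{\cal{D}}\epsilon\rangle - 2{\rm Re}\langle\epsilon, \slashed\nabla(\slashed\omega\epsilon)\rangle + \tfrac{1}{2}R\|\epsilon\|^2~,
\end{eqnarray*}
with $\Sigma_X = k_0 f\slashed X + k_1 \slashed F\cdot\slashed X + k_2 \slashed{i_X F}$.

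The body of the proof naturally splits into three blocks. The pure 0-form contributions involving $(e_1, k_0)$ only, and the pure 2-form contributions involving $(e_2,k_1,k_2)$ only, can be directly imported from Proposition \ref{0formpropextra} and the 2-form proposition preceding Theorem \ref{th:2hform}; these reproduce the coefficients $c_1^{[0]}$ and $c_1^{[2]}, c_2^{[2]}$, as well as the $e_1 f$ and $e_2 F$ terms already appearing in (\ref{02max}). The new work is the third block: the cross terms between the 0-form and 2-form pieces of $\Sigma$ and $\slashed\omega$. These originate from three sources: (a) the cross contribution in $\|\Sigma\epsilon\|^2$, computed via the Clifford identities $\sum_i \Gamma^i \slashed F \Gamma^i = (n-4)\slashed F$ and $\sum_i \Gamma^i \slashed{i_{e_i}F} = 2\slashed F$, which yields terms proportional to $f\langle\epsilon,\slashed F\epsilon\rangle$ with coefficients built from ${\rm Re}(\bar k_0 k_1)(n-4)$ and ${\rm Re}(\bar k_0 k_2)$; (b) the cross piece of $\langle\Sigma\epsilon,\nabla\epsilon\rangle$, where the substitution $\slashed\nabla\epsilon = {\cal{D}}\epsilon - e_1 f \epsilon - e_2\slashed F\epsilon$ produces mixed contributions $\bar k_0 e_2 f\slashed F$, $\bar k_1 e_1 f\slashed F$ and $\bar k_2 e_1 f\slashed F$; and (c) the cross terms from $\slashed\nabla(\slashed\omega\epsilon)$, where $\slashed\omega = e_1 f + e_2\slashed F$ and one uses $\slashed\nabla(f\eta) = \slashed{df}\eta + f\slashed\nabla\eta$ together with $\slashed\nabla(\slashed F\eta) = \slashed{dF}\eta + \text{(terms)}$ followed by the same ${\cal{D}}-\slashed\omega$ rewriting. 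After taking real parts these combine into a single $f\slashed F$ operator of the form ${\rm Re}\langle\epsilon, c_3 f\slashed F\epsilon\rangle$, and the assembled coefficient should match the expression for $c_3$ in (\ref{02cc2y}).

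The main obstacle is the bookkeeping of the cross block, in particular verifying that the $(n-4)$ arising from the $\bar k_0 k_1$ contraction conspires with the constant $-2$ from the $\bar k_0 k_2$ contraction, together with the $\bar k_0 e_2$, $\bar k_1 e_1$, $\bar k_2 e_1$ and the $\bar e_1 e_2$ cross contributions, to yield precisely
\begin{eqnarray*}
c_3 = 4(i\,{\rm Im}\,e_1+\bar k_0)e_2 - 4\bar k_1 e_1 - 2(n-4)(\bar k_0 k_1 - k_0\bar k_1) - 2(\bar k_0 k_2 - k_0\bar k_2)~.
\end{eqnarray*}
Everything else is a routine assembly: the exterior derivative term $-\tfrac{2}{3}{\rm Re}\langle\epsilon, e_2\slashed{dF}\epsilon\rangle$, the codifferential-like term $-4\nabla^j {\rm Re}\langle\epsilon, e_2 F_{ji}\Gamma^i\epsilon\rangle$, and the modified $F^i{}_j\Gamma^j\nabla_i\epsilon$ coefficient $-4\bar k_2 + 16\bar k_1 - 8i\,{\rm Im}\,e_2$ are all inherited unchanged from the pure 2-form derivation, while the $-2\nabla_i{\rm Re}\langle\epsilon, e_1 f\Gamma^i\epsilon\rangle$ term is inherited from the pure 0-form derivation. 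Adding the three blocks and regrouping terms in accordance with the structure displayed in (\ref{02max}) completes the proof.
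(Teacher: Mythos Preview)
Your proposal is correct and follows precisely the template the paper itself uses. In fact the paper gives no explicit proof for this proposition at all; it simply states the identity, relying on the reader to superpose the 0-form computation of Proposition~\ref{0formpropextra} with the 2-form computation leading to (\ref{max2}) and to track the cross terms. Your three-block decomposition (pure 0-form, pure 2-form, cross) is exactly the implicit argument, and your identification of the sources of $c_3$ is accurate. One minor point: the coefficient $-4\bar k_2+16\bar k_1-8i\,{\rm Im}\,e_2$ in front of $F^i{}_j\Gamma^j\nabla_i\epsilon$ is not literally ``inherited unchanged'' from (\ref{max2}), where the coefficient is $-4\bar k_2+16\bar k_1-8e_2$; rather, the $\delta\slashed F$ term present in (\ref{max2}) has been integrated by parts into the total-divergence term $-4\nabla^j{\rm Re}\langle\epsilon,e_2 F_{ji}\Gamma^i\epsilon\rangle$, and this absorbs the $-8\,{\rm Re}\,e_2$ piece of the coefficient. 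You already list both of these terms, so this is only a matter of wording.
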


\begin{theorem}
\label{theorem02}
Let $M^n$ be a closed spin manifold.
Suppose that $F$ is closed, $ k_2= 4{{k}}_1+2i {\rm Im}\, e_2$ and ${\rm Im}\,c_3=0$. Then the following hold.
\begin{enumerate}

\item If ${1\over2} R+ c^{[0]}_1 f^2 +c^{[2]}_1 F^2\gneq 0$ and $c^{[2]}_2 \geq 0$, then ${\rm Ker}\,{\cal{D}}=\{0\}$~.

\item If ${1\over2} R+ c^{[0]}_1 f^2 +c^{[2]}_1 F^2= 0$, $c^{[2]}_2 =0$,  $e_1=n k_0$,
${\rm Im} \, e_2 \neq 0$,  and ${\rm Ker}\,{\cal{D}}\not=\{0\}$, then $e_2=(n-4) k_1+k_2$ and ${\rm Ker}\,{\cal{D}}={\rm Ker}\,{{\hat{\nabla}}}$.

\end{enumerate}

\end{theorem}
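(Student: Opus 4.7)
The plan is to follow the template established for the pure 2-form case in Theorem \ref{th:2fc}, supplemented by the 0-form manipulations used in Theorem \ref{theorem01}. The first step is to substitute the hypotheses into the fundamental identity (\ref{02max}) and verify that precisely the same four categories of ``unwanted'' terms disappear as in the pure 2-form analysis: $dF=0$ kills the $\slashed{dF}$ term, the choice $k_2=4k_1+2i\,{\rm Im}\,e_2$ makes the coefficient $-4\bar k_2+16\bar k_1-8i\,{\rm Im}\,e_2$ of the remaining $F^i{}_j\Gamma^j\nabla_i\epsilon$ term vanish identically, ${\rm Im}\,c_3=0$ eliminates the $c_3 f\slashed F$ term because $\langle\epsilon,\slashed F\epsilon\rangle$ is purely imaginary for a 2-form (so ${\rm Re}\langle\epsilon,(\text{real})f\slashed F\epsilon\rangle=0$), and the two divergence terms $\nabla_i{\rm Re}\langle\epsilon,e_1 f\Gamma^i\epsilon\rangle$ and $\nabla^j{\rm Re}\langle\epsilon,e_2 F_{ji}\Gamma^i\epsilon\rangle$ integrate to zero on the closed manifold.

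For the first part, I would take $\epsilon\in{\rm Ker}\,{\cal D}$ so that the term carrying ${\cal D}\epsilon$ also vanishes, and integrate what remains over $M^n$ to obtain
\begin{equation*}
\int_{M^n}\!\Big[\big(\tfrac12 R+c_1^{[0]}f^2+c_1^{[2]}F^2\big)\|\epsilon\|^2+2\|\hat\nabla\epsilon\|^2+c_2^{[2]}\|\slashed F\epsilon\|^2\Big]=0.
\end{equation*}
Under the strict positivity $\tfrac12 R+c_1^{[0]}f^2+c_1^{[2]}F^2\gneq 0$ together with $c_2^{[2]}\geq 0$, the integrand is nonnegative and strictly positive somewhere unless $\epsilon\equiv 0$, giving ${\rm Ker}\,{\cal D}=\{0\}$.

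For the second part the same integral vanishes and, with $c_2^{[2]}=0$, forces $\hat\nabla\epsilon=0$, so ${\rm Ker}\,{\cal D}\subseteq{\rm Ker}\,\hat\nabla$. For the reverse inclusion I would compute
\begin{equation*}
({\cal D}-\slashed{\hat\nabla})\epsilon=(e_1-nk_0)f\epsilon+\big(e_2-(n-4)k_1-k_2\big)\slashed F\epsilon,
\end{equation*}
which uses the Clifford identities $\Gamma^i\Gamma_i=n$, $\Gamma^i\slashed F\Gamma_i=(n-4)\slashed F$ and $\Gamma^i\slashed{i_{e_i}F}=\slashed F$ that underlie the 0- and 2-form cases already treated. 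The hypothesis $e_1=nk_0$ kills the first term; the remaining identity $(e_2-(n-4)k_1-k_2)\slashed F\epsilon=0$ then has to be forced. Supposing $e_2\neq (n-4)k_1+k_2$ yields $\slashed F\epsilon=0$, at which point I would invoke the 2-form identity (\ref{hamham}) with $dF=0$ and $\hat\nabla\epsilon=0$ substituted in to obtain $\delta\slashed F\epsilon+2(k_2-4k_1)F^2\epsilon=0$; using $k_2-4k_1=2i\,{\rm Im}\,e_2$ and the integrability condition that $\epsilon$ is nowhere vanishing, this is exactly the relation producing $(\delta F)^2+16({\rm Im}\,e_2)^2 F^4=0$, which with ${\rm Im}\,e_2\neq 0$ forces $F=0$. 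Under $F=0$ the relation $(e_2-(n-4)k_1-k_2)\slashed F=0$ is automatic, ${\cal D}$ collapses to $\slashed\nabla+e_1 f$, and $e_1=nk_0$ gives ${\cal D}=\slashed{\hat\nabla}$; the surviving case $F\not\equiv 0$ therefore compels $e_2=(n-4)k_1+k_2$, so that ${\cal D}=\slashed{\hat\nabla}$ and ${\rm Ker}\,\hat\nabla\subseteq{\rm Ker}\,\slashed{\hat\nabla}={\rm Ker}\,{\cal D}$.

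The main obstacle is the reverse-inclusion step: extracting an integrability condition that genuinely forces $F=0$ (rather than merely a weaker algebraic restriction), because now both 0- and 2-form pieces sit inside $\hat\nabla$ and one must be sure that no cross term spoils the simple $(\delta F)^2+16({\rm Im}\,e_2)^2 F^4=0$ conclusion from the pure 2-form case. A secondary task, easy but bookkeeping-heavy, is to exhibit a nonempty range of parameters satisfying simultaneously $dF=0$, $k_2=4k_1+2i\,{\rm Im}\,e_2$, ${\rm Im}\,c_3=0$, $e_1=nk_0$, together with $c_2^{[2]}=0$ and the scalar equality, which amounts to solving the real linear equation ${\rm Im}\,c_3=0$ in $(k_0,k_1,e_1,e_2)$ after substituting the other constraints, exactly in the spirit of the parameter-counting remarks accompanying Theorems \ref{th:2fc} and \ref{theorem01}.
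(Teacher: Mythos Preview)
Your proposal is correct and follows exactly the template the paper itself sketches (the paper's own proof is the single line ``similar to that given in the previous case'' followed by the parameter-existence verification). Your flagged concern about $0$-form/$2$-form cross-terms in the analogue of (\ref{hamham}) dissolves on inspection: the extra $k_0 f\Gamma_i$ piece of $\Sigma_i$ contributes $k_0 f\,\Gamma^i\slashed F\Gamma_i\,\epsilon=(n-4)k_0 f\,\slashed F\epsilon$, which vanishes under the running hypothesis $\slashed F\epsilon=0$, so the route to $(\delta F)^2+16({\rm Im}\,e_2)^2 F^4=0$ is unchanged.
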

\begin{proof}
The proof of both assertions are similar to that given in the previous case.  It only remains to verify that both statements hold for a non-empty
range of parameters. Indeed eliminating $k_2$ from $c_2^{[2]}$ using  $ {{k}}_2= 4{{k}}_1+2i {\rm Im}\, e_2$, one finds that $c_2^{[2]}$ can be expressed as in (\ref{2c2}), where $e$ is replaced by $e_2$.  Similarly $c_3$ can be expressed as
\begin{eqnarray}
c_3=4(i {\rm Im}\,e_1 +\bar k_0) e_2-4 \bar k_1 e_1-2 n (\bar k_0 k_1- k_0\bar k_1)-8i   {\rm Re}\, k_0 {\rm Im}\, e_2~.
\end{eqnarray}
Clearly the condition ${\rm Im}\,c_3=0$ required for the validity of the first statement has many solutions.

Furthermore, the additional conditions that are required for the validity of the second statement give that
\begin{eqnarray}
{\rm Im}\, c_3=4n^2 {\rm Im}\, k_0\,\, {\rm Re}\, k_1 -4n {\rm Im}( k_0 \bar k_1)~.
\end{eqnarray}
  On the other hand
\begin{eqnarray}
c^{[2]}_2=2{n+1\over n} |e_2|^2-4 |e_2|^2\cos^2\psi=2n(n+1) |k_1|^2-4 n^2|k_1|^2\cos^2\psi ~,
\end{eqnarray}
as $e_2=n \bar k_1$ and $e_2=|e_2| \exp(i\psi)$.  The condition $c^{[2]}_2=0$ specifies the phase $\psi$. Then it is straightforward to observe that ${\rm Im}\, c_3=0$ has non-trivial solutions.
\end{proof}

\begin{prop}
\label{prop02}
Let $M^n$ be a closed spin manifold.
Suppose that $F$ is closed 2-form $dF=0$, $ k_2= 4{{k}}_1+2i {\rm Im}\, e_2$,  $k_0=e_1$, $k_1={\rm Re}\, e_2$,    $c_2^{[2]}\geq 0$ and ${\rm Im}\,c_3=0$. Then  the eigenvalues $\lambda$ of ${\cal{D}}$ are bounded as
\begin{eqnarray}
|\lambda|^2\geq \inf_{M^n} \left( {1\over4} R+ {1\over2} c^{[0]}_1 f^2 + {1\over2} c^{[2]}_1 F^2\right) \ .
\end{eqnarray}
\end{prop}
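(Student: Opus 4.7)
The strategy mirrors that of Proposition \ref{prop2}, now absorbing the $0$-form contribution alongside the $2$-form one. Starting from the fundamental identity (\ref{02max}), I would use the five hypotheses of the proposition in turn to eliminate its ``nuisance'' terms and reach a rearrangement of the shape
\begin{eqnarray*}
\nabla^2\parallel\epsilon\parallel^2 &=& 2\parallel{\hat{\nabla}}\epsilon\parallel^2 - 2\parallel{\cal D}\epsilon\parallel^2 + c^{[2]}_2\parallel\slashed F\epsilon\parallel^2 \\
&& +\Bigl(\tfrac{1}{2}R + c^{[0]}_1 f^2 + c^{[2]}_1 F^2\Bigr)\parallel\epsilon\parallel^2 + (\text{total divergences}),
\end{eqnarray*}
from which the claim follows upon integration over the closed manifold $M^n$, setting ${\cal D}\epsilon=\lambda\epsilon$, and discarding the two non-negative terms $2\parallel{\hat{\nabla}}\epsilon\parallel^2$ and $c^{[2]}_2\parallel\slashed F\epsilon\parallel^2$.

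Four of the hypotheses produce purely algebraic simplifications of (\ref{02max}). The assumption $dF=0$ eliminates the $\slashed{dF}$ term. The choice $k_2=4k_1+2i\,{\rm Im}\,e_2$ makes the coefficient $-4\bar k_2+16\bar k_1-8i\,{\rm Im}\,e_2$ of $F^i{}_j\Gamma^j\nabla_i\epsilon$ vanish identically, removing that term entirely. Since $\slashed F$ is formally anti-Hermitian for any real 2-form, $\langle\epsilon,c_3 f\slashed F\epsilon\rangle$ is purely imaginary whenever $c_3\in\mathbb{R}$, and so the assumption ${\rm Im}\,c_3=0$ forces ${\rm Re}\,\langle\epsilon,c_3 f\slashed F\epsilon\rangle=0$ pointwise. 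Finally, $-4\nabla^j\,{\rm Re}\,\langle\epsilon,e_2 F_{ji}\Gamma^i\epsilon\rangle$ is already a total divergence.

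The substantive step is the rearrangement of $2\,{\rm Re}\,\langle\epsilon,(\slashed\nabla-2\bar k_0 f+\bar e_1 f+(2\bar k_1-e_2)\slashed F){\cal D}\epsilon\rangle$. With $k_0=e_1$ the $0$-form coefficient in parentheses reduces to $-\bar e_1 f$, and with $k_1={\rm Re}\,e_2$ the $2$-form coefficient reduces to $\bar e_2\slashed F$. Applying the standard integration-by-parts identity
\[
{\rm Re}\,\langle\epsilon,\slashed\nabla{\cal D}\epsilon\rangle=\nabla_i\,{\rm Re}\,\langle\epsilon,\Gamma^i{\cal D}\epsilon\rangle-{\rm Re}\,\langle\slashed\nabla\epsilon,{\cal D}\epsilon\rangle,
\]
substituting $\slashed\nabla\epsilon={\cal D}\epsilon-e_1 f\epsilon-e_2\slashed F\epsilon$, and using the anti-Hermiticity of $\slashed F$ to move it to the other slot of the inner product, one finds that the cross-terms $\pm 2\,{\rm Re}\,\langle\epsilon,\bar e_1 f{\cal D}\epsilon\rangle$ and $\mp 2\,{\rm Re}\,\langle\epsilon,\bar e_2\slashed F{\cal D}\epsilon\rangle$ cancel by design of the two parameter choices, producing
\[
2\,{\rm Re}\,\langle\epsilon,(\slashed\nabla-\bar e_1 f+\bar e_2\slashed F){\cal D}\epsilon\rangle=2\nabla_i\,{\rm Re}\,\langle\epsilon,\Gamma^i{\cal D}\epsilon\rangle-2\parallel{\cal D}\epsilon\parallel^2,
\]
which supplies the missing piece of the target rearrangement.

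The only genuine obstacle is verifying that the five hypotheses jointly carve out a non-empty region of parameters. Substituting $k_0=e_1$, $k_1={\rm Re}\,e_2$, and $k_2=4\,{\rm Re}\,e_2+2i\,{\rm Im}\,e_2$ into the formula for $c_3$ in (\ref{02cc2y}) one computes $c_3=4i\bigl(n\,{\rm Im}\,e_1\,{\rm Re}\,e_2-{\rm Re}\,e_1\,{\rm Im}\,e_2\bigr)$, automatically purely imaginary, so the constraint ${\rm Im}\,c_3=0$ collapses to the single real phase relation $n\,{\rm Im}\,e_1\,{\rm Re}\,e_2={\rm Re}\,e_1\,{\rm Im}\,e_2$. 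A similar substitution into $c^{[2]}_2$ of (\ref{02cc2y}) produces a phase constraint on $e_2$ that is solvable for every $n$, in complete analogy with the analysis in the proof of Proposition \ref{prop2}. The compatibility of these two constraints leaves an open subset of admissible $(e_1,e_2)\in\mathbb{C}^2$ on which the proposition holds.
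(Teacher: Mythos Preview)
Your approach is essentially the paper's own: the paper simply states that ``the proof is similar to those in previous cases'' and then verifies the parameter range, so your detailed rearrangement of (\ref{02max}) faithfully expands what the authors leave implicit. One small computational slip: with $k_0=e_1$, $k_1={\rm Re}\,e_2$, $k_2=4\,{\rm Re}\,e_2+2i\,{\rm Im}\,e_2$ substituted into (\ref{02cc2y}) one actually finds
\[
c_3 \;=\; 4i\bigl((n-1)\,{\rm Im}\,e_1\,{\rm Re}\,e_2 \;-\; {\rm Re}\,e_1\,{\rm Im}\,e_2\bigr),
\]
i.e.\ the coefficient of $\,{\rm Im}\,e_1\,{\rm Re}\,e_2$ is $n-1$ rather than $n$, matching the paper's expression ${\rm Im}\,c_3=4|e_2|\bigl((n-1)\cos\psi\,{\rm Im}\,e_1-\sin\psi\,{\rm Re}\,e_1\bigr)$; the qualitative conclusion (a single real phase constraint, compatible with $c_2^{[2]}\ge 0$) is unaffected.
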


\begin{proof}
Again the proof is similar to those in previous cases. It remains that to prove that the statement is valid for a non-empty range of parameters. Indeed
\begin{eqnarray}
c_2^{[2]} &=& 2 |e_2|^2 (1-n \cos^2 \psi),
\nonumber \\
{\rm Im}\, c_3&=&4 |e_2| \big( (n-1) \cos \psi {\rm Im} \, e_1 - \sin \psi {\rm Re} \, e_1 \big) \ ,
\end{eqnarray}
where $e_2=|e_2| \exp(i\psi)$. So there is a range of parameters such that $c_2^{[2]} \geq 0$ and ${\rm Im}\,c_3=0$.

\end{proof}

\section{(0,3)-multi-form modified Dirac operators}

A (0,3)-multi-form modified Dirac operator  can be written as
\begin{eqnarray}
{\cal{D}}=\slashed{\nabla}+ e_1 f + e_2 \slashed H~,
\end{eqnarray}
where $e_1, e_2\in \mathbb{C}$,  and $f$ is a function and $H$ is a 3-form on $M^n$.  Furthermore consider
the connection
\begin{eqnarray}
{{\hat{\nabla}}}_X=\nabla_X+k_0 f \slashed X +k_1 \slashed H \cdot \slashed X+k_2 {i_X\slashed H}~,
\end{eqnarray}
on the spin bundle, where $k_0,  k_1, k_2\in \mathbb{C}$.

\begin{prop}
The fundamental identity is

\begin{eqnarray}
\nabla^2\parallel\epsilon\parallel^2&=&\left({1\over2} R+ c^{[0]}_1 f^2 +c^{[3]}_1 H^2\right)\parallel\epsilon\parallel^2+2 \parallel{{\hat{\nabla}}}\epsilon\parallel^2+c^{[3]}_2 \parallel\slashed H\epsilon\parallel^2
\cr
&+&2 {\rm Re} \langle\epsilon, (\slashed {\nabla}-2\bar k_0 f+\bar e_1 f+(-2\bar k_1+e_2)\slashed H){\cal{D}}\epsilon\rangle
+{\rm Re}\, \langle\epsilon, c_3 f \slashed H\epsilon\rangle
\cr
&+& {\rm Re} \langle \epsilon, \bigg((12(2 {\bar{k}}_1-{\rm Re}\,e_2)+4{\bar{k}}_2)H^i{}_{pq}\Gamma^{pq} \bigg) \nabla_i \epsilon \rangle
\cr
&-& 2 \nabla_i{\rm Re}\langle \epsilon, e_1 f \Gamma^i\epsilon\rangle
-{1 \over 2} {\rm Re} \langle \epsilon, e_2\,
{\slashed{dH}} \epsilon \rangle
-6\nabla^i {\rm Re} \langle \epsilon, e_2\,
H_{ijk} \Gamma^{jk} \epsilon \rangle
\nonumber \\
\label{03max}
\end{eqnarray}
where
\begin{eqnarray}
c^{[0]}_1&=&-2 |k_0|^2 n+2{\rm Re} ((2\bar k_0+e_1) e_1)-4({\rm Re}\,e_1)^2~,
\cr
c^{[3]}_1&=&-96 |k_1|^2-{8\over3}|k_2|^2-32 {\rm Re} (\bar k_2 k_1)~,
\cr
c^{[3]}_2&=&-{1\over9}\big(18 (n-8) |k_1|^2+2 |k_2|^2 -12 {\rm Re} (\bar k_2 k_1)\big)   +{\rm Re}((-4\bar k_1+2e_2)e_2)~,
\cr
c_3&=&-4 e_2 {\rm Re} \, e_1+4(\bar k_0 e_2+\bar k_1 e_1)-2 (6-n) (k_1\bar k_0-\bar k_1 k_0)
\cr
&&-2 (\bar k_0 k_2-k_0\bar k_2)~.
\nonumber \\
\end{eqnarray}

\end{prop}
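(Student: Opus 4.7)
The plan is to derive this by superposition from the fundamental identities already established in Proposition 1 (0-form case) and Proposition 4.1 (3-form case), isolating the cross-terms that couple $f$ to $H$ and produce the new coefficient $c_3$. First I would write ${{\hat{\nabla}}}_X = \nabla_X + \Sigma_X$ with $\Sigma_X = \Sigma^{[0]}_X + \Sigma^{[3]}_X$, where $\Sigma^{[0]}_X = k_0 f\slashed{X}$ and $\Sigma^{[3]}_X = k_1\slashed{H}\cdot\slashed{X} + k_2\, i_X\slashed{H}$, and split $\slashed{\omega}=e_1 f + e_2\slashed{H}$. Starting from the universal identity stated in the proof of Proposition 1,
\begin{eqnarray*}
\nabla^2\parallel\epsilon\parallel^2 &=& 2\parallel{{\hat{\nabla}}}\epsilon\parallel^2 - 4{\rm Re}\langle\Sigma\epsilon,\nabla\epsilon\rangle - 2\parallel\Sigma\epsilon\parallel^2 \\
&& +\, 2{\rm Re}\langle\epsilon,\slashed{\nabla}{\cal{D}}\epsilon\rangle - 2{\rm Re}\langle\epsilon,\slashed{\nabla}(\slashed{\omega}\epsilon)\rangle + {\tfrac{1}{2}}R\parallel\epsilon\parallel^2,
\end{eqnarray*}
which is valid for any multi-form Dirac operator, one expands each term and separates the contributions depending only on the $(k_0,e_1,f)$ data (which reproduce the 0-form fundamental identity) from those depending only on the $(k_1,k_2,e_2,H)$ data (which reproduce the 3-form fundamental identity~(\ref{max3})).

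The only genuinely new work is the collection of cross-terms between the 0-form and 3-form pieces. These arise from three sources: (i) the mixed contribution $-4{\rm Re}\langle\Sigma^{[0]}\epsilon,\Sigma^{[3]}\epsilon\rangle$ inside $-2\parallel\Sigma\epsilon\parallel^2$; (ii) mixed pieces in $-4{\rm Re}\langle\Sigma\epsilon,\nabla\epsilon\rangle$ obtained after substituting $\slashed{\nabla}\epsilon = {\cal{D}}\epsilon - \slashed{\omega}\epsilon$, which couples $k_0 f$ with $e_2$ and $k_1,k_2$ with $e_1$; and (iii) the analogous mixed contributions inside $-2{\rm Re}\langle\epsilon,\slashed{\nabla}(\slashed{\omega}\epsilon)\rangle$ when the connection acts on the $e_1 f$ factor with $\epsilon$ contracted against the $\slashed{H}$ piece (and vice versa). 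Using the Clifford contractions $\sum_i \Gamma^i \slashed{H}\Gamma^i = (6-n)\slashed{H}$ and $\sum_i \Gamma^i\, \slashed{i_{e_i}H} = 3\slashed{H}$, together with the reality structure of $\langle\cdot,\cdot\rangle$, every such cross-term collapses to a scalar multiple of ${\rm Re}\langle\epsilon, f\slashed{H}\epsilon\rangle$. Adding all scalars assembles precisely the coefficient $c_3 = -4 e_2{\rm Re}\,e_1 + 4(\bar k_0 e_2 + \bar k_1 e_1) - 2(6-n)(k_1\bar k_0 - \bar k_1 k_0) - 2(\bar k_0 k_2 - k_0\bar k_2)$, with the $(6-n)$ factor coming from the first Clifford identity applied to the $k_1\bar k_0$ mixing and the last bracket coming from the $k_2$–$k_0$ mixing via the second identity.

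Finally, I would assemble: the diagonal 0-form contributions give ${\tfrac{1}{2}}R\parallel\epsilon\parallel^2 + c^{[0]}_1 f^2\parallel\epsilon\parallel^2$ together with the boundary and ${\cal{D}}\epsilon$ terms featuring $-2\bar k_0 f + \bar e_1 f$ and $-2\nabla_i{\rm Re}\langle\epsilon, e_1 f\Gamma^i\epsilon\rangle$; the diagonal 3-form contributions give $c^{[3]}_1 H^2\parallel\epsilon\parallel^2 + c^{[3]}_2\parallel\slashed{H}\epsilon\parallel^2$ together with the ${\cal{D}}\epsilon$ term featuring $(-2\bar k_1 + e_2)\slashed{H}$, the $\nabla_i\epsilon$ term with coefficient $(12(2\bar k_1 - {\rm Re}\,e_2) + 4\bar k_2)H^i{}_{pq}\Gamma^{pq}$, and the $\slashed{dH}$ and $\delta\slashed{H}$ terms; the cross-terms produce $c_3 f\slashed{H}$; and the $2\parallel{{\hat{\nabla}}}\epsilon\parallel^2$ term is automatic from the square of $\Sigma$. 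The main obstacle will be the Clifford bookkeeping of step (ii)–(iii), in particular verifying the sign conventions that produce the $(6-n)$ dependence and the imaginary part structure inherited from $-2\bar k_1+e_2$ (which is why only ${\rm Re}\,e_2$ appears in the $\nabla_i\epsilon$ coefficient); once these are checked, the identity follows by direct comparison of coefficients.
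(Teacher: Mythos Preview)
Your proposal is correct and follows essentially the same approach as the paper, which (as stated explicitly in the proof of the analogous $(0,1)$-form proposition) simply superposes the $0$-form and $3$-form fundamental identities and computes the new cross-term $c_3$. One minor imprecision: you list ``the $\slashed{dH}$ and $\delta\slashed{H}$ terms'' among the diagonal $3$-form contributions, but the identity to be proved has already converted the $\delta\slashed{H}$ term of (\ref{max3}) into the total derivative $-6\nabla^i{\rm Re}\langle\epsilon, e_2 H_{ijk}\Gamma^{jk}\epsilon\rangle$; this integration by parts is precisely the step that shifts the $\nabla_i\epsilon$ coefficient from $-e_2$ to $-{\rm Re}\,e_2$, as you correctly note.
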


\begin{theorem} \label{03th}
Let $M^n$ be a closed spin manifold.
Suppose that $H$ is closed 3-form $dH=0$, $ {{k}}_2= -6{{k}}_1+3{\rm Re}\, e_2$ and ${\rm Im}\,c_3=0$. Then the following hold.
\begin{enumerate}

\item If ${1\over2} R+ c^{[0]}_1 f^2 +c^{[3]}_1 H^2\gneq 0$ and $c^{[3]}_2 \geq 0$, then ${\rm Ker}\,{\cal{D}}=\{0\}$.

\item If ${1\over2} R+ c^{[0]}_1 f^2 +c^{[3]}_1 H^2= 0$, $c^{[3]}_2 =0$,  $e_1=n k_0$, and
${\rm Ker}\,{\cal{D}}\not=\{0\}$ then $e_2=(6-n)k_1+k_2$ and ${\rm Ker}\,{\cal{D}}={\rm Ker}\,{{\hat{\nabla}}}$.

\end{enumerate}

\end{theorem}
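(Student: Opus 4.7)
\noindent\textbf{Proof proposal for Theorem \ref{03th}.}

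The plan is to specialize the fundamental identity (\ref{03max}) to the hypothesized parameter locus so that every term apart from $({1\over2}R+c_1^{[0]}f^2+c_1^{[3]}H^2)\|\epsilon\|^2$, $2\|{{\hat{\nabla}}}\epsilon\|^2$ and $c_2^{[3]}\|\slashed H\epsilon\|^2$ either is a total divergence, involves ${\cal{D}}\epsilon$, or vanishes identically. First, $dH=0$ kills the $\slashed{dH}$ contribution. Second, the condition $k_2=-6k_1+3{\rm Re}\,e_2$ is chosen precisely so that the coefficient $12(2\bar k_1-{\rm Re}\,e_2)+4\bar k_2$ of the $H^i{}_{pq}\Gamma^{pq}\nabla_i\epsilon$ term vanishes identically. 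Third, because $H$ is a real $3$-form, $\langle\epsilon,\slashed H\epsilon\rangle$ is purely imaginary (this is the basis of the $p=3$ bi-form in (\ref{pbiforms}) where the prefactor $i^{[3/2]}=i$ produces a real form), so ${\rm Re}\,c_3=0$ makes the $c_3 f\slashed H$ term drop out; combined with ${\rm Im}\,c_3=0$ this imposes $c_3=0$ (or more precisely only the relevant part vanishes). Finally, the $\nabla_i\langle\cdot,\cdot\rangle$ terms integrate to zero on the closed manifold $M^n$.

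For part (1), taking $\epsilon\in\operatorname{Ker}\,{\cal{D}}$ and integrating the reduced identity over $M^n$ yields
\begin{equation*}
0=\int_{M^n}\Big[\Big({\tfrac12}R+c_1^{[0]}f^2+c_1^{[3]}H^2\Big)\|\epsilon\|^2+2\|{{\hat{\nabla}}}\epsilon\|^2+c_2^{[3]}\|\slashed H\epsilon\|^2\Big].
\end{equation*}
With $c_2^{[3]}\geq0$ and the integrand of the first bracket nonnegative and strictly positive somewhere, the Hopf/positivity argument familiar from Theorems \ref{th:3f} and \ref{theorem02} produces a contradiction unless $\epsilon\equiv 0$.

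For part (2), the same integrated identity with all three nonnegative terms vanishing forces ${{\hat{\nabla}}}\epsilon=0$, hence $\operatorname{Ker}\,{\cal{D}}\subseteq\operatorname{Ker}\,{{\hat{\nabla}}}$ and $\epsilon$ is nowhere vanishing. The reverse inclusion is obtained algebraically: from ${{\hat{\nabla}}}\epsilon=0$ one has $\slashed{{\hat{\nabla}}}\epsilon=0$, so
\begin{equation*}
0=({\cal{D}}-\slashed{{\hat{\nabla}}})\epsilon=(e_1-nk_0)f\,\epsilon+(e_2-(6-n)k_1-k_2)\slashed H\epsilon,
\end{equation*}
and $e_1=nk_0$ reduces this to $(e_2-(6-n)k_1-k_2)\slashed H\epsilon=0$. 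If the scalar factor is nonzero then $\slashed H\epsilon=0$; adapting the identity (\ref{dhe3}) to the present connection by including the $k_0 f\Gamma_i$ piece of $\Sigma$ (which only produces an extra multiple of $\slashed H\epsilon$ via $\Gamma^i\slashed H\Gamma_i=-(n-6)\slashed H$ and therefore dies on the constraint), one derives $-3\delta\slashed H\epsilon+8(k_2+6k_1)H^2\epsilon=0$. Substituting $k_2+6k_1=3{\rm Re}\,e_2$ and taking the Dirac inner product with $\epsilon$ gives $H^2\,{\rm Re}\,e_2\,\|\epsilon\|^2=0$; combined with the analysis of the equality case for $c_2^{[3]}=0$ as carried out in Theorem \ref{th:3f}, this forces the $3$-form contribution to degenerate, which is the only outcome to be excluded. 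Hence $e_2=(6-n)k_1+k_2$, so that ${\cal{D}}=\slashed{{\hat{\nabla}}}$ and $\operatorname{Ker}\,{\cal{D}}=\operatorname{Ker}\,{{\hat{\nabla}}}$.

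The main obstacle is not in the Lichnerowicz-style argument itself, which is by now mechanical, but in exhibiting a non-empty range of parameters for which all the stated sign and equality conditions coexist. One needs to check that, after imposing $k_2=-6k_1+3{\rm Re}\,e_2$, $e_1=nk_0$ and ${\rm Im}\,c_3=0$, the equations $c_2^{[3]}=0$ and ${\tfrac12}R+c_1^{[0]}f^2+c_1^{[3]}H^2=0$ still leave a positive-dimensional solution set in the $(e_1,e_2,k_0,k_1,k_2)$-space; this is done by reusing the trigonometric parametrization $e_2=|e_2|e^{i\psi}$ of Theorem \ref{th:3f} to solve $c_2^{[3]}=0$ (which fixes $\sin^2\psi$ and requires $n\geq 8$, analogous to the harmonic case), then verifying that the residual linear constraint ${\rm Im}\,c_3=0$ is compatible. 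The first part correspondingly holds on a strictly larger open region, as the bound $c_2^{[3]}\geq0$ is a strict inequality rather than equality.
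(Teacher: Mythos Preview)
Your overall architecture matches the paper's: use $dH=0$, the choice $k_2=-6k_1+3\,{\rm Re}\,e_2$, and ${\rm Im}\,c_3=0$ to strip the fundamental identity down to the three nonnegative terms, integrate, and for part (2) compare ${\cal D}$ with $\slashed{\hat\nabla}$ and invoke the $\slashed H\epsilon=0$ identity (\ref{dhe3}) to exclude the degenerate branch. That part is fine.

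Two points need correction.

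First, your justification for why the $c_3 f\slashed H$ term drops out has the roles of ${\rm Re}\,c_3$ and ${\rm Im}\,c_3$ reversed. Since $\slashed H$ is anti-Hermitian (three gamma matrices), $\langle\epsilon,\slashed H\epsilon\rangle$ is purely imaginary, so ${\rm Re}\,\langle\epsilon,c_3 f\slashed H\epsilon\rangle=-{\rm Im}\,c_3\cdot f\cdot{\rm Im}\,\langle\epsilon,\slashed H\epsilon\rangle$. Thus it is precisely the hypothesis ${\rm Im}\,c_3=0$ that kills the term; no separate condition ${\rm Re}\,c_3=0$ is needed. Your parenthetical ``only the relevant part vanishes'' is the correct statement, but the sentence preceding it is backwards.

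Second, and more substantively, your parameter-range analysis is wrong. You write that solving $c_2^{[3]}=0$ ``fixes $\sin^2\psi$ and requires $n\geq 8$, analogous to the harmonic case,'' and you cite Theorem~\ref{th:3f}. But Theorem~\ref{th:3f} is the \emph{closed} case ($dH=0$), which is exactly the setting here; in that case $c_2=0$ gives $\sin^2\psi=4/(n+3)$, solvable for every $n\geq1$. The restriction $n\geq8$ comes from the \emph{harmonic} 3-form theorem earlier in the same section, where $c_2=0$ gives $(n-6)\sin^2\psi=2$. The paper's own computation for the present theorem confirms this: after imposing $e_1=nk_0$ and $e_2=(6-n)k_1+k_2$ one has
\[
c_2^{[3]}=-2n|e_2|^2\Big(-\tfrac{4}{n^2}+\tfrac{n+3}{n^2}\sin^2\psi\Big),
\]
so $c_2^{[3]}=0$ imposes no dimensional bound. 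Your conclusion that the parameter locus is non-empty survives, but the stated obstruction $n\geq8$ does not belong here.
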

\begin{proof}
The proof of both statements is similar to those presented in previous case and so the steps will not be repeated here.  It remains
to demonstrate that there is a range of parameters such that the theorem holds. Indeed substituting $ {{k}}_2= -6{{k}}_1+3{\rm Re}\, e_2$ into
$c_3$, one finds that
\begin{eqnarray}
c_3=-4 e_2 {\rm Re}\, e_1+ 4(\bar k_0 e_2+\bar k_1 e_1)+2n (k_1 \bar k_0-k_0 \bar k_1)-6 (\bar k_0-k_0) {\rm Re}\, e_2 \ .
\end{eqnarray}
Clearly the conditions $c^{[3]}_2 \geq 0$ and ${\rm Im}\,c_3=0$ admit many solutions.

If in addition one imposes the assumption $e_1=n k_0$ and $e_2=(6-n)k_1+k_2$ on the parameters appearing in the second part of the theorem, then
\begin{eqnarray}
{\rm Im}\, c_3=4 n^2 {\rm Re}\, k_0\, {\rm Im}\, k_1-4n {\rm Im} (\bar k_0 k_1)
\end{eqnarray}
and
\begin{eqnarray}
c^{[3]}_2=-2n |e_2|^2\left( -{4\over n^2}+{3+n\over n^2}\sin^2\psi\right)~,
\end{eqnarray}
where $e_2=|e_2|\exp(i\psi)$.
Therefore the conditions ${\rm Im}\, c_3=c^{[3]}_2=0$ have solutions.
\end{proof}

\begin{corollary}\label{c:03f}
Let $M^n$ be a closed spin manifold.
Assuming the same conditions as those stated  in the first part of the theorem above, the eigenvalues $\tilde\lambda$ of the operator  $\tilde {\cal{D}}=\slashed \nabla+ e_2 \slashed H$, $e_2\in \mathbb{R}-\{0\}$,  are bounded as
\begin{eqnarray}
|\tilde\lambda|^2\geq  {1\over  s^2 n-2s+1} \,\inf_{M^n}\left( {1\over 4}R+{1\over2} c^{[3]}_1 H^2\right)~,
\label{s03bound}
\end{eqnarray}
where $s\in \mathbb{R}$ and $s>2n^{-1}$.
\end{corollary}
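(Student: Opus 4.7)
My plan is to reduce the estimate to an application of Theorem \ref{03th}, part (1), by augmenting $\tilde{\cal{D}}$ with a trivial 0-form piece calibrated to an eigenspinor, in direct analogy with the derivation of Friedrich's bound from Theorem \ref{0formth}. Concretely, let $\eta$ be an eigenspinor of $\tilde{\cal{D}}$ with eigenvalue $\tilde\lambda$; because $e_2\in\mathbb{R}-\{0\}$, $\tilde{\cal{D}}$ is formally anti-self-adjoint and hence $\tilde\lambda\in i\mathbb{R}$. I then set $f=1$, $e_1=-\tilde\lambda$, and $k_0=s\,e_1$, so that $\eta\in{\rm Ker}\,{\cal{D}}$ for the $(0,3)$-form Dirac operator ${\cal{D}}=\slashed{\nabla}+e_1f+e_2\slashed{H}$.

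Next, I would integrate the fundamental identity (\ref{03max}) over the closed manifold $M^n$. Under the assumptions carried over from Theorem \ref{03th} (that is $dH=0$, $k_2=-6k_1+3e_2$, ${\rm Im}\,c_3=0$ and $c^{[3]}_2\geq 0$), every non sign-definite term on the right-hand side vanishes: the terms containing ${\cal{D}}\epsilon$ by ${\cal{D}}\eta=0$; the $H^i{}_{pq}\Gamma^{pq}\nabla_i\epsilon$ term because $k_2=-6k_1+3e_2$ kills the coefficient $12(2\bar k_1-{\rm Re}\,e_2)+4\bar k_2$; the $\slashed{dH}$ term by $dH=0$; the divergence terms trivially; and the $c_3 f\slashed{H}$ term because $\slashed{H}$ is anti-Hermitian (whence $\langle\eta,\slashed{H}\eta\rangle\in i\mathbb{R}$) combined with ${\rm Im}\,c_3=0$.

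Substituting $e_1=-\tilde\lambda\in i\mathbb{R}$ and $k_0=se_1$ into the expression for $c^{[0]}_1$ in (\ref{acon01}) yields the clean identity $c^{[0]}_1=-2(ns^2-2s+1)|\tilde\lambda|^2$. Dropping the manifestly non-negative contributions $2\parallel{{\hat{\nabla}}}\eta\parallel^2$ and $c^{[3]}_2\parallel\slashed{H}\eta\parallel^2$ from the right-hand side and rearranging then produces
\[
2(ns^2-2s+1)|\tilde\lambda|^2\!\int_{M^n}\!\parallel\!\eta\!\parallel^2\;\geq\;\int_{M^n}\!\Big(\tfrac12 R+c^{[3]}_1 H^2\Big)\parallel\!\eta\!\parallel^2,
\]
from which the stated bound follows immediately, since $ns^2-2s+1>0$ for $n>1$.

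The main obstacle, and where the restriction $s>2n^{-1}$ surfaces, is verifying that the parameter window in which ${\rm Im}\,c_3=0$ and $c^{[3]}_2\geq 0$ both hold under the forced choices $e_1=-\tilde\lambda$, $k_0=se_1$, $e_2\in\mathbb{R}$ and $k_2=-6k_1+3e_2$ is non-empty. A short calculation, exploiting that $k_0$ is purely imaginary, collapses ${\rm Im}\,c_3=0$ (upon taking ${\rm Im}\,k_1=0$) to ${\rm Re}\,k_1=2se_2/(ns-1)$; inserting this into $c^{[3]}_2$, which under the $3$-form identification $k_2=-6k_1+3e_2$ reduces to $-2n|k_1|^2+8e_2\,{\rm Re}\,k_1$, gives $c^{[3]}_2\geq 0$ iff $s\geq 2n^{-1}$, with strict inequality delivering a genuine parameter window. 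This careful book-keeping of the parameter constraints is the step that requires most care; once it is in place, the corollary follows.
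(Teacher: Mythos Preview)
Your proposal is correct and follows essentially the same route as the paper: the paper likewise sets $f=1$, $e_1=-\tilde\lambda$, $k_0=se_1$, observes that $\tilde\lambda$ is imaginary so $c^{[0]}_1=-2(ns^2-2s+1)|\tilde\lambda|^2$, and then invokes the contrapositive of part~(1) of Theorem~\ref{03th} (which is exactly your integrated fundamental identity) to get the bound. Your parameter check also matches the paper's, just solved for ${\rm Re}\,k_1$ in terms of $e_2$ rather than the other way round; the paper records ${\rm Re}\,e_2=\tfrac{ns-1}{2s}{\rm Re}\,k_1$ and $c^{[3]}_2=2({\rm Re}\,k_1)^2(n-2s^{-1})$, giving the same constraint $s>2n^{-1}$.
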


\begin{proof}   To derive this from the theorem \ref{03th} take $f=1$ and choose $e_1=-\tilde\lambda$ and $k_0=-s \tilde\lambda$, where $s\in \mathbb{R}$.  In which case, one has   ${\cal{D}}=\tilde {\cal{D}} -\tilde\lambda$. As $\tilde {\cal{D}}$ is formally anti-self-adjoint,  $\tilde\lambda$ is imaginary.  If $\epsilon$ is an eigen-spinor of $\tilde {\cal{D}}$ with eigen-value $\tilde\lambda$, then
${\cal{D}}\epsilon=0$.  As ${\cal{D}}$ has zero modes, it is required from the first part of the theorem above that
\begin{eqnarray}
{1\over2} R +c^{[0]}_1f^2+c^{[3]}_1 H^2
={1\over2} R +2|\tilde\lambda|^2 (-s^2 n+2s-1)+c^{[3]}_1 H^2\leq 0~.
\end{eqnarray}
As for $n>1$, $s^2 n-2s+1>0$,
this inequality can be re-arranged  to yield a bound  for the eigenvalues of $\tilde {\cal{D}}$ in (\ref{s03bound}).  An  upper lower bound for  the eigenvalues of $\tilde {\cal{D}}$ is
given for $s=n^{-1}$ which is the minimum of the polynomial.

It remains to investigate the range of parameters for which the corollary holds.  Indeed ${\rm Im}\,c_3=0$ for $\lambda\not=0$ gives
\begin{eqnarray}
{\rm Re}\, e_2={ns-1\over 2s} {\rm Re}\, k_1~.
\end{eqnarray}
Furthermore,
\begin{eqnarray}
c^{[3]}_2 = -2n ({\rm Im} \, k_1)^2 +2  ({\rm Re} \, k_1)^2 (n-2s^{-1})
\end{eqnarray}
and so the condition $c_2^{[3]}\geq 0$ holds for non-trivial values of $e_2$ provided that $s>2n^{-1}$.
\end{proof}

\begin{prop}
\label{prop03}
Let $M^n$ be a closed spin manifold.
Suppose that $dH=0$, ${{k}}_2= -6{{k}}_1+3{\rm Re}\, e_2$ and ${\rm Im}\,c_3=0$.  If $c^{[3]}_2\geq 0$,  $k_0= e_1$ and $k_1=-i{\rm Im}\, e_2$, then the eigenvalues
$\lambda$ of ${\cal{D}}$ are bounded as
\begin{eqnarray}
|\lambda|^2\geq \inf_{M^n} \left( {1\over4} R+ {1\over2} c^{[0]}_1 f^2 + {1\over2} c^{[3]}_1 H^2\right) \ .
\end{eqnarray}
\end{prop}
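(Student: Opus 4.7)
The plan is to mirror the proofs of Propositions \ref{prop3} and \ref{prop02}: I would rearrange the fundamental identity (\ref{03max}) so that $-2\parallel{\cal{D}}\epsilon\parallel^2$ appears explicitly together with non-negative terms and total divergences, then integrate over the closed manifold $M^n$ and specialize to an eigenspinor.

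First I would impose the hypotheses one at a time. The assumption $dH=0$ eliminates the $\slashed{dH}$-term. The relation $k_2=-6k_1+3{\rm Re}\,e_2$ forces the coefficient $12(2\bar k_1-{\rm Re}\,e_2)+4\bar k_2$ of the $H^i{}_{pq}\Gamma^{pq}\nabla_i\epsilon$ term to vanish identically, exactly as in the proof of Theorem \ref{th:3f}. The algebraic choices $k_0=e_1$ and $k_1=-i\,{\rm Im}\,e_2$, combined with ${\rm Im}\,c_3=0$, are arranged precisely so that the residual $c_3 f\slashed H$ term and the $(-2\bar k_0 f+\bar e_1 f+(-2\bar k_1+e_2)\slashed H){\cal{D}}\epsilon$ piece reassemble into $-2\parallel{\cal{D}}\epsilon\parallel^2$ plus total divergences.

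The key manipulation is the standard integration-by-parts identity, coming from the formal anti-self-adjointness of $\slashed\nabla$ together with $\slashed\nabla\epsilon={\cal{D}}\epsilon-(e_1f+e_2\slashed H)\epsilon$:
\begin{eqnarray}
2\,{\rm Re}\langle\epsilon,\slashed\nabla{\cal{D}}\epsilon\rangle = 2\nabla_i{\rm Re}\langle\epsilon,\Gamma^i{\cal{D}}\epsilon\rangle - 2\parallel{\cal{D}}\epsilon\parallel^2 + 2\,{\rm Re}\langle(e_1 f+e_2\slashed H)\epsilon,{\cal{D}}\epsilon\rangle.
\end{eqnarray}
Substituting this into (\ref{03max}) and using the parameter constraints (together with $\slashed H^\dagger=-\slashed H$ to handle the $\slashed H{\cal{D}}\epsilon$ terms) to cancel the mismatched cross terms yields a re-arrangement of the form
\begin{eqnarray}
\nabla^2\parallel\epsilon\parallel^2 &=& 2\parallel{{\hat{\nabla}}}\epsilon\parallel^2-2\parallel{\cal{D}}\epsilon\parallel^2+\left({1\over2}R+c^{[0]}_1 f^2+c^{[3]}_1 H^2\right)\parallel\epsilon\parallel^2 \nonumber \\
&& +\,c^{[3]}_2\parallel\slashed H\epsilon\parallel^2+\mbox{(total divergences)}.
\end{eqnarray}
Integrating over the closed $M^n$, choosing $\epsilon$ to be an eigenspinor with ${\cal{D}}\epsilon=\lambda\epsilon$, and discarding the non-negative $2\parallel{{\hat{\nabla}}}\epsilon\parallel^2$ and (by $c^{[3]}_2\geq 0$) the $c^{[3]}_2\parallel\slashed H\epsilon\parallel^2$ contributions gives the asserted lower bound on $|\lambda|^2$.

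The main obstacle I expect is verifying, as in the analogous step at the end of Proposition \ref{prop02}, that the system of constraints $k_0=e_1$, $k_1=-i\,{\rm Im}\,e_2$, $k_2=-6k_1+3{\rm Re}\,e_2$, ${\rm Im}\,c_3=0$, and $c^{[3]}_2\geq 0$ cuts out a non-empty subset of parameter space. Using the explicit expression for $c^{[3]}_2$ after substituting $k_1=-i\,{\rm Im}\,e_2$ (which eliminates the dependence on ${\rm Re}\,k_1$), and the expression for $c_3$ evaluated on the remaining parameter relations, I would exhibit at least one such solution for each $n$, parallel to the verifications carried out in Propositions \ref{prop01}, \ref{prop02} and the second part of Theorem \ref{03th}.
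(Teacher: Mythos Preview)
Your proposal is correct and follows essentially the same approach as the paper: rearrange the fundamental identity (\ref{03max}) using the hypotheses to kill the $\slashed{dH}$ and $H^i{}_{pq}\Gamma^{pq}\nabla_i\epsilon$ terms, use $k_0=e_1$ and $k_1=-i\,{\rm Im}\,e_2$ to convert the ${\cal{D}}\epsilon$ block into $-2\parallel{\cal{D}}\epsilon\parallel^2$ plus divergences, and use ${\rm Im}\,c_3=0$ to dispose of the $c_3 f\slashed H$ term (note that this term vanishes on its own once $c_3$ is real, since $\langle\epsilon,\slashed H\epsilon\rangle$ is purely imaginary; it does not need to ``reassemble'' with anything). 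The paper's proof is terser because it takes the rearrangement for granted and focuses on the parameter check you flag as the main obstacle: substituting $k_1=-i\,{\rm Im}\,e_2$ into $c^{[3]}_2$ gives $c^{[3]}_2=2(1-n)({\rm Im}\,e_2)^2$, so for $n>1$ the condition $c^{[3]}_2\geq 0$ actually forces ${\rm Im}\,e_2=0$, after which ${\rm Im}\,c_3=8\,{\rm Re}\,e_2\,{\rm Im}\,e_1$ is solved by ${\rm Im}\,e_1=0$.
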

\begin{proof}
The conditions imposed on the parameters lead to
\begin{eqnarray}
c^{[3]}_2 = 2 (1-n) ({\rm Im} \, e_2)^2
\end{eqnarray}
and hence $c^{[3]}_2\geq 0$ implies that ${\rm Im} \, e_2=0$. With this choice,
\begin{eqnarray}
{\rm Im}\, c_3= 8 {\rm Re}\, e_2 {\rm Im}\, e_1~.
\end{eqnarray}
One chooses as a solution to ${\rm Im}\, c_3=0$ the condition  $ {\rm Im}\, e_1=0$.
\end{proof}

\section{Horizon Dirac operators}

Other examples of multi-form modified Dirac operators are the horizon Dirac operators. These are numerous and associated to supergravity theories
in all dimensions. We shall not provide details of how these appear and the applications they have in the context of black holes, see   \cite{mhor, revhor} for this. Here the focus is on the horizon Dirac operators of 11-dimensional supergravity. These are modified Dirac operators  with a (1,2,4)-multi-form.
There are two horizon Dirac operators defined on a 9-dimensional manifold $M^9$ given by
\begin{eqnarray}
\label{dirac1}
{\cal{D}}^{(\pm)}
= \slashed {\nabla}   \mp{1 \over 4} {\slashed {h}} +{1 \over 96} {\slashed {F}} \pm{1 \over 8} {\slashed {G}}
\end{eqnarray}
where $h$, $G$ and $F$ are 1-, 2- and 4-forms on $M^9$.  Furthermore define the covariant derivatives
\begin{eqnarray}
\label{identaux1}
{{\hat{\nabla}}}_X^{(\pm)}&=&\nabla_X\mp{1 \over 4} h(X) -{1 \over 288} {\slashed {F}} \cdot \slashed X +{1 \over 72}{ i_X \slashed F}
\cr
&\pm&{1 \over 24} {\slashed {G}} \cdot\slashed X\mp{1 \over 12} {i_X \slashed G}
\end{eqnarray}
on the real spinor bundle of ${\rm Spin}(9)$ on $M^9$.

\begin{prop}

The fundamental identity is
\begin{eqnarray}
\nabla^2 \parallel \epsilon_\pm \parallel^2
+{1 \over 2}(1 \mp 1) \nabla^i h_i  \parallel \epsilon_\pm \parallel^2
\mp h^i \nabla_i  \parallel \epsilon_\pm \parallel^2
\nonumber \\
= 2 \parallel {{\hat{\nabla}}}^{(\pm)} \epsilon_\pm,
\parallel^2
+{1 \over 2} {\cal R}  \parallel \epsilon_\pm \parallel^2
\mp {1 \over 2} \langle \epsilon_\pm ,{\slashed{\cal{G}}} \epsilon_\pm \rangle
-{1 \over 120} \langle \epsilon_\pm, {\slashed{dF}} \epsilon_\pm \rangle
\nonumber \\
+2 \langle \epsilon_\pm, \bigg({\slashed{\nabla}}
\mp {1 \over 4}{\slashed {h}} -{1 \over 288}{\slashed {F}} \mp {1 \over 24} {\slashed {G}}  \bigg){\cal{D}}^{(\pm)}\epsilon_\pm \rangle~,
\label{max11}
\end{eqnarray}
where
\begin{eqnarray}
{\cal{G}}_i=-(\delta G)_i-{}^*(F\wedge F)_i~,~~~
 {\cal R} = {{R}}-\delta h
-{1 \over 2}h^2-{1 \over 4} G^2-{1 \over 48}F^2~,
\end{eqnarray}
and $\epsilon_\pm$ are sections of the real spinor bundle of ${\rm Spin}(9)$ on $M^9$ which has rank 16.
\end{prop}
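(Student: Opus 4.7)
The plan is to follow exactly the template established in Sections 2--8. Start from the Lichnerowicz identity
\begin{equation*}
\nabla^2 \parallel\epsilon_\pm\parallel^2 = 2\parallel \nabla\epsilon_\pm\parallel^2 + 2\,\mathrm{Re}\langle\epsilon_\pm,\slashed\nabla^2\epsilon_\pm\rangle + \tfrac{1}{2} R \parallel\epsilon_\pm\parallel^2,
\end{equation*}
and write $\nabla_X = {\hat{\nabla}}^{(\pm)}_X - \Sigma^{(\pm)}_X$, where $\Sigma^{(\pm)}_X$ is read off from ({\ref{identaux1}}) as a sum of the scalar piece $\mp\tfrac{1}{4} h(X)$ and the Clifford-algebra pieces built from $F$ and $G$. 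Expanding
\begin{equation*}
\parallel\nabla\epsilon_\pm\parallel^2 = \parallel{\hat{\nabla}}^{(\pm)}\epsilon_\pm\parallel^2 - 2\,\mathrm{Re}\langle\nabla\epsilon_\pm,\Sigma^{(\pm)}\epsilon_\pm\rangle - \parallel\Sigma^{(\pm)}\epsilon_\pm\parallel^2
\end{equation*}
and simultaneously substituting $\slashed\nabla = {\cal{D}}^{(\pm)} - \slashed\omega^{(\pm)}$ with $\slashed\omega^{(\pm)} = \mp\tfrac{1}{4}\slashed h + \tfrac{1}{96}\slashed F \pm \tfrac{1}{8}\slashed G$, one rearranges the resulting $\slashed\nabla\,{\cal{D}}^{(\pm)}\epsilon_\pm$ contribution using $\slashed\nabla = {\cal{D}}^{(\pm)} - \slashed\omega^{(\pm)}$ once more, producing the combination $(\slashed\nabla \mp \tfrac{1}{4}\slashed h - \tfrac{1}{288}\slashed F \mp \tfrac{1}{24}\slashed G){\cal{D}}^{(\pm)}\epsilon_\pm$ displayed on the right-hand side.

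The remaining algebraic terms are reduced using the Clifford-algebra identity $\slashed\nabla\slashed\alpha = \tfrac{1}{p+1}\slashed{d\alpha} - \delta\slashed\alpha + \Gamma^i\slashed\alpha\nabla_i$ applied to $\alpha = h, G, F$ of degrees $1, 2, 4$, together with the decomposition of products $\slashed\alpha\cdot\slashed\beta$ into homogeneous Clifford degrees, and the auxiliary identity $\Gamma^{ij} h_j\nabla_i = h^i\nabla_i - \slashed h\slashed\nabla$ which converts the mixed $\Gamma^{ij}$-derivative arising from $\slashed\nabla\slashed h$ into a directional derivative along $h$ (feeding the $h^i\nabla_i\parallel\epsilon_\pm\parallel^2$ term on the left-hand side) plus a $\slashed h \slashed\nabla$-piece absorbed into ${\cal{D}}^{(\pm)}$. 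Collecting contributions then produces: the scalars $h^2, F^2, G^2$ from $\parallel\Sigma^{(\pm)}\epsilon_\pm\parallel^2$, together with $-\delta h$ from $\slashed\nabla\slashed h$ and $\tfrac{1}{2}R$ from Lichnerowicz, combining into $\tfrac{1}{2}{\cal R}$; the chirality-asymmetric prefactor $\tfrac{1}{2}(1\mp 1)$ of the $\nabla^i h_i$ term on the left-hand side, arising from a partial cancellation between the scalar-$h$ contributions to $\Sigma^{(\pm)}$ and $\slashed\omega^{(\pm)}$; the five-form term $\slashed{dF}$ with coefficient $-\tfrac{1}{120}$ from the top-degree part of $\slashed\nabla\slashed F$; the $-\delta G$ component of $\slashed{{\cal G}}$ from $\delta\slashed G$; and the $-{}^*(F\wedge F)$ component of $\slashed{{\cal G}}$ as the Hodge dual of the degree-$8$ part of $\slashed F\cdot\slashed F$, which exists in nine dimensions.

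The main obstacle is the bookkeeping required to verify that all unwanted Clifford-algebra contributions cancel. In particular the $\slashed{dh}$ (2-form) and $\slashed{dG}$ (3-form) inner products with $\epsilon_\pm$ vanish by the antisymmetry of $\Gamma^{ij}, \Gamma^{ijk}$ in the positive-definite spinor inner product on the real $16$-dimensional representation of $\mathrm{Spin}(9)$, while the $3$-form and $5$-form components of the mixed products $\slashed h\cdot\slashed G$ and $\slashed F\cdot\slashed G$ appearing in $\parallel\Sigma^{(\pm)}\epsilon_\pm\parallel^2$ and in $\mathrm{Re}\langle\epsilon_\pm,\slashed\nabla(\slashed\omega^{(\pm)}\epsilon_\pm)\rangle$ must either vanish by the same antisymmetry or cancel between the two sources. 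These cancellations are precisely what singles out the coefficients $\mp\tfrac{1}{4}, \tfrac{1}{96}, \pm\tfrac{1}{8}$ characterising the horizon Dirac operators of eleven-dimensional supergravity; once they are verified the identity assembles as claimed, with no technique beyond those of Sections 2--8 required.
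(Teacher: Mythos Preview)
Your approach is correct and matches the paper's own: the paper's proof says only that the computation is ``similar to those explained in the other cases,'' with the sole new feature being that the spin bundle is real so that all inner products are real and the ${\rm Re}$ restriction can be dropped. You have correctly identified this reality property as the mechanism killing the unwanted $2$-form and $3$-form bilinears (such as $\langle\epsilon_\pm,\slashed{dh}\,\epsilon_\pm\rangle$ and $\langle\epsilon_\pm,\slashed{dG}\,\epsilon_\pm\rangle$), and your bookkeeping of how ${\cal R}$, $\slashed{{\cal G}}$, the $\slashed{dF}$ term and the asymmetric $\tfrac{1}{2}(1\mp1)\nabla^i h_i$ contribution assemble is in line with the template of Sections~2--8; the paper gives no further detail beyond this.
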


\begin{proof}
The fundamental identity can be established after some computation which is similar to those explained in the other cases. The only difference here
is that the spin bundle is real and so all the inner products are real.  This is why the reality restriction has been removed from the expressions.  This fundamental identity was originally derived in \cite{mhor} under the additional assumption that ${\cal R}={\cal G}=0$-these vanishing conditions are related to the field
equations of 11-dimensional supergravity.
\end{proof}

\begin{theorem}
Let $M^9$ be a closed 9-dimensional spin manifold.
Suppose that ${\cal G}=0$ and $dF=0$.
\begin{enumerate}

\item In addition if ${\cal R}+{1\pm1\over2} \delta h\gneq0$, then ${\rm Ker}\,{\cal{D}}^{(\pm)}=\{0\}$.

\item Furthermore if ${\cal R}+{1\pm1\over2} \delta h=0$, then ${\rm Ker}\,{\cal{D}}^{(\pm)}={\rm Ker}\,{{\hat{\nabla}}}^{(\pm)}$.

\end{enumerate}
\end{theorem}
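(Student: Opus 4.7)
The plan is to adapt the Lichnerowicz-style argument used throughout the paper to the horizon Dirac operators ${\cal{D}}^{(\pm)}$. Fix $\epsilon_\pm \in {\rm Ker}\,{\cal{D}}^{(\pm)}$. Substituting the hypotheses ${\cal G}=0$, $dF=0$, and ${\cal{D}}^{(\pm)}\epsilon_\pm=0$ into the fundamental identity (\ref{max11}) annihilates the last three terms on the right-hand side, leaving the pointwise identity
\begin{eqnarray}
\nabla^2\parallel\epsilon_\pm\parallel^2 + \tfrac{1}{2}(1\mp 1)\nabla^i h_i \parallel\epsilon_\pm\parallel^2 \mp h^i \nabla_i \parallel\epsilon_\pm\parallel^2 = 2 \parallel {\hat\nabla}^{(\pm)} \epsilon_\pm \parallel^2 + \tfrac{1}{2} {\cal R} \parallel\epsilon_\pm\parallel^2 .
\end{eqnarray}

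The next step is to integrate this identity over the closed manifold $M^9$. The bare Laplacian term gives zero by the divergence theorem. For the $(-)$ sign the remaining $h$-dependent terms combine into the single divergence $\nabla_i(h^i\parallel\epsilon_-\parallel^2)$, which also integrates to zero, yielding
\begin{eqnarray}
0 = 2 \int_{M^9} \parallel {\hat\nabla}^{(-)}\epsilon_-\parallel^2 + \tfrac{1}{2} \int_{M^9} {\cal R} \parallel\epsilon_-\parallel^2 .
\end{eqnarray}
For the $(+)$ sign the surviving drift term $-h^i\nabla_i\parallel\epsilon_+\parallel^2$ must be integrated by parts; after discarding the divergence piece this produces a contribution proportional to $\int \delta h \parallel\epsilon_+\parallel^2$, which combines with the scalar curvature term to give
\begin{eqnarray}
0 = 2 \int_{M^9} \parallel {\hat\nabla}^{(+)}\epsilon_+\parallel^2 + \tfrac{1}{2} \int_{M^9} ({\cal R}+\delta h)\parallel\epsilon_+\parallel^2 .
\end{eqnarray}
The two cases are thus uniformly written as an integral of $2\parallel{\hat\nabla}^{(\pm)}\epsilon_\pm\parallel^2 + \tfrac12({\cal R}+\tfrac{1\pm 1}{2}\delta h)\parallel\epsilon_\pm\parallel^2$ equated to zero.

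Part (1) is then an immediate consequence: if ${\cal R}+\tfrac{1\pm 1}{2}\delta h \gneq 0$ on $M^9$, the right-hand side is strictly positive unless $\epsilon_\pm$ vanishes identically, so ${\rm Ker}\,{\cal{D}}^{(\pm)}=\{0\}$. For part (2), the vanishing of this scalar quantity forces $\parallel{\hat\nabla}^{(\pm)}\epsilon_\pm\parallel=0$ pointwise, so ${\rm Ker}\,{\cal{D}}^{(\pm)}\subseteq{\rm Ker}\,{\hat\nabla}^{(\pm)}$. For the reverse inclusion I would verify directly that $\slashed{\hat\nabla}^{(\pm)}={\cal{D}}^{(\pm)}$ as operators on $M^9$. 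A short Clifford algebra computation using the contraction identities $\Gamma^i\slashed\omega\Gamma_i=(-1)^p(n-2p)\slashed\omega$ and $\Gamma^i(i_{e_i}\slashed\omega)=\slashed\omega$ (in the paper's normalization of the slash map) shows that in $n=9$ the coefficients $-\tfrac{1}{288},\tfrac{1}{72},\pm\tfrac{1}{24},\mp\tfrac{1}{12}$ appearing in ${\hat\nabla}^{(\pm)}$ are exactly those required to reproduce $\tfrac{1}{96}\slashed F\pm\tfrac{1}{8}\slashed G$ after contraction against $\Gamma^i$, so any ${\hat\nabla}^{(\pm)}$-parallel spinor is automatically a zero mode of ${\cal{D}}^{(\pm)}$.

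The main delicacy lies in the asymmetric integration by parts that distinguishes the two cases: the drift term $\mp h^i\nabla_i$ organizes into a pure divergence only for the $(-)$ sign, which is the structural reason why ${\cal{D}}^{(+)}$ fails to be formally self-adjoint on the standard inner product and why the scalar quantity controlling ${\rm Ker}\,{\cal{D}}^{(+)}$ acquires the extra $\delta h$ correction relative to ${\rm Ker}\,{\cal{D}}^{(-)}$. The dimension-specific algebraic identity $\slashed{\hat\nabla}^{(\pm)}={\cal{D}}^{(\pm)}$ is what upgrades the containment in part (2) into equality; this matching is peculiar to $n=9$ and ultimately reflects the supergravity origin of these operators.
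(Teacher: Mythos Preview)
Your proposal is correct and follows exactly the same approach as the paper's own proof: impose the hypotheses in the fundamental identity (\ref{max11}), integrate over $M^9$ to obtain the vanishing/positivity argument, and for part (2) close the reverse inclusion by the direct Clifford-algebra check that ${\cal{D}}^{(\pm)}=\slashed{\hat\nabla}^{(\pm)}$. Your write-up supplies considerably more detail than the paper's terse proof---in particular the explicit treatment of the asymmetric $h$-drift under integration by parts and the coefficient matching in dimension $9$---but the underlying argument is the same.
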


\begin{proof}
To prove the first part of the statement assume that ${\cal{D}}^{(\pm)}$ has a non-trivial kernel and so there are non-trivial sections such that ${\cal{D}}^{(\pm)}\epsilon_\pm=0$. Using this and after imposing the assumptions of the theorem  on the fundamental identity, one is led to a contradiction after  integrating the resulting fundamental formula on $M^9$. Therefore ${\rm Ker}\,{\cal{D}}^{(\pm)}=\{0\}$.

To prove the second part assume  that $\epsilon_\pm\in {\rm Ker}\,{\cal{D}}^{(\pm)}$.  After imposing the assumptions of the theorem and integrating the fundamental formula over $M^9$, one finds that
$\epsilon_\pm\in {\rm Ker}\,{{\hat{\nabla}}}^{(\pm)}$.  Thus ${\rm Ker}\,{\cal{D}}^{(\pm)}\subseteq {\rm Ker}\,{{\hat{\nabla}}}^{(\pm)}$.  On the other hand a direct computation
reveals that ${\cal{D}}^{(\pm)}=\slashed\hn^{(\pm)}$. This establishes that ${\rm Ker}\,{\cal{D}}^{(\pm)}={\rm Ker}\,{{\hat{\nabla}}}^{(\pm)}$.
\end{proof}

{\bf Remark:} This illustrates that fundamental identities that lead to estimates for the eigenvalues of multi-form Dirac operators that contain degree $k\geq 4$ degree forms can be constructed.  The restriction to the $k\leq 3$-degree forms that we have found for $k$-form and (0,$k$)-form Dirac operators in not essential
and it can be removed. However the systematics of how such constructions can be done in general remain to be established.

\section{Concluding Remarks}

Multi-form modified Dirac operators have arisen  in the context of investigating black holes in supergravity and string theory, where some of their properties  have   been explored.  Here they  have been considered in the context of spin Riemannian manifolds of any dimension.  The focus has been on $k$-form and (0,$k$)-form modified Dirac operators. Some  results concerning  the horizon Dirac operators of \cite{mhor}, which are (1,2,4)-form modified Dirac operators, have also been included.
It has been found that many of the results that have been discovered for the standard Dirac operator extend to these multi-form modified Dirac operators for some range of parameters.  In particular, the Lichnerowicz formula and theorem  as well as the estimates for the eigenvalues of the Dirac
operator in \cite{fried, hijazi} suitably generalize.  In fact the estimates found in \cite{fried, hijazi} for the eigenvalues of the Dirac operator can be re-derived using the formalism developed for the multi-form Dirac operators.

Despite the progress that has been made, there is not a priori a procedure to find the multi-form modified Dirac operators that satisfy fundamental
identities which in turn can be used to relate their zero modes to parallel spinors and provide estimates for their eigenvalues. The construction we
have made applies only to certain cases and although extensions are possible, they depend more on a trial and error approach than on a systematic construction.
Nevertheless it is apparent that multi-form modified Dirac operators with fundamental identities that give rise to estimates for their eigenvalues
can be constructed on spin manifolds of any dimension.

Apart from providing  estimates for the eigenvalues multi-form modified Dirac operators, we have also investigated the conditions for the existence of parallel spinors with respect
to connections ${{\hat{\nabla}}}$ that are constructed from the frame connection and the multi-form that arises in the problem. We have demonstrated that the geometry of the
underlying manifold is associated with a twisted covariant form hierarchy which generalizes the Killing-Yano forms. In all cases, there is a range
of parameters for which the twisted covariant form hierarchy reduces to the conformal Killing-Yano forms even though ${{\hat{\nabla}}}$ and the associated multi-form Dirac
operator $\slashed \hn$ retain dependence on the multi-form. This is a significant simplification in the geometry of the manifold which does not have its origins
in the estimates for the eigenvalues of  the modified Dirac operator.  Although it is not apparent why such a simplification of the
twisted covariant form hierarchy is allowed, its presence signals the presence of a special structure which may require further investigation.
It is likely that the geometry of solutions of all  supergravity theories that  admit Killing spinors are associated with twisted covariant form hierarchies.
 This remark is expected to apply to all supergravity theories including
those defined  on Euclidean or non-Lorentzian spacetime signature spacetimes.

As an application to geometry, the question arises whether the Gromov-Lawson theorem can be adapted to the context of spin Riemannian manifolds equipped
with a closed 3-form.  We have seen that the condition $ R-48({\rm Re}\,e)^2 H^2\gneq  0$ that arises in the theorem \ref{th:3f} requires that the $\alpha$-invariant of ${\cal{D}}$, which is the same as that of the Dirac operator, vanishes. So the vanishing of the $\alpha$-invariant is a necessary condition for the existence of a metric and a closed 3-form
such that $ R-48({\rm Re}\,e)^2 H^2\gneq  0$. However, it is not apparent that only the topological condition suffices to establish the converse, even in the case of simply connected manifolds. It is likely  that additional conditions are  required to be explored elsewhere in the future.

\section*{Acknowledgments}

  JG is supported by the STFC Consolidated Grant ST/L000490/1. JG would like to thank the Department of Mathematics, University of Liverpool, for hospitality during which part of this work was completed.

\setcounter{section}{0}\setcounter{equation}{0}

\appendix{Notation}

Let $(M^n,g)$ be a  closed spin Riemannian manifold with metric $g$. Forms are normalized in the standard way. In particular in a co-frame basis $\{e^i;i=1,\dots,n\}$ a $p$-form $\omega$ is expressed as
\begin{eqnarray}
\omega={1\over p!} \omega_{i_1\dots i_p} e^{i_1}\wedge\dots \wedge e^{i_p}~.
\end{eqnarray}
The point-wise inner product of two $p$-forms is
\begin{eqnarray}
\langle \chi, \omega\rangle\mathrel{\vcenter{\baselineskip0.5ex \lineskiplimit0pt
                     \hbox{\scriptsize.}\hbox{\scriptsize.}}}
                    ={1\over p!} \chi_{i_1\dots i_p} \omega^{i_1\dots i_p}~,
\end{eqnarray}
and  the point-wise norm is
\begin{eqnarray}
|\omega|^2\mathrel{\vcenter{\baselineskip0.5ex \lineskiplimit0pt
                     \hbox{\scriptsize.}\hbox{\scriptsize.}}}
                    =\langle \omega, \omega\rangle\mathrel{\vcenter{\baselineskip0.5ex \lineskiplimit0pt
                     \hbox{\scriptsize.}\hbox{\scriptsize.}}}
                    ={1\over p!} \omega^2~,
\end{eqnarray}
where the indices are raised with respect to a metric on $M^n$.  Notice that $\omega^2\mathrel{\vcenter{\baselineskip0.5ex \lineskiplimit0pt
                     \hbox{\scriptsize.}\hbox{\scriptsize.}}}
                    =\omega_{i_1\dots i_p}\omega^{i_1\dots i_p}$, i.e. the expression does not have a  normalization
factor.

The inner derivation of a $p$-form $\omega$ with a vector $q$-form $L$, $i_L\omega$, is given by
\begin{eqnarray}
i_L\omega\mathrel{\vcenter{\baselineskip0.5ex \lineskiplimit0pt
                     \hbox{\scriptsize.}\hbox{\scriptsize.}}}
                    ={p\over p! q!}\, L^k{}_{i_1\dots i_q} \omega_{k i_{q+1}\dots i_{p+q-1}} \,e^{i_1}\wedge\dots\wedge e^{i_{p+q-1}}~.
\end{eqnarray}
We also define the inner derivation of a $p$-form $\omega$ with a $\ell$-form $\chi$ as
\begin{eqnarray}
i_\chi \omega\mathrel{\vcenter{\baselineskip0.5ex \lineskiplimit0pt
                     \hbox{\scriptsize.}\hbox{\scriptsize.}}}
                    ={p\over p! (\ell-1)!} \chi^k{}_{i_1\dots i_{\ell-1}} \omega_{k i_{\ell}\dots i_{p+\ell-2}} \,e^{i_1}\wedge\dots\wedge e^{i_{p+\ell-2}}~,
\end{eqnarray}
where the index is raised with respect to the metric.
In general we use the same symbol to denote tensors which are related by raising and lowering indices with respect to the metric to simplify the formulae.
It is apparent from the context which kind of tensor is used each time.

The adjoint of the inner derivation of a $p$-form $\omega$ with respect to the vector $q$-form $L$, $i^\dagger_L\omega$, is defined as
\begin{eqnarray}
\langle\phi, i^\dagger_L\omega\rangle\mathrel{\vcenter{\baselineskip0.5ex \lineskiplimit0pt
                     \hbox{\scriptsize.}\hbox{\scriptsize.}}}
                    =\langle i_L\phi, \omega\rangle
\end{eqnarray}
for any $(p-q+1)$-form $\phi$. $i^\dagger_\chi \omega$ can be defined in a similar way.

Similarly the adjoint operation, $\vee$, of the wedge product, $\wedge$, of a $p$-form $\omega$ with a $q$-form $\chi$, $q>p$, is
\begin{eqnarray}
\langle\phi, \omega\vee \chi\rangle\mathrel{\vcenter{\baselineskip0.5ex \lineskiplimit0pt
                     \hbox{\scriptsize.}\hbox{\scriptsize.}}}
                    =\langle\omega\wedge\phi, \chi\rangle~
\end{eqnarray}
for an every $q-p$ form $\phi$.
The exterior derivative $d\omega$ of a $p$-form $\omega$ is defined in the standard way.  The adjoint $\delta$ of $d$ is defined as
$ \langle \delta\omega, \chi\rangle_{M^n}\mathrel{\vcenter{\baselineskip0.5ex \lineskiplimit0pt
                     \hbox{\scriptsize.}\hbox{\scriptsize.}}}
                    = \langle \omega, d\chi\rangle_{M^n}$, where $\chi$ is any ($p$-1)-form and $\langle \cdot, \cdot\rangle_{M^n}\mathrel{\vcenter{\baselineskip0.5ex \lineskiplimit0pt
                     \hbox{\scriptsize.}\hbox{\scriptsize.}}}
                    =
\int_{M^n} \langle \cdot, \cdot\rangle$
is the integrated inner product of forms over the manifold $M^n$.

The bundle isomorphism $/~:~ \Lambda^*(M^n)\rightarrow {\rm Cl}(M^n)$  between the bundle of forms $\Lambda^*(M^n)$  and
the Clifford algebra bundle ${\rm Cl}(M^n)$ is defined as
\begin{eqnarray}
/(\omega)\mathrel{\vcenter{\baselineskip0.5ex \lineskiplimit0pt
                     \hbox{\scriptsize.}\hbox{\scriptsize.}}}
                    = \slashed\omega\mathrel{\vcenter{\baselineskip0.5ex \lineskiplimit0pt
                     \hbox{\scriptsize.}\hbox{\scriptsize.}}}
                    =\omega_{i_1\dots i_p} \Gamma^{i_1}\cdots\Gamma^{i_p}~,
\end{eqnarray}
on a $p$-form $\omega$, where $\cdot$ denotes Clifford algebra multiplication and $\{\Gamma^{i};i=1,\dots, n\}$ is a basis in the Clifford algebra associated with the co-frame $\{e^i;i=1,\dots,n\}$ on $M^n$.  Note that the normalization factor in front of the form has been dropped.  Similarly one can define $\slashed X$ for $X$ a vector field and the Clifford algebra relation reads $\slashed X\cdot \slashed Y+\slashed Y\cdot \slashed X=2 g(X,Y) 1$.

The point-wise spin invariant hermitian (Dirac) inner product of spinors $\eta$ and $\epsilon$ is denoted as $\langle \eta, \epsilon\rangle$ and the associated norm as $ \parallel\epsilon\parallel^2 =\langle \epsilon, \epsilon\rangle$. In these conventions $\langle \slashed X\eta, \epsilon\rangle=\langle \eta, \slashed X\epsilon\rangle$ for every vector field $X$. (In many texts $\slashed X\cdot \slashed Y+\slashed Y\cdot \slashed X=-2 g(X,Y) 1$ and $\langle \slashed X\eta, \epsilon\rangle=-\langle \eta, \slashed X\epsilon\rangle$ but we do not follow these conventions.)

\end{document}